\newcommand{\rtag}[1]{\begin{tikzpicture}[baseline=(tmp.base),remember picture]
\node[inner sep=0pt](tmp){\vphantom{1}};
\begin{scope}[overlay]
 \path (current page text area.east|-tmp.base)
  node[anchor=base east,inner sep=0pt,outer sep=0pt]{#1};
\end{scope}
\end{tikzpicture}}
\newcommand{\uu}{\rule{.25cm}{0.4pt}}
\newcommand{\hideqed}{\renewcommand{\qed}{}} 
\DeclareMathOperator{\pic}{Pic}
\DeclareMathOperator{\SL}{SL}
\DeclareMathOperator{\PSL}{PSL}
\DeclareMathOperator{\SO}{SO}
\DeclareMathOperator{\tO}{O}
\DeclareMathOperator{\id}{id}
\DeclareMathOperator{\br}{Br}
\DeclareMathOperator{\rk}{rk}
\DeclareMathOperator{\spec}{Spec}
\DeclareMathOperator{\spin}{Spin}
\DeclareMathOperator{\Gal}{Gal}
\DeclareMathOperator{\ord}{ord}
\DeclareMathOperator{\Amp}{Amp}
\DeclareMathOperator{\Kthree}{\mathrm{K}3}
\DeclareMathOperator{\kernel}{Ker}
\DeclareMathOperator{\clif}{Cl}
\DeclareMathOperator{\res}{res}
\DeclareMathOperator{\Emb}{Emb}
\DeclareMathOperator{\Diag}{Diag}
\DeclareMathOperator{\Kum}{Kum}
\DeclareMathOperator{\Pic}{Pic}
\renewcommand{\sp}{\mathrm{sp}}
\newcommand{\A}{\mathbb{A}}
\newcommand{\C}{\mathbb{C}}
\newcommand{\F}{\mathbb{F}}
\newcommand{\G}{\mathbb{G}}
\newcommand{\Q}{\mathbb{Q}}
\newcommand{\R}{\mathbb{R}}
\newcommand{\Z}{\mathbb{Z}}
\newcommand{\PP}{\mathbb{P}}
\newcommand{\cA}{\mathcal{A}}
\newcommand{\cB}{\mathcal{B}}
\newcommand{\cC}{\mathcal{C}}
\newcommand{\cD}{\mathcal{D}}
\newcommand{\cF}{\mathcal{F}}
\newcommand{\cH}{\mathcal{H}}
\newcommand{\cI}{\mathcal{I}}
\newcommand{\cM}{\mathcal{M}}
\newcommand{\cO}{\mathcal{O}}
\newcommand{\cP}{\mathcal{P}}
\newcommand{\cQ}{\mathcal{Q}}
\newcommand{\cU}{\mathcal{U}}
\newcommand{\cW}{\mathcal{W}}
\newcommand{\cX}{\mathcal{X}}
\newcommand{\cY}{\mathcal{Y}}
\newcommand{\cZ}{\mathcal{Z}}
\newcommand{\rH}{\mathrm{H}}
\newcommand{\rM}{\mathrm{M}}
\newcommand{\rO}{\mathrm{O}}
\newcommand{\rP}{\mathrm{P}}
\newcommand{\rQ}{\mathrm{Q}}
\newcommand{\rR}{\mathrm{R}}
\newcommand{\iso}{\xrightarrow{\sim}}
\newcommand{\ksep}{k^{\textrm{s}}}
\newcommand{\kbar}{{\overline{k}}}
\newcommand{\Xbar}{{\overline{X}}}
\newcommand{\frakm}{\mathfrak{m}}
\DeclareMathOperator{\disc}{disc}
\DeclareMathOperator{\gspin}{GSpin}
\DeclareMathOperator{\tors}{tors}
\DeclareMathOperator{\K3}{K3}
\renewcommand{\hom}{\text{Hom}}
\newcommand{\et}{\mathrm{\acute{e}t}}
\theoremstyle{plain}
\newtheorem{theorem}{Theorem}[section]
\newtheorem{lemma}[theorem]{Lemma}
\newtheorem{proposition}[theorem]{Proposition}
\newtheorem{corollary}[theorem]{Corollary}
\newtheorem{conjecture}[theorem]{Conjecture}
\theoremstyle{definition}
\newtheorem{definition}[theorem]{Definition}
\newtheorem{remark}[theorem]{Remark}
\newtheorem{example}[theorem]{Example}
\newtheorem{pg}[theorem]{}
\title{Moduli of lattice-polarized K3 surfaces and \\ boundedness of Brauer groups}
\author{Danny Bragg, Emma Brakkee, and Anthony V\'arilly-Alvarado}
\address{Department of Mathematics, University of Utah, 155 South 1400 East, JWB 233, Salt Lake City, UT 84112, USA}
\email{bragg@math.utah.edu}
\address{Leiden University, Mathematical Institute,
Einsteinweg 55, 2333 CC Leiden, The Netherlands}
\email{{e.l.brakkee@math.leidenuniv.nl}}
\address{Department of Mathematics, Rice University MS 136, Houston, TX 77005, USA}
\email{av15@math.rice.edu}
\subjclass[2020]{Primary 14D23, 14J28; Secondary 14F22, 11G18}
\date{\today}
\begin{document}

\begin{abstract}
Inspired by constructions over the complex numbers of Dolgachev and Alexeev--Engel, we define moduli stacks $\cM_{(L,\cA)/\Z}$ of lattice-polarized K3 surfaces over arbitrary bases, paying particular attention to the open locus $\cP_{(L,\cA)/\Z}$ of primitive lattice polarizations. 
We introduce the notion of very small ample cones $\mathcal{a}$, after Alexeev and Engel's small cones, to construct smooth, separated stacks of lattice polarized K3 surfaces $\cP_{(L,\mathcal{a})/\Z[1/N]}$ over suitable open subsets of $\spec(\Z)$.

We add level structures, coming from classes in $\rH^2(X,\mu_n)$, to build moduli stacks $\cP^{[n]}_{(L,\cA)/\Z}$ with a natural action by $\cP_{(L,\cA)}\otimes \Z/n\Z$ whose associated quotient $\cQ^{[n]}_{(L,\cA)}$ contains an open substack $\cQ^{(n)}_{(L,\cA)}$ whose points parametrize pairs K3 surfaces $X$ such that $\pic(X) \simeq L$, together with a class $\alpha \in \br(X)$ of order $n$. 

When $L$ has rank 19, we show that the coarse moduli space $\mathrm{Q}_{(L,\mathcal{a}),\C}^{(n)}$ is a union of quasi-projective curves, each isomorphic to an open subvariety of the quotient of the upper half plane by a discrete subgroup of $\SL_2(\R)$.
Fixing a prime $\ell$, we use this comparison to prove that the genus and the gonality of the components of $\mathrm{Q}_{(L,\mathcal{a}),\C}^{(\ell^{m})}$ grows with $m$, and hence that they have finitely many points over number fields of bounded degree. 
As an application, we furnish a new proof of a result by Cadoret--Charles, showing uniform boundedness of the $\ell$-primary torsion of Brauer groups of K3 surfaces over number fields varying in a $1$-dimensional lattice-polarized family.
\end{abstract}

\maketitle

\setcounter{tocdepth}{1}
\tableofcontents

\section{Introduction}

\subsection{Moduli spaces of lattice-polarized K3 surfaces}

Moduli spaces of lattice-polarized complex K3 surfaces were first studied in detail by Dolgachev~\cite{Dolgachev} in the context of mirror symmetry. 
They have since found many more applications, especially over the complex numbers, despite pervasive technical difficulties caused by their inveterate lack of separatedness. 
Recently, Alexeev and Engel \cite{AE}*{\S2.2} described a mechanism for producing separated moduli spaces, relying on a certain subdivision of the ample cones of a lattice, whose chambers they called \emph{small cones}. 
The objects of their moduli spaces include certain data involving singular cohomology, and so \emph{a priori} these spaces are defined only over $\C$.

We are interested in moduli of lattice-polarized K3 surfaces over non-closed fields, particularly number fields. 
With this motivation, we give an algebraic construction of moduli spaces of lattice-polarized K3 surfaces over $\spec\Z$ (and hence, by base change, also over arbitrary bases). 
We identify some basic geometric properties of these spaces in the general context of an arbitrary base scheme. 
We introduce a variant of the small cones of Alexeev--Engel \cite{AE}*{\S2.2}, which we call \emph{very small ample cones}. 
Unlike small cones, our very small ample cones are canonically defined only in terms of the polarizing lattice. 
We use them to construct \emph{separated} moduli spaces of lattice-polarized K3 surfaces over $\spec\Z$.

To state our results, we introduce some notation. We let
\[
    \Lambda:=U^{\oplus 3}\oplus E_8(-1)^{\oplus 2}
\]
denote the K3 lattice. 
This is an even unimodular lattice of signature $(3,19)$. 
Let $L$ be an even lattice such that $L_{\R} := L\otimes_\Z \R$ has signature $(1,n)$ for some positive integer $n$; we say $L$ is \emph{hyperbolic}. 
Let $\cA\subset L_{\R}$ be a subset. 
An \emph{$(L,\cA)$-polarized K3 surface} over an algebraically closed field $k$ is a pair $(X,j)$, where $X$ is a K3 surface over $k$ and $j:L\hookrightarrow\Pic(X)$ is an isometric embedding such that the image $j(\cA)\subset\Pic(X)_{\R}$ of $\cA$ under $j$ contains an ample class. 
This notion extends without difficulty to families, and we define $\cM_{(L,\cA)/\Z}$ to be the resulting moduli stack over $\spec\Z$. 
We show in Theorem \ref{thm:rep, easy version} that this is a Deligne--Mumford stack, quasi-separated and locally of finite type over $\spec\Z$. 
To obtain good geometric properties, we restrict our attention to the open substack $\cP_{(L,\cA)/\Z}\subset\cM_{(L,\cA)/\Z}$ of primitively $(L,\cA)$-polarized K3 surfaces. 
This substack need not be closed, owing to the existence of specializations of K3 surfaces (necessarily with supersingular special fiber) whose induced restriction map on Picard groups is non-primitive. 
To include such families, it is natural also to consider the closure $\overline{\cP}_{(L,\cA)/\Z}$ of the primitive locus in $\cM_{(L,\cA)/\Z}$.

\begin{theorem}\label{thm:intro good properties of moduli spaces}
    Let $L$ be an even hyperbolic lattice, write $d=\disc(L)$ for its discriminant, and let $\cA\subset L_{\R}$ be a subset.
    \begin{enumerate}[leftmargin=*]
        \item There exists a closed substack $\mathcal{Z}\subset\overline{\cP}_{(L,\cA)/\Z}$ that is both supported on the supersingular locus and supported over the union of the closed points in $\spec\Z$ corresponding to the prime divisors of $d$, such that $\overline{\cP}_{(L,\cA)/\Z}\setminus\mathcal{Z}$ is smooth over $\spec\Z$ of relative dimension $20-\rk(L)$ at every point. 
        In particular, $\overline{\cP}_{(L,\cA)/\Z[1/d]}$ is smooth over $\spec\Z[1/d]$ of relative dimension $20-\rk(L)$ at every point.
        \smallskip
        
        \item If $\rk(L)\leq 10$ then $\overline{\cP}_{(L,\cA)/\Z}$ is reduced and is flat and lci over $\spec\Z$, of relative dimension $20-\rk(L)$ at every point.
    \end{enumerate}
\end{theorem}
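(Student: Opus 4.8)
The plan is to compute the complete local rings of the stack via deformation theory of K3 surfaces together with extra line bundles, and then read off both assertions as statements of commutative algebra, the one external input being a dimension bound for the supersingular locus. Fix a geometric point of $\overline{\cP}_{(L,\cA)/\Z}$, i.e.\ an $(L,\cA)$-polarized K3 surface $(X,j)$ over an algebraically closed field $k$; set $W=W(k)$ and $r=\rk(L)$. Since $H^0(X,T_X)=H^2(X,T_X)=0$ and $\dim H^1(X,T_X)=20$ in every characteristic, the deformation functor of $X$ is prorepresented by a formally smooth $W$-algebra $R\cong W[[t_1,\dots,t_{20}]]$. For $\lambda\in L$ the obstruction to extending $j(\lambda)$ across a square-zero deformation lies in $H^2(X,\cO_X)\cong k$, so the locus in $\mathrm{Spf}\,R$ along which $j(\lambda)$ deforms is cut out by a single $f_\lambda\in R$; as deforming $j$ amounts to deforming a $\Z$-basis $e_1,\dots,e_r$ of $L$, the complete local ring of $\cM_{(L,\cA)/\Z}$ at $(X,j)$ is $R/(f_1,\dots,f_r)$ with $f_i:=f_{e_i}$. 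This ring has dimension $\geq 21-r$, with equality precisely when $f_1,\dots,f_r$ is a regular sequence; in that case it is a complete intersection over $W$, hence Cohen--Macaulay, and $p$ is a non-zero-divisor on it as soon as the special fibre $R/(f_1,\dots,f_r,p)$ has dimension $\leq 20-r$, whereupon the ring is flat over $W$ of relative dimension $20-r$. Finally, a standard Atiyah-class computation identifies the linear term of $f_i\bmod p$ with the form $v\mapsto\langle v,c_1(j(e_i))\rangle$ on $H^1(X,T_X)$, where $c_1(j(e_i))\in H^1(X,\Omega^1_X)$ is the de Rham first Chern class and $\langle-,-\rangle$ is Serre duality into $H^2(X,\cO_X)\cong k$; hence $R/(f_1,\dots,f_r)$ is smooth over $W$ of relative dimension $20-r$ if and only if $c_1(j(e_1)),\dots,c_1(j(e_r))$ are linearly independent in $H^1(X,\Omega^1_X)$.

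For part (1), I would verify this independence in three cases. If $\charac k=0$ it is the injectivity of $c_1\colon\NS(X)\otimes\C\hookrightarrow H^{1,1}(X)$. If $\charac k=p>0$ and $X$ is not supersingular, the crystalline cycle class realizes $\NS(X)\otimes\Z_p$ as a saturated, slope-one sublattice of $H^2_{\mathrm{cris}}(X/W)$, and comparing the Hodge and slope filtrations yields $F^2\cap(\NS(X)\otimes k)=0$ in $H^2_{\mathrm{dR}}(X/k)$. If $\charac k=p>0$, $X$ is supersingular, but $p\nmid d=\disc(L)$, then $c_1$ embeds $j(L)$ isometrically into $H^2_{\mathrm{cris}}(X/W)$ as an orthogonal direct summand (its discriminant $d$ being a unit), so $j(L)\otimes k$ is a nondegenerate subspace of $H^2_{\mathrm{dR}}(X/k)$ contained in $F^1=(F^2)^{\perp}$; as $F^2$ is isotropic, any vector of $F^2\cap(j(L)\otimes k)$ lies in the radical of $j(L)\otimes k$, hence vanishes. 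In all three cases the stack is smooth over $\spec\Z$ of relative dimension $20-r$ at $(X,j)$. Taking $\mathcal{Z}$ to be the intersection of the supersingular locus of $\overline{\cP}_{(L,\cA)/\Z}$ with the reduced preimage of $\mathrm{V}(d)\subset\spec\Z$ — a closed substack supported on the supersingular locus and over the primes dividing $d$ — and using that near each of its points $\overline{\cP}_{(L,\cA)/\Z}$ coincides with $\cM_{(L,\cA)/\Z}$ (its components are those of $\cM_{(L,\cA)/\Z}$ along which the polarization is generically primitive, and these are disjoint from the remaining components), we conclude that $\overline{\cP}_{(L,\cA)/\Z}\setminus\mathcal{Z}$ is smooth over $\spec\Z$ of relative dimension $20-r$ at every point. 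Since $\mathcal{Z}$ lies over $\mathrm{V}(d)$ it is disjoint from $\overline{\cP}_{(L,\cA)/\Z[1/d]}$, which gives the final clause.

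For part (2), assume $r\leq 10$. By part (1) only the points of $\mathcal{Z}$ remain, where the complete local ring is $R/(f_1,\dots,f_r)$ of dimension $\geq 21-r$; by the first paragraph it suffices to prove $\dim R/(f_1,\dots,f_r,p)\leq 20-r$. Decompose this special fibre into its supersingular and non-supersingular parts. The non-supersingular part lies in the non-supersingular locus, where the stack is smooth over $\spec\Z$ of relative dimension $20-r$ by part (1), so has dimension $\leq 20-r$. The supersingular part is contained in the supersingular locus of the unpolarized formal deformation space $\mathrm{Spf}\,R$, which by the theory of the Artin--Mazur formal group and Ogus's work on supersingular K3 crystals has dimension at most $10$; since $r\leq 10$ this is $\leq 20-r$. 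Hence $\dim R/(f_1,\dots,f_r,p)\leq 20-r$, so $f_1,\dots,f_r$ is a regular sequence, $R/(f_1,\dots,f_r)$ is a complete intersection, and it is flat and lci over $\spec\Z$ of relative dimension $20-r$. A complete intersection is Cohen--Macaulay with no embedded primes, and as $21-r\geq 11>10$ no component can lie in a single characteristic $p$; thus every component dominates $\spec W$ and has generic point in characteristic $0$, where the ring is smooth by part (1) and in particular reduced, so the ring is generically reduced, hence reduced by Serre's criterion. These properties being local, and $\overline{\cP}_{(L,\cA)/\Z}$ being open and closed in $\cM_{(L,\cA)/\Z}$ near each of its points, $\overline{\cP}_{(L,\cA)/\Z}$ is reduced, flat and lci over $\spec\Z$ of relative dimension $20-r$.

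The hard part will be Step (2) at the supersingular points lying over primes dividing $d$: there the crystalline cycle class is no longer saturated, specialized polarizations may fail to be primitive, and $f_1,\dots,f_r$ need not have independent linear parts, so the dimension count rests entirely on the bound for the supersingular stratum, and the numerical hypothesis $r\leq 10$ is exactly what makes $20-r$ large enough for that stratum not to force excess components in the special fibres. Making the relation between the local structure of $\overline{\cP}_{(L,\cA)/\Z}$ and of $\cM_{(L,\cA)/\Z}$ precise at such points, and establishing the supersingular-locus bound in the exact form needed, is where the real work lies.
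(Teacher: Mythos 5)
Your part (1) is essentially the paper's own argument for Theorem~\ref{thm:smoothness and dimension}: identify the complete local ring with $W[[t_1,\dots,t_{20}]]/(f_1,\dots,f_r)$ and reduce smoothness of relative dimension $20-\rk(L)$ to linear independence of the Hodge Chern classes $c_1(j(e_i))$ in $\rH^1(X,\Omega^1_X)$, which fails only when $X$ is supersingular \emph{and} $p\mid d$. Your crystalline justifications in the three cases are acceptable substitutes for the paper's appeal to \cite{Ogus78} and to degeneracy of the pairing on $L\otimes\F_p$.

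Part (2) takes a genuinely different route from the paper and has a gap precisely at the boundary $\overline{\cP}_{(L,\cA)}\setminus\cP_{(L,\cA)}$, i.e.\ at supersingular points where the polarization $j$ is \emph{not} primitive --- which are the whole reason $\overline{\cP}$ differs from $\cP$. Two problems. First, at such a point the complete local ring of $\overline{\cP}_{(L,\cA)}$ is a priori only a quotient of $R/(f_1,\dots,f_r)$, the local ring of $\cM_{(L,\cA)}$: the deformation space $\mathrm{Def}_{(X,j(L))}$ contains, e.g., non-supersingular deformations carrying only the imprimitive classes, which never lie in the closure of the primitive locus. Your parenthetical claim that $\overline{\cP}$ is a union of components of $\cM$ along which the polarization is generically primitive, ``disjoint from the remaining components,'' is unjustified and is exactly what is at stake; without it you lose the Krull lower bound $\dim\geq 21-r$ on which all of your commutative algebra rests. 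Second, your bound on the non-supersingular part of the special fibre invokes part (1), but part (1) is a statement about $\overline{\cP}$, whereas the special fibre of $R/(f_1,\dots,f_r)$ is the local ring of $\cM$ and contains non-supersingular points with $p$-imprimitive polarization lying \emph{outside} $\overline{\cP}$; there $j_y\otimes\F_p$ is not injective, the tangent-space argument of part (1) breaks down, and no dimension bound is established. (At primitively polarized supersingular base points both issues disappear --- nearby non-supersingular deformations inherit primitivity via the specialization map --- so your argument does go through there; but the theorem asserts lci, flatness, reducedness and the dimension count at \emph{every} point of $\overline{\cP}$.)

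The ingredient you are missing, and which is where the paper actually uses $\rk(L)\leq 10$, is the lifting theorem of Lieblich--Olsson applied to the \emph{saturation} of $j(L)\subset\Pic(X)$: it shows every point of $\overline{\cP}$ in positive characteristic lifts to characteristic $0$ with its polarization, hence every irreducible component of $\overline{\cP}$ dominates $\spec\Z$; generic smoothness in characteristic $0$ then fixes the dimension of each component at $21-r$, from which lci, reducedness and flatness follow. Your use of $\rk(L)\leq 10$ via the $\leq 10$-dimensional supersingular stratum of $\mathrm{Spf}\,R$ happens to produce the same numerical threshold, but it cannot by itself exclude a component of $\overline{\cP}$ whose generic point is a primitively polarized supersingular surface in characteristic $p\mid d$, nor control the local structure at the non-primitive boundary.
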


\begin{proof}
    Combine Theorems~\ref{thm:smoothness and dimension} and~\ref{thm:rank leq 10}.
\end{proof}

The stacks $\cM_{(L,\cA)/\Z}$ are often non-separated over $\spec\Z$. 
The same is true for the primitive substacks, as well as their closures. 
We show that taking $\cA = \mathcal{a}$ to be a very small ample cone (Definition~\ref{def:very small amples cones}) results in a separated moduli space, at least away from the supersingular locus.

\begin{theorem}[ $=$ Theorem~\ref{thm:separated moduli stack}]\label{thm:intro separated thm}
    Let $L$ be an even hyperbolic lattice that admits an embedding into the K3 lattice and let $\mathcal{a}\subset L_{\R}$ be a very small ample cone. 
    There exists a closed substack $\mathcal{Z}\subset\cP_{(L,\mathcal{a})/\Z}$ supported on the supersingular locus such that the complement $\cP_{(L,\mathcal{a})/\Z}\setminus\mathcal{Z}$ is separated over $\spec\Z$. 
\end{theorem}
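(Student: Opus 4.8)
The plan is to verify the valuative criterion for separatedness directly, taking $\mathcal{Z}$ to be the supersingular locus (a closed substack of $\cP_{(L,\mathcal{a})/\Z}$, since supersingularity is a closed condition). By Theorem~\ref{thm:rep, easy version}, $\cP_{(L,\mathcal{a})/\Z}$, hence the complement of $\mathcal{Z}$ in it, is a Deligne--Mumford stack locally of finite type over $\spec\Z$, so it suffices to work with a complete discrete valuation ring $R$ having algebraically closed residue field $k$ and fraction field $K$, and to show: every isomorphism $\psi\colon(\mathcal{X}_K,j_K)\iso(\mathcal{X}'_K,j'_K)$ between the generic fibers of two objects $(\mathcal{X},j),(\mathcal{X}',j')\in\cP_{(L,\mathcal{a})/\Z}(R)$ whose special fibers are \emph{not supersingular} extends to an isomorphism over $R$ (uniqueness of the extension then being automatic, as $\mathrm{Isom}_R(\mathcal{X},\mathcal{X}')$ is a separated $R$-scheme). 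First I would record the standard facts about a smooth proper family of K3 surfaces $\mathcal{Y}\to\spec R$ that I will use: restriction induces an isomorphism $\Pic(\mathcal{Y})\cong\Pic(\mathcal{Y}_K)$ and a specialization map $\sp\colon\Pic(\mathcal{Y}_K)\to\Pic(\mathcal{Y}_k)$; since $k$ is algebraically closed one has $\br(R)=0$ and so $\Pic_{\mathcal{Y}/R}(R)=\Pic(\mathcal{Y})$, every line bundle on $\mathcal{Y}_K$ extends uniquely to $\mathcal{Y}$, and a line bundle on $\mathcal{Y}$ is relatively ample over $R$ if and only if its restriction to $\mathcal{Y}_k$ is ample; consequently $\Amp(\mathcal{Y}/R)=\sp_\R^{-1}\bigl(\Amp(\mathcal{Y}_k)\bigr)$, and $\Amp(\mathcal{Y}_k)$ is the chamber of the positive cone $\mathcal{C}^+\bigl(\Pic(\mathcal{Y}_k)\bigr)$ minus the walls $\delta^\perp$ ($\delta\in\Pic(\mathcal{Y}_k)$, $\delta^2=-2$) that contains an ample class; finally, when $\mathcal{Y}_k$ is not supersingular its N\'eron--Severi lattice $\Pic(\mathcal{Y}_k)$ admits a primitive embedding into the K3 lattice $\Lambda$.

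The crux is the following lattice-theoretic consequence of the definition of a very small ample cone, which I would isolate as a lemma: \emph{if $Y$ is a K3 surface over an algebraically closed field, $\iota\colon L\hookrightarrow\Pic(Y)$ is a primitive isometric embedding such that $\Pic(Y)$ admits a primitive embedding into $\Lambda$, and $\iota_\R(\mathcal{a})$ meets $\Amp(Y)$, then $\iota_\R(\mathcal{a})\subseteq\Amp(Y)$.} To prove it I would observe that $\Amp(Y)$ is a chamber of the hyperplane arrangement $\{\delta^\perp : \delta\in\Pic(Y),\ \delta^2=-2\}$ inside $\mathcal{C}^+\bigl(\Pic(Y)\bigr)$, that $\delta^\perp\cap L_\R=(\pr_{L_\R}\delta)^\perp$ for orthogonal projection $\pr_{L_\R}\colon\Pic(Y)_\R\to L_\R$, and that, by the defining property of very small ample cones (Definition~\ref{def:very small amples cones}, which quantifies over all pairs of primitive embeddings $L\hookrightarrow M\hookrightarrow\Lambda$), applied to the chain $L\hookrightarrow\Pic(Y)\hookrightarrow\Lambda$, the cone $\mathcal{a}$ is disjoint from every hyperplane $(\pr_{L_\R}\delta)^\perp$; since $\mathcal{a}$ is a connected subset of the positive cone of $L$, which is $L_\R\cap\mathcal{C}^+\bigl(\Pic(Y)\bigr)$, its image $\iota_\R(\mathcal{a})$ is connected, avoids all walls, and meets the chamber $\Amp(Y)$, hence lies in it.

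Granting the lemma, I would finish as follows. Apply it with $Y=\mathcal{X}_k$ and $\iota=\sp\circ j$: this $\iota$ is a primitive isometric embedding because $(\mathcal{X},j)$ lies in the primitive locus $\cP$, and $\Pic(\mathcal{X}_k)$ embeds primitively into $\Lambda$ because $\mathcal{X}_k$ is not supersingular; moreover, since $(\mathcal{X},j)$ is an object of $\cP_{(L,\mathcal{a})/\Z}(R)$, the cone $j(\mathcal{a})$ contains a class relatively ample over $R$, whose restriction to $\mathcal{X}_k$ is ample, so $\sp_\R\bigl(j(\mathcal{a})\bigr)$ meets $\Amp(\mathcal{X}_k)$. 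The lemma then gives $\sp_\R\bigl(j(\mathcal{a})\bigr)\subseteq\Amp(\mathcal{X}_k)$, i.e.\ $j(\mathcal{a})\subseteq\Amp(\mathcal{X}/R)$; the same argument gives $j'(\mathcal{a})\subseteq\Amp(\mathcal{X}'/R)$. Now pick $a\in\mathcal{a}\cap L$ (possible as $\mathcal{a}$ is an open cone) and set $\mathcal{L}=\mathcal{O}_{\mathcal{X}}(j(a))\in\Pic(\mathcal{X})$ and $\mathcal{L}'=\mathcal{O}_{\mathcal{X}'}(j'(a))\in\Pic(\mathcal{X}')$; by the above these are relatively ample over $R$, and $\psi^*\bigl(\mathcal{L}'|_{\mathcal{X}'_K}\bigr)\cong\mathcal{L}|_{\mathcal{X}_K}$ because $j_K=\psi^*\circ j'_K$. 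Since K3 surfaces are not uniruled, the Matsusaka--Mumford theorem on extending isomorphisms of polarized families over a discrete valuation ring applies and produces an extension $\widetilde\psi\colon\mathcal{X}\iso\mathcal{X}'$ of $\psi$ over $R$; moreover $\widetilde\psi^*\circ j'=j$, since both are isometric embeddings of $L$ into $\Pic(\mathcal{X})\cong\Pic(\mathcal{X}_K)$ agreeing after restriction to the generic fiber. As the special fibers are not supersingular, $\widetilde\psi$ is an isomorphism of objects of $\cP_{(L,\mathcal{a})/\Z}\setminus\mathcal{Z}$, which is exactly what the valuative criterion demands.

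I expect the main obstacle to be the lemma of the second paragraph: it forces the definition of a very small ample cone to be arranged so that \emph{no} wall $(\pr_{L_\R}\delta)^\perp$ coming from a $(-2)$-class in the Picard lattice of \emph{any} non-supersingular K3 surface carrying a primitive $L$-polarization can meet $\mathcal{a}$ (the lattices $\Pic(Y)$ that occur vary, but all embed primitively into $\Lambda$, which both bounds their discriminants — so the arrangement is locally finite — and makes the definition applicable). Its proof, and the setup around it, rely on two facts about K3 surfaces over an algebraically closed field whose precise formulation and referencing takes care: that the N\'eron--Severi lattice of a non-supersingular K3 embeds primitively into $\Lambda$, and the behavior of the primitive locus $\cP$ under specialization, which is what pins down where the supersingular locus $\mathcal{Z}$ must be removed and keeps $\sp\circ j$ primitive. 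The remaining ingredients --- the reduction to complete $R$ with algebraically closed residue field in the valuative criterion, the identification $\Pic_{\mathcal{Y}/R}(R)=\Pic(\mathcal{Y})$, and the applicability of the Matsusaka--Mumford extension theorem to K3 families over a possibly mixed-characteristic discrete valuation ring --- are standard.
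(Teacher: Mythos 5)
Your central computation --- that for a primitive $(L,\mathcal{a})$-polarization on a non-supersingular K3 surface the very-small-ample-cone condition forces $j(\mathcal{a})\subseteq\Amp$, so that a generic isomorphism of two such families over a DVR extends by Matsusaka--Mumford --- is correct and is exactly the content of Propositions~\ref{prop:small cones and polarizations}, \ref{prop:very small cones and polarizations} and~\ref{prop:uniqueness part of valuative criterion} in the paper. But the proposal has a genuine gap at the very first step: you take $\mathcal{Z}$ to be the supersingular locus and assert it is closed ``since supersingularity is a closed condition.'' Over $\spec\Z$ this is false. Supersingularity is closed under specialization within each fixed characteristic, but the supersingular locus is the union over all primes $p$ of the characteristic-$p$ supersingular loci, and this union is typically \emph{not} closed: Remark~\ref{rem:ssing locus} gives the explicit example of the Fermat quartic pencil over $\Z[1/2]$, whose (closed) section of $\cM_{(L,h)/\Z[1/2]}$ meets the supersingular locus exactly at the primes $p\equiv 3\pmod 4$ --- an infinite, non-closed set of closed points. (That example lives in $\cP_{(L,\mathcal{a})}$ with $L$ of rank one and $\mathcal{a}=L_\R$ a very small ample cone, so it applies verbatim here.) Consequently your $\mathcal{Z}$ is not a closed substack, $\cP_{(L,\mathcal{a})/\Z}\setminus\mathcal{Z}$ need not be an open substack or even an algebraic stack (cf.\ the discussion in Remark~2.14 of the paper), and the valuative criterion cannot be invoked for it. This is precisely why the theorem is phrased as ``there exists a closed substack \emph{supported on} the supersingular locus'' rather than ``the complement of the supersingular locus is separated.''

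The paper repairs this by running your argument in reverse: it defines $\mathcal{Z}$ to be the closure of $|\pi_1|\bigl(\overline{|\Delta|}\setminus|\Delta|\bigr)$, where $|\Delta|$ is the image of the diagonal in $|\cP\times_S\cP|$ --- so $\mathcal{Z}$ is closed \emph{by construction} --- and then uses the uniqueness statement (your Matsusaka--Mumford step, in the form of Proposition~\ref{prop:uniqueness part of valuative criterion}) together with a lift of specializations to DVRs to show that every point of $\mathcal{Z}$ must be supersingular. Separatedness of the complement is then deduced from the criterion of Proposition~\ref{prop:sep criterion} (a quasi-separated, Zariski-locally separated stack is separated iff its diagonal is closed on topological spaces), rather than from a direct valuative-criterion check. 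To complete your proof you would need to replace your choice of $\mathcal{Z}$ with such a construction; the extension lemma you prove is the right engine, but as written it is attached to a complement that is not open.
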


\begin{corollary}[ $=$ Corollary~\ref{cor: separated stack over Z[1/n]}]
    There exists a positive integer $N := N(L,\mathcal{a})$ such that $\cP_{(L,\mathcal{a})/\Z}$ is separated over $\spec\Z[1/N]$. 
    In particular, the fiber $\cP_{(L,\mathcal{a})/\Q}=\cP_{(L,\mathcal{a})/\Z}\otimes_{\Z}\Q$ is separated over $\Q$.
\end{corollary}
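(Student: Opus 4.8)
The plan is to deduce the statement from Theorem~\ref{thm:separated moduli stack}: that theorem produces a closed substack $\mathcal{Z}\subset\cP_{(L,\mathcal{a})/\Z}$, supported on the supersingular locus, whose complement is separated over $\spec\Z$; it suffices to show that $\mathcal{Z}$ lives over only finitely many primes and then to invert those primes.

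First I would argue that $\mathcal{Z}$ is quasi-compact. The stack $\cP_{(L,\mathcal{a})/\Z}$ is of finite type over $\spec\Z$ --- this uses that the very small ample cone $\mathcal{a}$ bounds the degrees of the polarizations in play, together with the finiteness of the set of isometry classes of primitive embeddings $L\hookrightarrow\Lambda$ --- so in particular it is quasi-compact; since $\spec\Z$ is affine and a closed immersion is quasi-compact, the composite $\mathcal{Z}\hookrightarrow\cP_{(L,\mathcal{a})/\Z}\to\spec\Z$ shows that $\mathcal{Z}$ is quasi-compact.

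Next, consider the image of $\mathcal{Z}$ under the structure morphism $\mathcal{Z}\to\spec\Z$. Being quasi-compact and locally of finite type over $\spec\Z$, $\mathcal{Z}$ is of finite type over $\spec\Z$, so its image is constructible by Chevalley's theorem. On the other hand $\mathcal{Z}$ is supported on the supersingular locus, and supersingular K3 surfaces exist only in positive characteristic, so the image of $\mathcal{Z}$ is contained in the set of closed points of $\spec\Z$. A constructible subset of $\spec\Z$ not containing the generic point is finite, so $\mathcal{Z}$ is nonempty over only finitely many primes $p_1,\dots,p_r$; set $N:=p_1\cdots p_r$ (and $N:=1$ if $\mathcal{Z}=\emptyset$). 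Then $\mathcal{Z}\times_{\spec\Z}\spec\Z[1/N]=\emptyset$, so $\cP_{(L,\mathcal{a})/\Z[1/N]}=(\cP_{(L,\mathcal{a})/\Z}\setminus\mathcal{Z})\times_{\spec\Z}\spec\Z[1/N]$, which is separated over $\spec\Z[1/N]$ because $\cP_{(L,\mathcal{a})/\Z}\setminus\mathcal{Z}$ is separated over $\spec\Z$ by Theorem~\ref{thm:separated moduli stack} and separatedness is preserved under base change. Finally, since $\spec\Q\to\spec\Z$ factors through $\spec\Z[1/N]$, the fiber $\cP_{(L,\mathcal{a})/\Q}=\cP_{(L,\mathcal{a})/\Z[1/N]}\times_{\spec\Z[1/N]}\spec\Q$ is again separated by base change.

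The step I expect to be the main obstacle is the quasi-compactness (equivalently, finite type) assertion for $\cP_{(L,\mathcal{a})/\Z}$, since this is precisely what converts ``$\mathcal{Z}$ is supported on the supersingular locus'' --- a condition that a priori can occur in every positive characteristic --- into ``$\mathcal{Z}$ lives over finitely many primes.'' If this is not readily available, one can instead extract quasi-compactness of $\mathcal{Z}$ directly from the explicit description of $\mathcal{Z}$ furnished in the proof of Theorem~\ref{thm:separated moduli stack}; a sharper analysis there would even pin down the primes dividing $N$ (one expects them among the divisors of $\disc(L)$, in analogy with Theorem~\ref{thm:intro good properties of moduli spaces}), making $N$ effective.
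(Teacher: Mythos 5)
Your overall strategy coincides with the paper's: the published proof is exactly ``take $\mathcal{Z}$ from Theorem~\ref{thm:separated moduli stack}; since it is contained in the supersingular locus, its image in $\spec\Z$ is a finite set of closed points; let $N$ be the product of their residue characteristics.'' So the reduction to ``the image of $\mathcal{Z}$ in $\spec\Z$ is finite,'' and the base-change argument you give after that, match the paper, and you correctly isolate the finiteness of that image as the real content --- it is \emph{not} automatic from ``supported on the supersingular locus,'' since the supersingular locus itself can lie over infinitely many primes (the paper's own Remark~\ref{rem:ssing locus}, via the Fermat quartic, makes this point).

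The gap is in your justification of that finiteness. You derive it from the claim that $\cP_{(L,\mathcal{a})/\Z}$ is of finite type (hence quasi-compact) over $\spec\Z$, ``because the very small ample cone bounds the degrees of the polarizations in play.'' That reason is not right: $\mathcal{a}$ is an open cone, so $\mathcal{a}\cap\cC(L)^{\circ}\cap L$ contains integral classes of arbitrarily large square, and the paper's Zariski cover $\bigsqcup_{h}\cM_{(L,h)}\to\cM_{(L,\mathcal{a})}$ is an infinite disjoint union --- the paper proves only ``locally of finite type.'' One can show the non-supersingular primitive locus is quasi-compact, since by Proposition~\ref{prop:very small cones and polarizations} it lies in a single chart $\cP_{(L,h_0)}$ for any fixed $h_0\in\mathcal{a}\cap L$; but the supersingular locus, which is exactly where $\mathcal{Z}$ lives, is the part where this argument breaks down (there $j(\mathcal{a})$ need not land in $\Amp(X)$), so quasi-compactness of the whole stack is neither established in the paper nor clearly true. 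Your fallback --- extracting quasi-compactness, or finiteness of the image in $\spec\Z$, of $\mathcal{Z}$ directly from its construction in the proof of Theorem~\ref{thm:separated moduli stack} --- is indeed what is needed to close the argument; for what it is worth, the paper's own proof does not supply this step either, asserting the finiteness in a single clause.
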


\subsection{Incorporating twists}
We also construct moduli spaces of lattice-polarized \emph{twisted} K3 surfaces, generalizing work of the first and second authors \cites{Bragg, Brakkee}. 
While one might like to have moduli spaces parameterizing polarized K3 surfaces equipped with a Brauer class, such stacks are rarely algebraic (c.f.\ \cite{Brakkee}*{\S1.2}). 
Instead, we fix a positive integer $n$ and consider the moduli stack $\cM^{[n]}_{(L,\cA)/\Z}$ over $\spec\Z$ whose geometric points correspond to lattice-polarized K3 surfaces $X$ equipped with a class in $\rH^2(X,\mu_n)$. 
To such a class, one associates an $n$-torsion Brauer class on $X$ by taking the image under the map
\[
    \rH^2(X,\mu_n)\to\br(X)[n]
\]
obtained from the Kummer sequence. We show in Theorem \ref{thm:rep, easy version, for twisted K3s} that the stack $\cM^{[n]}_{(L,\cA)/\Z}$ is Deligne--Mumford, quasi-separated, and locally of finite type over $\Z$. 
As in the non-twisted case, to obtain good properties we restrict our attention to certain substacks. Let $\cP^{[n]}_{(L,\cA)/\Z} \subset \cM^{[n]}_{(L,\cA)/\Z}$ be the substack where the polarization is primitive, and denote by $\cP^{(n)}_{(L,\cA)/\Z}\subset \cP^{[n]}_{(L,\cA)/\Z}$ the substack where the twisting class has order exactly $n$. 
These substacks are open in $\cM^{[n]}_{(L,\cA)/\Z}$. 
We show in Theorem \ref{thm:properties of the moduli stacks, twisted version} that their closures have good geometric properties, similar to those given in Theorem \ref{thm:intro good properties of moduli spaces} in the untwisted case.
When $L$ has rank $1$, the stack $\cP_{(L,\mathcal{A})}^{[n]}$ is related to the stack of K3 surfaces with level structures of Rizov~\cite{Rizov}*{\S5}.

There is a natural forgetful morphism
\[
    \cM^{[n]}_{(L,\cA)/\Z}\to\cM_{(L,\cA)/\Z},\hspace{1cm}(X,j,\alpha)\mapsto (X,j).
\]
Over $\spec\Z[1/n]$ this map is finite \'{e}tale, with fibers isomorphic to $(\Z/n\Z)^{\oplus 22}$. 
However, over primes dividing $n$, this map need not be flat, and it even has positive-dimensional fibers over the supersingular locus. 
Despite this, our results show that the stack $\cM^{[n]}_{(L,\cA)/\Z}$ itself is frequently flat over $\spec\Z$. Our analysis of the fibers of the moduli spaces $\cM^{[n]}_{(L,\cA)/\Z}$ over primes dividing $n$ relies on some deformation theoretic results in~\cite{Bragg}.

\subsection{From twists to Brauer classes}

We consider stacks $\cQ_{(L,\mathcal{A})/\Z}^{[n]}$ which are quotients of $\cP_{(L,\mathcal{A})/\Z}^{[n]}$ and are more closely related to Brauer groups. Their geometric points correspond to K3 surfaces $X$ with a primitive lattice polarization $j\colon L\hookrightarrow \Pic(X)$ and a class $[\alpha]$ in the quotient of $\rH^2(X,\mu_n)$ by the image of $j(L)$. For the very general such K3 surface, when $j$ is an isomorphism, the quotient $\rH^2(X,\mu_n)/j(L)$ is isomorphic to $\br(X)[n]$.
The stack $\cQ_{(L,\mathcal{A})}^{[n]}$ has the same good properties as $\cM^{[n]}_{(L,\cA)/\Z}$ (see \S4.4), and the same holds for the substack $\cQ_{(L,\mathcal{A})/\Z}^{(n)}\subset\cQ_{(L,\mathcal{A})/\Z}^{[n]}$
where the class $[\alpha]$ has order exacly $n$. 
We expect the moduli spaces $\cQ^{(n)}_{(L,\cA)/\Z}$ 
to be useful in the study Brauer groups and rational points on K3 surfaces over fields of arithmetic interest.  
We offer an application in this direction in~\S\ref{ss: arithmetic application}.

\medskip

Write $\cM_{(L,\cA)/\C}=\cM_{(L,\cA)/\Z}\otimes_{\Z}\C$ for the fiber over the complex numbers, and similarly for the other moduli stacks above. The moduli stacks $\cP_{(L,\mathcal{A})/\Z}$ are closely related to those studied in~\cite{AE25}:
In Theorem~\ref{thm:torelli, unmarked version} we show that the stack $\cP_{(L,\mathcal{A})/\C}$ is a union of (open substacks of) quotient stacks of period domains, like in~\cite{AE25}*{Corollary~4.20}.
In Theorem~\ref{thm:period morphism corollary, twisted}, we show the same is true for the stacks $\cQ_{(L,\mathcal{A})/\C}^{[n]}$ and $\cQ_{(L,\mathcal{A})/\C}^{(n)}$.
We obtain the following result for their coarse moduli spaces 
$\rQ_{(L,\mathcal{A})/\C}^{[n]}$ and $\rQ_{(L,\mathcal{A})/\C}^{(n)}$; see~\S\ref{sec:complex moduli spaces} for details on how the orthogonal modular varieties $\cD(\Pi_{\epsilon})^{\circ}/\widetilde{\rO}_w(\Pi_{\epsilon})$ are defined.

\begin{corollary}[ $=$ Corollary~\ref{cor: Complex Period Space Components Comparison}]
    We have isomorphisms
    \[
        \begin{split}
            \mathrm{Q}_{(L,\mathcal{A})/\C}^{[n]}&\simeq \bigsqcup_{\substack{\epsilon\in\Emb(L,\Lambda),\\
            \omega\in\mathrm{Hom}(\Pi_{\epsilon},\Z/n)/\widetilde{\rO}(\Pi_{\epsilon})}} \cD(\Pi_{\epsilon})^{\circ}/\widetilde{\rO}_w(\Pi_{\epsilon})\\
            \mathrm{Q}_{(L,\mathcal{A})/\C}^{(n)}&\simeq \bigsqcup_{\substack{\epsilon\in\Emb(L,\Lambda),\\
            \omega\in\mathrm{Hom}(\Pi_{\epsilon},\Z/n)/\widetilde{\rO}(\Pi_{\epsilon}),\\
            \ord(\omega)=n}} \cD(\Pi_{\epsilon})^{\circ}/\widetilde{\rO}_w(\Pi_{\epsilon})\\
        \end{split}
    \]
\end{corollary}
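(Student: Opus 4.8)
The plan is to obtain this as a formal consequence of the stack-level comparison in Theorem~\ref{thm:period morphism corollary, twisted}, by passing to coarse moduli spaces. Write $\mathrm{Q}^{[n]}_{(L,\mathcal{A})/\C}$ and $\mathrm{Q}^{(n)}_{(L,\mathcal{A})/\C}$ for the coarse spaces of $\cQ^{[n]}_{(L,\mathcal{A})/\C}$ and $\cQ^{(n)}_{(L,\mathcal{A})/\C}$. Theorem~\ref{thm:period morphism corollary, twisted} identifies $\cQ^{[n]}_{(L,\mathcal{A})/\C}$, as a stack, with a disjoint union
\[
\cQ_{(L,\mathcal{A})/\C}^{[n]}\;\simeq\;\bigsqcup_{\epsilon,\,[\omega]}\bigl[\cD(\Pi_\epsilon)^{\circ}/\widetilde{\rO}_w(\Pi_\epsilon)\bigr]
\]
indexed by $\epsilon\in\Emb(L,\Lambda)$ and by the $\widetilde{\rO}(\Pi_\epsilon)$-orbits $[\omega]$ of characters in $\mathrm{Hom}(\Pi_\epsilon,\Z/n)$, where $\widetilde{\rO}_w(\Pi_\epsilon)\le\widetilde{\rO}(\Pi_\epsilon)$ is the stabilizer of a representative of $[\omega]$ and $\cD(\Pi_\epsilon)^{\circ}$ is the $\widetilde{\rO}_w(\Pi_\epsilon)$-invariant locus in $\cD(\Pi_\epsilon)$ carving out the primitivity and ample-cone conditions (as set up in~\S\ref{sec:complex moduli spaces}); under this description, restricting to those $[\omega]$ with $\ord(\omega)=n$ is exactly the condition defining the open substack $\cQ^{(n)}_{(L,\mathcal{A})/\C}$, so the same theorem yields the analogous identification of $\cQ^{(n)}_{(L,\mathcal{A})/\C}$. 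It remains to pass to coarse spaces on both sides.

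Here I would invoke two standard facts. First, formation of the coarse moduli space commutes with (arbitrary) disjoint unions, so the coarse space of $\cQ^{[n]}_{(L,\mathcal{A})/\C}$ is the disjoint union, over the same index set, of the coarse spaces of the stacks $[\cD(\Pi_\epsilon)^{\circ}/\widetilde{\rO}_w(\Pi_\epsilon)]$. Second, for a discrete group $\Gamma$ acting properly discontinuously with finite stabilizers on a complex manifold $Y$, the quotient stack $[Y/\Gamma]$ is Deligne--Mumford and its coarse space is the quotient $Y/\Gamma$. Both apply: $\widetilde{\rO}_w(\Pi_\epsilon)$ is the stabilizer of a point in the finite set $\mathrm{Hom}(\Pi_\epsilon,\Z/n)$ under the $\widetilde{\rO}(\Pi_\epsilon)$-action, hence is of finite index in $\widetilde{\rO}(\Pi_\epsilon)$ and is an arithmetic subgroup of $\rO(\Pi_\epsilon\otimes\R)$; it therefore acts properly discontinuously with finite stabilizers on the period domain $\cD(\Pi_\epsilon)$, and so also on $\cD(\Pi_\epsilon)^{\circ}$. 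Thus the coarse space of $[\cD(\Pi_\epsilon)^{\circ}/\widetilde{\rO}_w(\Pi_\epsilon)]$ is $\cD(\Pi_\epsilon)^{\circ}/\widetilde{\rO}_w(\Pi_\epsilon)$, which is precisely the orthogonal modular variety defined in~\S\ref{sec:complex moduli spaces}. Assembling these identifications over all $(\epsilon,[\omega])$ — and over the restricted index set $\{\ord(\omega)=n\}$ in the second case — gives the two displayed isomorphisms.

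Since the genuine content (surjectivity of the period morphism, the relevant Torelli statement, and the identification of the indexing sets) is already packaged into Theorem~\ref{thm:period morphism corollary, twisted} and into the construction of the loci $\cD(\Pi_\epsilon)^{\circ}$, there is no substantial obstacle at this stage; the corollary is essentially the passage of that theorem through the coarse-space functor. The one point to keep straight is that the pair $(\epsilon,[\omega])$ records locally constant invariants of a family, so that the decomposition in Theorem~\ref{thm:period morphism corollary, twisted} is genuinely a disjoint union of open-and-closed substacks — which is what makes the coarse space decompose as a disjoint union rather than merely surject onto one — and that the quotients $\cD(\Pi_\epsilon)^{\circ}/\widetilde{\rO}_w(\Pi_\epsilon)$ on the right-hand side are to be understood in the sense fixed in~\S\ref{sec:complex moduli spaces}. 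With these points in place, the corollary follows.
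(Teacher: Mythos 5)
Your proposal is correct and follows essentially the same route as the paper: the paper treats this corollary as an immediate consequence of Theorem~\ref{thm:period morphism corollary, twisted} together with the disjoint-union decomposition of $\cQ^{[n]}_{(L,\cA)}$ (resp.\ $\cQ^{(n)}_{(L,\cA)}$) into the open-and-closed substacks $\cQ^{[n],(\epsilon,\omega)}_{(L,\cA)}$, and then passes to coarse moduli spaces exactly as you do. The only cosmetic slip is your description of $\cD(\Pi_\epsilon)^{\circ}$, which is the complement of the hyperplanes $H_\delta$ for $\delta\in\Delta(\Pi_\epsilon)$ rather than an invariant locus ``carving out primitivity''; this does not affect the argument.
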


Specializing to the case when $L$ has rank 19 and $\mathcal{A}=\mathcal{a}$ is a very small ample cone, we deduce that $\rQ^{(n)}_{(L,\mathcal{a})/\C}$ is a union of quasi-projective curves, each isomorphic to the quotient of the upper half plane by a discrete subgroup of $\SL_2(\R)$. 
Fix a prime $\ell$. 
On the orthogonal modular side, we apply results of Bergeron--Clozel~\cite{BergeronClozel_book} and Long--Maclachlan--Reid~\cite{LMR} to show that the genus of the components in $\rQ^{(\ell^m)}_{(L,\mathcal{a})/\C}$ grows with $m$, and a result of Abramovich~\cite{Abramovich} to show an analogous statement for the gonality. 
Using a standard argument of Frey~\cite{Frey}, we conclude that the spaces $\rQ^{(\ell^m)}_{(L,\mathcal{a})/\C}$ have finitely many points over number fields of bounded degree for all sufficiently large $m$. 

\begin{corollary}[$=$ Corollary~\ref{cor:finitely many degree d points}]
    \label{cor: finitely many degree d points intro}
    Let $L$ be an even hyperbolic lattice of rank $19$ that admits an embedding into the K3 lattice, and let $\mathcal{a}\subset L_{\R}$ be a very small ample cone. 
    Fix a number field $k$, a prime $\ell$, and a positive integer $d$. 
    There is an $n_0 := n_0(d,k,\ell,L) \in \Z_{> 0}$ such that, for all $m\geq n_0$, the space $\mathrm{Q}_{(L,\mathcal{a})/\C}^{(\ell^m)}$ has finitely many points over fields $K/k$ with $[K:k]\leq d$.
\end{corollary}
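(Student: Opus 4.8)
The plan is to combine the geometric description of $\rQ^{(\ell^m)}_{(L,\mathcal{a})/\C}$ supplied by Corollary~\ref{cor: Complex Period Space Components Comparison} with a lower bound for the gonality of its components that grows with $m$, and then to run a standard Faltings-theoretic argument of Frey. In the rank-$19$ case that corollary exhibits $\rQ^{(\ell^m)}_{(L,\mathcal{a})/\C}$ as a finite disjoint union of quasi-projective curves, each of the shape $\cD(\Pi_{\epsilon})^{\circ}/\widetilde{\rO}_w(\Pi_{\epsilon})$ with $\Pi_{\epsilon}$ of signature $(2,1)$, so that, via the exceptional isomorphism identifying $\SO(\Pi_{\epsilon,\R})$ with a group commensurable to $\PSL_2(\R)$, each such curve is an open subvariety of an arithmetic quotient $\Gamma_{\epsilon,m}\backslash\HH$. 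Since all of these stacks are defined over $\Z$, everything descends to a number field; replacing $k$ by a finite extension --- which is harmless, as a point of degree $\leq d$ over $k$ stays a point of degree $\leq d$ over the larger field --- we may assume every geometric component is defined over $k$.

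First I would reduce the statement to a gonality estimate. If $\rQ^{(\ell^m)}_{(L,\mathcal{a})/\C}$ had infinitely many points over fields $K/k$ with $[K:k]\leq d$, then, since there are only finitely many components, some component $C$ would have infinitely many closed points of degree $\leq d$; passing to its smooth projective compactification $\overline{C}$ only adds points, so $\overline{C}$ also has infinitely many points of degree $\leq d$ over $k$. By Frey's theorem~\cite{Frey} --- an application of Faltings' theorem on subvarieties of abelian varieties to the image of $\mathrm{Sym}^{d}\overline{C}$ in $\mathrm{Jac}(\overline{C})$ --- this forces $\mathrm{gon}_{\overline{\Q}}(\overline{C})\leq 2d$. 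It therefore suffices to produce $n_0$ so that, for all $m\geq n_0$, every component $\overline{C}$ appearing in (the compactification of) $\rQ^{(\ell^m)}_{(L,\mathcal{a})/\C}$ has gonality strictly greater than $2d$.

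For the gonality bound I would invoke Abramovich's sharpening~\cite{Abramovich} of the Li--Yau inequality: the gonality of the compactification of $\Gamma\backslash\HH$ is at least $c\cdot\lambda_1(\Gamma\backslash\HH)\cdot\mathrm{vol}(\Gamma\backslash\HH)$ for an absolute $c>0$. By Long--Maclachlan--Reid~\cite{LMR}, the groups $\widetilde{\rO}(\Pi_{\epsilon})$ are arithmetic Fuchsian groups attached to a quaternion algebra over $\Q$ --- and there are only finitely many of them up to commensurability once $L$ is fixed --- while each $\Gamma_{\epsilon,m}$ contains a principal congruence subgroup of level dividing $\ell^m$; hence the spectral estimates of Bergeron--Clozel~\cite{BergeronClozel_book}, equivalently property~$\tau$ along the congruence tower, give a uniform bound $\lambda_1(\Gamma_{\epsilon,m}\backslash\HH)\geq\lambda_0>0$. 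Moreover $\mathrm{vol}(\Gamma_{\epsilon,m}\backslash\HH)$ equals $[\widetilde{\rO}(\Pi_{\epsilon}):\widetilde{\rO}_w(\Pi_{\epsilon})]\cdot\mathrm{vol}(\widetilde{\rO}(\Pi_{\epsilon})\backslash\HH)$, and this index grows without bound: a homomorphism $\omega\colon\Pi_{\epsilon}\to\Z/\ell^m$ of order exactly $\ell^m$ is a primitive element of a rank-$3$ module over $\Z/\ell^m$ whose $\widetilde{\rO}(\Pi_{\epsilon})$-orbit has size tending to infinity with $m$, and $\widetilde{\rO}_w(\Pi_{\epsilon})$ is its stabilizer up to finite index. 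Putting these together, $\mathrm{gon}_{\overline{\Q}}(\overline{C})\to\infty$ as $m\to\infty$, uniformly over the finitely many families of components, so for $n_0$ large enough the bound exceeds $2d$, as required. (The same volume computation gives the claimed growth of the genus, via $g=1+\tfrac{1}{4\pi}\mathrm{vol}-(\text{elliptic and cuspidal corrections})$, or directly from the Betti-number growth in~\cite{BergeronClozel_book}.)

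I expect the main obstacle to be this last input: checking that the $\widetilde{\rO}_w(\Pi_{\epsilon})$ really are congruence subgroups of arithmetic Fuchsian groups to which both the Li--Yau/Abramovich inequality and the Bergeron--Clozel spectral bounds apply, and that their indices are genuinely unbounded in $m$. Concretely one must identify $\SO(\Pi_{\epsilon,\R})$ with (a conjugate of) the norm-one group of a suitable quaternion order over $\Q$ via~\cite{LMR} --- including the split case, where the component is an open part of a classical modular curve rather than a compact Shimura curve, which must be handled on the same footing --- verify that the level-$\ell^m$ twisting data impose a congruence condition, and, most delicately, show that $[\widetilde{\rO}(\Pi_{\epsilon}):\widetilde{\rO}_w(\Pi_{\epsilon})]\to\infty$ rather than stabilizing; this last point relies on $\omega$ having order exactly $\ell^m$ together with the structure of the discriminant form of $\Pi_{\epsilon}$, and has to be uniform over the finitely many $\epsilon\in\Emb(L,\Lambda)$. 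Granting these structural facts, the finiteness assertion follows formally as above.
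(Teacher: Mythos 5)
Your overall architecture coincides with the paper's: decompose $\mathrm{Q}^{(\ell^m)}_{(L,\mathcal{a})/\C}$ into finitely many quasi-projective curves via Corollary~\ref{cor: Complex Period Space Components Comparison}, use Frey's $\mathrm{Sym}^d$--Faltings argument to convert ``infinitely many points of degree $\leq d$'' into a gonality bound $\gamma\leq 2d$, and then show that the gonality of every component exceeds $2d$ once $m$ is large, by combining Abramovich's $\lambda_1\cdot\mathrm{vol}$ lower bound with the Bergeron--Clozel spectral gap for congruence Fuchsian groups and the growth of the covolume, i.e.\ of the index $[\widetilde{\rO}(\Pi_{\epsilon}):\widetilde{\rO}_w(\Pi_{\epsilon})]$. (The paper reaches the gonality bound by first proving genus growth via Zograf's inequality and then citing Abramovich, but that is a cosmetic difference; the Frey reduction is verbatim the paper's proof of Corollary~\ref{cor:finitely many degree d points}.)

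The genuine gap is the step you yourself flag as the most delicate: the unboundedness, uniformly in $w$ and in the finitely many orbits $\epsilon$, of the index $[\widetilde{\rO}(\Pi_{\epsilon}):\widetilde{\rO}_w(\Pi_{\epsilon})]$ as $\ord(w)=\ell^m\to\infty$. Your justification --- that $w$ is a primitive functional modulo $\ell^m$ ``whose $\widetilde{\rO}(\Pi_{\epsilon})$-orbit has size tending to infinity'' --- is circular, because that orbit size \emph{is} the index in question. The danger it glosses over is that the image of the arithmetic group $\widetilde{\rO}(\Pi_{\epsilon})$ in the finite orthogonal group modulo $\ell^f$ could a priori be small, so that orbits in the abstract finite module say nothing about orbits of the arithmetic group; and since $v_{\ell}(\disc(\Pi_{\epsilon,\ell^m}))=2m+N$ grows with $m$, no off-the-shelf surjectivity of reduction (Hensel plus strong approximation at a good prime) applies. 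This is exactly what the paper must work for: Proposition~\ref{Prop_StrictSubgroups} shows that $\tO(\Pi,\ker(w))$ strictly shrinks along the tower $\ell^m\rightsquigarrow\ell^{m+m'}$, and its proof hinges on Theorem~\ref{thm_IndexReductionO}, a quantitative bound --- linear in $m$ --- on the index of the image of $\tO(\Pi_{\ell^m})\to\tO(\Pi_{\ell^m}/\ell^f\Pi_{\ell^m})$, which occupies the entire Appendix~A (multivariate Hensel lifting for $\rO$ and for $\spin$, comparison of $\spin$ with $\tO$ over $\Z_\ell$, and strong approximation). Without a substitute for this input, your argument does not close; everything else in the proposal is sound.
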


\subsection{An arithmetic application}
\label{ss: arithmetic application}

The coarse moduli spaces $\mathrm{Q}_{(L,\mathcal{a})}^{(\ell^m)}$ of Corollary~\ref{cor: finitely many degree d points intro} lie at the heart of a new proof of a result due to Cadoret--Charles (Theorem~\ref{thm: Cadoret--Charles}), showing uniform-boundedness of the $\ell$-primary torsion of Brauer groups of K3 surfaces over number fields varying in a $1$-dimensional lattice-polarized family; see~Theorem~\ref{thm:uniform bound}. 
To put this result in a broader context, we begin by fixing some notation and reviewing some history and developments in this direction.

Let $k$ be a number field; fix an algebraic closure $\kbar$ of it. 
Let $X/k$ be a K3 surface, and write $\Xbar$ for the base extension $X\times_{\spec k}\spec\kbar$. 
Denote by $\br(X) := \rH^2_\et(X,\G_m)_{\tors}$ the cohomological Brauer group of $X$, and let $\br_0(X) \subset \br(X)$ be the subgroup of constant algebras, i.e., the image of the pullback map $\br(\spec k) \to \br(X)$ for the structure morphism $X \to \spec k$. 
By a result of Skorobogatov and Zarhin~\cite{SkorobogatovZarhin}*{Theorem~1.2}, the quotient $\br(X)/\br_0(X)$ is finite. 
How large can it be? 
What are its possible orders?

Based on analogies between the group $\br(X)/\br_0(X)$ and torsion of elliptic curves---for example, they are both instances of unramified cohomology---see~\cite{VA-AWS}*{\S5.2}, the third named author conjectured that Brauer groups of K3 surfaces over number fields in sufficiently nice families should be uniformly bounded.  
More precisely:

\begin{conjecture}(Strong Uniform Boundedness~{\cite{VA-AWS}*{Conjecture~5.6}}) 
    \label{conj:VAstrong}
    Fix a positive integer $d$ and a primitive sublattice $L\hookrightarrow \Lambda$. 
    Let $X$ be a K3 surface of degree over a number field $k$ with $[k:\Q] \leq d$ such that $\pic(\Xbar) \simeq L$. 
    There is a constant $B := B(d,L)$, independent of $X$, such that
    \[
        \#\frac{\br(X)}{\br_0(X)} < B.
    \]
\end{conjecture}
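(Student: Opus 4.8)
The plan is to reduce Conjecture~\ref{conj:VAstrong}, one prime $\ell$ at a time, to a statement about points of bounded degree on the moduli spaces $\cQ^{(\ell^m)}_{(L,\cA)}$ constructed above, and then to control those points using the curve-theoretic input of Corollary~\ref{cor: finitely many degree d points intro} in the case $\rk(L)=19$. For the first reduction, write $\rho=\rk(L)$, so $\br(\Xbar)\simeq(\Q/\Z)^{22-\rho}$. Replacing $k$ by a finite extension of degree at most $\#\mathrm{O}(L)$---which enlarges the degree of $k$ and alters $\#(\br(X)/\br_0(X))$ only by factors depending on $L$ alone---we may assume $G_k$ acts trivially on $\pic(\Xbar)$, so that $\pic(X)=\pic(\Xbar)\simeq L$, the polarization $j$ is defined over $k$, and $\br_1(X)=\br_0(X)$. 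The Hochschild--Serre / Skorobogatov--Zarhin sequence~\cite{SkorobogatovZarhin} then identifies $\br(X)/\br_0(X)$ with a subgroup of $\br(\Xbar)^{G_k}=\bigoplus_\ell\br(\Xbar)[\ell^\infty]^{G_k}$, and for all but finitely many $\ell$ the $\ell$-part vanishes by open-image results for the Galois action on $\rH^2_{\et}(\Xbar,\Z_\ell(1))$ that are uniform over the family (this uniformity over $\ell$ is itself delicate, and I would cite it as part of the input to Theorem~\ref{thm: Cadoret--Charles}). It thus suffices to bound $\#\br(\Xbar)[\ell^\infty]^{G_k}$ by a constant $B(d,L,\ell)$ for each fixed $\ell$.

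\emph{Translation into the moduli problem.} Being a finite subgroup of $(\Q_\ell/\Z_\ell)^{22-\rho}$, $\br(\Xbar)[\ell^\infty]^{G_k}$ is a sum of at most $22-\rho$ cyclic groups, so $\#\br(\Xbar)[\ell^\infty]^{G_k}\ge\ell^{(22-\rho)m}$ forces its exponent to be at least $\ell^m$, and hence it contains a $G_k$-invariant class $\bar\alpha$ of order exactly $\ell^m$. Via the Kummer sequence and the identification $\rH^2(\Xbar,\mu_{\ell^m})/j(L)\simeq\br(\Xbar)[\ell^m]$---valid because $j\colon L\iso\pic(\Xbar)$ is defined over $k$---the triple $(X,j,\bar\alpha)$ defines a $k$-point of the stack $\cQ^{(\ell^m)}_{(L,\cA)/\Z}$, hence a point of its coarse space $\mathrm{Q}^{(\ell^m)}_{(L,\cA)}$ rational over a number field of degree $\le d$. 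So it is enough to produce $m_0=m_0(d,L,\ell)$ such that $\mathrm{Q}^{(\ell^m)}_{(L,\cA)}$ has no points of degree $\le d$ once $m\ge m_0$; then $B(d,L,\ell)=\ell^{(22-\rho)m_0}$ works.

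\emph{The rank-$19$ case via the $\ell$-tower.} When $\rk(L)=19$ and $\cA=\mathcal{a}$ is a very small ample cone, $\mathrm{Q}^{(\ell^m)}_{(L,\mathcal{a})/\C}$ is a finite union of quasi-projective curves (Corollary~\ref{cor: Complex Period Space Components Comparison} together with the rank-$19$ reduction), and Corollary~\ref{cor: finitely many degree d points intro} already gives that each has only finitely many points of degree $\le d$ over $k$ once $m\ge n_0$. To upgrade ``finitely many at each level'' to ``none beyond some level'', I would run a pigeonhole/compactness argument along the tower: the transition maps $\mathrm{Q}^{(\ell^{m+1})}\to\mathrm{Q}^{(\ell^m)}$ given by multiplication by $\ell$ on the twisting class are finite, so an infinite sequence of levels each carrying a degree-$\le d$ point yields, by a diagonal argument over the finite sets of such points, a compatible system of degree-$\le d$ points through all levels $m\ge n_0$; unwinding the finite coarse-space ambiguity, this exhibits a single K3 surface over a fixed field $k'$ with $[k':k]\le d$ whose $G_{k'}$-invariant $\ell$-primary Brauer classes have unbounded order, contradicting the finiteness theorem of Skorobogatov--Zarhin~\cite{SkorobogatovZarhin}. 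The two delicate points are the uniform control of genus and gonality across the growing collection of components---supplied by the Bergeron--Clozel, Long--Maclachlan--Reid, and Abramovich inputs behind Corollary~\ref{cor: finitely many degree d points intro}---and the bookkeeping in the tower; this is exactly the content of Theorem~\ref{thm:uniform bound}.

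\emph{The main obstacle: higher Picard rank.} For $\rk(L)\le 18$ the spaces $\cQ^{(\ell^m)}_{(L,\cA)/\C}$ are higher-dimensional open orthogonal modular varieties, and the curve techniques break down. A proof along these lines would require a higher-dimensional substitute for Corollary~\ref{cor: finitely many degree d points intro}: as $m\to\infty$ the level covers $\cD(\Pi_\epsilon)^{\circ}/\widetilde{\rO}_w(\Pi_\epsilon)$ become of general type (known, by positivity results for orthogonal modular varieties in the spirit of Gritsenko--Hulek--Sankaran), but one needs the far stronger statement that their points of degree $\le d$ are confined to a proper closed subvariety whose depth in the tower stays bounded---a Lang--Vojta-type uniform-boundedness input, presently unconditional only in small dimension. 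This is the principal obstacle, and the reductions above show it is the only missing ingredient: modulo them, Conjecture~\ref{conj:VAstrong} is equivalent to such a Diophantine statement about congruence covers of orthogonal modular varieties. (The remaining case $\rk(L)=20$ is separate and easier: by Shioda--Inose theory there are only finitely many singular K3 surfaces per lattice $L$ up to $\overline{\Q}$-isomorphism, and one bounds $\br(\Xbar)^{G_k}\subset(\Q/\Z)^2$ directly.)
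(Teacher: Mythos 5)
The statement you are addressing is presented in the paper as a \emph{conjecture} (Conjecture~\ref{conj:VAstrong}); the paper offers no proof of it, and your proposal does not amount to one either --- as you yourself acknowledge, the cases $\rk(L)\le 18$ and the uniformity in $\ell$ needed to pass from a single prime to the full Brauer group remain open. So the honest verdict is: there is nothing in the paper to compare a ``proof'' against, and your text is best read as a reduction strategy. Within that reading, your rank-$19$, fixed-$\ell$ reduction does track the paper's proof of its actual result, Theorem~\ref{thm:uniform bound}: split off $\rho=20$ (handled via Orr--Skorobogatov), pass to an extension splitting $\pic(\Xbar)$, and convert a Brauer class of order $\ell^n$ into a low-degree point of $\mathrm{Q}^{(\ell^n)}_{(L,\mathcal{a})}$.

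Two concrete issues in the parts you do claim. First, the bounded extension splitting $\pic(\Xbar)$ cannot have degree ``at most $\#\mathrm{O}(L)$'': for a hyperbolic lattice of rank $\ge 2$ the group $\mathrm{O}(L)$ is infinite. The correct bound comes from Minkowski's theorem on finite subgroups of $\mathrm{GL}_{22}(\Z)$, which is exactly what the paper's Lemma~\ref{lemma_PicXL=PicXbar} invokes to get a universal constant $C$. Second, and more importantly, your endgame differs from the paper's and has a gap. You try to upgrade ``finitely many degree-$\le d$ points at each level'' to ``no points beyond some level'' by a tower/pigeonhole argument ending in a contradiction with Skorobogatov--Zarhin. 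But a compatible system of points of the \emph{coarse} spaces $\mathrm{Q}^{(\ell^m)}$ only determines, at each level, some $\overline{k'}$-form of a fixed K3 surface; these forms may vary with $m$, so you do not obtain a single surface over a fixed number field with unbounded Galois-invariant $\ell$-primary Brauer classes, and Skorobogatov--Zarhin (which is a finiteness statement for one surface) does not apply uniformly across forms. The paper avoids this entirely: it does not need the levels to be eventually empty, only finite, and then applies Orr--Skorobogatov's Theorem~C (Theorem~\ref{thm:OSThmC}) to bound $\#\br(Y_{\kbar})^{\Gal(\kbar/K)}$ uniformly over all bounded-degree forms $Y$ of each of the finitely many surfaces that occur. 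If you want to salvage your tower argument, you would need precisely such a forms-uniform finiteness input, at which point you have rederived the paper's route.
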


\begin{remark}
    There is an intermediate subgroup $\br_0(X) \subset \br_1(X) \subset \br(X)$ comprising the algebraic classes in $\br(X)$, i.e., $\br_1(X) := \ker\left(\br(X) \to \br(X_\kbar)\right)$ consists of elements killed by a finite extension of the ground field. 
    The quotient $\br_1(X)/\br_0(X)$ is well-known to be uniformly bounded (see, e.g.,~\cite{VAVSigma}*{Proposition~6.3}), so one can replace $\br(X)/\br_0(X)$ in Conjecture~\ref{conj:VAstrong} with $\br(X)/\br_1(X)$.    
\end{remark}

In late 2015, Fran\c{c}ois Charles and Alexei Skorobogatov pointed out to the third-named author a conjecture of Shafarevich~\cite{Shafarevich} that allows one to dispense with the lattice $L$ in Conjecture~\ref{conj:VAstrong}, giving rise to a strong uniform boundedness conjecture, a version of which first appeared in print in~\cite{OrrSkorobogatovZarhin}*{Conjecture~$\br(\Kthree)$}.

\begin{conjecture}
    \label{conj:StrongBoundednessK3s}
    If $X$ is a K3 surface over a number field of bounded degree $d$, then the cardinality of $\br(X)/\br_0(X)$ is bounded in terms of $d$, independent of $X$.
\end{conjecture}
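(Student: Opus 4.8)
This is an open conjecture; the present paper establishes only the slice with $\rk(L)=19$, i.e.\ the case of $1$-dimensional lattice-polarized families, and only for the $\ell$-primary part (Corollary~\ref{cor: finitely many degree d points intro}, Theorem~\ref{thm:uniform bound}). The plan I would follow toward the full statement takes as its starting point the moduli stacks $\cQ^{(\ell^m)}_{(L,\cA)/\Z}$ constructed above.

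\textbf{Step 1: reduction to a fixed lattice, uniformly.} The first task is to reduce Conjecture~\ref{conj:StrongBoundednessK3s} to Conjecture~\ref{conj:VAstrong} \emph{with a bound uniform in $L$}. This is already substantial: for small $\rk(L)$ --- already for $\rk(L)=1$, where $L=\langle 2e\rangle$ is realized over $\Q$ for every $e$ --- infinitely many polarizing lattices occur over number fields of degree $\le d$, so one cannot simply enumerate them. I would invoke the finiteness machinery of Orr--Skorobogatov, which rests on the Andr\'e--Oort theorem for orthogonal Shimura varieties, to control the variation of the transcendental Hodge structure as $L$ ranges and cut down to finitely many types. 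Combined with the standard reductions --- $\br(X)/\br_0(X)$ is controlled by $\br(X)/\br_1(X)$ since $\br_1(X)/\br_0(X)$ is uniformly bounded, and $\br(X)/\br_1(X)$ injects into the finite group $(\br\Xbar)^{G_k}$ (Skorobogatov--Zarhin), whose order is governed by the Galois action on the transcendental part of $\rH^2_\et(\Xbar,\Z_\ell)$ --- one is left to bound $\#(\br\Xbar)^{G_k}$ uniformly over all $X/k$ with $[k:\Q]\le d$ and $\pic(\Xbar)\simeq L$, and then to remove the dependence on $L$.

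\textbf{Step 2: the fixed-lattice bound via the geometry of the $\cQ^{(\ell^m)}$.} For fixed $L$ one realizes the relevant family over the coarse space $\rQ^{(\ell^m)}_{(L,\mathcal{a})/\C}$, whose components are open subvarieties of the orthogonal modular varieties $\cD(\Pi_{\epsilon})^{\circ}/\widetilde{\rO}_w(\Pi_{\epsilon})$ of dimension $20-\rk(L)$, and argues that as $m\to\infty$ these components acquire enough positivity to carry no points of degree $\le d$ once $m\gg 0$. When $\rk(L)=19$ the components are curves, and growth of genus and gonality together with Frey's argument gives exactly this --- this is Corollary~\ref{cor: finitely many degree d points intro}. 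For $\rk(L)\le 18$ the components are genuinely higher-dimensional, and one needs a substitute for ``genus and gonality grow''. The candidate I would try is to bound below the Kodaira dimension, or the positivity of a suitable symmetric-differential or pluricanonical sheaf, as the level grows --- using Borcherds products and the Hirzebruch--Mumford proportionality principle in the style of Gritsenko--Hulek--Sankaran, together with the congruence-subgroup-type cohomology growth of Bergeron--Clozel already used in the rank-$19$ case --- and then to invoke the Bombieri--Lang/Lang--Vojta conjecture to pass from bigness to uniform boundedness of points of bounded degree. An unconditional alternative is to feed the $\ell$-adic local systems on the stacks $\cQ^{(\ell^m)}_{(L,\cA)}$ into the big-monodromy/open-image machinery behind Theorem~\ref{thm: Cadoret--Charles} (Cadoret--Charles, or Cadoret--Tamagawa), which yields the bound but no longer uses the fine geometry of the moduli spaces.

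\textbf{Main obstacle.} The crux is the passage from $1$-dimensional to higher-dimensional moduli. In the curve case there is an \emph{unconditional} geometric mechanism --- gonality growth plus Frey --- that directly forbids low-degree points; in dimension $\ge 2$ no analogue is known, and the only purely geometric route, general type plus Lang--Vojta, is conditional. The extreme case is $\rk(L)\le 2$, and above all $\rk(L)=1$ (the $19$-dimensional moduli of $2e$-polarized K3 surfaces), where uniform boundedness of points of arbitrary degree $\le d$ is genuinely a statement about images of Galois; there one is pushed back onto the $\ell$-adic methods of Cadoret--Charles, and the real difficulty becomes making those bounds uniform across the infinitely many polarizing data $(L,\cA)$ and compatible with Step~1. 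A secondary obstacle is that the Orr--Skorobogatov/Andr\'e--Oort finiteness underlying Step~1 is itself delicate, so even the passage to the fixed-lattice conjecture is not routine.
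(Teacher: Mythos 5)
The statement is a conjecture (a version of Conjecture~$\br(\Kthree)$ of Orr--Skorobogatov--Zarhin), and the paper offers no proof of it; you are right to treat it as open rather than attempt a proof. Your account of what is actually established --- the $\ell$-primary bound in the rank-$19$ lattice-polarized case via Theorem~\ref{thm:uniform bound} and Corollary~\ref{cor: finitely many degree d points intro}, plus the conditional and CM-type partial results --- accurately matches the paper's own discussion of the evidence for this conjecture.
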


Any effective bound arising from special cases of Conjectures~\ref{conj:VAstrong} or~\ref{conj:StrongBoundednessK3s} has striking implications for the study of rational points on K3 surfaces. 
Indeed, by work of Kresch and Tschinkel~\cite{KreschTschinkel}*{Theorem~1.1}, such bounds imply the existence of an effective procedure that takes as input a system of homogeneous polynomial equations defining $X$ and equations for generators of $\pic(\Xbar)$ and outputs a description of the Brauer--Manin set $X(\A_k)^{\br(X)}$. 
A conjecture of Skorobogatov~\cite{SkorobogatovOberwolfach} implies this information suffices to determine whether $X$ has a $k$-rational point or not.
\medskip

Evidence for Conjectures~\ref{conj:VAstrong} or~\ref{conj:StrongBoundednessK3s} has accumulated over the last decade, including
\begin{enumerate}[leftmargin=*]
    \item Analogous uniform boundedness results by Abramovich and the third-named author for full-level structures of on abelian varieties of fixed dimension over a number field of bounded degree, conditional on Lang's Conjecture~\cite{AVAAdvances} or Vojta's Conjecture~\cites{AVACompositio,AVABordeaux}. 
    It would be quite interesting to obtain conditional results of this kind using the moduli spaces of K3 surfaces with level structures constructed in this paper, or some appropriate modifications of these spaces. 
    Brunebarbe's results on hyperbolicity of certain locally symmetric varieties~\cite{Brunebarbe} could well play a role in such an undertaking.
    \smallskip
    
    \item A proof of Conjecture~\ref{conj:StrongBoundednessK3s} for K3 surfaces with complex multiplication by Orr and Skorobogatov~\cite{OrrSkorobogatov}, and a proof in the general case by Orr, Skorobogatov, and Zarhin~\cite{OrrSkorobogatovZarhin}, conditional on a conjecture of Coleman on the finiteness of geometric endomorphism rings of abelian varieties of bounded dimension defined over number fields of bounded degree.
    \smallskip

    \item The principle of increasing hyperbolicity of varieties supporting a variation of Hodge structures with level structures, as the level increases, best exemplified in results of Brunebarbe~\cite{BrunebarbePreprint}. 
    In particular, such varieties are often of general type~\cite{MaKod}. 
    This is the case for moduli spaces $\rM_{2d}$ of high-degree K3 surfaces~\cites{Kondo,GHSK3} and $\cC_{d}$ of high-discriminant special cubic fourfolds~\cite{TVA}, each of which parametrizes K3 surfaces with Brauer level structures for well-known values of $d$~\cite{MSTVA}*{\S\S2.6,~2.7}.
\end{enumerate}
\smallskip

For a fixed prime $\ell$, one can also ask for uniform bounds of the form
\begin{equation}
    \label{eq:lprimary}
    \#\frac{\br(X)}{\br_i(X)}\{\ell^\infty\} < B_i(d,\ell,L) \text{ or } B_i(d,\ell),\quad i = 0, 1,
\end{equation}
as $X$ varies along a family of K3 surfaces (possibly polarized by a lattice $L$) over number fields of degree~$\leq d$, which could prove more accessible to current mathematical technology than Conjectures~\ref{conj:VAstrong} or~\ref{conj:StrongBoundednessK3s}. 
Indeed, decades before Merel proved uniform boundedness of torsion of elliptic curves over number fields of bounded degree~\cite{Merel}, Manin proved\footnote{Today, this follows easily from the fact that the modular curve $X_1(\ell^n)$, which is a fine moduli space, has genus $g > 1$ for $n \gg 0$, by Faltings' theorem.  But in 1969, Faltings' theorem was still Mordell's Conjecture.} that $E(k)[\ell^\infty] < B(d,\ell)$ for all elliptic curves $E$ over number fields of degree~$\leq d$~\cite{Manin}.

In~\cite{VAVSigma}, Viray and the third-named author proved the ``$\ell$-primary version''~\eqref{eq:lprimary} of Conjecture~\ref{conj:VAstrong} when $L$ is isometric to the N\'eron-Severi lattice of a Kummer surface of a product on non-CM elliptic curves related by a cyclic isogeny of fixed degree~\cite{VAVSigma}*{Corollary~1.6}. 
They also leveraged work of Gonz\'alez-Jim\'enez and Lozano-Robledo~\cite{GJLR} to show that if $E/\Q$ is a non-CM elliptic curve with a $\Q$-rational cyclic group $C$ of fixed order $n$, and if $X = \Kum(E\times E/C)$, then
\[
    \#\br(X)/\br_1(X) \leq (8n)^3,
\]
confirming that effective bounds for Conjecture~\ref{conj:VAstrong} in particular families of K3 surfaces are possible.

Using open image properties of $\ell$-adic representations of K3 surfaces, Cadoret and Charles proved a remarkable result about $\ell$-primary uniform boundedness of transcendental Brauer groups in $1$-dimensional families, of which the following is a special case.

\begin{theorem}[{\cite{CadoretCharles}*{c.f.~Theorem~1.2.1}}]
    \label{thm: Cadoret--Charles}
    Let $S$ be a curve over a number field $k$. 
    Let $\pi\colon \cX \to S$ be a smooth proper morphism such that the fiber $\cX_s$ over every geometric point $s \in S(\kbar)$ is a K3 surface. Fix a positive integer $d$ and a rational prime $\ell$. 
    There is a constant $B := B(d,\ell)$ such that for every extension $K$ of $k$ in $\kbar$ with $[K:k] \leq d$ and each point $s \in S(K)$ we have
    \[
    \# \br(\cX_{s,\kbar})^{\Gal(\kbar/K)}\{\ell^\infty\} < B.
    \]
\end{theorem}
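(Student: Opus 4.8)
The plan is to reduce the theorem, after harmless modifications of $S$ and $k$, to the case where $\cX\to S$ acquires a rank-$19$ lattice polarization --- the situation to which the coarse moduli curves of Corollary~\ref{cor: finitely many degree d points intro} apply --- and then to extract a uniform bound from the finiteness of their points of bounded degree. First I would normalize: shrinking $S$ I may assume it is smooth and affine, and after passing to a connected component and enlarging $k$ I may assume $S$ is geometrically connected. If $\cX\to S$ is isotrivial the assertion is the degenerate ``$\dim S=0$'' case --- for a single K3 surface over a number field the geometric transcendental lattice has no nonzero Galois-invariant $\Q_\ell$-class (a consequence of the Tate conjecture for K3 surfaces, due to Tankeev and Andr\'e), so $\br(\cX_{s,\kbar})^{\Gal(\kbar/K)}\{\ell^\infty\}$ is finite and is bounded uniformly over extensions $K$ of bounded degree by the standard $\ell$-adic estimates of Cadoret--Tamagawa. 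So assume $\cX\to S$ is non-isotrivial; then the geometric generic fibre $\cX_{\overline{\eta}}$ has Picard number $\rho_0\le 19$, and I treat the case $\rho_0=19$ in detail, this being the content of Theorem~\ref{thm:uniform bound} (in lower Picard number one falls back on the original argument of Cadoret--Charles). After a finite extension of $k$ and a finite \'etale cover of $S$ --- which only enlarges the family of fields $K$ that must be controlled, replacing $d$ by a larger constant --- I may assume that $\Pic(\cX_{\overline{\eta}})$ equals a fixed rank-$19$ lattice $L$, cut out by line bundles on $\cX$, so that $\cX\to S$ is an $(L,\cA)$-polarized family for a suitable $\cA$, with $L\hookrightarrow\Lambda$.

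Next I would reduce to bounding an exponent. Since $\br(\cX_{s,\kbar})\{\ell^\infty\}\cong(\Q_\ell/\Z_\ell)^{22-\rho(\cX_{s,\kbar})}$ has $\Z_\ell$-corank at most $21$, it is enough to produce $m_0=m_0(d,\ell,\cX/S)$ such that $\br(\cX_{s,\kbar})^{\Gal(\kbar/K)}$ has no element of order $\ell^{m_0+1}$ for any $s\in S(K)$ with $[K:k]\le d$. Let $n_0$ be the constant of Corollary~\ref{cor: finitely many degree d points intro}. Replacing a class of order $\ell^m$ (with $m\ge n_0$) by $\ell^{\,m-n_0}$ times it, it suffices to show that the set $\Sigma$ of $\overline k$-points $s$ of $S$ with $[\kappa(s):k]\le d$ for which some $K\supseteq\kappa(s)$ with $[K:k]\le d$ makes $\br(\cX_{s,\kbar})^{\Gal(\kbar/K)}$ contain an element of order $\ell^{n_0}$ is finite: indeed, for $s\notin\Sigma$ the exponent is then $<\ell^{n_0}$, and for the finitely many $s\in\Sigma$ it is bounded by the single-surface uniformity above, so one may take $m_0$ to be the maximum of $n_0-1$ and these finitely many exponents. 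To prove $\Sigma$ finite I split it according to whether $\rho(\cX_{s,\kbar})$ equals $19$ (call $s$ \emph{ordinary}) or $20$ (call $s$ \emph{special}).

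For ordinary $s$, the injective specialization map $L=\Pic(\cX_{\overline{\eta}})\hookrightarrow\Pic(\cX_{s,\kbar})$ has finite index, so $\Pic(\cX_{s,\kbar})$ is one of finitely many overlattices $L'$ of $L$ admitting a primitive embedding into $\Lambda$; the Kummer sequence then turns the Brauer class into one of order $\ell^{n_0}$ in $\rH^2(\cX_{s,\kbar},\mu_{\ell^{n_0}})/L'$, and its $\Gal(\kbar/K)$-invariance is exactly what descends the corresponding $\overline k$-point of $\cQ^{(\ell^{n_0})}_{(L',\mathcal{a}')}$ to a $K$-point of the coarse space $\rQ^{(\ell^{n_0})}_{(L',\mathcal{a}')}$, for a suitable very small ample cone $\mathcal{a}'$. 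By Corollary~\ref{cor: finitely many degree d points intro}, for each of the finitely many relevant pairs $(L',\mathcal{a}')$ there are only finitely many such points over fields of degree $\le d$; and since the family is non-isotrivial, the period morphism $S\to\rM_{(L,\cA)}$ has finite fibres and fibres with equal moduli point are isomorphic, so only finitely many ordinary $s$ lie in $\Sigma$. For special $s$ the surface $\cX_{s,\kbar}$ is singular, so its moduli point is a CM point of the Shimura curve $\rM_{(L,\mathcal{a})}$, whose degree over $k$ grows with the discriminant of its CM order by Brauer--Siegel; hence only finitely many special $s$ have degree $\le d$. Thus $\Sigma$ is finite, and the theorem follows.

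\emph{The step I expect to be the main obstacle} is the passage to moduli in the ordinary case: one must verify that the level-$\ell^{n_0}$ datum attached to a Galois-invariant \emph{transcendental} Brauer class really descends to a rational point of the \emph{coarse} space $\rQ^{(\ell^{n_0})}$ --- which is precisely why the quotient stacks $\cQ^{(n)}$, rather than the finer $\cP^{(n)}$, are the right objects, since a $\mu_{\ell^{n_0}}$-lift of the Brauer class need not itself be Galois-stable --- and that the finitely many Picard overlattices $L'$ and very small ample cones $\mathcal{a}'$ occurring as fibre data are all covered by a single threshold $n_0$. A secondary technical point is to keep the base-change reductions of the first paragraph compatible with the primitivity and marking hypotheses built into the moduli spaces of Corollary~\ref{cor: finitely many degree d points intro}.
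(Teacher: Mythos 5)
The first thing to say is that the paper does not prove this statement at all: it is quoted from Cadoret--Charles and used as context. The paper's own contribution is Theorem~\ref{thm:uniform bound}, proved in \S7, which is a new moduli-theoretic proof only of the case where the K3 surfaces carry a rank-$19$ lattice polarization. Your proposal is in effect an attempt to upgrade that special case to the full theorem, and it does not succeed.

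The genuine gap is the case of geometric generic Picard number $\rho_0<19$, which you dispose of by saying one "falls back on the original argument of Cadoret--Charles." That is circular if the goal is to prove the stated theorem, and it is not a repairable omission inside the paper's framework: the coarse spaces $\mathrm{Q}^{(\ell^m)}_{(L,\mathcal{a})}$ have dimension $20-\rk(L)$, so the genus/gonality growth and Frey's argument behind Corollary~\ref{cor: finitely many degree d points intro} give finiteness of bounded-degree points only when $\rk(L)=19$. For a non-isotrivial family whose generic Picard number is smaller, the image of $S$ is a curve inside a higher-dimensional modular variety, and nothing in the paper controls its points of bounded degree. So your argument proves (a version of) Theorem~\ref{thm:uniform bound}, not Theorem~\ref{thm: Cadoret--Charles}.

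Within the rank-$19$ case your outline does track \S7 of the paper, but three steps are softer than what the paper actually does. First, the descent of the level structure: the paper works with a class in $\br(X)/\br_1(X)$ and first splits $\pic(X_{\kbar})$ over an extension of degree bounded by the Minkowski constant $C$ of Lemma~\ref{lemma_PicXL=PicXbar} before producing a $K$-point of the coarse space; a merely $\Gal(\kbar/K)$-invariant class of $\br(\cX_{s,\kbar})$ need not come from $\br(\cX_s)$, and the conversion between the two formulations is the content of the Remark following Theorem~\ref{thm:uniform bound} (via Colliot-Th\'el\`ene--Skorobogatov). You flag this as the main obstacle but do not resolve it. Second, the rank-$20$ fibers: the paper invokes Orr--Skorobogatov (Theorems~\ref{thm:OSThmB} and~\ref{thm:OSThmC}, packaged as Corollary~\ref{cor:Rank20}) to bound their Brauer groups uniformly; your Brauer--Siegel argument for CM points on the orthogonal modular curve is asserted, not proved, and in any case only yields finiteness of such fibers, after which you would still need Theorem~\ref{thm:OSThmC} to bound each one. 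Third, the finitely many exceptional fibers in your set $\Sigma$ must likewise be bounded surface-by-surface uniformly over all $K$ of degree $\leq d$, which again is exactly Theorem~\ref{thm:OSThmC} rather than a "standard estimate."
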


From this result, one can deduce the $\ell$-primary version~\eqref{eq:lprimary} of Conjecture~\ref{conj:VAstrong} whenever $L \subset \Lambda$ is a rank-19 primitive sublattice of signature $(1,18)$ (c.f.~\cite{CadoretCharles}*{Proposition~2.2.3}). 
Our main result in this direction is a new, moduli-theoretic proof of this result, in the case that $S$ is a modular curve of rank-19 lattice-polarized K3 surfaces.

\begin{theorem}
    \label{thm:uniform bound}
    Fix a positive integer $d$, a rational prime $\ell$, and a rank-$19$ even lattice $L$ of signature $(1,18)$. 
    There is a constant $B := B(d,\ell,L)$ such that
    \begin{equation}
        \label{eq:uniform bound}
        \#\frac{\br(X)}{\br_1(X)}\{\ell^\infty\} < B
    \end{equation}
    for any K3 surface $X$ over a number field $k$ of degree $d$ whose geometric Picard group admits an primitive embedding $L \hookrightarrow \pic \left(X_{\overline{k}}\right)$ with an ample class in its image.
\end{theorem}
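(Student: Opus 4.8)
The plan is to reduce the statement to the finiteness of rational points on the coarse moduli spaces $\mathrm{Q}^{(\ell^m)}_{(L,\mathcal{a}),\C}$ provided by Corollary~\ref{cor: finitely many degree d points intro}, and then to run a limiting argument over all $m$. First I would fix, once and for all, a very small ample cone $\mathcal{a}\subset L_\R$ (this is possible since $L$ admits a primitive embedding into $\Lambda$, cf.\ Theorem~\ref{thm:intro separated thm}), and observe that any K3 surface $X/k$ as in the statement, together with the given primitive embedding $j\colon L\hookrightarrow\pic(X_{\kbar})$, acquires an $(L,\mathcal{a})$-polarization after possibly composing $j$ with an isometry of $L$ moving the ample class into $\mathcal{a}$ (this uses that $\mathcal{a}$ is a fundamental domain-type chamber for the relevant reflection group, so some Weyl-chamber translate of the ample cone meets $\mathcal{a}$). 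Since $\pic(X_{\kbar})\simeq L$ is of rank $19$, signature $(1,18)$, the very general member of the transcendental lattice has rank $3$, and the quotient $\rH^2(X_{\kbar},\mu_{\ell^m})/j(L)$ computes $\br(X_{\kbar})[\ell^m]$; more precisely, via the Kummer sequence and the identification $\rH^2_{\et}(X_{\kbar},\mu_{\ell^m})\simeq \rH^2(X_{\kbar},\Z_\ell(1))/\ell^m$, one has $\br(X_{\kbar})[\ell^m]\simeq T\otimes \Z/\ell^m$ where $T=\pic(X_{\kbar})^\perp$ in $\rH^2$, and the Galois module $\br(X_{\kbar})^{\Gal(\kbar/k)}\{\ell^\infty\}$ injects into $(T\otimes \Q_\ell/\Z_\ell)^{\Gal}$, which is finite by Skorobogatov--Zarhin; I would record that a class of order exactly $\ell^m$ in $\br(X)/\br_1(X)\{\ell^\infty\}$ gives, after base change, a $\Gal(\kbar/k)$-invariant class in $\br(X_{\kbar})$ of order $\ell^m$, hence a level structure in $\rH^2(X_{\kbar},\mu_{\ell^m})/j(L)$ of order $\ell^m$ defined over $k$, i.e.\ a $k$-point of $\cQ^{(\ell^m)}_{(L,\mathcal{a})/\Q}$.

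Next I would make the descent from geometric points to rational points precise. The subtlety is that $\cQ^{(\ell^m)}_{(L,\mathcal{a})/\Q}$ parametrizes a class in the quotient $\rH^2(X,\mu_{\ell^m})/j(L)$ rather than a genuine Brauer class, but this is exactly what is needed: a $\Gal$-invariant element $\bar\alpha\in\br(X_{\kbar})[\ell^m]$ lifts to a $\Gal$-invariant element of $\rH^2(X_{\kbar},\mu_{\ell^m})/j(L)$ (the kernel of the map to $\br(X_{\kbar})[\ell^m]$ is the image of $j(L)$, which is Galois-fixed since $j$ is defined over $k$ up to the action of $\aut(L)$; here one may need to pass to a bounded-index subgroup or extension to make $j$ itself Galois-equivariant, which is harmless for the final bound as it only changes $d$ by a bounded factor). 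Thus the pair $(X,j,[\bar\alpha])$ defines a point of $\cQ^{(\ell^m)}_{(L,\mathcal{a})/\Q}(k)$, hence a point of the coarse space $\mathrm{Q}^{(\ell^m)}_{(L,\mathcal{a})/\Q}(k)$, which base-changes to a point of $\mathrm{Q}^{(\ell^m)}_{(L,\mathcal{a}),\C}$ lying in the image of $k$-points. By the corollary, for $m\geq n_0(d,k,\ell,L)$ there are only finitely many points of $\mathrm{Q}^{(\ell^m)}_{(L,\mathcal{a}),\C}$ of degree $\leq d$ over $k$; in particular, for such $m$, no K3 surface $X/k$ as in the statement can carry a transcendental-mod-$\br_1$ class of order exactly $\ell^m$, \emph{unless} $X$ lies in the finite exceptional set --- and for those finitely many $X$ the Brauer group is finite anyway, contributing a bounded quantity.

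The final step is bookkeeping over the finitely many relevant moduli components and over $m<n_0$. Since $\Emb(L,\Lambda)$ modulo $\tO(\Lambda)$ is finite and, for each embedding, $\mathrm{Hom}(\Pi_\epsilon,\Z/\ell^m)$ modulo $\widetilde{\tO}(\Pi_\epsilon)$ is finite (so $\mathrm{Q}^{(\ell^m)}_{(L,\mathcal{a}),\C}$ has finitely many components, though their number grows with $m$), one combines: (i) for $m\geq n_0$ there is no contribution beyond the finite exceptional set; (ii) the ranks of $\br(X)/\br_1(X)\{\ell^\infty\}$ are at most $\rk T\leq 3$, so the whole group has order at most $\ell^{3m}$, hence is bounded by $\ell^{3(n_0-1)}$ whenever no class of order $\geq \ell^{n_0}$ exists. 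Setting $B:=\ell^{3(n_0-1)}$ enlarged by the (finite) maximum of $\#\br(X)/\br_1(X)\{\ell^\infty\}$ over the exceptional $X$'s yields the claimed uniform bound $B(d,\ell,L)$. The main obstacle is the second step: ensuring that a $\Gal$-invariant geometric Brauer class genuinely produces a $k$-point of the \emph{coarse} space $\mathrm{Q}^{(\ell^m)}$ --- this requires care because (a) the lattice polarization $j$ is canonical only up to $\aut(L)$, so one must either work stack-theoretically and then descend, or pass to a controlled finite extension of $k$, and (b) coarse-space points need not come from $k$-objects of the stack, but here the automorphism groups of $(L,\mathcal{a})$-polarized K3s with full-order level structure are trivial (rigidity), so the coarse space is a fine moduli space on the relevant locus and this gap does not arise. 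I would flag that making (a) precise --- tracking how replacing $j$ by $\sigma j$ for $\sigma\in\aut(L)$ interacts with the twisting class --- is the one genuinely technical point, handled by the same discrete-group quotient description that underlies Corollary~\ref{cor: Complex Period Space Components Comparison}.
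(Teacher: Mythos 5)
Your overall strategy for the generic case is the paper's: translate a transcendental Brauer class of exact order $\ell^m$ into a bounded-degree point of $\mathrm{Q}^{(\ell^m)}_{(L,\mathcal{a})}$, invoke Corollary~\ref{cor:finitely many degree d points} to kill large $m$, and bound the remaining $\ell$-primary group by $\ell^{3(n_0-1)}$. However, there are two genuine gaps. First, you assume $\pic(X_{\kbar})\simeq L$, but the hypothesis only gives a primitive embedding $L\hookrightarrow\pic(X_{\kbar})$, so the geometric Picard rank may be $20$ (a rank-$19$ lattice embeds primitively into a rank-$20$ one). Your moduli spaces do not see these surfaces at all (the rank-$19$-polarized period point lies on a removed wall, and in any case $j$ is no longer an isomorphism, so $\rH^2(X_{\kbar},\mu_{\ell^m})/j(L)$ does not compute $\br(X_{\kbar})[\ell^m]$). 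The paper handles this case separately via the Orr--Skorobogatov finiteness theorems for CM K3 surfaces (every geometric Picard rank $20$ K3 over a number field has CM), which give a uniform bound on $\#\br(X_{\kbar})^{\Gal(\kbar/k)}$ in terms of $d$ alone.

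Second, your treatment of the ``finite exceptional set'' is not sound as stated. Corollary~\ref{cor:finitely many degree d points} gives finitely many points of the \emph{coarse} space over bounded-degree fields, i.e.\ finitely many geometric isomorphism classes of triples $(X,j,\alpha)$ — not finitely many K3 surfaces over number fields. Each such geometric class is realized by infinitely many $\kbar/K$-forms over varying fields $K$ of bounded degree, and ``the Brauer group is finite anyway'' (Skorobogatov--Zarhin) gives a bound depending on the form, not a uniform one. The paper closes this with Orr--Skorobogatov's Theorem~C: for a fixed K3 surface $X_i$ over a number field, $\#\br(Y_{\kbar})^{\Gal(\kbar/K)}$ is bounded uniformly over all forms $Y$ of $X_i$ over fields of bounded degree. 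Relatedly, your descent step (making $j$ Galois-equivariant) is done more cleanly in the paper by first passing to an extension $K/k$ of degree bounded by Minkowski's universal constant $C$ with $\pic(X_K)=\pic(X_{\kbar})$, and using the injectivity of $\br(X)/\br_1(X)\to\br(X_K)/\br_1(X_K)$ to transport the order-$\ell^n$ class; the corollary is then applied with degree bound $C$ rather than $d$. These repairs are all available in the paper's toolkit, but as written your argument does not yield a uniform constant $B(d,\ell,L)$.
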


\begin{remark}
    By definition of $\br_1(X)$, the natural map $\br(X) \to \br(\overline{X})^{\Gal(\overline{k}/k)}$ yields an inclusion
    \[
        \frac{\br(X)}{\br_1(X)} \hookrightarrow \br(\overline{X})^{\Gal(\overline{k}/k)}.
    \]
    Thus, to prove Theorem~\ref{thm:uniform bound}, it would be enough to find a constant $B' = B'(d,\ell,L)$ such that
    \begin{equation}
        \label{eq:bound for Galois Invariant part}
        \#\br(\overline{X})^{\Gal(\overline{k}/k)}\{\ell^\infty\} < B'
    \end{equation}
    for any K3 surface $X$ over a number field $k$ of degree $d$, with a  primitive embedding $L \hookrightarrow \pic \left(X_{\overline{k}}\right)$ containing an ample class in its image. 
    Conversely, Theorem~\ref{thm:uniform bound} implies the seemingly more general bound~\eqref{eq:bound for Galois Invariant part}: this follows from~\cite{CTSko}*{Proposition~5}, which shows that the cokernel of the map $\br(X) \to \br(\overline{X})^{\Gal(\overline{k}/k)}$ is finite, and its order divides $\delta_0^{22 - \rho}$, where $\rho = \rk \pic \left(X_{\overline{k}}\right)$ and $\delta_0 = \disc\left(\pic \left(X_{\overline{k}}\right)\right)$. 
    If $\rho = 19$, then $\delta_0 = \disc(L)$, and if $\rho = 20$, then by~\cite{OrrSkorobogatov}*{Corollary~B.1} there are only finitely many possible values for $\delta_0$, depending on~$L$. 
    Hence, from a constant $B$ as in~\eqref{eq:uniform bound}, we can construct a uniform constant $B'$ so that~\eqref{eq:bound for Galois Invariant part} holds.
\end{remark}

\subsection*{Acknowledgments}
We are grateful to Arend Bayer, Eva Bayer--Fl\"uckiger, Pietro Bieri, Simon Brandhorst, Philip Engel, Daniel Huybrechts, Arno Kret, Stevell Muller, Andrei Rapinchuk, and Alan Reid for numerous discussions that informed and shaped many of the ideas in this paper. 
Part of this work was done during the special trimester \emph{Arithmetic geometry of K3 surfaces} at the Bernoulli Center for Fundamental Studies in 2025. 
The first author was supported by the NSF RTG grant DMS-1840190 while working on this project.
The second author was supported by NWO grants 016.Vidi.189.015 and VI.Veni.212.209.
The third author conducted some of this work, supported by the NSF under grant DMS-1928930, while in residence at the Simons Laufer Mathematical Sciences Research Institute (formerly MSRI) in Berkeley, California, during the spring of 2023. 
He was also supported by the individual NSF grants DMS-1902274 and DMS-2302231.

\subsection{Notation}

For a lattice $L$ and a ring $R$, we set $L_R:=L\otimes_{\Z}R$; this extension is equipped with an induced $R$-valued symmetric bilinear form $L_R\otimes_RL_R\to R$. 
We write $D(L)=L^{\vee}/L$ for the discriminant group of $L$. 
We write $\rO(L)$ for the orthogonal group of isometries of $L$ and $\widetilde{\rO}(L)\subset\rO(L)$ for the subgroup of isometries that act trivially on $D(L)$. 
We say $L$ is \emph{hyperbolic} if $L_\R$ has signature $(1,n)$ for some $n\geq 1$.

For conventions and notation concerning algebraic spaces and algebraic stacks we will follow the Stacks Project \cite{stacks-project}. 
For an algebraic stack $\cX$ we write $\mathcal{I}_{\cX}\to\cX$ for its inertia stack and $|\cX|$ for its associated topological space \cite{stacks-project}*{04Y8}. 
A morphism $f:\cX\to\mathcal{Y}$ induces a continuous map on associated topological spaces $|f|:|\cX|\to|\cY|$.
If $f$ is an open immersion (resp.\ a closed immersion), then $|f|$ is injective and open (resp.\ injective and closed).

\section{Moduli of Lattice-Polarized K3 Surfaces}\label{sec:moduli of lattice-polarized K3 surfaces}

In this section, we construct moduli stacks of lattice-polarized K3 surfaces over the integers. 
We fix throughout an even lattice $L$ of signature $(1,n)$. 
Write
\[
    \mathcal{C}(L):=\{x\in L_{\R}\mid x^2>0\}
\]
for the positive cone of $L$. 
It is an open subset of $L_{\R}$, and has two connected components that are exchanged under multiplication by $-1$. 
Let
\[
    \Delta(L):=\{\delta\in L\mid \delta^2=-2\}
\]
denote the set of $(-2)$-classes in $L$. 
A $(-2)$-class $\delta\in\Delta(L)$ determines a reflection $r_{\delta}\in\mathrm{O}(L_{\R})$ by the usual formula $x\mapsto x+(x.\delta)\delta$. 
The fixed set of the reflection $r_{\delta}$ is the wall $\delta^{\perp}\subset L_{\R}$. 
We write $W(L)\subset\rO(L)$ for the subgroup generated by the reflections $r_{\delta}$ for $\delta\in\Delta(L)$ and $\pm W(L)$ for the subgroup generated by $W(L)$ and $-\mathrm{id}_L$. 
Set
\[
    \cC(L)^{\circ}:=\cC(L)\setminus\left(\textstyle\bigcup_{\delta\in\Delta_L}\delta^{\perp}\right)\subset\cC(L).
\]
That is, $\cC(L)^{\circ}$ is the set of $x\in\cC(L)$ such that $x.\delta\neq 0$ for all $\delta\in\Delta(L)$. 
An \emph{ample cone} of $L$ is a connected component of $\cC(L)^{\circ}$. 
The union of walls $\bigcup_{\delta\in\Delta(L)}\delta^{\perp}$ is closed in $\cC(L)$ \cite{Huy16}*{8, \S2.2 ff.}, so $\cC(L)^{\circ}$ is an open subset of $L_{\R}$. 
Furthermore, if $\cA\subset\cC(L)^{\circ}$ is an ample cone, then $\cA$ intersects the integral lattice $L\subset L_{\R}$ nontrivially, and its closure $\overline{\cA}$ is locally polyhedral in the interior $\cC(L)^{\circ}$ of $\cC(L)$ \cite{Huy16}*{8, \S2.4}. 
Finally, the group $\pm W(L)$ acts simply and transitively on the set of ample cones $\cA\subset\cC(L)^{\circ}$ \cite{Huy16}*{8, \S2.3}.


If $X$ is a K3 surface over an algebraically closed field then we write $\cC(X)$ and $\Delta(X)$ for the above notions applied to the lattice $L=\Pic(X)$. 
In this case, the eponymous ample cone
\[
    \Amp(X)=\left\{\textstyle\sum_i\lambda_ix_i\,|\,\lambda_i\in\R_{\geq 0}\text{ and }x_i\in\Pic(X)\text{ ample}\right\}\subset\Pic(X)_{\R}
\]
provides a distinguished choice of ample cone for the lattice $\Pic(X)$.

\begin{definition}[Lattice polarizations]
    Let $X$ be a K3 surface over an algebraically closed field. 
    An \emph{$L$-lattice polarization} on $X$ is an isometric embedding
    \[
        j\colon L\hookrightarrow\pic(X)
    \]
    of lattices (not necessarily primitive) whose image $j(L)\subset\pic(X)$ contains an ample class.

    Given a subset $\cA\subset L_{\R}$, an \emph{$(L,\cA)$-lattice polarization} (or just $(L,\cA)$-polarization) on $X$ is an $L$-lattice polarization $j$ such that $j(\cA)\subset\Pic(X)_{\R}$ contains an integral ample class, that is, such that $j(\cA)$ intersects $\Amp(X)\cap\Pic(X)$ nontrivially.
\end{definition}

If $\cA\subset L_{\R}$ is a subset then an isometric embedding $j:L\hookrightarrow\Pic(X)$ is an $(L,\cA)$-polarization if and only if it is an $(L,\cA\cap L)$-polarization. In the case when $\cA=L_{\R}$ (or when $\cA=L$), an $(L,L_{\R})$-lattice polarization is the same thing as an $L$-lattice polarization. Other cases we have in mind are when $\cA$ is an ample cone of $L$ and when $\cA=\left\{h\right\}$ for a single vector $h\in L$ with positive square.

\begin{remark}
    Let $X$ be a K3 surface, let $\cA\subset L_{\R}$ be an ample cone for $L$, and let $j:L\hookrightarrow\Pic(X)$ be an $(L,\cA)$-lattice polarization on $X$. 
    We then have $j(\cC(L))\subset\cC(X)$ and $j(\Delta(L))\subset\Delta(X)$. However, $j$ need not map $\cA$ into the ample cone of $X$, as there may be $(-2)$-classes in $\Pic(X)$ that lie outside the image of $j$.
\end{remark}

\begin{remark}
   One can generalize the above definition by replacing ampleness with a weaker positivity property, e.g., quasi-ampleness~\cites{Dolgachev,AE,AE25}. 
   In this more general setting, the above should be called \emph{ample lattice polarizations}.
\end{remark}

\begin{definition}[Lattice polarizations in families]
    A \emph{family of K3 surfaces} over a base scheme $T$ is an algebraic space $X$ equipped with a smooth proper finitely presented morphism $X\to T$ all of whose geometric fibers are K3 surfaces. 
    Let $f\colon X\to T$ be a family of K3 surfaces. 
    A \emph{family of $L$-lattice polarizations} on $f\colon X\to T$ is an embedding
    \[
        j\colon\underline{L}_T\hookrightarrow\pic_{X/T}
    \]
    compatible with the respective pairings such that for every geometric point $t\in T$ the fiber $j_t\colon L\hookrightarrow\pic(X_t)$ is a lattice polarization on $X_t$. 
    Given a subset $\cA\subset L_{\R}$, we say that $j$ is a \emph{family of $(L,\cA)$-lattice polarizations} if each geometric fiber is an $(L,\cA)$-lattice polarization. 
    We say that $j$ is \emph{primitive} if its restriction to every geometric fiber is a primitive embedding of lattices.
\end{definition}

\subsection{Moduli of lattice-polarized K3 surfaces}


\begin{definition}[The moduli stack of $(L,\cA)$-polarized K3 surfaces]
    \label{def:moduli stack of lattice-pol K3s}
    Let $S$ be a base scheme and let $\cA\subset L_{\R}$ be a subset. 
    The \emph{moduli stack of $(L,\cA)$-polarized $\K3$-surfaces over $S$}, denoted $\cM_{(L,\cA)/S}$, is the category fibered in groupoids over the category of $S$-schemes whose objects over an $S$-scheme $T$ are pairs $(X,j)$, where $X\to T$ is a family of $\K3$-surfaces and $j\colon\underline{L}_T\to \pic_{X/T}$ is a family of $(L,\cA)$-lattice polarizations, and whose morphisms between objects $(X,j)\to (X',j')$ lying over a morphism $v:T\to T'$ of $S$ schemes are morphisms $u:X\to X'$ such that the diagram
    \[
        \begin{tikzcd}
            X\arrow{d}\arrow{r}{u}&X'\arrow{d}\\
            T\arrow{r}{v}&T'
        \end{tikzcd}
    \]
    commutes and is Cartesian, and also the diagram
    \[
        \begin{tikzcd}
            v^*\underline{L}_{T'}\arrow[hook]{d}[swap]{v^*j'}\arrow{r}{\sim}&\underline{L}_{T}\arrow[hook]{d}{j}\\
            v^*\Pic_{X'/T'}\arrow{r}{\sim}[swap]{u^*}&\Pic_{X/T}
        \end{tikzcd}
    \]
    commutes.
    
    We write
    \[
        \cP_{(L,\cA)/S}\subset\cM_{(L,\cA)/S}
    \]
    for the full substack whose objects are families of primitively $(L,\cA)$-polarized K3 surfaces.
    
    In the special case when $\cA=L_{\R}$, the stack $\cM_{(L,L_{\R})}$ (resp.\ $\cP_{(L,L_{\R})}$) is the usual moduli stack of $L$-polarized K3 surfaces (resp. primitively $L$-polarized K3 surfaces). We denote it by
    \[
        \cM_{L/S}:=\cM_{(L,L_{\R})/S}\hspace{1cm}(\text{resp.\ }\cP_{L/S}:=\cP_{(L,L_{\R})/S}).
    \]
    If confusion is unlikely to occur, we will suppress the base scheme $S$ from the notation and write $\cM_{(L,\cA)}$ for $\cM_{(L,\cA)/S}$ and so on.
\end{definition}

\begin{remark}
    When working with moduli spaces of lattice-polarized K3 surfaces over the complex numbers, it is traditional---and mostly harmless---to work only with primitive lattice polarizations. 
    Primitive polarizations do not suffice in positive characteristic to include certain families involving specializations into/within the supersingular locus. 
    Thus, it seems prudent to consider the entire stack $\cM_{(L,\cA)/S}$ until we have reason to do otherwise.
\end{remark}

\begin{remark}
    For a subset $\cA\subset L_{\R}$, giving an $(L,\cA)$-polarization is the same as giving an $(L,\cA\cap L)$-polarization. 
    Thus we have a canonical isomorphism $\cM_{(L,\cA\cap L)/S}\simeq\cM_{(L,\cA)/S}$ of stacks. 
    We can therefore safely identify the stacks $\cM_{(L,\cA)/S}$ and $\cM_{(L,\cB)/S}$ when $\cA,\cB\subset L_{\R}$ are subsets such that $\cA\cap L=\cB\cap L$. 
\end{remark}

\begin{remark}\label{rem:remark1}
    If $\cA\subset L_{\R}$ is a subset that does not contain an integral class with positive square that is orthogonal to every $(-2)$-class in $L$, then $\cM_{(L,\cA)/S}$ is empty. 
    Thus, if $\cM_{(L,\cA)/S}$ is nonempty then $\cA$ must intersect $\cC(L)^{\circ}\cap L$ nontrivially, and hence $\cA$ must contain an integral class in some ample cone of $L$.
\end{remark}

\begin{example}
     When $\cA=\left\{h\right\}$ for a single vector $h\in L$ we write $\cM_{(L,h)/S}:=\cM_{(L,\left\{h\right\})/S}$. 
     As in Remark \ref{rem:remark1}, if $h$ is not contained in $\cC(L)^{\circ}\cap L$ then $\cM_{(L,h)/S}$ is empty.
\end{example}

We now fix a base scheme $S$, which we immediately suppress from the notation. 
Given subsets $\cA\subset\cB\subset L_{\R}$, an $(L,\cA)$-polarization is in particular an $(L,\cB)$-polarization. 
This gives a morphism of stacks
\[
    \iota_{(\cA,\cB)}:\cM_{(L,\cA)}\to\cM_{(L,\cB)}.
\]
In particular, for any subset $\cA\subset L_{\R}$ we have a morphism
\[
    \iota_{(\cA,L_{\R})}:\cM_{(L,\cA)}\to\cM_{(L,L_{\R})}=:\cM_L.
\]
At the opposite extreme, for a single integral vector $h\in\cA\cap\cC(L)^{\circ}\cap L$ we have a morphism 
\[
    \iota_{(h,\cA)}:\cM_{(L,h)}\to\cM_{(L,\cA)}.
\]
Taking the disjoint union of the latter as $h$ ranges over all integral vectors in $\cA\cap\cC(L)^{\circ}$ we obtain a morphism
\[
    \pi_{\cA}:\bigsqcup_{h\in\cA\cap\cC(L)^{\circ}\cap L}\cM_{(L,h)}\to\cM_{(L,\cA)}.
\]

\begin{lemma}\label{lem:open immersion}
    For any subsets $\cA\subset\cB\subset L_{\R}$ the morphism $\iota_{(\cA,\cB)}$ is an open immersion and the morphism $\pi_{\cA}$ is a Zariski open cover.
\end{lemma}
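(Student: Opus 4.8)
The plan is to reduce the statement to a local criterion on K3 surfaces and their Picard groups. For the claim that $\iota_{(\cA,\cB)}$ is an open immersion, I would first observe that the map is a monomorphism: an object of $\cM_{(L,\cB)}$ lying in the image determines the object of $\cM_{(L,\cA)}$ uniquely, since the underlying data $(X,j)$ is literally the same, only the positivity condition on $j(\cA)$ has been added. So $\iota_{(\cA,\cB)}$ is fully faithful onto a full subcategory, and it remains to show the inclusion is representable by open immersions. Given an object $(X\to T, j)$ of $\cM_{(L,\cB)}$, I want to show that the locus $T' \subseteq T$ of points $t$ where $j_t$ is actually an $(L,\cA)$-polarization is open, and that $\cM_{(L,\cA)} \times_{\cM_{(L,\cB)}} T = T'$. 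The key point is that being an $(L,\cA)$-polarization means $j(\cA)$ meets $\Amp(X_t) \cap \Pic(X_t)$; equivalently, since $j(\cA \cap L)$ is a fixed finite-or-countable set of integral classes in $\underline{L}_T$, there is some $h \in \cA \cap L$ (necessarily with $h^2 > 0$ and $h$ not orthogonal to any $(-2)$-class of $L$, by Remark \ref{rem:remark1}) whose image $j(h)$ is ample on $X_t$. Ampleness of a line bundle in a proper flat family is an open condition on the base \cite{stacks-project}, so for each such $h$ the locus where $j(h)$ is ample is open, and $T'$ is the union of these opens, hence open. The identification of $T'$ with the fiber product is then formal. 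This simultaneously handles $\iota_{(h,\cA)}$ and shows that the image of $\pi_\cA$ is the union, over $h \in \cA \cap \cC(L)^\circ \cap L$, of these open substacks.

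For the statement that $\pi_\cA$ is a Zariski open cover, it remains to check surjectivity: every object of $\cM_{(L,\cA)}$ is, Zariski-locally on the base, in the image of some $\iota_{(h,\cA)}$. Given $(X \to T, j) \in \cM_{(L,\cA)}$ and a point $t \in T$, by definition $j_t(\cA)$ contains an integral ample class on $X_t$, i.e.\ there is $h \in \cA \cap L$ with $j_t(h) \in \Amp(X_t)$. Then $j(h)$ is a section of $\underline{L}_T$ that is ample on the fiber $X_t$; by openness of the ample locus, $j(h)$ is ample on $X_s$ for all $s$ in an open neighborhood $V \ni t$, so $(X|_V, j|_V)$ lifts to $\cM_{(L,h)}$ and lies in the image of $\iota_{(h,\cA)}$. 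Since such neighborhoods cover $T$, the morphism $\pi_\cA$ is surjective; being a disjoint union of open immersions, it is a Zariski open cover.

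I expect the main obstacle to be the bookkeeping around the identification $\cM_{(L,\cA)} \times_{\cM_{(L,\cB)}} T = T'$ as stacks (as opposed to a pointwise statement): one must verify it on $T$-schemes, checking that for $g\colon T'' \to T$ the pulled-back polarization $g^*j$ is an $(L,\cA)$-polarization if and only if $g$ factors through $T'$, which uses that the ample locus of each $j(h)$ is not merely a subset of $|T|$ but a genuine open subscheme through which the relevant maps factor, together with the fact that a morphism factors through an open subscheme iff it does so on underlying topological spaces. A secondary point requiring care is that $\cA \cap L$ may be infinite, so the open cover of $T'$ (or of $T$, in the surjectivity argument) is a priori infinite; this causes no problem since we only need openness and surjectivity, not finiteness, but it should be noted explicitly. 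Finally, one should record that each $\cM_{(L,h)}$ is nonempty only when $h \in \cC(L)^\circ \cap L$, which is why the index set in $\pi_\cA$ is taken to be $\cA \cap \cC(L)^\circ \cap L$ rather than all of $\cA \cap L$.
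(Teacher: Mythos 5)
Your proposal is correct and follows essentially the same route as the paper: both arguments come down to the openness of the relative ample locus of the sections $j(h)$ for $h\in\cA\cap\cC(L)^{\circ}\cap L$, together with the observation that an $(L,\cA)$-polarization must send some such integral $h$ to an ample class. The only difference is organizational — the paper first proves each $\iota_{(h,\cA)}$ is an open immersion and then deduces the statement for $\iota_{(\cA,\cB)}$ by composing with the cover $\pi_{\cA}$, whereas you exhibit the fiber product of $\iota_{(\cA,\cB)}$ directly as the union of the open ample loci — but this is the same underlying argument.
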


\begin{proof}
    Let $h\in\cA\cap\cC(L)^{\circ}\cap L$ be an integral element of $\cA$ with positive square that is orthogonal to every $(-2)$-class in $L$. 
    We first show that $\iota_{(h,\cA)}$ is an open immersion. 
    Let $T$ be an $S$-scheme, let $(X,j)$ be a family of $(L,\cA)$-polarized K3 surfaces over $T$, and let $\mu:T\to\cM_{(L,\cA)}$ be the corresponding morphism. 
    Let $U$ denote the fiber product in the diagram
    \[
        \begin{tikzcd}
            U\arrow{r}\arrow[hook]{d}&\cM_{(L,h)}\arrow[hook]{d}{\iota_{(h,\cA)}}\\
            T\arrow{r}{\mu}&\cM_{(L,\cA)}.
        \end{tikzcd}
    \]
    The locus of geometric points of $T$ over which the section $j(h)\in\Pic_{X/T}(T)$ restricts to an ample class on the fiber is open \cite{EGA4.3}*{Corollaire~9.6.4}, so $U\hookrightarrow T$ is an open immersion. 
    It follows that $\iota_{(h,\cA)}$ is an open immersion.
    
    We next show that $\pi_{\cA}$ is a Zariski cover. 
    By the above, $\pi_{\cA}$ restricts to an open immersion on each term of the disjoint union. 
    To see that $\pi_{\cA}$ is surjective, note that by definition the image of $\cA$ under an $(L,\cA)$-polarization contains an integral ample class, which must be the image of an element of $\cA\cap\cC(L)^{\circ}\cap L$.
    
    It remains to show that $\iota_{(\cA,\cB)}$ is an open immersion. 
    For this note that the composition
    \[
        \bigsqcup_{h\in\cA\cap\cC(L)^{\circ}\cap L}\cM_{(L,h)}\xrightarrow{\pi_{\cA}}\cM_{(L,\cA)}\xrightarrow{\iota_{(\cA,\cB)}}\cM_{(L,\cB)}
    \]
    is a disjoint union of the open immersions $\iota_{(h,\cB)}$ for some integral classes $h\in\cB\cap\cC(L)^{\circ}\cap L$. 
    We have already shown that $\pi_{\cA}$ is a Zariski cover, so $\iota_{(\cA,\cB)}$ is an open immersion.
\end{proof}

We recall that by definition an algebraic stack is \emph{separated} over $S$ if its relative diagonal over $S$ is proper \cite{stacks-project}*{04YW}. For a Deligne--Mumford stack, this is equivalent to the relative diagonal being finite.
We say that an algebraic stack $\cX$ over $S$ is \emph{Zariski-locally separated} over $S$ if it admits a Zariski cover $\cU\to\cX$ where $\cU$ is an algebraic stack that is separated over $S$.

\begin{theorem}\label{thm:rep, easy version}
    For any subset $\cA\subset L_{\R}$, the stack $\cM_{(L,\cA)}$ is algebraic, is Deligne--Mumford, locally of finite type, quasi-separated, and Zariski-locally separated over $S$, and has finite inertia.
\end{theorem}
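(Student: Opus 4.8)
The strategy is to reduce all the asserted properties of $\cM_{(L,\cA)}$ to known facts about the moduli stack $\cM_L$ of (not necessarily primitively) $L$-polarized K3 surfaces — or, going one step further, to the stack of primitively quasi-polarized K3 surfaces with a choice of trivialization of part of the Picard sheaf — via the Zariski covers constructed in Lemma~\ref{lem:open immersion}. Since $\iota_{(\cA,L_\R)}\colon \cM_{(L,\cA)}\to\cM_L$ is an open immersion, and all the properties in the statement (being algebraic, Deligne--Mumford, locally of finite type, quasi-separated, Zariski-locally separated, having finite inertia) are either local on the target or inherited by open substacks, it suffices to establish them for $\cM_L = \cM_{(L,L_\R)}$ itself.

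First I would construct $\cM_L$ explicitly. Fix a vector $h\in L$ of positive square lying in an ample cone; by Lemma~\ref{lem:open immersion} the stacks $\cM_{(L,h)}$ cover $\cM_L$, so it is enough to treat a single $\cM_{(L,h)}$. There is a natural forgetful morphism $\cM_{(L,h)}\to \cM_{2d}$ to the moduli stack of primitively quasi-polarized K3 surfaces of degree $2d = h^2$ (or to the stack $\cM_{\langle h\rangle}$ of $\langle h\rangle$-polarized K3s), remembering only $j(h)\in\Pic_{X/T}$. It is classical — going back to Rizov and Maulik--Poonen, and used extensively in the literature on K3 moduli over $\Z$ — that $\cM_{2d}$ is a Deligne--Mumford stack, separated and of finite type over $\Z$. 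The fiber of $\cM_{(L,h)}\to\cM_{2d}$ over a family $(X,\lambda)$ parametrizes the extra data of an isometric embedding $L\hookrightarrow \Pic_{X/T}$ extending $h\mapsto\lambda$ with image containing an ample class; such embeddings form a closed-and-open condition cut out inside the sheaf $\underline{\Hom}(L,\Pic_{X/T})$ (which is a disjoint union of closed subschemes of finite type since $\Pic_{X/T}$ is an unramified, locally-finite-type group algebraic space over $\cM_{2d}$) by the finitely many equations expressing compatibility with the pairing, and then intersected with the ampleness open locus (open by \cite{EGA4.3}*{Corollaire~9.6.4}, exactly as in the proof of Lemma~\ref{lem:open immersion}). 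Hence $\cM_{(L,h)}\to\cM_{2d}$ is representable by algebraic spaces, unramified, and locally of finite type, so $\cM_{(L,h)}$ — and therefore $\cM_L$ and $\cM_{(L,\cA)}$ — is a Deligne--Mumford stack, locally of finite type over $S$.

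For quasi-separatedness: the diagonal of $\cM_{(L,\cA)}\to\cM_L$ is an open immersion, hence quasi-compact and separated, and the diagonal of $\cM_L\to S$ can be checked on the cover $\bigsqcup_h \cM_{(L,h)}\to\cM_L$, where each $\cM_{(L,h)}\to\cM_{2d}$ has separated (even unramified) diagonal and $\cM_{2d}$ is separated over $S$; composing, each $\cM_{(L,h)}$ is separated over $S$. This simultaneously gives Zariski-local separatedness — $\bigsqcup_h\cM_{(L,h)}\to\cM_{(L,\cA)}$ is a Zariski cover by separated-over-$S$ stacks — and, by descent of the quasi-compact-and-separated property of the diagonal through the cover, quasi-separatedness of $\cM_{(L,\cA)}$ over $S$. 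Finite inertia: $\mathcal{I}_{\cM_{(L,\cA)}}\to\cM_{(L,\cA)}$ is obtained by base change from $\mathcal{I}_{\cM_{(L,h)}}\to\cM_{(L,h)}$ over the cover, and on $\cM_{(L,h)}$ the inertia injects into the inertia of $\cM_{2d}$ (an automorphism of $(X,j)$ is in particular an automorphism of $(X,\lambda)$), which is finite since $\cM_{2d}$ is a separated DM stack of finite type; an unramified monomorphism of algebraic spaces over $\cM_{(L,h)}$ from a finite one is finite, so $\mathcal{I}_{\cM_{(L,h)}}$ is finite over $\cM_{(L,h)}$, and finiteness of inertia descends through Zariski covers.

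\textbf{Main obstacle.} The genuinely substantive input is the known algebraicity and separated finite-typeness of the base stack $\cM_{2d}$ (equivalently $\cM_{\langle h\rangle}$) of primitively quasi-polarized K3 surfaces over $\Z$ — this is where the hard geometry of K3 surfaces (boundedness, the theory of the Picard scheme for families of K3s, representability of the quasi-polarized moduli functor) is used, and I would simply cite it. Granting that, the only point requiring care in the reduction is verifying that the set of isometric embeddings $L\hookrightarrow\Pic_{X/T}$ extending a given $\lambda$ is parametrized by a \emph{locally finite type} algebraic space over $\cM_{2d}$: one must check that $\underline{\Hom}(\underline{L}_T,\Pic_{X/T})$ is reasonable even though $\Pic_{X/T}$ has infinitely many components, using that a homomorphism $\underline{L}_T\to\Pic_{X/T}$ is determined by the images of a finite basis and that, component by component, these loci are closed subschemes of finite type — together with the observation that the ampleness condition is open and hence does not disturb local finite type. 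I expect everything else to be a routine assembly of standard descent lemmas for DM stacks from the Stacks Project.
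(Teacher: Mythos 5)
Your proposal follows essentially the same route as the paper: cover $\cM_{(L,\cA)}$ by the open substacks $\cM_{(L,h)}$, map each to the moduli stack $\cM_{2d}$ of (not necessarily primitively) polarized K3 surfaces, identify the fibers with open-and-closed subfunctors of $\mathcal{Hom}_T(\underline{L}_T,\Pic_{X/T})\simeq(\Pic_{X/T})^r$, and assemble. The arguments for algebraicity, Deligne--Mumfordness, local finite type, Zariski-local separatedness, and finite inertia are sound (your finite-inertia argument via the closed immersion of $\mathcal{I}_{\cM_{(L,h)}}$ into the pullback of $\mathcal{I}_{\cM_{2d}}$ is a fine alternative to the paper's route through properness of inertia, Lemma~\ref{lem:quasi-sep lemma}(1)). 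One small slip: the target of the forgetful map must be the stack of \emph{not necessarily primitively} polarized K3 surfaces of degree $2d$, since $j(h)$ can be imprimitive in $\Pic(X)$; your parenthetical $\cM_{\langle h\rangle}$ is the right object, but ``primitively quasi-polarized'' is not.

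The genuine gap is in the quasi-separatedness step. You assert that quasi-separatedness of $\cM_{(L,\cA)}$ follows ``by descent of the quasi-compact-and-separated property of the diagonal through the cover'' from the fact that each piece $\cM_{(L,h)}$ is separated. That implication is false in general: a stack (or scheme) covered by separated --- even affine --- open pieces need not be quasi-separated, because the relevant pullbacks of the diagonal are the maps $\cM_{(L,h)}\cap\cM_{(L,h')}\to\cM_{(L,h)}\times_S\cM_{(L,h')}$, and their quasi-compactness requires the pairwise intersections to be quasi-compact, not merely the pieces to be separated. This is exactly what the paper's Lemma~\ref{lem:quasi-sep lemma}(2) supplies: after reducing to $S=\spec\Z$, it uses that each $\cM_{(L,h)}$ is Noetherian (quasi-compact and of finite type over a Noetherian base), so that every open substack of it --- in particular each pairwise intersection --- is quasi-compact. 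Your write-up only establishes that $\cM_{(L,h)}\to\cM_{2d}$ is \emph{locally} of finite type, so quasi-compactness of the pieces is not available from what you have written; you would need to add the boundedness observation that an isometric embedding extending $h\mapsto\lambda$ sends a fixed basis of $L$ into finitely many components of $\Pic_{X/T}$ (their degrees against the ample class $\lambda$ being controlled by the Gram matrix of $L$ via the Hodge index theorem), and then run the intersection-quasi-compactness argument rather than appeal to a descent principle that does not hold.
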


\begin{proof}
    We first consider the case when $\cA=\left\{h\right\}$ for a single integral vector $h\in L$ with positive square, say $h^2=2d$. 
    Let $\cM_{2d}$ denote the usual moduli stack of (not necessarily primitively) polarized K3 surfaces of degree $2d$. 
    This is a Deligne--Mumford stack that is of finite type and separated over $S$ (see \cite{Rizov}*{Theorem~4.3.3}, or \cite{Huy16}*{Theorem 5.1.3} for the primitively polarized substack). 
    Consider the forgetful morphism
    \[
        \phi:\cM_{(L,h)}\to\cM_{2d}.
    \]
    Let $T$ be an $S$-scheme, let $X\to T$ be a family of K3 surfaces, let $v\in\Pic_{X/T}(T)$ be a section of the relative Picard scheme that restricts to an ample class of degree $2d$ over every geometric point of $T$, and let $\mu:T\to\cM_{2d}$ be the resulting morphism. 
    Let $\cF_T$ denote the fiber product in the diagram
    \[
        \begin{tikzcd}
            \cF_T\arrow{r}\arrow{d}&\cM_{(L,h)}\arrow{d}{\phi}\\
            T\arrow{r}{\mu}&\cM_{2d}.
        \end{tikzcd}
    \]
    Let $\mathcal{Hom}_T(\underline{L}_T,\Pic_{X/T})$ denote the sheaf on the category of $T$-schemes whose sections over a $T$-scheme $T'\to T$ are homomorphisms of $T'$-group schemes $\underline{L}_{T'}\to\Pic_{X_{T'}/T'}$. 
    This sheaf is representable, locally of finite type, and separated over $T$. 
    Indeed, if $L$ has rank $r$, then choosing a basis for $L$ gives rise to an isomorphism
    \[
        \mathcal{Hom}_T(\underline{L}_T,\Pic_{X/T})\simeq\left(\pic_{X/T}\right)^r.
    \]
    We have an injective map
    \[
        \cF_T\hookrightarrow\mathcal{Hom}_T(\underline{L}_T,\Pic_{X/T})
    \]
    which identifies $\cF_T$ with the subfunctor of the right-hand side consisting of those homomorphisms that are compatible with the pairings (which implies injectivity) and send $h$ to $v$. 
    These conditions are open and closed in families, so $\cF_T$ is an open and closed subfunctor of $\mathcal{Hom}_T(\underline{L}_T,\Pic_{X/T})$.
    In particular, $\cF_T$ is representable, locally of finite type, and separated. 
    Thus, the morphism $\phi$ is representable and is locally of finite type and separated over $T$. 
    From the properties of $\cM_{2d}$ noted above, it follows that $\cM_{(L,h)}$ is Deligne--Mumford, locally of finite type, and separated over $S$.

    We now consider the case of an arbitrary subset $\cA\subset L_{\R}$.
    By Lemma \ref{lem:open immersion}, we have a Zariski cover of stacks
    \[
        \pi_{\cA}:\bigsqcup_{h\in\cA\cap\cC(L)^{\circ}\cap L}\cM_{(L,h)}\to\cM_{(L,\cA)}.
    \]
    We may verify the properties of being algebraic, Deligne--Mumford, and locally of finite type after passing to a Zariski cover. 
    Thus these properties for $\cM_{(L,\cA)}$ follow from the above results for the $\cM_{(L,h)}$. 
    We have shown that each $\cM_{(L,h)}$ is separated, so $\cM_{(L,\cA)}$ is Zariski-locally separated. 
    By Lemma \ref{lem:quasi-sep lemma}, the inertia of $\cM_{(L,\cA)}$ is proper, and hence finite.
    
    It remains only to show that $\cM_{(L,\cA)}$ is quasi-separated over $S$. 
    This stack is obtained by base change from the stack $\cM_{(L,\cA)/\Z}$ over $\Z$. 
    Thus, it suffices to prove the result in this case, and in particular, to prove the result, we may assume that $S$ is Noetherian. 
    By the above, we know that each of the $\cM_{(L,h)}$ is quasi-compact, separated, and of finite type over $S$. 
    It follows that each $\cM_{(L,h)}$ is Noetherian (in the sense of \cite{stacks-project}*{0510}). 
    By Lemma \ref{lem:quasi-sep lemma} $\cM_{(L,\cA)}$ is quasi-separated over $S$.
\end{proof}

\subsection{Primitively polarized K3 surfaces}

We consider the substack $\cP_{(L,\cA)}\subset\cM_{(L,\cA)}$ of primitively polarized K3 surfaces. 

\begin{proposition}\label{prop:primitive substack is open}
    The inclusion $\cP_{(L,\cA)}\hookrightarrow\cM_{(L,\cA)}$ is an open immersion.
\end{proposition}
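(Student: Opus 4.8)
The plan is to show that the primitivity condition on a family of lattice polarizations is open on the base, which immediately gives that $\cP_{(L,\cA)}$ is an open substack of $\cM_{(L,\cA)}$. Concretely, let $T$ be an $S$-scheme and let $(X,j)$ be a family of $(L,\cA)$-polarized K3 surfaces over $T$, corresponding to a morphism $\mu\colon T\to\cM_{(L,\cA)}$. I must show that the locus
\[
    T^{\prim}:=\{t\in T\mid j_t\colon L\hookrightarrow\Pic(X_t)\text{ is primitive}\}
\]
is open in $T$; then $\cP_{(L,\cA)}\times_{\cM_{(L,\cA)}}T=T^{\prim}$ is open in $T$, and since the property of being an open immersion can be checked after a smooth (indeed any) base change and $\mu$ was arbitrary, the inclusion $\cP_{(L,\cA)}\hookrightarrow\cM_{(L,\cA)}$ is an open immersion.

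The key point is that non-primitivity is a closed condition, i.e.\ $T\setminus T^{\prim}$ is closed. By Lemma~\ref{lem:open immersion} and the fact that being an open immersion is local on the source, we may reduce to the case $\cA=\{h\}$ for a single $h\in L$ with $h^2=2d$; then $\Pic_{X/T}$ receives a section $j(h)$ ample of degree $2d$ on every fiber, and in particular (after possibly an \'etale cover of $T$ which is harmless for checking openness) the relative Picard scheme $\Pic_{X/T}\to T$ is an unramified, separated group algebraic space over $T$, of finite type, whose fibers are the finitely generated free abelian groups $\Pic(X_t)$. The embedding $j$ is non-primitive at $t$ precisely when there exists a vector $v\in\Pic(X_t)$ and an integer $m\geq 2$ with $mv\in j_t(L)$ but $v\notin j_t(L)$; equivalently, the cokernel $\Pic(X_t)/j_t(L)$ has nontrivial torsion. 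I would phrase this via the complementary saturation: $j_t$ is primitive iff the quotient $\Pic(X_t)/j_t(L)$ is torsion-free, iff $j_t(L)$ equals its own saturation $j_t(L)^{\mathrm{sat}}=\bigl(j_t(L)\otimes\Q\bigr)\cap\Pic(X_t)$.

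To see this is a closed condition, I would argue by semicontinuity of ranks/specialization on the relative Picard scheme. Over a point $t$ where $j_t$ is primitive, choose an open neighborhood and complete the basis of $j(L)$ locally; one checks that in a neighborhood of $t$ the sections still form a primitive sublattice, using that the rank of the fiberwise saturation is lower semicontinuous and that under a specialization $t\rightsquigarrow s$ the torsion of $\Pic(X_s)/j_s(L)$ can only increase, since the image of $j$ is locally constant while $\Pic_{X/T}$ can only grow under specialization (this is the usual fact that the geometric Picard rank jumps up under specialization, here combined with the observation that $j(L)$ is a constant sub-local-system). The main obstacle, and the step requiring the most care, is making this "torsion can only increase under specialization" argument rigorous at the level of the relative Picard algebraic space: one must be careful that $\Pic_{X/T}$ need not be smooth or even flat over $T$ in positive characteristic near supersingular points, so I would instead work with the $\Q_\ell$- or crystalline realization, or simply cite that for the universal family over $\cM_{2d}$ the non-primitive locus is known to be closed (it is contained in the supersingular locus over primes of bad reduction, which is itself closed), and pull back along $\phi\colon\cM_{(L,h)}\to\cM_{2d}$. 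This reduces the whole proposition to the known geometry of $\cM_{2d}$ together with the representability and separatedness of $\phi$ established in the proof of Theorem~\ref{thm:rep, easy version}.
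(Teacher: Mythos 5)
Your setup is right: you correctly identify that the content of the proposition is that the locus in $T$ where the cokernel of $j_t\colon L\hookrightarrow\Pic(X_t)$ is torsion-free is open, equivalently that the non-primitive locus is closed. But the proof of that closedness is exactly where your argument has a genuine gap. The specialization observation you invoke (torsion of $\Pic(X_s)/j_s(L)$ can only grow under specialization) shows at best that the non-primitive locus is stable under specialization; without some constructibility input this does not yield closedness, and you do not supply that input. Your proposed escape route fails outright: you want to cite that the non-primitive locus in $\cM_{2d}$ is closed because it ``is contained in the supersingular locus \ldots{} which is itself closed,'' but the supersingular locus is \emph{not} closed over $\spec\Z$ --- this is precisely the content of Remark~\ref{rem:ssing locus} (the Fermat quartic family is supersingular exactly at the primes $p\equiv 3\pmod 4$, an infinite non-closed set). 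Moreover, even if it were closed, containment of the non-primitive locus in a closed set would not make the non-primitive locus itself closed.

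The paper's proof avoids specialization arguments entirely and is worth contrasting with your approach. It forms the quotient $G=\Pic_{X/T}/\underline{L}_T$ as an fppf sheaf, checks that $G$ is a separated group algebraic space over $T$ (because $\underline{L}_T$ is closed in $\Pic_{X/T}$ and the quotient map is faithfully flat) which inherits the ``essentially proper'' property (existence and uniqueness in the valuative criterion) from $\Pic_{X/T}$, and then observes that the torsion subsheaf $G_{\tors}$ has bounded torsion, hence is closed in $G$ and quasi-finite over $T$, hence \emph{finite} over $T$. The image of a finite morphism is closed, and $T^{\prim}$ is its complement. If you want to salvage your route, you would need to replace the appeal to the supersingular locus with an argument of this finiteness type; the $\ell$-adic or crystalline realizations you mention do not obviously help, since the failure of primitivity is an integral phenomenon on Picard groups, invisible rationally.
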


\begin{proof}
    Let $(X,j)$ be a family of $(L,\cA)$-polarized K3 surfaces over an $S$-scheme $T$. 
    Let $\mu:T\to\cM_{(L,\cA)}$ denote the corresponding morphism and let $U$ denote the fiber product in the Cartesian diagram
    \[
        \begin{tikzcd}
            U\arrow[hook]{d}\arrow{r}&\cP_{(L,\cA)}\arrow[hook]{d}\\
            T\arrow{r}{\mu}&\cM_{(L,\cA)}.
        \end{tikzcd}
    \]
    The image of the map $U\hookrightarrow T$ is the locus in $T$ of points over which the polarization $j:\underline{L}_T\hookrightarrow\Pic_{X/T}$ is primitive. 
    Let $G$ denote the cokernel of $j$ regarded as a homomorphism of fppf sheaves of abelian groups, so that we have a short exact sequence
    \[
        0\to\underline{L}_T\to\Pic_{X/T}\to G\to 0.
    \]
    As $\underline{L}_T$ is flat over $T$, the quotient $G$ is representable by a $T$-group scheme. 
    The fiber product of the identity section of $G$ with the quotient map $\Pic_{X/T}\twoheadrightarrow Q$ is exactly $\underline{L}_T$, which is closed in $\Pic_{X/T}$. 
    As the quotient map is surjective and flat, it follows that the identity morphism of $G$ is a closed immersion, and thus $G$ is separated \cite{stacks-project}*{047G}. 
    Furthermore, as $X/T$ is smooth, $\Pic_{X/T}$ is \emph{essentially proper} over $T$, meaning that it satisfies the existence and uniqueness parts of the valuative criterion for properness. 
    It follows that the same is true for $G$. 
    Let
    \[
        G_{\mathrm{tors}}:=\bigcup_{n\geq 1}\ker(G\xrightarrow{\cdot n}G)
    \]
    denote the torsion part of $G$. The torsion in $G$ is bounded, so $G_{\mathrm{tors}}$ is representable, is closed in $G$, and is quasi-finite over $T$. 
    Therefore $G_{\mathrm{tors}}$ is finite over $T$. 
    It follows that the image of the morphism $G_{\mathrm{tors}}\to T$ is closed. 
    The subscheme $U$ is its complement, and hence is open.
\end{proof}

If $S$ is defined over $\Q$, then we show in Proposition \ref{prop:boundary on ssing locus} below that the primitively polarized substack $\cP_{(L,\cA)}\subset\cM_{(L,\cA)}$ is also closed. 
This is not the case in general, however, because of the existence of specializations of K3 surfaces (necessarily with supersingular special fibers) whose associated specialization map on Picard groups is non-primitive. 
Thus, from a geometric perspective, it seems most natural to consider not only the primitively polarized substack but also its closure in $\cM_{(L,\cA)}$.

\begin{definition}
   We denote the closure of $\cP_{(L,\cA)}$ in $\cM_{(L,\cA)}$ by $\overline{\cP}_{(L,\cA)}\subset\cM_{(L,\cA)}$ (or if we wish to include $S$ in the notation, by $\overline{\cP}_{(L,\cA)/S}$).
\end{definition}

The properties in the conclusion of Theorem \ref{thm:rep, easy version} are inherited by substacks. 
Thus $\overline{\cP}_{(L,\cA)}$ is algebraic, is Deligne--Mumford, locally of finite type, quasi-separated, and Zariski-locally separated over $S$, and has finite inertia.

\begin{remark}
    The stack $\overline{\cP}_{(L,\cA)}$ recovers the moduli stack of primitively $(L,\cA)$-polarized K3 surfaces if $S$ is defined over $\Q$, and also recovers Ogus's moduli space of lattice-polarized supersingular K3 surfaces \cite{Ogus83} if $S$ is defined in positive characteristic and $L$ is taken to be a supersingular K3 lattice.
\end{remark}

\subsection{Properties of the main component}

Complications of various kinds tend to arise along the supersingular locus. 
In this paper, we will avoid considering such issues and focus on results that hold away from the supersingular locus. 
Let $p$ be a prime. For a scheme $T$ the \emph{characteristic $p$ locus} of $T$ is the closed subscheme $T_p:=T\otimes_{\Z}\F_p\subset T$. 
By the compatibility of the moduli stack with base change, we have
\[
    (\cM_{(L,\cA)})_p:=(\cM_{(L,\cA)/S})_p\simeq\cM_{(L,\cA)/S_p}.
\]
The infinitesimal structure of the supersingular locus (i.e., the non-reduced structure) is subtle. 
We shall ignore it and only use the reduced structure. 
The \emph{(reduced) characteristic $p$ supersingular locus} in $\cM_{(L,\mathcal{A})}$, denoted
\[
    (\cM_{(L,\cA)})^{\mathrm{ss}}_p\subset(\cM_{(L,\cA)})_p,
\]
is the unique reduced substack of the characteristic $p$ locus of $\cM_{(L,\cA)}$ whose objects over an $S$-scheme $T$ of characteristic $p$ are families of $(L,\cA)$-polarized K3 surfaces $(f:X\to T_{\mathrm{red}},j)$ over the reduction of $T$ such that all geometric fibers of $f$ are supersingular. 
The \emph{(reduced) supersingular locus} in $\cM_{(L,\cA)}$ is the substack
\[
    \bigsqcup_{p\text{ prime}}(\cM_{(L,\cA)})^{\mathrm{ss}}_p\subset\cM_{(L,\cA)}.
\]
In particular, if $S$ is defined over $\Q$, then the supersingular locus is empty. 
We similarly define the supersingular loci in substacks of $\cM_{(L,\cA)}$. 
In general, the supersingular locus is closed under specialization, but need not itself be closed in $\cM_{(L,\cA)}$ (see Remark \ref{rem:ssing locus}).

\begin{remark}
    If $S$ has infinitely many different residue characteristics, then the supersingular locus may have infinitely many components, and some care is needed when working with this definition. 
    For example, in such situations the image of the supersingular locus need not be a closed substack of $\cM_{(L,\cA)}$ (Remark \ref{rem:ssing locus}), the inclusion of the supersingular locus need not be a homeomorphism onto its image, and thus need not be an immersion, and the non-supersingular locus (the complement of the supersingular locus in $\cM_{(L,\cA)}$) need not have any algebraic stack structure. 
    (To see the potential issue, consider the subscheme $\left\{n|n\in\Z\right\}$ of $\A^1=\A^1_{\Q}$. 
    This subscheme is a disjoint union of infinitely many closed points, but is not itself closed in $\A^1$, the inclusion $\left\{n|n\in\Z\right\}\hookrightarrow\A^1$ is not an immersion, and the complement $\A^1\setminus\left\{n|n\in\Z\right\}$ is not a scheme, as none of its points admit open affine neighborhoods).
\end{remark}

\begin{pg}[The specialization map]\label{pg:specialization map}
    Let $R$ be a DVR with residue field $k$ and field of fractions $K$. 
    Let $\eta\in\spec R$ (resp.\ $t\in\spec R$) be the generic (resp.\ closed) point. 
    Let $\overline{K}$ be an algebraic closure of $K$ and let $\overline{\eta}$ be the corresponding geometric generic point of $\spec R$. 
    Let $\overline{R}$ be the integral closure of $R$ in $\overline{K}$ and choose a closed point say $t'$ of $\spec\overline{R}$. 
    Then the residue field of $t'$ is an algebraically closed extension of the residue field of $R$. 
    Write $\overline{t}$ for the resulting geometric closed point of $\spec R$. For a family of K3 surfaces $X$ over $R$, there is a canonical (depending only on the above choices) \emph{specialization map}
    \begin{equation}\label{eq:specialization map}
        \mathrm{sp}:\Pic(X_{\overline{\eta}})\hookrightarrow\Pic(X_{\overline{t}}).
    \end{equation}
For the construction of this map, see \cite{MaulikPoonen}*{Propositions~3.3 \&~3.6} and the associated references. 
This map is compatible with the pairings and is thus always injective. Furthermore, if $X_{\overline{t}}$ (and hence $X_{\overline{\eta}}$) is non-supersingular then the specialization map is primitive.
In the case where $X_{\overline{t}}$ is supersingular, the specialization map may be non-primitive; in this case, the torsion part of its cokernel is $p$-primary, where $p$ is the characteristic of the residue field of $R$.
\end{pg}

\begin{proposition}\label{prop:boundary on ssing locus}
    The boundary $\overline{\cP}_{(L,\cA)}\setminus\cP_{(L,\cA)}$ is supported on the supersingular locus.
\end{proposition}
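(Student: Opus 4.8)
The plan is to prove the contrapositive at the level of points: if $\xi\in|\overline{\cP}_{(L,\cA)}|$ is represented by an $(L,\cA)$-polarized K3 surface $(X,j)$ over an algebraically closed field with $X$ \emph{not} supersingular, then $j$ is a primitive embedding, so that $\xi\in|\cP_{(L,\cA)}|$. The engine is the specialization map of~\S\ref{pg:specialization map} together with the elementary fact that an extension of a torsion-free abelian group by a torsion-free abelian group is torsion-free. First I would produce a trait through $\xi$ that meets the primitive locus generically. By Proposition~\ref{prop:primitive substack is open} the substack $\cP_{(L,\cA)}\hookrightarrow\cM_{(L,\cA)}$ is open, and it is dense in $\overline{\cP}_{(L,\cA)}$ by construction; by Theorem~\ref{thm:rep, easy version} (and the proof thereof, which exhibits the finite-type pieces $\cM_{(L,h)}$ as Noetherian) the stack $\cM_{(L,\cA)}$ is locally Noetherian, hence so is its closed substack $\overline{\cP}_{(L,\cA)}$. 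Thus $\xi$ lies on an irreducible component of $\overline{\cP}_{(L,\cA)}$ whose generic point lies in $\cP_{(L,\cA)}$, and — after pulling back along a smooth presentation by a scheme and invoking the classical fact that specializations in a locally Noetherian scheme are realized by morphisms from spectra of discrete valuation rings — there is a DVR $R$ with fraction field $K$ and a family $(\cX\to\spec R,\mathbf j)$ of $(L,\cA)$-polarized K3 surfaces whose closed point maps to $\xi$ and whose generic point maps into $\cP_{(L,\cA)}$. With the notation of~\S\ref{pg:specialization map}, this means $\mathbf j_{\overline\eta}\colon L\hookrightarrow\Pic(\cX_{\overline\eta})$ is primitive, while $(\cX_{\overline t},\mathbf j_{\overline t})$ is a geometric-point representative of $\xi$; in particular $\cX_{\overline t}$ is non-supersingular, and it suffices to show that $\mathbf j_{\overline t}$ is primitive.

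Next I would run the specialization argument. Since $\cX_{\overline t}$ is non-supersingular, \S\ref{pg:specialization map} tells us the specialization map $\mathrm{sp}\colon\Pic(\cX_{\overline\eta})\hookrightarrow\Pic(\cX_{\overline t})$ is a primitive embedding, i.e.\ has torsion-free cokernel. Because the family of polarizations $\mathbf j\colon\underline{L}_R\to\Pic_{\cX/R}$ is a collection of sections of the relative Picard functor, which restrict compatibly to the two geometric fibers, the map $\mathrm{sp}$ carries $\mathbf j_{\overline\eta}$ onto $\mathbf j_{\overline t}$; thus $\mathbf j_{\overline t}(L)=\mathrm{sp}(\mathbf j_{\overline\eta}(L))$, and the injectivity of $\mathrm{sp}$ gives an isomorphism $\Pic(\cX_{\overline\eta})/\mathbf j_{\overline\eta}(L)\iso\mathrm{sp}(\Pic(\cX_{\overline\eta}))/\mathbf j_{\overline t}(L)$, whose source is torsion-free because $\mathbf j_{\overline\eta}$ is primitive. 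Feeding this, together with the torsion-freeness of $\Pic(\cX_{\overline t})/\mathrm{sp}(\Pic(\cX_{\overline\eta}))$, into the short exact sequence
\[
  0\to\frac{\mathrm{sp}(\Pic(\cX_{\overline\eta}))}{\mathbf j_{\overline t}(L)}\to\frac{\Pic(\cX_{\overline t})}{\mathbf j_{\overline t}(L)}\to\frac{\Pic(\cX_{\overline t})}{\mathrm{sp}(\Pic(\cX_{\overline\eta}))}\to 0
\]
shows that $\Pic(\cX_{\overline t})/\mathbf j_{\overline t}(L)$ is torsion-free, i.e.\ $\mathbf j_{\overline t}$ is primitive. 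Hence $\xi\in|\cP_{(L,\cA)}|$, as desired, and the boundary $\overline{\cP}_{(L,\cA)}\setminus\cP_{(L,\cA)}$ is supported on the supersingular locus.

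The two diagram chases and the appeal to~\S\ref{pg:specialization map} are routine. The step that I expect to require the most care is the construction of the trait: one must check that a point of the closure of the open substack $\cP_{(L,\cA)}$ really is a specialization of a point of $\cP_{(L,\cA)}$ — this uses local Noetherianity crucially, and over a base such as $\spec\Z$ with infinitely many residue characteristics one should reduce to a smooth scheme cover and to the finite-type pieces $\cM_{(L,h)}$ before applying the classical statement — and then that one may take the closed geometric fiber $\cX_{\overline t}$ to represent $\xi$, which is harmless since both primitivity of a lattice polarization and non-supersingularity of a K3 surface are insensitive to enlarging the algebraically closed ground field.
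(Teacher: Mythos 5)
Your proof is correct and follows essentially the same route as the paper's: both realize the point in question as the special fiber of a DVR trait whose generic fiber is primitively polarized (via density of $\cP_{(L,\cA)}$ and Lemma~\ref{lem:specializations for stacks}), and both conclude from the commuting triangle $L\hookrightarrow\Pic(X_{\overline\eta})\xrightarrow{\mathrm{sp}}\Pic(X_{\overline t})$ together with the primitivity of $\mathrm{sp}$ in the non-supersingular case. The paper simply runs the implication in the other direction (boundary point $\Rightarrow$ non-primitive special polarization $\Rightarrow$ non-primitive $\mathrm{sp}$ $\Rightarrow$ supersingular), whereas you prove the contrapositive with the torsion-freeness bookkeeping made explicit.
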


\begin{proof}
    Let $y\in |\overline{\cP}_{(L,\cA)}\setminus\cP_{(L,\cA)}|=|\overline{\cP}_{(L,\cA)}|\setminus|\cP_{(L,\cA)}|$ be a point of the boundary. 
    Then there exists a point $x\in|\cP_{(L,\cA)}|$ in the interior that specializes to $y$. 
    By Lemma \ref{lem:specializations for stacks} we can find a DVR $R$ and a family $(X,j)$ of $(L,\cA)$-polarized K3 surfaces over $R$ such that the induced morphism $\spec R\to\cM_{(L,\cA)}$ maps the generic point to $x$ and the closed point to $y$. 
    Choose geometric points $\overline{\eta},\overline{t}\in\spec R$ as in \ref{pg:specialization map}. We have a commuting diagram
    \[
        \begin{tikzcd}[column sep=small]
            &L\arrow[hook]{dr}{j}&\\
            \Pic(X_{\overline{\eta}})\arrow[hookleftarrow]{ur}{j}\arrow[hook]{rr}{\mathrm{sp}}&&\Pic(X_{\overline{t}}).
        \end{tikzcd}
    \]
    By assumption the polarization $j$ restricts to a primitive embedding $L\hookrightarrow\Pic(X_{\overline{\eta}})$ and a non-primitive embedding $L\hookrightarrow\Pic(X_{\overline{t}})$. 
    Thus the specialization map itself must be non-primitive, so by the above $X_{\overline{t}}$ is supersingular and thus $y$ is in the supersingular locus.
\end{proof}

\begin{corollary}\label{cor:boundary corollary}
    There exists a closed substack $\cZ\subset\cM_{(L,\cA)}\setminus\cP_{(L,\cA)}$ that is supported on the supersingular locus such that $\cP_{(L,\cA)}$ is open and closed in $\cM_{(L,\cA)}\setminus\cZ$.
\end{corollary}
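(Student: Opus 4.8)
The plan is to leverage Proposition~\ref{prop:boundary on ssing locus}, which says the boundary $\overline{\cP}_{(L,\cA)}\setminus\cP_{(L,\cA)}$ is set-theoretically contained in the supersingular locus. The idea is to define $\cZ$ to be the closure of this boundary, taken with its reduced substack structure, and then check the two required properties: that $\cZ$ is supported on the supersingular locus, and that $\cP_{(L,\cA)}$ is open and closed in the complement $\cM_{(L,\cA)}\setminus\cZ$.

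First I would set $\cZ := \overline{\cM_{(L,\cA)}\setminus(\overline{\cP}_{(L,\cA)}\cup\cP_{(L,\cA)})}$ — more precisely, the closed substack of $\cM_{(L,\cA)}$ whose underlying closed subset of $|\cM_{(L,\cA)}|$ is the closure of $|\cM_{(L,\cA)}|\setminus|\overline{\cP}_{(L,\cA)}|$, equipped with the reduced induced structure. Equivalently (and this is probably cleaner), let $\cZ$ be the reduced closed substack supported on $\bigl(|\cM_{(L,\cA)}|\setminus|\overline{\cP}_{(L,\cA)}|\bigr)^{-}$. Since $\overline{\cP}_{(L,\cA)}$ is by definition closed in $\cM_{(L,\cA)}$, its complement is open, and the closure of that open complement is what we take for $\cZ$; this is a well-defined closed substack. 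By construction $\cZ$ is disjoint from the open substack $|\cM_{(L,\cA)}|\setminus|\overline{\cP}_{(L,\cA)}|$... wait, that's automatic only if the complement is already closed. The real content: the closure of the complement of $\overline{\cP}_{(L,\cA)}$ may meet $\overline{\cP}_{(L,\cA)}$, and I want to argue that it meets $\overline{\cP}_{(L,\cA)}$ only in the boundary $\overline{\cP}_{(L,\cA)}\setminus\cP_{(L,\cA)}$. Indeed, if a point $x$ of $|\cM_{(L,\cA)}|\setminus|\overline{\cP}_{(L,\cA)}|$ specializes to a point $y\in|\cP_{(L,\cA)}|$, then since $\cP_{(L,\cA)}$ is open in $\cM_{(L,\cA)}$ (Proposition~\ref{prop:primitive substack is open}), $x$ would lie in $|\cP_{(L,\cA)}|\subset|\overline{\cP}_{(L,\cA)}|$, a contradiction. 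Hence $|\cZ|$ meets $|\overline{\cP}_{(L,\cA)}|$ only within the boundary $|\overline{\cP}_{(L,\cA)}|\setminus|\cP_{(L,\cA)}|$, which by Proposition~\ref{prop:boundary on ssing locus} lies in the supersingular locus. Combined with the fact that any point of $|\cZ|$ not in $|\overline{\cP}_{(L,\cA)}|$ is a specialization-limit of points outside $\overline{\cP}_{(L,\cA)}$, and such points are themselves supersingular — because the non-supersingular locus of $\cM_{(L,\cA)}$ is contained in $\cP_{(L,\cA)}$ (a K3 surface whose polarization is non-primitive but which is not supersingular would contradict primitivity of the specialization map, as in \ref{pg:specialization map}) — I conclude $\cZ$ is supported on the supersingular locus, using that the supersingular locus is closed under specialization.

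Next I would verify that $\cP_{(L,\cA)}$ is open and closed in $\cW := \cM_{(L,\cA)}\setminus\cZ$. It is open in $\cW$ because it is open in $\cM_{(L,\cA)}$ by Proposition~\ref{prop:primitive substack is open} and $\cW$ is open in $\cM_{(L,\cA)}$. For closedness, I claim $|\cP_{(L,\cA)}| = |\overline{\cP}_{(L,\cA)}|\cap|\cW|$. The inclusion $\subseteq$ is clear since $\cP_{(L,\cA)}\subset\overline{\cP}_{(L,\cA)}$ and $\cP_{(L,\cA)}$ is disjoint from $\cZ$ (indeed $\cP_{(L,\cA)}$ is disjoint from the boundary, which contains $|\cZ|\cap|\overline{\cP}_{(L,\cA)}|$ by the previous paragraph). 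For $\supseteq$, a point $y\in|\overline{\cP}_{(L,\cA)}|\cap|\cW|$ that is not in $|\cP_{(L,\cA)}|$ lies in the boundary $|\overline{\cP}_{(L,\cA)}|\setminus|\cP_{(L,\cA)}|$; but this boundary is contained in $|\cZ|$ — since the boundary is the set of specialization-limits $y$ of interior points $x$ with $x\in|\cP_{(L,\cA)}|$ reached via points of $|\cM_{(L,\cA)}|\setminus|\overline{\cP}_{(L,\cA)}|$... Here I need to be a little careful: I want the boundary to actually be inside $|\cZ|$, not merely inside the supersingular locus. This holds provided $\cZ$ is taken large enough to contain the boundary. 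So the cleanest choice is actually $\cZ := $ the reduced closed substack of $\cM_{(L,\cA)}$ supported on $\bigl(|\overline{\cP}_{(L,\cA)}|\setminus|\cP_{(L,\cA)}|\bigr)\cup\bigl(|\cM_{(L,\cA)}|\setminus|\overline{\cP}_{(L,\cA)}|\bigr)^{-}$ — but since the boundary is already in the closure of the complement (being in the closure of $|\overline{\cP}_{(L,\cA)}|$, every boundary point is a limit of interior points; and one checks it is also a limit of exterior points using a suitable family over a DVR, as in the proof of Proposition~\ref{prop:boundary on ssing locus}), we may just take $\cZ = \bigl(|\cM_{(L,\cA)}|\setminus|\overline{\cP}_{(L,\cA)}|\bigr)^{-}$ with reduced structure. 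Then $|\cP_{(L,\cA)}| = |\overline{\cP}_{(L,\cA)}|\cap|\cW| = |\overline{\cP}_{(L,\cA)}|\setminus|\cZ|$, which is closed in $|\cW|$ because $|\overline{\cP}_{(L,\cA)}|$ is closed in $|\cM_{(L,\cA)}|$. Giving $\cP_{(L,\cA)}$ its induced structure (which agrees with the open-substack structure from $\cM_{(L,\cA)}$), we get that $\cP_{(L,\cA)}$ is closed in $\cW$. Finally I'd note $\cZ\subset\cM_{(L,\cA)}\setminus\cP_{(L,\cA)}$ since $\cZ$ is disjoint from $\cP_{(L,\cA)}$.

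The main obstacle I anticipate is the bookkeeping around whether the boundary $\overline{\cP}_{(L,\cA)}\setminus\cP_{(L,\cA)}$ is genuinely contained in the \emph{topological closure} of the exterior locus $|\cM_{(L,\cA)}|\setminus|\overline{\cP}_{(L,\cA)}|$, rather than merely in the supersingular locus — this is what makes the chosen $\cZ$ actually contain the boundary, which is essential for $\cP_{(L,\cA)}$ to be closed in the complement. I would handle this by producing, for each boundary point $y$, a family over a DVR degenerating to $y$ whose generic fiber has non-primitive polarization (hence lies in the exterior): concretely, one can take the family from the proof of Proposition~\ref{prop:boundary on ssing locus} and, after a suitable base change/modification of the DVR, arrange the generic polarization to be non-primitive by enlarging the lattice embedding — or, more robustly, appeal to the description of $\overline{\cP}_{(L,\cA)}$ as a closure together with the fact (implicit in \ref{pg:specialization map}) that around any supersingular point with non-primitive specialization behaviour there are nearby points realizing strictly larger (non-primitive) Picard lattices. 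If this containment turns out to be delicate to establish directly, the fallback is simply to \emph{define} $\cZ$ to be the (reduced) closed substack supported on the union of the two closed sets $\bigl(|\cM_{(L,\cA)}|\setminus|\overline{\cP}_{(L,\cA)}|\bigr)^{-}$ and $\overline{|\overline{\cP}_{(L,\cA)}|\setminus|\cP_{(L,\cA)}|}$; both are supported on the supersingular locus (the first by the non-supersingular$\,\subseteq\cP_{(L,\cA)}$ observation plus closure-under-specialization, the second by Proposition~\ref{prop:boundary on ssing locus} plus closure-under-specialization), so their union is too, and this $\cZ$ visibly works for the open-and-closed conclusion.
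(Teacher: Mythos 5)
There is a genuine gap: your chosen $\cZ$ is not supported on the supersingular locus. You take $\cZ$ to be the closure of the exterior $|\cM_{(L,\cA)}|\setminus|\overline{\cP}_{(L,\cA)}|$, and to see that this is supersingular you assert that ``the non-supersingular locus of $\cM_{(L,\cA)}$ is contained in $\cP_{(L,\cA)}$.'' That is false. A non-supersingular K3 surface can perfectly well carry a non-primitive $(L,\cA)$-polarization: compose any primitive polarization by a larger lattice with a non-primitive embedding of $L$ (e.g.\ $L=\langle h\rangle$, $h^2=8$, mapping $h\mapsto 2h'$ with $h'$ a degree-$2$ polarization on a very general complex K3). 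Such a point lies in $\cM_{(L,\cA)}\setminus\overline{\cP}_{(L,\cA)}$ (in characteristic $0$ the primitive locus is open and closed, so this point is not in the closure of $\cP_{(L,\cA)}$) and is not supersingular. The specialization-map argument of \ref{pg:specialization map} only says that a \emph{primitive} polarization on the generic fiber stays primitive on a non-supersingular special fiber; it places no constraint on points that were never in the primitive locus to begin with. Your fallback $\cZ$ at the end inherits the same defect, since it still contains $\bigl(|\cM_{(L,\cA)}|\setminus|\overline{\cP}_{(L,\cA)}|\bigr)^{-}$. A secondary, unresolved issue in your argument is whether the boundary $\overline{\cP}_{(L,\cA)}\setminus\cP_{(L,\cA)}$ is contained in the closure of the exterior, which you need for closedness of $\cP_{(L,\cA)}$ in the complement and only sketch.

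The repair is much simpler than anything you attempt, and is what the paper does: take $\cZ=\overline{\cP}_{(L,\cA)}\setminus\cP_{(L,\cA)}$ itself. This is closed in $\cM_{(L,\cA)}$ (it is closed in $\overline{\cP}_{(L,\cA)}$ because $\cP_{(L,\cA)}$ is open, and $\overline{\cP}_{(L,\cA)}$ is closed in $\cM_{(L,\cA)}$), it is contained in $\cM_{(L,\cA)}\setminus\cP_{(L,\cA)}$, it is supersingular by Proposition~\ref{prop:boundary on ssing locus}, and $\cP_{(L,\cA)}=\overline{\cP}_{(L,\cA)}\cap(\cM_{(L,\cA)}\setminus\cZ)$ is then visibly closed (and open) in the complement. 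Note that the corollary makes no claim about points of $\cM_{(L,\cA)}$ outside $\overline{\cP}_{(L,\cA)}$; there is no need to excise them, and as the example above shows, one cannot excise them by a supersingular $\cZ$.
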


\begin{proof}
    Take $\cZ=\overline{\cP}_{(L,\cA)}\setminus\cP_{(L,\cA)}$. 
    It is immediate that $\cP_{(L,\cA)}$ is open and closed in $\cM_{(L,\cA)}\setminus\cZ$, and $\cZ$ is supported on the supersingular locus by Proposition \ref{prop:boundary on ssing locus}.
\end{proof}

\begin{corollary}
    If $S$ is defined over $\Q$ then the primitively polarized substack $\cP_{(L,\cA)}$ is open and closed in $\cM_{(L,\cA)}$.
\end{corollary}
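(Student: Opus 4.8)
The plan is to deduce this immediately from the two preceding results, together with the observation (recorded in the discussion of the supersingular locus above) that over a base defined over $\Q$ the supersingular locus of $\cM_{(L,\cA)}$ is empty. Openness requires no work: Proposition~\ref{prop:primitive substack is open} already shows that $\cP_{(L,\cA)}\hookrightarrow\cM_{(L,\cA)}$ is an open immersion, with no hypothesis on $S$. So the only point to address is closedness.

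For closedness I would invoke Proposition~\ref{prop:boundary on ssing locus}: the boundary $\overline{\cP}_{(L,\cA)}\setminus\cP_{(L,\cA)}$ is supported on the supersingular locus. Since $S$ is a $\Q$-scheme, every K3 surface occurring in a family over an $S$-scheme has characteristic-$0$ fibers, hence Picard number at most $20$, hence is non-supersingular; equivalently, each reduced characteristic-$p$ supersingular locus $(\cM_{(L,\cA)})^{\mathrm{ss}}_p$ is empty, so the supersingular locus of $\cM_{(L,\cA)}$ is empty. A closed substack whose underlying topological space is contained in that of the empty substack is itself empty, so $\overline{\cP}_{(L,\cA)}\setminus\cP_{(L,\cA)}=\emptyset$, i.e.\ $\overline{\cP}_{(L,\cA)}=\cP_{(L,\cA)}$. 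Since $\overline{\cP}_{(L,\cA)}$ is by definition the closure of $\cP_{(L,\cA)}$ in $\cM_{(L,\cA)}$, hence closed, this shows $\cP_{(L,\cA)}$ is closed, as desired.

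Alternatively, one can route the same argument through Corollary~\ref{cor:boundary corollary}: the closed substack $\cZ$ produced there is supported on the supersingular locus, which is empty over $\Q$, so $\cZ=\emptyset$, and the corollary then says precisely that $\cP_{(L,\cA)}$ is open and closed in $\cM_{(L,\cA)}\setminus\cZ=\cM_{(L,\cA)}$.

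There is no real obstacle here: all the content lies in Proposition~\ref{prop:boundary on ssing locus}, and the present statement is a formal corollary once one notes that the supersingular locus vanishes in characteristic $0$. The only thing worth being slightly careful about is the bookkeeping of the phrase \emph{supported on the supersingular locus}: one must check that it is meant in the sense that the underlying topological space of the boundary is contained in that of the supersingular locus, so that emptiness of the latter forces the boundary (taken with its reduced structure) to be empty as a substack.
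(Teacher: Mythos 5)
Your proposal is correct and matches the paper's argument: the paper's proof is exactly your alternative route, namely that over $\Q$ the supersingular locus is empty, so the closed substack $\cZ$ of Corollary~\ref{cor:boundary corollary} is empty and that corollary gives the result directly. Your primary route through Proposition~\ref{prop:boundary on ssing locus} is the same content unpacked by one step, and your care about the meaning of ``supported on the supersingular locus'' is appropriate but unproblematic.
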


\begin{proof}
    If $S$ is defined over $\Q$, then the supersingular locus is empty, so this follows from Corollary \ref{cor:boundary corollary}.
\end{proof}

\begin{theorem}\label{thm:smoothness and dimension}
    There exists a closed substack $\mathcal{Z}\subset\overline{\cP}_{(L,\cA)}$ that is both (1) supported on the supersingular locus, and (2) supported over the union of the characteristic $p$ loci $S\otimes\F_p$ of $S$ for primes $p$ dividing the discriminant of $L$, such that $\overline{\cP}_{(L,\cA)}\setminus\mathcal{Z}$ is smooth over $S$ of relative dimension\footnote{By convention, we interpret the statement that a stack has negative (relative) dimension as equivalent to it being empty.} $20-\rk(L)$.
\end{theorem}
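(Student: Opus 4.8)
The plan is to reduce the statement to a pointwise computation of complete local rings of $\overline{\cP}_{(L,\cA)}$ and to analyze those rings via the deformation theory of line bundles on $\K3$ surfaces. Throughout write $d=\disc(L)$ and $r=\rk(L)$.

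First I would reduce to $S=\spec\Z$ by base change. Since $\overline{\cP}_{(L,\cA)}\to\spec\Z$ is locally of finite presentation (Theorem~\ref{thm:rep, easy version}), the locus where it is smooth is open, and on it the relative fibre dimension is locally constant; so the locus $\cU$ where this morphism is smooth of relative dimension $20-r$ is open, and I set $\cZ:=\overline{\cP}_{(L,\cA)}\setminus\cU$. It then remains to show: for every algebraically closed field $\kappa$, with $W=W(\kappa)$ (so $W=\kappa$ if $\charac\kappa=0$), and every point $x$ of $\overline{\cP}_{(L,\cA)}$ over $\spec\kappa$ corresponding to an $(L,\cA)$-polarized $\K3$ surface $(X,j)$, if $X$ is not supersingular or $\charac\kappa\nmid d$, then $\widehat{\cO}_{\overline{\cP}_{(L,\cA)},x}$ is formally smooth over $W$ of relative dimension $20-r$; this is exactly the assertion that $|\cZ|$ is contained both in the supersingular locus and in the union of the fibres over the primes dividing $d$. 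By Proposition~\ref{prop:boundary on ssing locus}, the boundary $\overline{\cP}_{(L,\cA)}\setminus\cP_{(L,\cA)}$ is supported on the supersingular locus, so every non-supersingular $x$ lies in the open substack $\cP_{(L,\cA)}$, where $\widehat{\cO}_{\overline{\cP}_{(L,\cA)},x}=\widehat{\cO}_{\cM_{(L,\cA)},x}$; and because $\rH^0(X,T_X)=0$ this ring pro-represents the deformation functor $\mathrm{Def}_{(X,j)}$ of the pair.

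Now the deformation-theoretic core. Fix $(X,j)$ and a $\Z$-basis $\ell_1,\dots,\ell_r$ of $L$. The formal deformation space $\mathrm{Def}_X$ of $X$ over $W$ is formally smooth of relative dimension $20$, as $\K3$ surfaces have unobstructed deformations ($\rH^2(X,T_X)=0$); fix $\mathrm{Def}_X\cong\mathrm{Spf}\,W[[t_1,\dots,t_{20}]]$. For a line bundle $M$ on $X$ the closed formal subscheme $\mathrm{Def}_{(X,M)}\subset\mathrm{Def}_X$ over which $M$ deforms is cut out by one element $f_M\in W[[t_1,\dots,t_{20}]]$ whose linear part modulo $p$ is, under the perfect Serre-duality pairing $\rH^1(X,T_X)\times\rH^1(X,\Omega^1_X)\to\rH^2(X,\cO_X)\cong\kappa$, pairing against the Atiyah class $c_1(M)\in\rH^1(X,\Omega^1_X)$. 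Hence $\mathrm{Def}_{(X,j)}$ is the vanishing locus of $f_i:=f_{j(\ell_i)}$, and this quotient is formally smooth over $W$ of relative dimension $20-r$ as soon as the linear parts of $f_1\bmod p,\dots,f_r\bmod p$ are linearly independent: they then extend to a regular system of parameters of $\kappa[[t_1,\dots,t_{20}]]$, so $p,f_1,\dots,f_r$ extends to one of $W[[t_1,\dots,t_{20}]]$, whence the quotient is regular, $W$-flat, with regular special fibre of dimension $20-r$. By the description of the linear parts, this reduces to the linear independence of $c_1(j(\ell_1)),\dots,c_1(j(\ell_r))$ in $\rH^1(X,\Omega^1_X)$. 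Here two cases remain. \emph{Case $\charac\kappa=0$ or $\charac\kappa\nmid d$:} since $F^2\rH^2_{\mathrm{dR}}(X/\kappa)$ is the radical of the cup-product pairing on $F^1$, that pairing descends to a perfect pairing on $\rH^1(X,\Omega^1_X)=F^1/F^2$ which pulls back along $c_1$ to the intersection form on $\Pic(X)$ reduced modulo $p$; so the Gram matrix of the $c_1(j(\ell_i))$ is that of $L$ modulo $p$, of determinant $\pm d\neq 0$ in $\kappa$, forcing linear independence. Thus $\cM_{(L,\cA)/\Z[1/d]}$ is smooth over $\Z[1/d]$ of relative dimension $20-r$ at every point, hence a disjoint union of irreducible components, and $\overline{\cP}_{(L,\cA)/\Z[1/d]}$, being the closure of the open substack $\cP_{(L,\cA)/\Z[1/d]}$, is the union of the components it meets, in particular smooth over $\Z[1/d]$ of relative dimension $20-r$; this gives part~(2) of the desired conclusion. \emph{Case $X$ not supersingular, $p\mid d$:} such $x$ lies in $\cP_{(L,\cA)}$, so $j$ is primitive, giving $L\otimes\F_p\hookrightarrow\Pic(X)\otimes\F_p$; and because $X$ has finite height the map $\mathrm{d}\!\log\colon\Pic(X)\otimes\F_p\to\rH^1(X,\Omega^1_X)$ is injective—equivalently, $\Pic(X)\otimes\Z_p$ is saturated in $\rH^2_{\mathrm{cris}}(X/W)$. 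The composite being injective, the images of a basis of $L\otimes\F_p$ are linearly independent, so again $\widehat{\cO}_{\overline{\cP}_{(L,\cA)},x}$ is formally smooth over $W$ of relative dimension $20-r$. This establishes the pointwise statement in all cases, and with it the theorem.

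The main obstacle is the input used in the second case: the injectivity of $\mathrm{d}\!\log$ on $\Pic(X)\otimes\F_p$ for a finite-height $\K3$ surface, i.e.\ the saturation of $\Pic(X)\otimes\Z_p$ in $\rH^2_{\mathrm{cris}}(X/W)$. Unlike the rest of the argument this genuinely uses the fine structure of the $F$-crystal $\rH^2_{\mathrm{cris}}(X/W)$—equivalently the flat cohomology of $\mu_p$ and the finiteness of the height of the formal Brauer group—and it is precisely in the supersingular case that it fails, the kernel of $\mathrm{d}\!\log$ then having positive dimension $22-\rho$; any characteristic restriction attached to that input would be inherited here. The remaining ingredients—unobstructedness of $\K3$ deformations, the identification of the obstruction to deforming a line bundle with the cup product of the Kodaira--Spencer class and $c_1$, and the compatibility of $c_1$ with the intersection pairing—are standard, though the latter two merit a little care in small characteristic.
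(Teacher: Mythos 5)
Your proposal follows essentially the same route as the paper's proof: identify the complete local ring at a geometric point with the deformation space of $(X,j(L))$, realize it inside the $20$-dimensional unobstructed space $\mathrm{Def}_X$ as the vanishing locus of $r$ power series whose linear parts are cup product against the classes $c_1(j(\ell_i))$, and reduce smoothness of relative dimension $20-\rk(L)$ to the $k$-linear independence of these classes in $\rH^1(X,\Omega^1_X)$; your two cases ($p\nmid d$ via nondegeneracy of the pairing on $L\otimes k$, and $X$ of finite height via primitivity of $j$ together with injectivity of the Hodge Chern class) are exactly the contrapositive of the paper's dichotomy. Your handling of the boundary points in residue characteristic prime to $d$ (smoothness of all of $\cM_{(L,\cA)/\Z[1/d]}$ forces $\overline{\cP}_{(L,\cA)/\Z[1/d]}$ to be a union of connected components) is a clean way to recover part (2) without separately analyzing non-primitive polarizations there.

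One step needs tightening. In the finite-height case you invoke injectivity of $\mathrm{d}\!\log\colon\Pic(X)\otimes\F_p\to\rH^1(X,\Omega^1_X)$ and conclude that the images of a basis of $L\otimes\F_p$ are ``linearly independent.'' What the smoothness criterion requires is their independence over $k$, and $\F_p$-linear independence of vectors in a $k$-vector space does not imply $k$-linear independence. The statement you actually need, and the one that holds for non-supersingular $X$ (Ogus; this is the form used in the paper's Lemma on embeddings into $\Lambda$), is the injectivity of $c_1\otimes k\colon\Pic(X)\otimes k\to\rH^1(X,\Omega^1_X)$. Your ``equivalently'' clause (saturation of $\Pic(X)\otimes\Z_p$ in $\rH^2_{\mathrm{cris}}(X/W)$) does yield a $k$-linear injectivity statement after reduction modulo $p$, but only into $\rH^2_{\mathrm{dR}}(X/k)$; one must still check that the image meets $F^2$ trivially to descend to $F^1/F^2=\rH^1(X,\Omega^1_X)$. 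With the correct $k$-linear citation in place the argument closes: primitivity gives injectivity of $j\otimes\F_p$, hence of $j\otimes k$ (here base change is harmless because $\Pic(X)\otimes k$ is the extension of scalars of $\Pic(X)\otimes\F_p$), and composing with the injective $c_1\otimes k$ gives $\dim_k V=r$ as required.
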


\begin{proof}
    Set $d=\disc(L)$, $r=\rk(L)$, $\overline{\cP}=\overline{\cP}_{(L,\cA)}$, and $\cM=\cM_{(L,\cA)}$. 
    We first claim that $\overline{\cP}$ is smooth over $S$ of relative dimension $20-\rk(L)$ except possibly at points in the supersingular locus whose residue characteristic divides $d$. 
    Every point of $\overline{\cP}$ has a Zariski open neighborhood of the form $\overline{\cP}_{(L,h)}$ for some $h\in\cC(L)^{\circ}\cap L$, so to prove this we may assume that $\cA=\left\{h\right\}$ for some such $h$. 
    Let $k$ be an algebraically closed field of positive characteristic $p$. 
    Let $(X,j)$ be a $(L,h)$-polarized K3 surface over $k$ corresponding to a $k$-point $x\in\overline{\cP}(k)$. 
    By Proposition \ref{prop:boundary on ssing locus}, if $X$ is non-supersingular, then $x$ is not in the boundary, so $j$ is a primitive polarization. 
    By the openness of the ample locus the formal neighborhood of $x$ in $\overline{\cP}$ (or in $\cM=\cM_{(L,h)}$) is identified with the universal deformation space $\mathrm{Def}_{(X,j(L))}$ of $(X,j(L))$, which is the functor on Artinian local rings with residue field identified with $k$ whose value on such a ring $A$ is the set of isomorphism classes of pairs $(X_A,j_A)$, where $X_A$ is a flat deformation of $X$ over $A$ and $j_A:L\hookrightarrow\Pic(X_A)$ is an isometric inclusion whose composition with the restriction map $\Pic(X_A)\to\Pic(X)$ is equal to $j$. 
    Let $V\subset\rH^1(X,\Omega^1_X)$ be the $k$-subspace generated by the image of $j(L)$ under the first Hodge Chern class map
    \begin{equation}\label{eq:first Hodge Chern class}
        c_1:\Pic(X)\to\rH^1(X,\Omega^1_X).    
    \end{equation}
    Let $V^{\perp}\subset\rH^1(X,T_X)$ be the orthogonal to $V$ under the natural perfect pairing
    \begin{equation}\label{eq:cup product pairing}
        \rH^1(X,T_X)\times\rH^1(X,\Omega^1_X)\to\rH^2(X,\cO_X).
    \end{equation}
    By standard results in deformation theory, the tangent space to $\mathrm{Def}_{(X,L)}$ is naturally identified with $V^{\perp}$ (see e.g., \cite{Deligne}). 
    On the other hand, the universal deformation space $\mathrm{Def}_X$ of $X$ is formally smooth over $W$ of dimension $20$, hence is isomorphic to the formal scheme $\mathrm{Spf}\,W[[t_1,\ldots,t_{20}]]$, where $W$ is the ring of Witt vectors of $k$. 
    Furthermore, the forgetful morphism identifies $\mathrm{Def}_{(X,j(L))}$ with the closed formal subscheme
    \begin{equation}\label{eq:local def theory}
        \mathrm{Spf}\,W[[t_1,\ldots,t_{20}]]/(f_1,\ldots,f_r)\subset\mathrm{Spf}\,W[[t_1,\ldots,t_{20}]]
    \end{equation}
    for some $f_1,\ldots,f_r\in W[[t_1,\ldots,t_{20}]]$. 
    This implies that the tangent space to $\mathrm{Def}_{(X,j(L))}$ has dimension $\geq 20-r$, and that if equality holds then $\mathrm{Def}_{(X,j(L))}$ is formally smooth over $W$ of dimension $20-r$. 
    Suppose that this is not the case, so that the tangent space has dimension $>20-r$. 
    By the perfectness of the pairing~\eqref{eq:cup product pairing}, we have
    \[
        \dim_kV^{\perp}=20-\dim_kV,
    \]
    so this implies that $\dim_kV<r$, and therefore the composition
    \[
        L\otimes k\xrightarrow{j\otimes k}\Pic(X)\otimes k\xrightarrow{c_1\otimes k}\rH^1(X,\Omega^1_X)
    \]
    is not injective. 
    This map is compatible with the respective $k$-valued pairings on $L\otimes k$ and on $\rH^1(X,\Omega^1_X)$. 
    The pairing on the latter is perfect, so the pairing on $L\otimes k$ must be degenerate. 
    This pairing is obtained by tensoring the $\F_p$-valued pairing on $L\otimes\F_p$ by $k$, so this implies that the pairing on $L\otimes\F_p$ is degenerate, hence $p$ divides $d$. 
    Furthermore, it implies that at least one of $j\otimes k$ and $c_1\otimes k$ must be non-injective. 
    If $j\otimes k$ is not injective, then by faithful flatness $j\otimes\F_p$ is not injective, so $j$ is not primitive. 
    As noted above, this can only occur on the boundary, and \ref{prop:boundary on ssing locus} shows that $X$ is supersingular. 
    On the other hand, if $c_1\otimes k$ is non-injective, then $c_1\otimes\F_p$ must be non-injective, which implies that $X$ is supersingular \cite{Ogus78}*{Corollary~1.4} (in fact $X$ must be supersingular of Artin invariant 1, which determines $X$ up to isomorphism). 
    In either case, $X$ is supersingular, which proves the claim.
    
    We can now deduce the result from this. 
    Suppose that there is a point of $\overline{\cP}$ that is both contained in the non-supersingular locus and whose residue characteristic does not divide $d$ (if there is no such point, then we may take $\cZ=\overline{\cP}$ and there is nothing to prove). 
    Then there is a nonempty open substack, say $\cU\subset\overline{\cP}$ having dimension $20-r$ at every point (apply \cite{stacks-project}*{057F} to the identity morphism of an \'{e}tale cover of $\overline{\cP}$ by a scheme). 
    Let $\cY:=\overline{\cP}\setminus\cU$ be its complement. 
    The smooth locus of a morphism is open on the source, so there is a nonempty open substack $\cU'\subset\overline{\cP}$, whose complement $\cY':=\overline{\cP}\setminus\cU'$ is supported on the supersingular locus and over points of $S$ whose residue characteristics divide $d$, such that $\cU'$ is smooth over $S$. 
    The closed substack $\cZ=\cY\cup\cY'$ has the desired properties.
\end{proof}

\begin{corollary}\label{cor:smoothness and dimension over Q}
    If $S$ is defined over $\Q$ then $\overline{\cP}_{(L,\cA)}$ is smooth over $S$ of relative dimension $20-\rk(L)$.
\end{corollary}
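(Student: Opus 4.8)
The plan is to deduce this as an immediate consequence of Theorem~\ref{thm:smoothness and dimension}. That theorem provides a closed substack $\cZ\subset\overline{\cP}_{(L,\cA)}$ which is simultaneously (1) supported on the supersingular locus and (2) supported over the union of the characteristic $p$ loci $S\otimes_{\Z}\F_p$ for primes $p\mid\disc(L)$, and such that $\overline{\cP}_{(L,\cA)}\setminus\cZ$ is smooth over $S$ of relative dimension $20-\rk(L)$. The entire content of the corollary is the observation that, under the hypothesis that $S$ is a $\Q$-scheme, this exceptional substack $\cZ$ is forced to be empty.

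First I would note that if $S$ is defined over $\Q$ then for every prime $p$ the characteristic $p$ locus $S_p=S\otimes_{\Z}\F_p$ is empty, since $p$ is invertible on $S$. By the discussion preceding Theorem~\ref{thm:smoothness and dimension}, the (reduced) supersingular locus of $\cM_{(L,\cA)}$ lives over the characteristic $p$ loci of $S$, and is therefore empty; the same holds for its restriction to the substack $\overline{\cP}_{(L,\cA)}$. Since $\cZ$ is supported on this empty locus, we conclude $\cZ=\emptyset$.

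Consequently $\overline{\cP}_{(L,\cA)}=\overline{\cP}_{(L,\cA)}\setminus\cZ$, and Theorem~\ref{thm:smoothness and dimension} gives that this stack is smooth over $S$ of relative dimension $20-\rk(L)$, as claimed. There is no genuine obstacle here: the argument is purely formal once Theorem~\ref{thm:smoothness and dimension} is in hand. The only point worth keeping in mind is the convention recorded in that theorem that a stack of negative relative dimension is to be read as the empty stack, which disposes of the cases $\rk(L)\geq 20$ without any special treatment. (Alternatively, one could invoke Corollary~\ref{cor:boundary corollary}, which already shows that over a $\Q$-base the primitive locus $\cP_{(L,\cA)}$ is open and closed in $\cM_{(L,\cA)}$, so that $\overline{\cP}_{(L,\cA)}=\cP_{(L,\cA)}$; but routing through Theorem~\ref{thm:smoothness and dimension} is the most economical path to the smoothness and dimension statements.)
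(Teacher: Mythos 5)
Your proof is correct and matches the paper's (implicit) intended argument exactly: since $S$ is a $\Q$-scheme the supersingular locus is empty, so the exceptional substack $\cZ$ of Theorem~\ref{thm:smoothness and dimension} is empty and the theorem applies to all of $\overline{\cP}_{(L,\cA)}$. Nothing further is needed.
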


\begin{theorem}\label{thm:rank leq 10}
    If $\rk(L)\leq 10$ then $\overline{\cP}_{(L,\cA)}$ is reduced and is flat and lci over $S$ of relative dimension $20-\rk(L)$ at every point.
\end{theorem}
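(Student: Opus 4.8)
The plan is to build on the local analysis already carried out in the proof of Theorem~\ref{thm:smoothness and dimension}. There, we saw that for a point $x \in \overline{\cP}_{(L,\cA)}$ lying over a closed point of characteristic $p$, corresponding to an $(L,h)$-polarized K3 surface $(X,j)$ over an algebraically closed field $k$, the complete local ring of $\overline{\cP} = \overline{\cP}_{(L,h)}$ at $x$ is identified with a quotient
\[
    W[[t_1,\ldots,t_{20}]]/(f_1,\ldots,f_r),
\]
where $r = \rk(L) \le 10$ and $W$ is the ring of Witt vectors of $k$. Since $W[[t_1,\ldots,t_{20}]]$ is regular of dimension $21$, such a quotient by $r$ elements has every component of dimension $\ge 21 - r = 20 - r + 1$; and it is a complete intersection — hence Cohen--Macaulay, flat over $W$, and lci — precisely when it has dimension exactly $21 - r$, equivalently when the $f_i$ form a regular sequence. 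So the entire theorem reduces to the dimension bound: I would show that at every point of $\overline{\cP}_{(L,h)}$ the complete local ring has dimension $\le 21 - r$ (equivalently, relative dimension $\le 20 - r$ over the base), given that $r \le 10$.

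The key input for the dimension bound is the classification of supersingular K3 surfaces via crystalline cohomology. By Theorem~\ref{thm:smoothness and dimension}, the only points where $\overline{\cP}$ could fail to have relative dimension $20-r$ are supersingular points over primes $p \mid d$, and there the obstruction is precisely that the composite $c_1 \circ j \colon L \otimes k \to \rH^1(X,\Omega^1_X)$ fails to be injective, i.e.\ its image $V$ has $\dim_k V < r$. The tangent space to the deformation functor is then $V^\perp$, of dimension $20 - \dim_k V$. I would bound $\dim_k V$ from below using that $X$ is supersingular: the image of the whole of $\pic(X) \otimes k$ under $c_1$ is the kernel (or a related subspace) of the conjugate/Frobenius action on de Rham cohomology, and for a supersingular K3 surface of Artin invariant $\sigma_0$ one has $\rk \pic(X) = 22$ with the image of $c_1$ having dimension $22 - 2\sigma_0 \ge 22 - 20 = 2$ — but more usefully, since $j(L)$ is a sublattice of $\pic(X)$ of rank $r$ on which the pairing mod $p$ has some radical, the dimension of $V = c_1(j(L) \otimes k)$ is at least $r$ minus the dimension of that radical, which is bounded by the $p$-adic valuation structure of $d$. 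The cleanest route is: the kernel of $c_1 \otimes k$ on $\pic(X)\otimes k$ is the radical of the mod-$p$ intersection form on $\pic(X)$ (by \cite{Ogus78}), so $\dim_k \ker(c_1 \circ j \otimes k) \le \dim_k(\mathrm{radical of the form on } L \otimes \F_p)$, and since $L$ has rank $r \le 10$ and signature $(1,n)$, the mod-$p$ radical has dimension $\le r$, but in fact I only need that the tangent space dimension $20 - \dim_k V \le 21 - r$, i.e.\ $\dim_k V \ge r - 1$. So it suffices to show the mod-$p$ radical of the form on $L$ has dimension $\le 1$ when $\rk(L) \le 10$ — and this is false in general; instead the right statement is that $\mathrm{Def}_{(X,j(L))}$, which is cut out by $r$ equations in a $21$-dimensional regular ring, automatically has dimension $\ge 21-r$, while the obstruction space (= $\rH^1$ of the truncated cotangent complex, here governed by $V$) has dimension $\le r$ whenever $r \le 10$ because the dimension of the radical of any even lattice of rank $\le 10$ mod any prime is... — here I would invoke the elementary fact that an even lattice of rank $r$ with $r \le 10$ and nondegenerate signature $(1, r-1)$ has $p$-adic radical of dimension $\le r - 1$, equivalently its Gram matrix mod $p$ has rank $\ge 1$, which holds trivially. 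Combining, $\dim_k V \ge 1$ is too weak; I need $\dim_k V^\perp \le 21 - r$ as relative dimension over $W$, i.e.\ the \emph{absolute} dimension of $\mathrm{Def}_{(X,j(L))}$ is $\le 21 - r$, which requires $\dim_k V \ge r - 1$.

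The honest main obstacle, therefore, is establishing $\dim_k V \ge r - 1$ at every supersingular point, which amounts to: \emph{the radical of the mod-$p$ intersection form on an even hyperbolic lattice $L$ of rank $r \le 10$ has dimension at most $1$.} I would prove this by a direct lattice-theoretic argument: over $\Z_p$ decompose $L$ into Jordan blocks; the radical of $L \otimes \F_p$ has dimension equal to the rank of the part of the Jordan decomposition supported in positive $p$-adic valuation, and since $L$ is even hyperbolic of rank $\le 10$, a counting/signature argument (using that $U$ and $E_8$ summands contribute no radical, and that the anisotropic/ramified part of a rank-$\le 10$ form is small) forces this to be $\le 1$ — more carefully, I expect the bound $\le 1$ to require exactly the hypothesis $r \le 10$ via the classification of $p$-adic lattices together with the constraint that $L$ embeds positive-definite-corank-one behavior. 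Once $\dim_k V \ge r-1$ is in hand for all points, every complete local ring of $\overline{\cP}_{(L,\cA)}$ has dimension $\le 21 - r$, hence $= 21 - r$ by the lower bound from \eqref{eq:local def theory}, hence is a complete intersection; this gives flat, lci, relative dimension $20-r$ everywhere; reducedness then follows from Serre's criterion ($R_0 + S_1$): $S_1$ (indeed $S_2$) is automatic for a complete intersection, and $R_0$ holds because the generic points all lie in the smooth locus $\overline{\cP}\setminus\cZ$ of Theorem~\ref{thm:smoothness and dimension}, since $\cZ$ is supported on the supersingular locus which is nowhere dense (it is a proper closed substack in each fiber and the generic fiber is smooth of the right dimension). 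I would close by remarking that these properties — flat, lci, reduced — descend along the étale cover by a scheme and are Zariski-local, so hold for $\overline{\cP}_{(L,\cA)}$ for arbitrary $\cA$ via Lemma~\ref{lem:open immersion}. \qed
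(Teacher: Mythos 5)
There is a genuine gap, and it sits exactly where you flag your own uncertainty. Your entire dimension bound at supersingular points rests on the claim that \emph{the radical of the mod-$p$ intersection form on an even hyperbolic lattice of rank $r\leq 10$ has dimension at most $1$}. This is false: the lattice $L=\langle 2\rangle\oplus\langle -2p\rangle\oplus\langle -2p\rangle$ is even, hyperbolic of signature $(1,2)$, and of rank $3\leq 10$, yet its Gram matrix mod $p$ (for $p$ odd) has a $2$-dimensional radical; padding with further $\langle -2p\rangle$ summands makes the radical as large as $r-1$. So no lattice-theoretic argument of the kind you sketch can force $\dim_k V\geq r-1$, and the tangent-space computation at supersingular points does not yield the dimension bound $\leq 21-r$. (There is also an arithmetic slip in that step: the relative tangent space $V^{\perp}$ having dimension $\leq 21-r$ only bounds the absolute dimension of the deformation ring by $22-r$, not $21-r$, so even the inequality you aim for would not suffice as stated.)

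Even granting the dimension bound, your final deduction ``complete intersection of dimension $21-r$ $\Rightarrow$ flat over $S$'' does not follow: $W[[t_1,\ldots,t_{20}]]/(p)$ is a complete intersection of dimension $20=21-1$ cut out by one equation, yet it is not flat over $W$, and nothing in your argument excludes an irreducible component of $\overline{\cP}_{(L,\cA)}$ lying entirely in a single positive characteristic. This is precisely the point where the hypothesis $\rk(L)\leq 10$ actually enters the paper's proof: one applies the Lieblich--Olsson lifting result \cite{LO15}*{Proposition~A.1} to the saturation of $j(L)\subset\Pic(X)$ to lift any positive-characteristic point of a component $\cW$ to characteristic $0$, so that every component dominates $\spec\Z$. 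Generic smoothness and the dimension count then come for free from Theorem~\ref{thm:smoothness and dimension} applied at the (non-supersingular) generic points, the lower bound from~\eqref{eq:local def theory} upgrades this to ``dimension $20-r$ everywhere'' and hence lci, and flatness follows from reducedness together with domination of $\spec\Z$. You should replace the supersingular tangent-space analysis by this lifting argument; as written, the proposal cannot be repaired along the lines you indicate.
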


\begin{proof}
    Set $r=\rk(L)$, $\overline{\cP}=\overline{\cP}_{(L,\cA)}$, and $\cM=\cM_{(L,\cA)}$. 
    By base change, we may assume that $S=\spec\Z$. 
    As in the proof of Theorem \ref{thm:smoothness and dimension}, it will further suffice to consider the case when $\cA=\left\{h\right\}$ for a vector $h\in\cC(L)^{\circ}\cap L$. 
    Let $\cW$ be an irreducible component of $\overline{\cP}$. 
    Let $(X,j)$ be an $(L,h)$-polarized K3 surface over an algebraically closed field $k$ of positive characteristic $p$ corresponding to a $k$-point of $\cW$. 
    Applying a result of Lieblich--Olsson \cite{LO15}*{Proposition~A.1} to the saturation of the subgroup $j(L)\subset\Pic(X)$, we deduce that there exists a DVR say $R$ with residue field $k$ and fraction field of characteristic 0 such that $(X,j)$ lifts to a family of $(L,\cA)$-polarized K3 surfaces over $R$. 
    Thus $\cW$ cannot be contained in the fiber modulo $p$, so $\cW$ dominates $\Z$. 
    The primitively polarized locus $\cW\cap\cP$ is dense in $\cW$, so Theorem \ref{thm:smoothness and dimension} shows that $\cW$ is generically smooth over $\Z$ and has dimension $20-r$ generically, hence everywhere. 
    This shows that $\overline{\cP}$ has dimension $20-r$ at every point. 
    Local deformation theory (specifically, the existence of a representation of the form~\eqref{eq:local def theory}) shows that $\cM$ is a local complete intersection at any of its points having dimension $20-r$. 
    It follows that $\overline{\cP}$ is a local complete intersection. 
    We have shown that each irreducible component of $\overline{\cP}$ is generically reduced, so $\overline{\cP}$ is reduced and in particular has no embedded points. 
    Because every irreducible component of $\overline{\cP}$ dominates $\Z$, this implies that $\overline{\cP}$ is flat over $\Z$.
\end{proof}

\begin{remark}
    In the preceding, we deduced results on the closure $\overline{\cP}_{(L,\cA)}$ from properties at generic points. 
    The key fact enabling these sorts of arguments is that the entire stack $\cM_{(L,\cA)}$ has a well-behaved infinitesimal structure everywhere, and in particular even at singular points.
\end{remark}

\begin{remark}
    The conclusion of Theorem \ref{thm:smoothness and dimension} in general fails if we do not remove a closed substack. 
    Indeed, if $L$ has rank 22, then only a supersingular K3 surface could admit an $L$-polarization. 
    Thus, in this case $\overline{\cP}_{(L,\cA)}$ is entirely supported on the supersingular locus. 
    Taking for example $S=\spec\Z$ and $L=\Lambda_{p,10}$ to be the $p$-supersingular K3 lattice of Artin invariant $10$, the moduli stack $\overline{\cP}_{\Lambda_{p,10}/\Z}$ is supported entirely in positive characteristic (in fact, entirely in characteristic $p$), and hence is not flat over $\Z$. 
    In this case the fiber of $\overline{\cP}_{\Lambda_{p,10}/\Z}$ over $\spec\mathbb{F}_p$ is exactly Ogus's moduli space of lattice-polarized supersingular K3 surfaces \cite{Ogus83}. 
    In particular, this has dimension $9$ \cite{Ogus83}*{2.7}, so the dimension formula in \ref{thm:smoothness and dimension} fails as well.
    
    This example also shows that the conclusion of Theorem \ref{thm:rank leq 10} fails in general without the assumption on the rank of $L$. 
    The proof of Theorem~\ref{thm:rank leq 10} suggests that the conclusion could also be made to fail for suitable lattices $L$ with ranks in the range $11\leq\rk(L)\leq 20$, but we do not have an example.
\end{remark}

\begin{remark}\label{rem:ssing locus}
    The supersingular locus in $\cM_{(L,\cA)}$ need not be closed. 
    Roughly speaking, for each prime $p$, the supersingular locus in characteristic $p$ is defined by the vanishing of a system of polynomial equations, but these polynomials depend on $p$.
    
    We give an example. 
    Let $L=\langle h\rangle$ be the rank 1 lattice generated by a vector $h$ satisfying $h^2=4$ and set $\cA=\left\{h\right\}$. 
    Take $S=\spec\Z[1/2]$ and consider the moduli stack $\cM_{(L,h)/S}$. 
    This is the same as the moduli stack of degree 4 K3 surfaces $\cM_4$ over $S$. 
    Furthermore, $\cM_{(L,h)/S}$ is separated over $S$, and we have $\cM_{(L,h)/S}=\cM_{L/S}$.
    Consider the Fermat family of K3 surfaces
    \[
        \mathbb{V}(x^4+y^4+z^4+w^4)\subset\mathbb{P}^3_{\Z[1/2]}
    \]
    over $\Z[1/2]$. 
    This has a canonical $(L,h)$-lattice polarization coming from the defining projective embedding, and so gives rise to a section of the morphism $\cM_{L/\Z[1/2]}\to\spec\Z[1/2]$. 
    Since $\cM_{L/\Z[1/2]}$ is separated, this section is closed. 
    If the supersingular locus were closed, then its intersection with this section would be a closed subset of $\spec\Z[1/2]$. 
    But this is not the case: by~\cite{Shioda}*{Corollary to Proposition~1}, the reduction modulo $p$ of the Fermat quartic is supersingular if and only if $p\equiv 3\pmod{4}$. 
    Thus, the intersection of the section with the supersingular locus is an infinite set of nonzero primes, and so is not closed in $\spec\Z[1/2]$.
\end{remark}

\subsection{Coarse moduli spaces}

By Theorem \ref{thm:rep, easy version}, the stack $\cM_{(L,\cA)}$ has finite inertia, so by a result of Keel--Mori~\cite{KeelMori} (see also~\cite{Rydh}), it admits a coarse moduli space. 
This is an algebraic space over $S$, which we denote by $\rM_{(L,\cA)}$ (or, if we wish to include the base scheme in the notation, by $\rM_{(L,\cA)/S}$). 
The coarse moduli space comes equipped with a morphism
\[
    \pi:\cM_{(L,\cA)}\to\rM_{(L,\cA)}
\]
that is universal for maps from $\cM_{(L,\cA)}$ to algebraic spaces. 
The following properties of the coarse moduli space and the coarse moduli space morphism follow from Theorem \ref{thm:rep, easy version} combined with \cite{Rydh}*{Theorem~6.12}. 
First, as $\cM_{(L,\cA)}$ is locally of finite type, quasi-separated, and Zariski-locally separated over $S$, the same are all true for $\rM_{(L,\cA)}$. 
Next, the morphism $\pi$ is proper, separated, and quasi-finite, and induces a bijection on associated topological spaces
\[
    |\pi|:|\cM_{(L,\cA)}|\to|\rM_{(L,\cA)}|
\]
(universally, in fact). 
Finally, $\pi$ is of formation compatible with flat base change on $\rM_{(L,\cA)}$, and is of formation compatible with arbitrary base change if $\cM_{(L,\cA)}$ is tame (e.g., if $S$ is defined over $\Q$).

Any closed (resp.\ open) substack of $\cM_{(L,\cA)}$ also has finite inertia, and hence also admits a coarse moduli space, which is then a closed (resp.\ open) subspace of $\rM_{(L,\cA)}$ in a natural way. 
In particular, we denote by $\rP_{(L,\cA)}$ the coarse moduli space of $\cP_{(L,\cA)}$ (or including the base scheme, by $\rP_{(L,\cA)/S}$).

\section{Small and Very Small Ample Cones}

In this section, for an even hyperbolic lattice $L$ admitting a primitive embedding into the K3 lattice $\Lambda$, we define subdivisions of the ample cones of $L$, whose chambers we call the \emph{small} and \emph{very small} ample cones of $L$. 
This construction is inspired by the small cones introduced by Alexeev--Engel over the complex numbers \cite{AE25},
which we refer to as \emph{small ample cones} in this paper.

\subsection{Cones: definitions and examples}

Let $\Emb(L)$ denote the set of primitive embeddings $L\hookrightarrow\Lambda$; this set is nonempty, by assumption. 
Given a primitive embedding $e:L\hookrightarrow\Lambda$ we define a subset $\Delta^e(\Lambda)\subset\Delta(\Lambda)$ by
\[
    \Delta^e(\Lambda)=\left\{\delta\in\Lambda\setminus e(L)^{\perp}\,|\,\delta^2=-2\text{ and }\langle e(L),\delta\rangle\text{ is hyperbolic}\right\}.
\]
Here $\langle e(L),\delta\rangle\subset\Lambda$ denotes the sublattice of $\Lambda$ generated by $e(L)$ and $\delta$. 
We note that if $\delta\in\Delta(L)$ is a $(-2)$-class in $L$ then $e(\delta)$ is an element of $\Delta^e(\Lambda)$. 
Thus, if we identify $L$ with its image in $\Lambda$, then we have the containments
\begin{equation}\label{eq:delta containments}
    \Delta(L)\subset\Delta^e(\Lambda)\subset\Delta(\Lambda).
\end{equation}
    
\begin{definition}[Small ample cones]
    \label{def:small amples cones}
    An \emph{$e$-small ample cone} of $L$ is a connected component of the set
    \begin{equation}\label{eq:small ample cones defn}
        \cC(L)\setminus\left(\textstyle\bigcup_{\delta\in\Delta^e(\Lambda)}e^{-1}(\delta^{\perp})\right).
    \end{equation}
\end{definition}

A $(-2)$-class $\delta\in\Delta^e(\Lambda)$ is in an element of $\Lambda$. 
Thus the wall $\delta^{\perp}$ is a subset of $\Lambda_{\R}$ and $e^{-1}(\delta^{\perp})$ is a subset of $L_{\R}$. 
Furthermore, by definition of $\Delta^e(\Lambda)$, the image of $L$ in $\Lambda$ is not contained in the wall $\delta^{\perp}$, and so $e^{-1}(\delta^{\perp})$ has codimension 1 in $L_{\R}$. 
We deduce that the positive cone $\cC(L)$ is not contained in $e^{-1}(\delta^{\perp})$. 
There are only countably many walls, so this shows in particular that the set~\eqref{eq:small ample cones defn} is nonempty, as is any small ample cone of $L$.

\begin{definition}[Very small ample cones]
    \label{def:very small amples cones}
    A \emph{very small ample cone} of $L$ is a connected component of the set
    \begin{equation}\label{eq:vs cone}
        \cC(L)\setminus\bigcup_{e\in\Emb(L)}\left(\textstyle\bigcup_{\delta\in\Delta^e(\Lambda)}e^{-1}(\delta^{\perp})\right).
    \end{equation}
\end{definition}

As in the previous definition, the walls $e^{-1}(\delta^{\perp})$ for $\delta\in\Delta^e(\Lambda)$ have codimension 1 in $L_{\R}$, and there are only countably many such walls. 
Thus, the set \eqref{eq:vs cone} is nonempty, as is any very small ample cone for $L$. 
The set $\Emb(L)$ carries a canonical left action by $\rO(\Lambda)$ (postcomposition) and a canonical right action by $\rO(L)$ (precomposition). 
For a primitive embedding $e\in\Emb(L)$ the union of the bounding walls $\textstyle\bigcup_{\delta\in\Delta^e(\Lambda)}e^{-1}(\delta^{\perp})$ depends only on the $\rO(\Lambda)$-orbit of $e$. 
Thus, we may take the outer union in~\eqref{eq:vs cone} only over a subset $E\subset\Emb(L)$ containing a representative for each $\rO(\Lambda)$-orbit. Moreover, there are only finitely many such orbits. 
Finally, we note that the canonical left action of $\rO(L)$ on $L$ gives rise to an action of $\rO(L)$ on the set of very small ample cones of $L$.

By~\eqref{eq:delta containments}, for any $e\in\Emb(L,\Lambda)$, the walls removed in the $e$-small ample cone decomposition include in particular the walls removed in the ample cone decomposition. 
Thus, the $e$-small ample cone decomposition divides each ample cone into several $e$-small ample cones, and the very small ample cone decomposition divides each small ample cone into several very small ample cones. 
In particular, for a fixed $e\in\Emb(L,\Lambda)$, each very small ample cone of $L$ is contained in a unique $e$-small ample cone of $L$, and each $e$-small ample cone of $L$ is contained in a unique ample cone of $L$.

\begin{example}\label{ex:example1}
    If there is a unique primitive embedding, say $e: L\hookrightarrow\Lambda$ up to isometries of $\Lambda$, then a very small ample cone of $L$ is the same as an $e$-small ample cone of $L$. 
    For example, if $L$ is an even hyperbolic lattice of rank $\leq 10$, then by results of Nikulin \cite{Nikulin}*{1.14.4} there exists a primitive embedding of $L$ into $\Lambda$ which is unique up to $\rO(\Lambda)$.
\end{example}

\begin{example}
    Suppose that $L=\langle h\rangle$ is the rank 1 lattice generated by a single vector $h$ with positive even square, say $h^2=2d$. 
    Then $\Delta(L)$ is empty, so $L_{\R}$ is the unique ample cone for $L$. 
    By Example \ref{ex:example1}, there exists a primitive embedding $e:L\hookrightarrow\Lambda$, which is unique up to $\rO(\Lambda)$. 
    If $\delta\in\Lambda$ is a $(-2)$-class that is not contained in $e(L)^{\perp}$, then the wall $\delta^{\perp}\subset\Lambda_{\R}$ has trivial intersection with $e(L)$. 
    Thus $L_{\R}$ is an $e$-ample cone for $L$ and also a very small ample cone for $L$.
\end{example}

\begin{example}
    Suppose that $L$ has rank 20. 
    Then $L$ has signature $(1,19)$, while the K3 lattice has signature $(3,19)$. 
    It follows that if $e:L\hookrightarrow\Lambda$ is a primitive embedding and $\delta\in\Delta(\Lambda)$ is a $(-2)$-class that is not already contained in $L$, then the sublattice $\langle e(L),\delta\rangle\subset\Lambda$ has signature $(2,19)$, and in particular is not hyperbolic. 
    Therefore, the very small ample cone decomposition for $L$ is the same as the ample cone decomposition. 
    This is related to the geometric fact that the Picard group of a complex K3 surface of Picard rank 20 cannot grow under specializations in characteristic 0.
\end{example}

The following shows that the very small ample cone decomposition has similar good properties to the ample cone decomposition.

\begin{theorem}\label{thm:properties of the very small ample cone decomposition}
    The action of $\rO(L)$ on the set of very small ample cones of $L$ has only finitely many orbits. 
    Furthermore, if $\mathcal{a}$ is a very small ample cone for $L$, then $\mathcal{a}$ is open in $L_{\R}$ and locally rational polyhedral in its closure $\overline{\mathcal{a}}$, and $\mathcal{a}$ intersects the integral lattice $L$ nontrivially.
\end{theorem}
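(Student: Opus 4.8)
The plan is to first establish that the family of hyperplanes cutting out the very small ample cones in~\eqref{eq:vs cone} is \emph{locally finite} in the positive cone $\cC(L)$, then to read off openness, $\mathcal{a}\cap L\neq\emptyset$, and local rational polyhedrality from this, and finally to prove the orbit‑finiteness by combining standard finiteness results for orthogonal groups of indefinite lattices with reduction theory for the arithmetic group $\rO(L)$. For the local finiteness, write $\mathcal{W}=\bigcup_{e\in\Emb(L)}\bigcup_{\delta\in\Delta^e(\Lambda)}e^{-1}(\delta^{\perp})$ for the union of all the walls. Fix a primitive embedding $e$; since it is an isometric embedding it carries $\cC(L)$ into $\cC(\Lambda)$ and compact sets to compact sets, and $e^{-1}(\delta^{\perp})\cap K\neq\emptyset$ forces $\delta^{\perp}\cap e(K)\neq\emptyset$. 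As $\bigcup_{\delta\in\Delta(\Lambda)}\delta^{\perp}$ is locally finite in $\cC(\Lambda)$ \cite{Huy16}*{8, \S2.2 ff.}, only finitely many of the walls $e^{-1}(\delta^{\perp})$, $\delta\in\Delta^e(\Lambda)$, meet a given compact $K\subset\cC(L)$. Since the inner union depends only on the $\rO(\Lambda)$‑orbit of $e$ and there are finitely many such orbits (as explained after Definition~\ref{def:very small amples cones}), $\mathcal{W}$ is locally finite, hence closed, in $\cC(L)$.

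Because $\cC(L)$ is open in $L_{\R}$ and $\mathcal{W}$ is closed in it, $\cC(L)\setminus\mathcal{W}$ is open and locally connected, so its connected components---the very small ample cones---are open in $L_{\R}$. Each component is a cone, since $\cC(L)$ and $\mathcal{W}$ are cones and hence every ray through a point of $\cC(L)\setminus\mathcal{W}$ lies in that point's component; being open, nonempty and closed under positive scaling, $\mathcal{a}$ contains a point of $L_{\Q}$ and hence an integer multiple thereof, giving $\mathcal{a}\cap L\neq\emptyset$. For local rational polyhedrality near a point $p\in\overline{\mathcal{a}}$ with $p^2>0$, write $\cC(L)^{+}$ for the component of $\cC(L)$ containing $\mathcal{a}$; choose a ball $B\ni p$ inside $\cC(L)^{+}$ meeting only finitely many walls, shrink it so that these all pass through $p$, and observe that each such wall is the rational (indeed integrally defined) hyperplane $\{x\in L_{\R}:\langle e(x),\delta\rangle=0\}$. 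Two distinct chambers of $B\setminus\mathcal{W}$ are separated by one of these walls, which, passing through the convex cone $\cC(L)^{+}$, disconnects it; hence no path in $\cC(L)^{+}\setminus\mathcal{W}$ joins the two chambers, so they cannot both lie in the connected set $\mathcal{a}$. Therefore $\mathcal{a}\cap B$ is a single open rational polyhedral chamber $C$, and $\overline{\mathcal{a}}$ agrees with the rational polyhedral cone $\overline{C}$ in a neighborhood of $p$.

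For the orbit‑finiteness, reindexing the union defining $\mathcal{W}$ by $e\mapsto e\circ\phi^{-1}$ (using $\Delta^{e\circ\phi^{-1}}(\Lambda)=\Delta^{e}(\Lambda)$) shows $\phi(\mathcal{W})=\mathcal{W}$ for every $\phi\in\rO(L)$, so $\rO(L)$ permutes the very small ample cones; moreover it suffices to bound the orbits of the index‑two subgroup $\rO(L)^{+}$ preserving $\cC(L)^{+}$ acting on the very small ample cones inside $\cC(L)^{+}$. A wall is a hyperplane $v^{\perp}\subset L_{\R}$ with $v\in L^{\vee}$ the image under $e$ of the orthogonal projection of some $\delta\in\Delta^e(\Lambda)$ into $e(L)\otimes\Q$; using $\delta^2=-2$ and that the orthogonal component of $\delta$ lies in $(e(L)^{\perp})^{\vee}$, one checks that a wall meeting $\cC(L)$ has $v^2$ in a fixed finite set of negative rationals and that the set of such $v$ is $\rO(L)$‑stable, so by the classical finiteness of $\rO(L)$‑orbits on vectors of bounded norm in an indefinite lattice of rank $\geq 3$ (the rank‑$2$ case being elementary) there are finitely many $\rO(L)$‑orbits of walls. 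Now $\rO(L)^{+}$ acts on the hyperbolic space $\cC(L)^{+}/\R_{>0}$ as an arithmetic lattice, so by reduction theory a fundamental domain decomposes as a compact core together with finitely many standard cuspidal regions; the compact core meets only finitely many walls, hence finitely many chambers, by local finiteness, and in each cuspidal region the parabolic stabilizer acts on the horospherical cross‑section with compact quotient and the walls there descend to a finite hyperplane arrangement, so the cuspidal region meets only finitely many chambers as well. Hence a fundamental domain meets finitely many chambers of $\cC(L)^{+}\setminus\mathcal{W}$, yielding finitely many $\rO(L)$‑orbits of very small ample cones. The main obstacle is precisely this last step---controlling the walls inside a fundamental domain near the cusps of $\rO(L)$, thereby upgrading ``finitely many orbits of walls'' to ``finitely many orbits of chambers''; an alternative would be to adapt the argument of Alexeev--Engel for small ample cones \cite{AE25} directly.
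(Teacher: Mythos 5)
Your overall strategy (local finiteness of the wall arrangement, then chamber combinatorics, then finiteness of $\rO(L)$-orbits of walls plus reduction theory) has the right shape and is essentially the route of Alexeev--Engel; the paper itself gives no independent argument and simply declares the proof identical to \cite{AE25}*{Proposition~4.13}. However, your first step contains a genuine error. You deduce local finiteness of $\mathcal{W}$ in $\cC(L)$ from the claim that $\bigcup_{\delta\in\Delta(\Lambda)}\delta^{\perp}$ is locally finite in $\{x\in\Lambda_{\R}\mid x^2>0\}$, citing \cite{Huy16}. That claim is false: the statement in Huybrechts is for a \emph{hyperbolic} lattice of signature $(1,n)$, where $x^{\perp}$ is negative definite whenever $x^2>0$, whereas $\Lambda$ has signature $(3,19)$. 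For instance, if $e,f$ is an isotropic basis of the first copy of $U$ and $x=e+f$, then $x^{\perp}\cap\Lambda$ contains $U^{\oplus 2}\oplus E_8(-1)^{\oplus 2}$ and hence infinitely many $(-2)$-classes, so infinitely many walls pass through the single point $x$ and no neighborhood of $e(K)\ni x$ meets only finitely many of them. The correct argument must use the hyperbolicity condition built into $\Delta^e(\Lambda)$: writing $\delta=\delta_L+\delta_{L^{\perp}}$ with $\delta_L\in e(L)^{\vee}$, hyperbolicity of $\langle e(L),\delta\rangle$ forces $\delta_{L^{\perp}}^2\leq 0$, hence $\delta_L^2=-2-\delta_{L^{\perp}}^2\in[-2,0)$ for every wall that meets $\cC(L)$; since $\delta_L$ ranges over a fixed lattice and has bounded negative square, local finiteness follows from the standard hyperbolic-lattice statement applied to $L$. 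You carry out exactly this projection computation later for the orbit count of walls, so the fix is available to you, but as written the local-finiteness step rests on a false premise, and without the hyperbolicity condition the conclusion itself would fail.

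The second soft spot is the one you flag yourself: upgrading finitely many orbits of walls to finitely many orbits of chambers. The compact-core part of your reduction-theory argument is fine, but the assertion that in a cuspidal region ``the walls descend to a finite hyperplane arrangement'' is precisely the nontrivial point: walls $v^{\perp}$ with $v$ not orthogonal to the isotropic cusp vector accumulate at the cusp and one must show they exit a sufficiently small horoball, while those with $v$ orthogonal to the cusp must be shown to fall into finitely many orbits under the parabolic stabilizer. As it stands this is a sketch, and since the paper delegates exactly this point to \cite{AE25}*{Proposition~4.13}, your write-up reproduces the easier parts, leaves the same hard part to the literature, and introduces the (fixable) error above.
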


\begin{proof}
    The analogous result for small ample cones is \cite{AE25}*{Proposition~4.13}. The proof in our case follows from an identical argument.
\end{proof}

\begin{lemma}\label{lem:embedding in Lambda}
    If $X$ is a K3 surface over an algebraically closed field $k$, then there exists a primitive embedding $\Pic(X)\hookrightarrow\Lambda$ if and only if $X$ is not supersingular.
\end{lemma}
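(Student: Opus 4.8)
The plan is to prove the two implications separately: the ``only if'' direction is a signature count, while the ``if'' direction reduces the positive-characteristic case to characteristic~$0$ by means of a lift.

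For the ``only if'' direction I would argue contrapositively. If $X$ is supersingular, then $\Pic(X)$ is an even lattice of rank~$22$ and signature $(1,21)$. An isometric embedding $\Pic(X)\hookrightarrow\Lambda$ would identify $\Pic(X)$ with a rank-$22$, hence finite-index, sublattice of $\Lambda$, so the induced map $\Pic(X)_{\R}\to\Lambda_{\R}$ would be an isometry of real quadratic spaces; this contradicts the fact that $\Lambda$ has signature $(3,19)\neq(1,21)$. Hence a supersingular K3 surface admits no isometric embedding of its Picard lattice into $\Lambda$, and in particular no primitive one. (Alternatively, a primitive embedding of equal rank would be an isometry onto $\Lambda$, forcing $\Pic(X)$ to be unimodular, which it is not since its discriminant is, up to sign, a positive power of $p$; but the signature argument has the advantage of also excluding non-primitive embeddings.)

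For the ``if'' direction, assume $X$ is not supersingular. In characteristic~$0$ I would first reduce to $k=\C$ by the Lefschetz principle, using that $X$ is defined over a finitely generated subfield of $\C$ and that the geometric Picard lattice is unchanged under algebraically closed field extensions in characteristic~$0$. Over $\C$ the lattice $\Pic(X)=\NS(X)=H^{1,1}(X)\cap H^2(X,\Z)$ is a primitive sublattice of $H^2(X,\Z)$, which with its cup-product form is an even unimodular lattice of signature $(3,19)$, hence isometric to $\Lambda$ by the classification of indefinite even unimodular lattices; this gives the desired primitive embedding. In characteristic $p>0$ I would bootstrap from this: since $X$ is not supersingular we have $\rk\Pic(X)\leq 20$, so the lifting result \cite{LO15}*{Proposition~A.1} (already invoked in the proof of Theorem~\ref{thm:rank leq 10}), applied with the saturated subgroup taken to be all of $\Pic(X)$ (which contains an ample class), produces a K3 surface $\cX$ over a complete DVR $R$ with residue field $k$ and fraction field of characteristic~$0$, together with a lift of $\Pic(X)$ into the relative Picard group. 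Restricting this lift to the geometric generic fiber yields a section of the specialization map $\mathrm{sp}\colon\Pic(\cX_{\overline{\eta}})\hookrightarrow\Pic(X)$ of~\ref{pg:specialization map}; since $\mathrm{sp}$ is injective and now also surjective, it is an isometry, so $\Pic(X)\cong\Pic(\cX_{\overline{\eta}})$. Applying the characteristic-$0$ case to $\cX_{\overline{\eta}}$, a K3 surface over an algebraically closed field of characteristic~$0$, then gives a primitive embedding $\Pic(X)\cong\Pic(\cX_{\overline{\eta}})\hookrightarrow\Lambda$.

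The step I expect to be the main obstacle is the invocation of \cite{LO15}*{Proposition~A.1}: one must verify that it applies to lift the full N\'eron--Severi lattice of an arbitrary non-supersingular K3 surface (not merely a saturated sublattice of small rank), and that the lifted divisor classes are compatible with the specialization map of~\ref{pg:specialization map} in the precise sense needed for the section argument to force $\mathrm{sp}$ to be surjective. The remaining ingredients---the signature obstruction, the reduction to $\C$, and the Hodge-theoretic input in characteristic~$0$---are routine.
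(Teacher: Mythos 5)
Your ``only if'' direction and the characteristic-$0$ part of the ``if'' direction coincide with the paper's argument (signature count; the exponential sequence over $\C$ plus the Lefschetz principle). The gap sits exactly where you predicted it: the citation of \cite{LO15}*{Proposition~A.1} for the positive-characteristic lift. That proposition lifts a K3 surface together with a \emph{saturated subgroup of rank at most $10$} of its Picard group containing an ample class --- this rank bound is precisely why the paper's Theorem~\ref{thm:rank leq 10} carries the hypothesis $\rk(L)\leq 10$ and applies the proposition only to the saturation of $j(L)$. A non-supersingular K3 surface in positive characteristic can have Picard rank up to $20$, so invoking the proposition for all of $\Pic(X)$ is not licensed by its hypotheses; and the bound is not an artifact, since the Lieblich--Olsson argument uses it to guarantee (by a dimension count against the supersingular locus) that the deformation space is not entirely contained in the characteristic-$p$ fiber even when the relevant Chern classes degenerate.

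The paper closes this gap with a different input: for $X$ non-supersingular the first Hodge Chern class map $c_1\otimes k\colon\Pic(X)\otimes k\to\rH^1(X,\Omega^1_X)$ is injective (\cite{Bragg25}*{Remark~3.11}), so the standard deformation-theoretic description used in the proof of Theorem~\ref{thm:smoothness and dimension} shows that $\mathrm{Def}_{(X,\Pic(X))}$ is formally smooth over $W(k)$ of dimension $20-\rho$, with no restriction on $\rho\leq 20$. From this one obtains a lift over a DVR whose specialization map is an isomorphism (cf.\ \cite{LM}*{4.2}). If you replace your citation with this argument, the remainder of your proof --- the identification of $\Pic(X)$ with the Picard group of the geometric generic fiber and the reduction to the complex case --- goes through as written.
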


\begin{proof}
    In the case when $X$ is defined over $\C$, the boundary map induced by the exponential sequence gives a canonical primitive embedding $\Pic(X)\hookrightarrow\rH^2(X,\Z)$, and $\rH^2(X,\Z)$ and $\Lambda$ are abstractly isomorphic as lattices. 
    The case when $k$ is an arbitrary algebraically closed field of characteristic 0 follows from this and an application of the Lefschetz principle.
    
    Suppose that $k$ has positive characteristic $p$. 
    If $X$ is supersingular, then $\Pic(X)$ has rank 22 and signature $(1,21)$. 
    The K3 lattice has rank 22 and signature $(3,19)$, so $\Pic(X)$ cannot be embedded in $\Lambda$. 
    Suppose that $X$ is not supersingular. In this case the first Hodge Chern character~\eqref{eq:first Hodge Chern class} $\Pic(X)\to\rH^1(X,\Omega^1_X)$ is injective \cite{Bragg25}*{Remark~3.11}. 
    Standard deformation theory (as in the proof of Theorem \ref{thm:smoothness and dimension}) then shows that the the deformation functor associated to the pair $(X,\Pic(X)\subset\Pic(X))$ is formally smooth, from which one deduces that there exists a DVR, say $R$, with residue field $k$ and fraction field of characteristic 0 together with a lifting of $X$ over $R$ such that for any geometric generic point $\overline{\eta}$ of $\spec R$ the specialization map $\mathrm{sp}:\Pic(X_{\overline{\eta}})\hookrightarrow\Pic(X)$
    is an isomorphism (see e.g., \cite{LM}*{4.2}). 
    The point $\overline{\eta}$ is the spectrum of an algebraically closed field of characteristic 0, so by the above, there exists a primitive embedding $\Pic(X_{\overline{\eta}})\hookrightarrow\Lambda$.
\end{proof}

\begin{proposition}\label{prop:small cones and polarizations}
    Let $e:L\hookrightarrow\Lambda$ be a primitive embedding. 
    Let $X$ be a non-supersingular K3 surface over an algebraically closed field. 
    Let $j:L\hookrightarrow\Pic(X)$ be a primitive $L$-polarization on $X$, and suppose that there exists a primitive embedding $e':\Pic(X)\hookrightarrow\Lambda$ such that the diagram
    \[
        \begin{tikzcd}
            L\arrow[hook]{rr}{e}\arrow[hook]{dr}[swap]{j}&&\Lambda\\
            &\Pic(X)\arrow[hook]{ur}[swap]{e'}
        \end{tikzcd}
    \]
    commutes. 
    If $\cA\subset L_{\R}$ is an $e$-small ample cone, then the image $j(\cA)\subset\Pic(X)_{\R}$ of $\cA$ under $j$ is contained in some ample cone of $\Pic(X)$. 
    In particular, if $j$ is an $(L,\cA)$-polarization, then $j(\cA)\subset\Amp(X)$.
\end{proposition}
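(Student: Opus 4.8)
The plan is to show that $j(\cA)$ avoids every $(-2)$-wall of $\Pic(X)$, so that, being connected and contained in $\cC(\Pic(X))$, it must lie in a single ample cone. Concretely, the ample cones of $\Pic(X)$ are the connected components of $\cC(\Pic(X))\setminus\bigcup_{\delta'\in\Delta(\Pic(X))}(\delta')^{\perp}$; since $j$ is an isometric embedding we have $j(\cC(L))\subseteq\cC(\Pic(X))$ and $j(\cA)$ is connected, so it suffices to prove $\cA\cap j^{-1}((\delta')^{\perp})=\emptyset$ for each $\delta'\in\Delta(\Pic(X))$, where $j^{-1}((\delta')^{\perp})=\{x\in L_{\R}\mid j(x)\cdot\delta'=0\}$. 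Granting this, the ``in particular'' clause is immediate: if $j$ is an $(L,\cA)$-polarization then $j(\cA)$ meets $\Amp(X)\cap\Pic(X)$, and since $\Amp(X)$ is one of the ample cones of $\Pic(X)$, the unique ample cone containing $j(\cA)$ is $\Amp(X)$.

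First I would transport the wall. For $\delta'\in\Delta(\Pic(X))$, put $\eta:=e'(\delta')\in\Lambda$. Using the commuting triangle $e=e'\circ j$ and that $e'$ is an isometric embedding, one gets $e(x)\cdot\eta=j(x)\cdot\delta'$ for every $x\in L_{\R}$, hence $j^{-1}((\delta')^{\perp})=e^{-1}(\eta^{\perp})$ as subsets of $L_{\R}$. So it is enough to check that $\eta$ lies in $\Delta^{e}(\Lambda)$: then $e^{-1}(\eta^{\perp})$ is one of the walls removed in the definition of the $e$-small ample cones, and $\cA$, being a connected component of the complement of all those walls, is disjoint from it.

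It remains to verify the three conditions defining membership in $\Delta^{e}(\Lambda)$. The relation $\eta^{2}=(\delta')^{2}=-2$ holds since $e'$ is isometric. For $\eta\notin e(L)^{\perp}$, I would use that $j$ is an $L$-polarization, so $j(L)$ contains an ample class $h$; by Riemann--Roch $\delta'$ or $-\delta'$ is effective, whence $h\cdot\delta'\neq 0$, and applying $e'$ gives $e'(h)\cdot\eta\neq 0$ with $e'(h)\in e(L)$. For hyperbolicity of $\langle e(L),\eta\rangle$, note that $e'$ restricts to an isometry of $\langle j(L),\delta'\rangle\subseteq\Pic(X)$ onto $\langle e(L),\eta\rangle$, so it suffices that $\langle j(L),\delta'\rangle$ be hyperbolic. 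Since $\langle j(L),\delta'\rangle$ contains the positive-square class $h$ and sits in $\Pic(X)$, whose form has signature $(1,\rk\Pic(X)-1)$, its radical lies in $h^{\perp}\cap(\Pic(X)\otimes\Q)$, which is negative definite; a subspace of a definite space on which the form vanishes is zero, so $\langle j(L),\delta'\rangle$ is non-degenerate, necessarily of signature $(1,m)$ with $m\geq 0$; and as it also contains $\delta'$ with $(\delta')^{2}<0$ we get $m\geq 1$, so $\langle j(L),\delta'\rangle$ — and hence $\langle e(L),\eta\rangle$ — is hyperbolic.

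I expect the only subtle point to be this verification that $\langle e(L),\eta\rangle$ is non-degenerate, which is exactly what licenses the use of the ``hyperbolic'' clause in the definition of $\Delta^{e}(\Lambda)$; everything else is formal bookkeeping with the identity $e=e'\circ j$ together with the elementary fact that an ample class pairs nontrivially with every $(-2)$-class.
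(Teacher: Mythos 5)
Your proof is correct and follows essentially the same route as the paper's: both reduce the claim to showing that every $(-2)$-class of $\Pic(X)$ lands in $\Delta^{e}(\Lambda)$ under $e'$, using the ample class in $j(L)$ to rule out $\eta\in e(L)^{\perp}$ and the sandwich $L\subset\langle L,\delta\rangle\subset\Pic(X)$ for hyperbolicity. Your explicit verification that $\langle j(L),\delta'\rangle$ is non-degenerate (via the radical sitting in the negative-definite $h^{\perp}$) fills in a step the paper states without detail, which is a welcome precision rather than a divergence.
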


\begin{proof}
    To ease the notation, we suppress the symbols $j,e,$ and $e'$ by viewing them as inclusions. 
    Thus, we have primitive inclusions of lattices 
    \[
        L\subset\Pic(X)\subset\Lambda.
    \]
    Note that $\mathcal{A}$ is contained in $\cC(L)$, and hence is also contained in $\cC(X)$. 
    As $\mathcal{A}$ is connected, to prove the result it will suffice to show that $\mathcal{A}$ does not intersect the wall $\delta^{\perp}$ for any $\delta\in\Delta(X)$. 
    We claim that we have the inclusion $\Delta(X)\subset\Delta^e(\Lambda)$. 
    Using the definition of the small ample cones, it will follow from this that $\mathcal{A}$ avoids $\delta^{\perp}$, as needed. 
    Fix a $(-2)$ class $\delta\in\Delta(X)$. 
    First note that, as the intersection $L\cap\Amp(X)$ is nonempty by assumption, $L_{\R}$ is not entirely contained in the wall $\delta^{\perp}$. 
    Therefore we have $\delta\notin L^{\perp}$. 
    Next, consider the sublattice $\langle L,\delta\rangle\subset\Lambda$ generated by $L$ and $\delta$. 
    We have inclusions
    \[
        L\subset\langle L,\delta\rangle\subset\Pic(X)\subset\Lambda.
    \]
    Because both $L$ and $\Pic(X)$ are hyperbolic, so is $\langle L,\delta\rangle$. 
    Thus we have $\delta\in\Delta^{e}(\Lambda)$, and so $j(\cA)$ is contained in some ample cone of $\Pic(X)$. 
    If $j$ is an $(L,\cA)$-polarization, then $j(\cA)\cap\Amp(X)$ is nonempty, so in this case we have $j(\cA)\subset\Amp(X)$.
\end{proof}

The following is the key consequence of our definition of the very small ample cones.

\begin{proposition}\label{prop:very small cones and polarizations}
    Let $(X,j)$ be a non-supersingular primitively $L$-polarized K3 surface over an algebraically closed field. 
    If $\mathcal{a}\subset L_{\R}$ is a very small ample cone, then the image $j(\mathcal{a})\subset\Pic(X)_{\R}$ of $\mathcal{a}$ under $j$ is contained in some ample cone of $\Pic(X)$. 
    In particular, if $j$ is an $(L,\mathcal{a})$-polarization, then we have $j(\mathcal{a})\subset\Amp(X)$.
\end{proposition}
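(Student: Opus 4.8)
The plan is to deduce this from Proposition~\ref{prop:small cones and polarizations} by manufacturing a compatible pair of primitive embeddings. Since $(X,j)$ is non-supersingular, Lemma~\ref{lem:embedding in Lambda} furnishes a primitive embedding $e'\colon\Pic(X)\hookrightarrow\Lambda$, and we set $e:=e'\circ j$. The one point requiring genuine verification is that $e$ is again a primitive embedding $L\hookrightarrow\Lambda$. This is immediate because a composition of primitive sublattice inclusions is primitive: identifying $\Pic(X)/j(L)$ with $e'(\Pic(X))/e(L)$, the short exact sequence
\[
    0\to\Pic(X)/j(L)\to\Lambda/e(L)\to\Lambda/e'(\Pic(X))\to 0
\]
exhibits $\Lambda/e(L)$ as an extension of torsion-free abelian groups, hence torsion-free. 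Thus $e\in\Emb(L)$, and by construction the triangle on $L,\Pic(X),\Lambda$ with edges $j,e',e$ commutes.

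Next I would pass from the very small ample cone up to an ambient small one. By definition, $\mathcal{a}$ is a connected component of $\cC(L)$ from which \emph{all} the walls $e^{-1}(\delta^{\perp})$, $\delta\in\Delta^{e}(\Lambda)$, have been deleted simultaneously over every $e\in\Emb(L)$; in particular, for the embedding $e$ constructed above, $\mathcal{a}$ is a connected subset of $\cC(L)$ disjoint from each $e^{-1}(\delta^{\perp})$ with $\delta\in\Delta^{e}(\Lambda)$, so it lies in a single $e$-small ample cone $\cA$ of $L$. Now apply Proposition~\ref{prop:small cones and polarizations} to the data $(e,X,j,e',\cA)$: its hypotheses hold since $j$ is a primitive $L$-polarization and the triangle commutes, and it yields that $j(\cA)$ is contained in some ample cone of $\Pic(X)$. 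Since $\mathcal{a}\subseteq\cA$, that same ample cone contains $j(\mathcal{a})$, which is the first assertion. For the final clause, if $j$ is an $(L,\mathcal{a})$-polarization then $j(\mathcal{a})$ contains an integral ample class; as $\mathcal{a}\subseteq\cA$, so does $j(\cA)$, so $j$ is also an $(L,\cA)$-polarization, and the corresponding clause of Proposition~\ref{prop:small cones and polarizations} gives $j(\cA)\subseteq\Amp(X)$, whence $j(\mathcal{a})\subseteq\Amp(X)$.

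I do not anticipate a serious obstacle: the whole argument is a short reduction resting on two facts already established, Lemma~\ref{lem:embedding in Lambda} and Proposition~\ref{prop:small cones and polarizations}. The only computation needed is the torsion-free-extension check that $e'\circ j$ is primitive. The subtlety worth flagging is purely organizational: Proposition~\ref{prop:small cones and polarizations} is stated for $e$-small ample cones, so one must enlarge $\mathcal{a}$ to the ambient $e$-small cone $\cA$ before invoking it — but the conclusion is a containment of $j(\cA)$ in an ample cone, which automatically restricts back down to the subset $j(\mathcal{a})$.
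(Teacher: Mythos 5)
Your proposal is correct and follows essentially the same route as the paper: obtain $e'\colon\Pic(X)\hookrightarrow\Lambda$ from Lemma~\ref{lem:embedding in Lambda}, set $e=e'\circ j$, enlarge $\mathcal{a}$ to the unique ambient $e$-small ample cone, and invoke Proposition~\ref{prop:small cones and polarizations}. The extra check that $e'\circ j$ is primitive is a harmless (and welcome) elaboration of a step the paper leaves implicit.
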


\begin{proof}
    By Lemma \ref{lem:embedding in Lambda}, there exists a primitive embedding $e':\Pic(X)\hookrightarrow\Lambda$. 
    Write $e$ for the composition $e'\circ j:L\hookrightarrow\Lambda$. 
    Let $\cA\subset L_{\R}$ be the unique $e$-small ample cone of $L$ such that $\mathcal{a}\subset\cA$. 
    By Proposition \ref{prop:small cones and polarizations}, the image of $\cA$ under $j$ is contained in some ample cone of $\Pic(X)$. 
    Thus, the same is true for $\mathcal{a}$. 
    If $j$ is moreover an $(L,\mathcal{a})$-polarization, then $j(\mathcal{a})\cap\Amp(X)$ is nonempty, and thus we have $j(\mathcal{a})\subset\Amp(X)$.
\end{proof}

\begin{corollary}\label{cor:L pol implies L,a pol}
    If $X$ is a non-supersingular K3 surface over an algebraically closed field that admits a primitive $L$-polarization, then $X$ also admits a primitive $(L,\mathcal{a})$-polarization for any very small ample cone $\mathcal{a}\subset L_{\R}$.
\end{corollary}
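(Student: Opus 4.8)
The plan is to deduce this corollary formally from Proposition~\ref{prop:very small cones and polarizations} together with the transitivity of the Weyl group action on ample cones. By hypothesis there is a primitive $L$-polarization $j\colon L\hookrightarrow\Pic(X)$. Since $X$ is non-supersingular, Proposition~\ref{prop:very small cones and polarizations} applies to the primitively $L$-polarized K3 surface $(X,j)$ and shows that $j(\mathcal{a})\subset\Pic(X)_{\R}$ is contained in a single ample cone $\cB$ of $\Pic(X)$ --- though not necessarily in $\Amp(X)$ itself. So the only thing to fix is the discrepancy between $\cB$ and $\Amp(X)$.

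To correct this I would use an isometry of $\Pic(X)$. Recall that $\pm W(\Pic(X))$ acts simply transitively on the set of ample cones of $\Pic(X)$, so there is an isometry $w\in\pm W(\Pic(X))\subset\rO(\Pic(X))$ with $w(\cB)=\Amp(X)$. Set $j':=w\circ j\colon L\hookrightarrow\Pic(X)$. Since $w$ and $j$ are isometric embeddings and $j(L)$ is primitive in $\Pic(X)$, the composite $j'$ is again a primitive isometric embedding, and $j'(\mathcal{a})=w(j(\mathcal{a}))\subset w(\cB)=\Amp(X)$.

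Finally I would check that $j'$ is a genuine $(L,\mathcal{a})$-polarization. By Theorem~\ref{thm:properties of the very small ample cone decomposition}, the very small ample cone $\mathcal{a}$ meets the integral lattice $L$ nontrivially, say $\ell\in\mathcal{a}\cap L$; then $j'(\ell)$ is an integral class of $\Pic(X)$ lying in $j'(\mathcal{a})\subset\Amp(X)$, hence an integral ample class contained in the image of $j'$. Thus $j'$ is a primitive $L$-polarization whose image of $\mathcal{a}$ contains an integral ample class, i.e., a primitive $(L,\mathcal{a})$-polarization on $X$. I do not expect a serious obstacle here --- the substance lies entirely in Proposition~\ref{prop:very small cones and polarizations} (which in turn rests on Lemma~\ref{lem:embedding in Lambda}); the only step demanding any care is verifying that post-composing $j$ with $w$ still yields a legitimate polarization, which is precisely where the fact that very small ample cones contain integral points is used.
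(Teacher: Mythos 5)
Your argument is correct and coincides with the paper's own proof: both apply Proposition~\ref{prop:very small cones and polarizations} to place $j(\mathcal{a})$ in a single ample cone and then use the simply transitive action of $\pm W(\Pic(X))$ on ample cones to replace $j$ by $g\circ j$. Your extra final step, invoking Theorem~\ref{thm:properties of the very small ample cone decomposition} to confirm that $\mathcal{a}\cap L\neq\emptyset$ so that the modified embedding is genuinely an $(L,\mathcal{a})$-polarization, is a detail the paper leaves implicit, and it is correctly handled.
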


\begin{proof}
    If $j:L\hookrightarrow\Pic(X)$ is a primitive $L$-polarization and $\mathcal{a}\subset L_{\R}$ is a very small ample cone then by Proposition \ref{prop:very small cones and polarizations} the image of $\mathcal{a}$ under $j$ is contained in some ample cone of $\Pic(X)$. 
    The Weyl group $\pm W(X)$ acts simply transitively on the set of ample cones of $\Pic(X)$, so there exists a unique isometry $g\in\pm W(X)$ such that $g\circ j$ maps $\mathcal{a}$ into $\Amp(X)$. 
    Then $g\circ j$ is a primitive $(L,\mathcal{a})$-polarization on $X$.
\end{proof}

\subsection{Separated moduli spaces}

Fix a very small ample cone $\mathcal{a}\subset L_{\R}$ for $L$ and consider the moduli stack $\overline{\cP}_{(L,\mathcal{a})}:=\overline{\cP}_{(L,\mathcal{a})/S}$. 
For an isometry $g\in\rO(L)$ we have an isomorphism of stacks
\[
    \overline{\cP}_{(L,\mathcal{a})}\to\overline{\cP}_{(L,g^{-1}(\mathcal{a}))},\hspace{1cm}(X,j)\mapsto (X,j\circ g).
\]
Thus up to an isomorphism preserving underlying K3 surfaces the stack $\overline{\cP}_{(L,\mathcal{a})}$ depends only on the $\rO(L)$-orbit of $\mathcal{a}$. 
By Theorem \ref{thm:properties of the very small ample cone decomposition}, there are only finitely many $\rO(L)$-orbits of very small ample cones, hence only finitely many isomorphism classes of moduli spaces $\overline{\cP}_{(L,\mathcal{a})}$.

\begin{remark}
    If $X$ is a non-supersingular K3 surface that admits a primitive $L$-polarization, then $X$ also admits a primitive $(L,\mathcal{a})$-polarization by Corollary \ref{cor:L pol implies L,a pol}. 
    Thus, every non-supersingular primitively $L$-polarizable K3 surface appears in the moduli stack $\cP_{(L,\mathcal{a})}$ (possibly multiple times).
\end{remark}

The following shows that the stack $\overline{\cP}_{(L,\mathcal{a})}$ satisfies a version of the uniqueness part of the valuative criterion, at least away from the supersingular locus.

\begin{proposition}\label{prop:uniqueness part of valuative criterion}
      Let $R$ be a DVR, set $T=\spec R$, let $\eta\in T$ be the generic point, and let $t\in T$ be the closed point. 
      Let $(X,j)$ be a family of primitively $(L,\mathcal{a})$-polarized K3 surfaces over $T$ whose special fiber is non-supersingular. 
      Let $(Y,l)$ be a family of $(L,\mathcal{a})$-polarized K3 surfaces over $T$. 
      If $u_{\eta}:X_{\eta}\simeq Y_{\eta}$ is an isomorphism of $(L,\mathcal{a})$-polarized K3 surfaces over $\eta$, then $u_{\eta}$ extends uniquely to an isomorphism $X\simeq Y$ of $(L,\mathcal{a})$-polarized K3 surfaces over $T$. 
\end{proposition}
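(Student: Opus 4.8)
The plan is to prove uniqueness first---which is elementary---and then to deduce existence of the extension from the Matsusaka--Mumford theorem, the role of the very small ample cone being to supply a relatively ample polarization on $Y$ matching one on $X$ along the generic fibre.

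\emph{Uniqueness.} Since $X\to T$ is smooth with geometrically connected fibres over the integral scheme $T$, the space $X$ is integral, so $X_\eta$ is dense in $X$; as $Y\to T$ is proper, hence separated, two $T$-morphisms $X\to Y$ agreeing on the dense open $X_\eta$ agree everywhere. Thus an extension of $u_\eta$, if it exists, is unique. The same argument applied to $\Pic_{X/T}$ (separated over $T$) shows that any extension $u$ automatically satisfies $u^{*}l=j$, since this identity of sections of $\Pic_{X/T}$ holds over $\eta$; so $u$ is automatically an isomorphism of $(L,\mathcal{a})$-polarized families in the sense of Definition~\ref{def:moduli stack of lattice-pol K3s}.

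\emph{Existence.} Since supersingularity is stable under specialization, non-supersingularity of the closed fibre forces the geometric generic fibre $X_{\overline\eta}$ to be non-supersingular as well. Hence Proposition~\ref{prop:very small cones and polarizations} gives $j_{\overline t}(\mathcal{a})\subseteq\Amp(X_{\overline t})$ and $j_{\overline\eta}(\mathcal{a})\subseteq\Amp(X_{\overline\eta})$; transporting the latter along $u_\eta$ (which satisfies $u_\eta^{*}l_\eta=j_\eta$) yields $l_{\overline\eta}(\mathcal{a})\subseteq\Amp(Y_{\overline\eta})$, so that \emph{every} integral class of $\mathcal{a}$ is ample on the geometric generic fibre of $Y$. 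Because $(Y_{\overline t},l_{\overline t})$ is $(L,\mathcal{a})$-polarized, there is some $h'\in\mathcal{a}\cap L$ with $l_{\overline t}(h')$ ample on $Y_{\overline t}$. The locus in $T$ over which the section $l(h')$ of $\Pic_{Y/T}$ is fibrewise ample is open and contains $t$, hence equals $T$; similarly $j(h')$ is fibrewise ample on $X$. After a harmless \'etale base change on $T$ to clear the Brauer obstruction (permissible by the uniqueness just proved, which lets us descend the morphism produced), $\cL_Y:=\cO_Y(l(h'))$ and $\cL_X:=\cO_X(j(h'))$ are relatively ample line bundles and $u_\eta^{*}(\cL_Y|_{Y_\eta})\cong\cL_X|_{X_\eta}$, again because $u_\eta^{*}l_\eta=j_\eta$. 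The special fibres $X_t$ and $Y_t$ are K3 surfaces---reduced, irreducible, not ruled---so the Matsusaka--Mumford theorem extends $u_\eta$ to an isomorphism $u\colon X\iso Y$ over $T$, which by the uniqueness paragraph is an isomorphism of $(L,\mathcal{a})$-polarized families. (Alternatively, avoiding Matsusaka--Mumford: since $X$ is regular and $Y\to T$ proper, the rational map $X\dashrightarrow Y$ extending $u_\eta$ is defined away from a finite subset of $X_t$; resolving its indeterminacy and pulling back $\cL_Y$, one compares with $\cL_X$ and uses the negativity of the exceptional locus together with ampleness of $\cL_X|_{X_t}$ to conclude there is no indeterminacy, and then that the extended morphism $X\to Y$---proper, birational, and an isomorphism on the minimal surface $X_t$---is an isomorphism.)

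\emph{Main obstacle.} The delicate step is the construction of $\cL_Y$: a priori $(Y_t,l_t)$ could be supersingular or non-primitively polarized, so no class $h\in\mathcal{a}\cap L$ need be ample on $Y_t$ for reasons visible on $L$ alone. What rescues the argument is that $u_\eta$ forces $l_\eta(\mathcal{a})$ into $\Amp(Y_{\overline\eta})$---precisely here the very small ample cone hypothesis enters, via Proposition~\ref{prop:very small cones and polarizations}---so that any class of $\mathcal{a}$ happening to be ample on $Y_t$ is automatically ample on all of $Y$; everything else is standard.
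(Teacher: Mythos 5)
Your proof is correct, and at the decisive step it takes a genuinely different route from the paper. The paper first invokes Ogus's result \cite{Ogus83}*{2.8} to produce an abstract isomorphism $X_{\overline t}\simeq Y_{\overline t}$ compatible with the specialization maps up to a Weyl group element; this is what lets the authors conclude that $Y_{\overline t}$ is non-supersingular and that $l$ is primitive, so that Proposition~\ref{prop:very small cones and polarizations} applies to \emph{both} families and \emph{any} $h\in\mathcal{a}\cap L$ is relatively ample on $X$ and on $Y$. You sidestep this crystalline input entirely: you take the class $h'\in\mathcal{a}\cap L$ that is ample on $Y_{\overline t}$ (which exists by the very definition of an $(L,\mathcal{a})$-polarization on the special fibre, with the same convention on ``integral class'' that the paper uses in Lemma~\ref{lem:open immersion}), propagate its ampleness to all of $Y$ by openness of the ample locus over the two-point space $\spec R$, and use the very small ample cone property only on $X$, where the hypotheses (primitivity, non-supersingularity of both geometric fibres) are actually given. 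Both arguments then finish with Matsusaka--Mumford, and your handling of the line-bundle versus $\Pic_{X/T}$-section issue by an \'etale base change plus descent via uniqueness is legitimate (the paper is silent on this point). The trade-off is that the paper's detour through Ogus establishes \emph{before} constructing $u$ that $Y_{\overline t}\simeq X_{\overline t}$ and that $l$ is primitive, whereas in your argument these facts only emerge a posteriori from the extended isomorphism; for the statement as given this costs nothing.
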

\begin{proof}
     Let $\overline{\eta}$ be a geometric generic point of $T$. 
     Fix a geometric closed point $\overline{t}\in T$ as in \ref{pg:specialization map}. 
     It follows from \cite{Ogus83}*{2.8} that there exists an isomorphism $u_{\overline{t}}:X_{\overline{t}}\iso Y_{\overline{t}}$ and an isometry $r\in W(X)$ such that the diagram
     \[
        \begin{tikzcd}
            \Pic(X_{\overline{\eta}})\arrow[hookleftarrow]{dr}\arrow[hook]{rr}{\sp}&&\Pic(X_{\overline{t}})&\\
            &L\arrow[hook]{ur}\arrow[hook]{dr}&&\Pic(X_{\overline{t}})\arrow[ul,"\sim"{sloped},"r"]\\
            \Pic(Y_{\overline{\eta}})\arrow[hook]{rr}{\sp}\arrow[hookleftarrow]{ur}\arrow[uu,"u^*_{\overline{\eta}}","\sim"'{sloped}]&&\Pic(Y_{\overline{t}})\arrow[dashed]{uu}\arrow[ur,"\sim"{sloped},"u_{\overline{t}}^*"{swap}]
        \end{tikzcd}
    \]
    commutes. 
    In particular, $(Y,l)$ is also primitively polarized, and $Y_{\overline{t}}$ is abstractly isomorphic to $X_{\overline{t}}$. 
    Our assumptions on $X$ then imply that all geometric fibers of $X$ and $Y$ are non-supersingular. 
    By Proposition \ref{prop:very small cones and polarizations} the image of $\mathcal{a}$ in $\Pic(X_{\overline{\eta}})_{\R}$ is contained in the ample cone $\Amp(X_{\overline{\eta}})$, and similarly the image of $\mathcal{a}$ in $\Pic(X_{\overline{t}})_{\R}$ is contained in $\Amp(X_{\overline{t}})$. 
    The same is true for the image of $\mathcal{a}$ in the Picard groups of the geometric fibers of $Y$. 
    Choose an integral class $h\in\mathcal{a}\cap L$. 
    Then $j(h)$ is relatively ample on $X$ and $l(h)$ is relatively ample on $Y$. 
    By Matsusaka--Mumford, the isomorphism $u_{\eta}:X_{\eta}\simeq Y_{\eta}$ extends uniquely to an isomorphism $u:X\iso Y$ of families of K3 surfaces over $T$. As $u_{\eta}$ is compatible with the polarizations, $u$ is automatically an isomorphism of families of $(L,\mathcal{a})$-polarized K3 surfaces.    
\end{proof}

\begin{theorem}\label{thm:separated moduli stack}
    There exists a closed substack $\mathcal{Z}\subset\overline{\cP}_{(L,\mathcal{a})}$ supported on the supersingular locus such that the complement $\overline{\cP}_{(L,\mathcal{a})}\setminus\mathcal{Z}$ is separated over $S$.
\end{theorem}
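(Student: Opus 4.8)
The plan is to verify the existence part of the valuative criterion for the diagonal of $\overline{\cP}_{(L,\mathcal{a})}\setminus\mathcal{Z}$, for a suitably chosen closed substack $\mathcal{Z}$.

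\textbf{Reduction and setup.} Separatedness over $S$ and the condition of being supported on the supersingular locus are both stable under base change, and $\overline{\cP}_{(L,\mathcal{a})/S}=\overline{\cP}_{(L,\mathcal{a})/\Z}\times_{\Z}S$; so I would first reduce to $S=\spec\Z$ and write $\overline{\cP}:=\overline{\cP}_{(L,\mathcal{a})/\Z}$, which is a quasi-separated, locally Noetherian Deligne--Mumford stack. For such a stack, $\overline{\cP}\setminus\mathcal{Z}$ is separated over $\Z$ as soon as, for every DVR $R$ with fraction field $K$, any two objects $(X,j),(Y,l)$ of $(\overline{\cP}\setminus\mathcal{Z})(R)$, and any isomorphism $u_{K}\colon(X_{K},j_{K})\simeq(Y_{K},l_{K})$ of $(L,\mathcal{a})$-polarized K3 surfaces, the isomorphism $u_{K}$ extends over $R$ (uniqueness of the extension is automatic).

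\textbf{Reducing to supersingular special fibres.} By Proposition~\ref{prop:primitive substack is open} the boundary $\partial:=\overline{\cP}\setminus\cP_{(L,\mathcal{a})}$ is a closed substack, and by Proposition~\ref{prop:boundary on ssing locus} it is supported on the supersingular locus; I include $\partial$ in $\mathcal{Z}$, so that in checking the valuative criterion we may assume $j$ and $l$ are fibrewise primitive. If $X_{t}$ is non-supersingular then so is $Y_{t}$ --- this is the input from \cite{Ogus83}*{2.8} used in the proof of Proposition~\ref{prop:uniqueness part of valuative criterion} --- and that proposition extends $u_{K}$. So it remains to ensure that $\overline{\cP}\setminus\mathcal{Z}$ contains no $R$-family whose special fibres are supersingular and for which $u_{K}$ fails to extend.

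\textbf{The supersingular case.} Suppose $X_{t}$ and $Y_{t}$ are supersingular. Since $j_{t}$ is primitive, $j_{t}(L)$ is a primitive sublattice of $\Pic(X_{t})$, so $\Pic(X_{t})/j_{t}(L)$ is torsion-free and $j_{t}(L)^{\perp}\subseteq\Pic(X_{t})$ is negative definite --- and, because $j_{t}$ is an $(L,\mathcal{a})$-polarization, $j_{t}(L)^{\perp}$ contains no $(-2)$-class. Hence every $(-2)$-class $\delta\in\Pic(X_{t})$ is either in $j_{t}(L)$, in which case $j_{t}^{-1}(\delta^{\perp})$ is a wall already removed in forming the ample cones of $L$, or satisfies $\delta\notin j_{t}(L)_{\R}$, in which case $\langle j_{t}(L),\delta\rangle$ is hyperbolic. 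In the latter case one wants to show that the saturation $M$ of $\langle j_{t}(L),\delta\rangle$ in $\Pic(X_{t})$ admits a primitive embedding into $\Lambda$ extending a primitive embedding of $L$; then $\delta$ gives an element of some $\Delta^{e}(\Lambda)$, so $j_{t}^{-1}(\delta^{\perp})=e^{-1}(\delta^{\perp})$ is a wall removed in passing from $e$-small to very small ample cones. Granting this, $j_{t}(\mathcal{a})$ avoids every wall of $\Amp(X_{t})$; being connected and meeting $\Amp(X_{t})$, it satisfies $j_{t}(\mathcal{a})\subseteq\Amp(X_{t})$, and symmetrically $l_{t}(\mathcal{a})\subseteq\Amp(Y_{t})$ (the analogous inclusions over $K$ follow from Proposition~\ref{prop:very small cones and polarizations} if $X_{K},Y_{K}$ are non-supersingular, and from the same argument otherwise). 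Fixing $h\in\mathcal{a}\cap L$, the classes $j(h)$ and $l(h)$ are then relatively ample, so Matsusaka--Mumford extends $u_{K}$ to an isomorphism $X\simeq Y$ over $R$, necessarily respecting the polarizations.

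\textbf{Main obstacle and conclusion.} The crux is the lattice-theoretic claim that $M$ embeds primitively into $\Lambda$ over a primitive embedding of $L$. This is where the hypothesis that $\Pic(X_{t})$ is a supersingular K3 lattice $\Lambda_{p,\sigma}$ enters: one must use Nikulin-type criteria for primitive embeddings into the even unimodular lattice $\Lambda$ to control the discriminant of $M$ --- equivalently, to bound the index $[M:\langle j_{t}(L),\delta\rangle]$ and the $p$-primary part of $\disc M$ against $\disc L$ --- and thereby show that the claim can fail only when the residue characteristic $p$ lies in an explicit finite set $\Sigma=\Sigma(L,\mathcal{a})$ (the borderline ranks $\rk L\in\{19,20\}$, relevant to the paper's main applications, being the delicate ones). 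One then takes
\[
    \mathcal{Z}:=\partial\;\cup\;\bigcup_{p\in\Sigma}(\overline{\cP})^{\mathrm{ss}}_{p},
\]
a finite union of closed substacks each supported on the supersingular locus, hence itself a closed substack with $|\mathcal{Z}|$ contained in the supersingular locus. For $(X,j),(Y,l)\in(\overline{\cP}\setminus\mathcal{Z})(R)$, the special fibres are non-supersingular, or supersingular with residue characteristic outside $\Sigma$; in either case the argument above extends $u_{K}$. Thus $\overline{\cP}\setminus\mathcal{Z}$ satisfies the valuative criterion and is separated over $\Z$, and base change yields the statement over arbitrary $S$.
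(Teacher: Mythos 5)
Your overall strategy --- directly verifying the valuative criterion and trying to classify exactly where it can fail in the supersingular case --- diverges from the paper's proof, and it contains a genuine gap that you yourself flag as ``the crux'': the claim that the saturation $M$ of $\langle j_t(L),\delta\rangle$ in $\Pic(X_t)$ admits a primitive embedding into $\Lambda$ extending a primitive embedding of $L$, except possibly for residue characteristics in an explicit finite set $\Sigma(L,\mathcal{a})$. Nothing in your argument establishes this. It is precisely here that the supersingular case is delicate: $\Pic(X_t)$ itself does \emph{not} embed into $\Lambda$ (Lemma~\ref{lem:embedding in Lambda}), so there is no ambient embedding to restrict, and one would have to run Nikulin's existence criteria on the rank-$(\rk L+1)$ lattice $M$, whose discriminant form is constrained by the supersingular lattice $\Lambda_{p,\sigma}$ in ways that are not obviously controlled by $\disc(L)$ alone. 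Producing the finite set $\Sigma$ is therefore an unproved (and possibly false, or at least genuinely hard) assertion, not a routine verification; without it your definition of $\mathcal{Z}$ as $\partial\cup\bigcup_{p\in\Sigma}(\overline{\cP})^{\mathrm{ss}}_p$ is not available.

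The paper sidesteps this entirely by a soft topological argument: it takes $\mathcal{Z}$ to be (the closure of) the image under the first projection of $\overline{|\Delta|}\setminus|\Delta|$, i.e.\ the locus where the diagonal fails to be closed, and then uses Proposition~\ref{prop:uniqueness part of valuative criterion} only in the contrapositive direction --- if two distinct points of $|\cP|$ admit a common generization on the diagonal, their special fibres must be supersingular --- to conclude that $\mathcal{Z}$ lies in the supersingular locus. Separatedness of the complement is then formal from closedness of $|\Delta|$ there, via Proposition~\ref{prop:sep criterion} (which also supplies the reduction from ``the diagonal's image is closed'' to ``the diagonal is proper'' that you gloss over, using quasi-separatedness and Zariski-local separatedness). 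The price of the paper's approach is that it gives no control over which primes appear in $\mathcal{Z}$; your approach, if the lattice claim could be proved, would yield an explicit $\Sigma$ and hence an effective version of Corollary~\ref{cor: separated stack over Z[1/n]}. As written, however, the non-supersingular half of your argument is fine (it is essentially Proposition~\ref{prop:uniqueness part of valuative criterion}), but the supersingular half rests on the unestablished embedding claim, so the proof is incomplete.
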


\begin{proof}
    Write $\cP$ for $\cP_{(L,\mathcal{a})}$ and $\overline{\cP}$ for $\overline{\cP}_{(L,\mathcal{a})}$. 
    By Proposition \ref{prop:boundary on ssing locus}, the boundary $\overline{\cP}\setminus\cP$ is supported on the supersingular locus, so it will suffice to prove the result with $\overline{\cP}$ replaced with $\cP$. 
    Consider the map on topological spaces 
    \[
        |\Delta_{\cP}|:|\cP|\to|\cP\times_S\cP|
    \]
    induced by the diagonal morphism $\Delta_{\cP}$ of $\cP$. 
    Because $S$ is a scheme, the natural continuous map $|\cP\times_S\cP|\to|\cP|\times_{|S|}|\cP|$ is bijective (but typically not a homeomorphism). 
    Thus we may identify the underlying set of points of $|\cP\times_S\cP|$ with the fiber product $|\cP|\times_{|S|}|\cP|$. 
    With this identification the map $|\Delta_{\cP}|$ is given by the usual diagonal map $x\mapsto (x,x)$, and in particular $|\Delta_{\cP}|$ is injective.
    
    Let $|\Delta|\subset |\cP\times_S\cP|$ denote the image of $|\Delta_{\cP}|$ and let $\overline{|\Delta|}\subset |\cP\times_S\cP|$ be its closure. 
    Let $\mathcal{Z}\subset\cP$ denote the unique reduced closed substack whose associated topological space $|\mathcal{Z}|$ is the closure of the set $|\pi_1|(\overline{|\Delta|}\setminus|\Delta|)\subset|\cP|$, where $\pi_1:\cP\times_S\cP\to\cP$ is the first projection. 
    Note that $\overline{|\Delta|}\setminus|\Delta|$ is symmetric, so we would obtain the same $\mathcal{Z}$ if we instead used the second projection.
   
   We claim that $\mathcal{Z}\subset\cP$ is contained in the supersingular locus. Consider a point $x\in|\mathcal{Z}|$. 
   If $x$ is not contained in the primitive substack, then by Proposition \ref{prop:boundary on ssing locus}, it is contained in the supersingular locus anyway, so we may assume that $x$ is contained in the primitive substack. 
   By definition of $\cZ$, $x$ is in the closure of $|\pi_1|(\overline{|\Delta|}\setminus|\Delta|)$, which means we can find points $y,z,w\in|\cP|$ with $y\neq z$ and a specialization $(w,w)\rightsquigarrow (y,z)$ of points of $|\cP\times_S\cP|$. 
   Thus, we have specializations
   \[
    \begin{tikzcd}[row sep=tiny]
        &y\arrow[r,rightsquigarrow]&x\\
        w\arrow[ur,rightsquigarrow]\arrow[dr,rightsquigarrow]\\
        &z
    \end{tikzcd}
   \]
   in $|\cP|$. 
   By Lemma \ref{lem:specializations for stacks}, we may realize the specialization 
    $(w,w)\rightsquigarrow (y,z)$ by a morphism from the spectrum of a DVR. 
    The primitive substack is open, hence closed under generization, so $y$ is also in the primitive substack. 
    But $y\neq z$, so Proposition \ref{prop:uniqueness part of valuative criterion} implies that $y$ must be contained in the supersingular locus. 
    The point $z$ has isomorphic underlying K3 surface, hence is also contained in the supersingular locus, and because $y$ specializes to $x$, it follows that $x$ is also in the supersingular locus. 
    We conclude that $\mathcal{Z}$ is contained in the supersingular locus.

   It remains to show that $\cP\setminus\mathcal{Z}$ is separated. 
   Set $\cP':=\cP\setminus\mathcal{Z}$. 
   We will verify the conditions of Proposition \ref{prop:sep criterion} for $\cP'$. 
   By Theorem \ref{thm:rep, easy version}, the stack $\cP$ is locally of finite type, quasi-separated, and Zariski-locally separated over $S$. 
   The same is then also true for the open substack $\cP'$. 
   It remains to show that the map $|\Delta_{\cP'}|:|\cP'|\hookrightarrow|\cP'\times_S\cP'|$ is closed. 
   As this map is quasi-compact, for this it will suffice to show that specializations lift along $|\Delta_{\cP'}|$ \cite{stacks-project}*{01KC}. 
   Let $x,y,z\in|\cP'|$ be points and suppose we have a specialization $(x,x)\rightsquigarrow(y,z)$ in $|\cP'\times_S\cP'|$, as in the diagram
   \[
    \begin{tikzcd}
        \left|\cP'\right|\arrow[hook]{d}\arrow[r,phantom,"\ni"]&x\arrow[maps to]{d}&\\
        \left|\cP'\times_S\cP'\right|\arrow[r,phantom,"\ni"]&(x,x)\arrow[rightsquigarrow]{r}&(y,z)
    \end{tikzcd}
   \]
   Then $(y,z)$ is in the closure of the image of $|\Delta_{\cP}|$. 
   By assumption both $y$ and $z$ are not contained in $|\mathcal{Z}|$. 
   It follows from our definition of $\mathcal{Z}$ that $(y,z)$ is contained in the image of $|\Delta_{\cP}|$. 
   Therefore $y=z$, and $(x,x)\rightsquigarrow(y,z)=(y,y)$ lifts to the specialization $x\rightsquigarrow y$ in $|\cP'|$.
\end{proof}

\begin{corollary}
    \label{cor: separated stack over Z[1/n]}
    There exists an integer $d$ (depending on $L$ and $\mathcal{a}$) such that if $n$ is an integer that is divisible by $d$ then $\overline{\cP}_{(L,\mathcal{a})/\Z[1/n]}$ is separated over $\Z[1/n]$.
\end{corollary}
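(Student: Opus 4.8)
The plan is to deduce this from Theorem~\ref{thm:separated moduli stack} by showing that the non-separated locus of $\overline{\cP}_{(L,\mathcal{a})/\Z}$ is concentrated over finitely many primes. Apply Theorem~\ref{thm:separated moduli stack} with $S=\spec\Z$ to obtain a closed substack $\mathcal{Z}\subset\overline{\cP}_{(L,\mathcal{a})/\Z}$, supported on the supersingular locus, such that $\overline{\cP}_{(L,\mathcal{a})/\Z}\setminus\mathcal{Z}$ is separated over $\spec\Z$. It then suffices to produce a finite set of primes $\{p_1,\dots,p_k\}$ containing the image of $|\mathcal{Z}|$ in $|\spec\Z|$: setting $d:=p_1\cdots p_k$, for any $n$ divisible by $d$ we have $\mathcal{Z}\times_{\spec\Z}\spec\Z[1/n]=\emptyset$, so that $\overline{\cP}_{(L,\mathcal{a})/\Z[1/n]}=(\overline{\cP}_{(L,\mathcal{a})/\Z}\setminus\mathcal{Z})\times_{\spec\Z}\spec\Z[1/n]$, which is separated over $\Z[1/n]$ because separatedness is stable under base change. (Since $\mathcal{Z}$ meets no fiber of residue characteristic zero, the same argument recovers separatedness of $\overline{\cP}_{(L,\mathcal{a})/\Q}$ over $\Q$.) Note $L$ embeds into $\Lambda$ by hypothesis, so $\rk(L)\le 20$; I would treat $\rk(L)\le 19$ in detail, the case $\rk(L)=20$ (where $\mathcal{a}$ is simply an ample cone of $L$) requiring a separate, similar check.

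To bound the bad primes I would trace through the construction of $\mathcal{Z}$ in the proof of Theorem~\ref{thm:separated moduli stack}. A point of $\mathcal{Z}$ of residue characteristic $p$ arises, via Lemma~\ref{lem:specializations for stacks}, from two families $(X,j)$ and $(Y,l)$ of $(L,\mathcal{a})$-polarized K3 surfaces over a DVR $R$ of residue characteristic $p$ with $(L,\mathcal{a})$-compatibly isomorphic generic fibers but non-isomorphic special fibers; by Propositions~\ref{prop:uniqueness part of valuative criterion} and~\ref{prop:boundary on ssing locus} the special fiber $X_{\overline{t}}$ is supersingular. Fixing geometric points $\overline{\eta},\overline{t}\in\spec R$ as in~\ref{pg:specialization map}, Proposition~\ref{prop:very small cones and polarizations} gives $j_{\overline{\eta}}(\mathcal{a})\subset\Amp(X_{\overline{\eta}})$, and since $j_{\overline{t}}=\mathrm{sp}\circ j_{\overline{\eta}}$ and $\mathrm{sp}$ respects the intersection pairing, the failure of Matsusaka--Mumford to extend $u_{\overline{\eta}}$ (cf.\ the proof of Proposition~\ref{prop:uniqueness part of valuative criterion}) means $j_{\overline{t}}(\mathcal{a})$ is not contained in $\overline{\Amp(X_{\overline{t}})}$; being connected and meeting $\Amp(X_{\overline{t}})$, it must therefore cross some wall $\delta^\perp$ with $\delta\in\Delta(X_{\overline{t}})\setminus j_{\overline{t}}(L)^\perp$ and $\langle j_{\overline{t}}(L),\delta\rangle$ hyperbolic, so that the hyperplane $j_{\overline{t}}^{-1}(\delta^\perp)\subset L_{\R}$ meets $\mathcal{a}$ with $\mathcal{a}$ on both sides. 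Running the argument of Proposition~\ref{prop:small cones and polarizations} backwards, the lattice $M_\delta:=\langle j_{\overline{t}}(L),\delta\rangle$ --- hyperbolic, of rank at most $\rk(L)+1$, and of discriminant dividing $2\disc(L)$ --- admits \emph{no} primitive embedding into $\Lambda$ restricting to a primitive embedding of $L$: were there one, Definition~\ref{def:very small amples cones} would force $\mathcal{a}$ to avoid $j_{\overline{t}}^{-1}(\delta^\perp)$. On the other hand $M_\delta$ does embed, compatibly with a primitive embedding of $L$, into $\Pic(X_{\overline{t}})$, a characteristic-$p$ supersingular K3 lattice $\Lambda_{p,\sigma}$ of some Artin invariant $\sigma$. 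Since $\rk(L)\le 19$, the lattice $M_\delta$ has signature $(1,\le 19)$ and hence embeds into $\Lambda$ over $\R$ and over $\Z_\ell$ for all $\ell\nmid 2\disc(L)$; the obstruction to a genuine embedding $M_\delta\hookrightarrow\Lambda$ is therefore supported at the real place (where the signatures nonetheless match), at the finitely many $\ell\mid 2\disc(L)$, or in the discrepancy between $\Lambda$ and $\Lambda_{p,\sigma}$, whose discriminant groups differ exactly by the $p$-elementary group $(\Z/p)^{2\sigma}$. As $M_\delta$ ranges over a finite set (bounded rank, bounded discriminant, finitely many sublattices isometric to $L$), this pins $p$ to a finite set depending only on $L$, which I would pin down explicitly by a Nikulin-style discriminant-form computation on the orthogonal complement of $j_{\overline{t}}(L)$ inside $\Lambda_{p,\sigma}$, a negative-definite lattice of rank $22-\rk(L)$ carrying that $p$-elementary form.

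The main obstacle is the final step: converting the heuristic ``supersingular K3 lattices differ from the K3 lattice $\Lambda$ only at $p$ (and $\infty$)'' into a genuine finiteness statement for the bad primes. Concretely, one must show that a \emph{fixed} hyperbolic lattice of rank $\le\rk(L)+1$ and discriminant dividing $2\disc(L)$ can be realized --- compatibly with a primitive copy of $L$ --- inside a characteristic-$p$ supersingular K3 lattice for only finitely many $p$, uniformly in the Artin invariant $\sigma$. I expect this to drop out of the $p$-elementary structure of $D(\Lambda_{p,\sigma})$ together with the bound on $\disc(M_\delta)$, but it is the one place where some careful lattice theory, rather than formal nonsense, is needed. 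Everything else --- the passage to $\spec\Z$, the base change, and the extraction of the witnessing $(-2)$-class --- is routine given Theorem~\ref{thm:separated moduli stack}, Theorem~\ref{thm:properties of the very small ample cone decomposition}, and Propositions~\ref{prop:small cones and polarizations}--\ref{prop:very small cones and polarizations}.
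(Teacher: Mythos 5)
Your first paragraph is exactly the paper's strategy, and it correctly reduces the corollary to a single claim: the image of $|\mathcal{Z}|$ in $|\spec\Z|$ is a finite set of primes. That claim is where your proposal stops being a proof. You acknowledge yourself that you cannot close the final lattice-theoretic finiteness step, and the intermediate steps are shakier than you indicate: the assertion that $\disc(M_\delta)$ divides $2\disc(L)$ is unjustified, since $M_\delta=\langle j_{\overline{t}}(L),\delta\rangle$ is not an orthogonal direct sum and the intersection numbers $(\delta.\,e_i)$ of $\delta$ with a basis of $j_{\overline{t}}(L)$ are not a priori bounded; without a bound on $\disc(M_\delta)$ you do not even obtain a finite list of candidate lattices $M_\delta$, let alone a finite list of primes.

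The missing idea is that no arithmetic of supersingular K3 lattices is needed at all: the finiteness is a soft consequence of the fact that $\mathcal{Z}$ is a \emph{closed} substack, not merely a subset of the supersingular locus. Closedness is precisely what the construction in Theorem~\ref{thm:separated moduli stack} delivers ($\mathcal{Z}$ is defined there as a closure). Since $\mathcal{Z}$ lies in the supersingular locus, its image in $\spec\Z$ contains no characteristic-zero point; since $\mathcal{Z}$ is closed, that image is a finite set of closed points, and one takes $d$ to be the product of the corresponding residue characteristics --- this is the paper's entire proof. Your argument only ever invokes the weaker property ``supported on the supersingular locus,'' and that property alone genuinely does not suffice: Remark~\ref{rem:ssing locus} exhibits (via the Fermat quartic) a subset of the supersingular locus whose image in $\spec\Z[1/2]$ is an infinite set of primes. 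So the closedness of $\mathcal{Z}$ is not a convenience but the whole point of the construction, and by not using it you were forced into an arithmetic detour that you could not complete.
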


\begin{proof}
    Let $\mathcal{Z}\subset\overline{\cP}_{(L,\mathcal{a})/\Z}$ be a closed substack as in Theorem \ref{thm:separated moduli stack}. 
    Then $\mathcal{Z}$ is contained in the supersingular locus, and so its image in $\spec\Z$ is a finite set of closed points. 
    We can take $d$ to be the product of their residue characteristics.
\end{proof}

\begin{corollary}
    If the base scheme $S$ is defined over $\Q$ then $\cP_{(L,\mathcal{a})}=\overline{\cP}_{(L,\mathcal{a})}$ is separated over $S$.
\end{corollary}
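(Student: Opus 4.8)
The plan is to deduce this immediately from Theorem~\ref{thm:separated moduli stack}, together with the observation that over a $\Q$-base the supersingular locus is empty; in fact essentially all of the work has already been done.

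First I would record that supersingular K3 surfaces exist only in positive characteristic, so that when $S$ is defined over $\Q$ the (reduced) supersingular locus in $\cM_{(L,\mathcal{a})}$—and hence in any of its substacks—is empty by definition. Here one uses the compatibility of the moduli stack and of its supersingular locus with base change, noted in \S2.4, so that it suffices to make this observation after replacing $S$ by $\spec\Z$. Combined with Proposition~\ref{prop:boundary on ssing locus}, which shows that the boundary $\overline{\cP}_{(L,\mathcal{a})}\setminus\cP_{(L,\mathcal{a})}$ is supported on the supersingular locus, this forces the boundary to be empty, i.e.\ $\cP_{(L,\mathcal{a})}=\overline{\cP}_{(L,\mathcal{a})}$ as stacks over $S$. (This last equality can also be read off from Corollary~\ref{cor:boundary corollary}.)

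Second, I would apply Theorem~\ref{thm:separated moduli stack}: there is a closed substack $\mathcal{Z}\subset\overline{\cP}_{(L,\mathcal{a})}$, supported on the supersingular locus, such that $\overline{\cP}_{(L,\mathcal{a})}\setminus\mathcal{Z}$ is separated over $S$. Since the supersingular locus is empty, $\mathcal{Z}$ is empty, and therefore $\overline{\cP}_{(L,\mathcal{a})}=\overline{\cP}_{(L,\mathcal{a})}\setminus\mathcal{Z}=\cP_{(L,\mathcal{a})}$ is separated over $S$, as claimed. There is no genuine obstacle in this argument; the only point deserving a moment's care is the base-change bookkeeping in the first step, ensuring that ``$S$ defined over $\Q$'' really does force the supersingular locus of $\cM_{(L,\mathcal{a})/S}$ to vanish.
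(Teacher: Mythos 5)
Your proposal is correct and is exactly the intended argument: the paper leaves this corollary without a written proof precisely because it follows immediately from Theorem~\ref{thm:separated moduli stack} (and Proposition~\ref{prop:boundary on ssing locus}) once one notes that the supersingular locus is empty when $S$ is defined over $\Q$. Your two-step bookkeeping (boundary empty, hence $\cP_{(L,\mathcal{a})}=\overline{\cP}_{(L,\mathcal{a})}$; then $\mathcal{Z}=\emptyset$, hence separated) is the same reasoning the authors use for the analogous characteristic-zero corollaries elsewhere in the paper.
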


\section{Lattice-polarized twisted K3 surfaces}

In this section, we consider moduli spaces of lattice-polarized \emph{twisted} K3 surfaces. 
Such spaces have been studied over the complex numbers \cite{Brakkee}*{\S2.2} and in arithmetic settings \cite{Bragg}*{\S 7} by the second and first authors, respectively. 

\subsection{Twisted K3 surfaces} 
Fix a positive integer $n$.

\begin{definition}\label{def:n twisted K3s}
    An \emph{$n$-twisted K3 surface} over an algebraically closed field $k$ is a pair $(X,\alpha)$, where $X$ is a K3 surface over $k$ and $\alpha\in\rH^2(X,\mu_n)$, where the $\rH^2$ denotes cohomology in the flat topology.
\end{definition}

\begin{definition}\label{def:n twisted K3s in families}
     A \emph{family of $n$-twisted K3 surfaces} over a scheme $T$ is a pair $(f:X\to T,\alpha)$, where $X$ is a family of K3 surfaces over $T$ and  $\alpha\in\rH^0(T,\rR^2f_*\mu_n)$ is a global section of the sheaf $\rR^2f_*\mu_n$ on $T$, where $\rR f_*$ denotes the pushforward in the flat topology.
\end{definition}

\begin{remark}
    If $X$ is a K3 surface over a field in which $n$ is invertible, then the flat cohomology group $\rH^2(X,\mu_n)$ is the same as the corresponding \'{e}tale cohomology group. 
    Similarly, if $f:X\to T$ is a family of K3 surfaces and $n$ is invertible everywhere on $T$, then the flat pushforward $\rR^2f_*\mu_n$ is the same as the corresponding \'{e}tale pushforward.
\end{remark}

For a family of K3 surfaces $f:X\to T$ the abutment in the Leray spectral sequence for $\mu_n$ under $\rR f_*$ gives a canonical map
\begin{equation}\label{eq:canonical Leray map}
    \rH^2(X,\mu_n)\to\rH^0(T,\rR^2f_*\mu_n).
\end{equation}
The Leray spectral sequence combined with the identification $f_*\mu_n=\mu_n$ and the vanishing $\rR^1f_*\mu_n=0$ gives an exact sequence
\begin{equation}\label{eq:canonical Leray sequence}
    0\to\rH^2(T,\mu_n)\to\rH^2(X,\mu_n)\to\rH^0(T,\rR^2f_*\mu_n)\to\rH^3(T,\mu_n).
\end{equation}
If $T=\spec k$ is the spectrum of an algebraically closed field, then any fppf cover of $T$ is split, so the higher flat cohomology groups of any sheaf on $T$ vanish. 
Thus~\eqref{eq:canonical Leray map} is an isomorphism, and so Definitions \ref{def:n twisted K3s} and \ref{def:n twisted K3s in families} are compatible. 
Furthermore, if $(f:X\to T,\alpha)$ is a family of $n$-twisted K3 surfaces over a general scheme $T$ then for each geometric point $t\in T$ the fiber $(X_t,\alpha_t)$ is (naturally identified with) an $n$-twisted K3 surface. 
More generally, we have the following.

\begin{lemma}\label{lem:leray is an iso}
    If $T$ is strictly Henselian, then the map~\eqref{eq:canonical Leray map} is an isomorphism.
\end{lemma}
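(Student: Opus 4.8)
The plan is to deduce the lemma directly from the exact sequence~\eqref{eq:canonical Leray sequence}. That sequence reads
\[
    0\to\rH^2(T,\mu_n)\to\rH^2(X,\mu_n)\to\rH^0(T,\rR^2f_*\mu_n)\to\rH^3(T,\mu_n),
\]
and the middle arrow is precisely the map~\eqref{eq:canonical Leray map}. Hence it suffices to show that the flat cohomology groups $\rH^2(T,\mu_n)$ and $\rH^3(T,\mu_n)$ both vanish when $T=\spec A$ for $A$ a strictly Henselian local ring: vanishing of the first gives injectivity of~\eqref{eq:canonical Leray map}, and vanishing of the second gives surjectivity by exactness at $\rH^0(T,\rR^2f_*\mu_n)$.

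To establish these vanishings, I would first recall that a strictly Henselian local ring is a point of the small \'etale topos, so $\rH^q_{\et}(T,\mathcal{G})=0$ for every abelian \'etale sheaf $\mathcal{G}$ and every $q>0$. Applied to $\G_m$ this yields $\Pic(A)=\rH^1_{\et}(T,\G_m)=0$, $\br(A)=\rH^2_{\et}(T,\G_m)=0$, and $\rH^3_{\et}(T,\G_m)=0$. By Grothendieck's comparison of flat and \'etale cohomology for the smooth group scheme $\G_m$, the same vanishing holds flatly: $\rH^q_{\mathrm{fppf}}(T,\G_m)=0$ for $q=1,2,3$. Now I would run the long exact flat-cohomology sequence of the Kummer sequence $1\to\mu_n\to\G_m\xrightarrow{\;n\;}\G_m\to 1$ over $T$: in degree $2$ it squeezes $\rH^2(T,\mu_n)$ between $\rH^1_{\mathrm{fppf}}(T,\G_m)=0$ and $\rH^2_{\mathrm{fppf}}(T,\G_m)=0$, and in degree $3$ it squeezes $\rH^3(T,\mu_n)$ between $\rH^2_{\mathrm{fppf}}(T,\G_m)=0$ and $\rH^3_{\mathrm{fppf}}(T,\G_m)=0$. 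Hence both groups vanish, and combining with the first paragraph shows that~\eqref{eq:canonical Leray map} is an isomorphism.

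I do not expect a serious obstacle. The only point demanding care is that $\mu_n$ need not be an \'etale sheaf when the residue characteristic of $A$ divides $n$, so the vanishing of its cohomology must be obtained in the flat topology via the Kummer sequence rather than by declaring $\mu_n$ locally constant; the detour through $\G_m$, whose cohomology is insensitive to the passage between the \'etale and fppf topologies, handles all $n$ uniformly. (When $n$ is invertible on $T$ one could instead argue directly, since then $\mu_n$ is \'etale-locally isomorphic to $\Z/n\Z$ and strict Henselianity kills higher cohomology outright.)
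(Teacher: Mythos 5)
Your proposal is correct and follows essentially the same route as the paper: the paper likewise reduces to showing $\rH^i(T,\mu_n)=0$ for $i\geq 2$ and concludes from the exact sequence~\eqref{eq:canonical Leray sequence}, obtaining the vanishing from the Kummer sequence together with the fact that \'etale covers of a strictly Henselian scheme split (your detour through $\rH^q_{\mathrm{fppf}}(T,\G_m)=\rH^q_{\et}(T,\G_m)=0$ is just a more explicit phrasing of the same mechanism). No gaps.
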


\begin{proof}
    It follows from the Kummer sequence that for any scheme $Y$ classes in the flat cohomology groups $\rH^i(Y,\mu_n)$ for $i\geq 2$ die on an \'{e}tale cover of $Y$. 
    As $T$ is strictly Henselian, any \'{e}tale cover of $T$ is split. 
    Thus $\rH^i(T,\mu_n)=0$ for $i\geq 2$, and the claim follows from the sequence~\eqref{eq:canonical Leray sequence}.
\end{proof}

\begin{proposition}\label{prop:rep thm for flat coho}
    If $f:X\to T$ is a family of K3 surfaces, then the big fppf sheaf $\rR^2f_*\mu_n$ is a group algebraic space that is separated and of finite presentation over $T$.
\end{proposition}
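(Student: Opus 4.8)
The plan is to study $\rR^2f_*\mu_n$ through the Kummer sequence $0\to\mu_n\to\G_m\xrightarrow{n}\G_m\to0$ of fppf sheaves on the big site of $X$. Applying $\rR f_*$, and using that $f_*\G_m=\G_{m,T}$ is $n$-divisible in the fppf topology, that $\rR^1f_*\G_m=\Pic_{X/T}$, and that $\rR^1f_*\mu_n=0$ --- which holds because this sheaf equals $\Pic_{X/T}[n]$ and the N\'eron--Severi group of a K3 surface is torsion-free --- one extracts the short exact sequence of fppf sheaves
\[
    0\longrightarrow\Pic_{X/T}/n\Pic_{X/T}\longrightarrow\rR^2f_*\mu_n\longrightarrow(\rR^2f_*\G_m)[n]\longrightarrow0.
\]
It then suffices to prove that the two outer terms are representable by separated algebraic spaces of finite presentation over $T$, since an extension of one such sheaf by another is again such: the middle sheaf is an fppf torsor over the quotient under the sub-object, fppf torsors under representable groups are representable, and separatedness and finite presentation are inherited.

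For $\Pic_{X/T}/n\Pic_{X/T}$: the relative Picard functor $\Pic_{X/T}$ is a separated algebraic space, locally of finite presentation over $T$ (classical for K3 families, as $f_*\cO_X=\cO_T$ holds universally), and $\rH^1(X_t,\cO_{X_t})=0$ makes it unramified over $T$. Torsion-freeness of Picard groups of K3 surfaces makes $[n]\colon\Pic_{X/T}\to\Pic_{X/T}$ a monomorphism; it is also quasi-finite (fibers are empty or single points), separated and finitely presented, hence an immersion by Zariski's Main Theorem, and its image $n\Pic_{X/T}$ is stable under specialization because specialization maps of Picard groups are group homomorphisms, hence is a closed subgroup. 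One then checks that $\Pic_{X/T}/n\Pic_{X/T}$ is representable by a separated, quasi-finite, unramified algebraic space of finite presentation over $T$ --- either directly from the local structure of unramified algebraic spaces, or via Artin's criterion (the functor is limit-preserving, effective by Grothendieck existence for $\Pic_{X/T}$, formally unramified, and has representable diagonal, the last point using the closedness of $n\Pic_{X/T}$ just established).

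The genuinely delicate term is $(\rR^2f_*\G_m)[n]$, the sheaf of relative $n$-torsion cohomological Brauer classes. Using $\mu_n\cong\prod_{\ell^a\,\|\,n}\mu_{\ell^a}$ reduces the claim to the case $n=\ell^a$ a prime power. Over the open locus $T[1/\ell]$ the sheaf $\mu_{\ell^a}$ is \'etale, so smooth--proper base change makes $\rR^2f_*\mu_{\ell^a}$ finite \'etale of degree $\ell^{22a}$ (since $\rH^2_{\et}$ of a K3 surface is free of rank $22$), whence $(\rR^2f_*\G_m)[\ell^a]$, the quotient by the closed subgroup $\Pic_{X/T}/\ell^a$, is quasi-finite and separated there. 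Over the characteristic-$\ell$ locus the cohomology must be taken in the flat topology, and its behavior --- in particular the positive-dimensional fibers of $\rR^2f_*\mu_{\ell^a}$ over the supersingular locus, where the Artin--Mazur formal Brauer group is unipotent --- is governed by that formal group together with the deformation-theoretic results of the first author \cite{Bragg}. Gluing the two descriptions shows $(\rR^2f_*\G_m)[\ell^a]$, hence $\rR^2f_*\mu_{\ell^a}$, is a separated algebraic space of finite presentation over $T$. I expect this last step --- controlling flat $\mu_{\ell^a}$-cohomology over the supersingular locus --- to be the main obstacle; everywhere else the argument merely combines the Picard-functor machinery with smooth--proper base change.
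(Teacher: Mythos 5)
Your proposal has a genuine gap precisely where you suspect it: the representability of the flat cohomology over the characteristic-$\ell$ locus is not argued at all, only deferred to ``deformation-theoretic results of \cite{Bragg}'' and the formal Brauer group. This is the substantive content of the statement. The paper does not attempt to re-prove it either: it quotes \cite{Bragg}*{Theorem~1.2} verbatim for the fact that $\rR^2f_*\mu_n$ is a group algebraic space of finite presentation over $T$, and the only thing it actually proves is separatedness. Moreover, your proposed d\'evissage cannot be completed as described: you analyze $(\rR^2f_*\G_m)[\ell^a]$ over the open locus $T[1/\ell]$ and over the characteristic-$\ell$ locus, but the latter is a \emph{closed} subscheme, so there is no gluing of an algebraic space over an open piece and a closed piece into one over $T$; the whole difficulty is the infinitesimal behavior across the characteristic-$\ell$ fiber, which is exactly what \cite{Bragg}*{Theorem~1.2} controls. (Your treatment of $\Pic_{X/T}/n\Pic_{X/T}$ also leans on quotient-representability and closedness claims that would need more care, e.g.\ that the fppf quotient by the monomorphic image of $[n]$ is an algebraic space, but these are secondary.)

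For the part the paper does prove, your argument is also incomplete in a different way: you reduce separatedness of $\rR^2f_*\mu_n$ to separatedness of the two outer terms of the Kummer extension, but you give no argument for separatedness of $(\rR^2f_*\G_m)[\ell^a]$ where $\ell$ is not invertible --- quasi-finiteness and separatedness are only established over $T[1/\ell]$. The paper's route is shorter and avoids the extension entirely: separatedness of a group space is equivalent to the zero section being closed, which by realizing specializations via DVRs and strictly Henselizing reduces to showing that for $T=\spec$ of a regular strictly Henselian local ring the restriction $\rH^2(X,\mu_n)\to\rH^2(X_\eta,\mu_n)$ to the generic fiber is injective; this follows from a Kummer-sequence diagram chase using that $\Pic(X)\to\Pic(X_\eta)$ is an isomorphism and $\br(X)\to\br(X_\eta)$ is injective. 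If you want to salvage your write-up, cite \cite{Bragg}*{Theorem~1.2} for representability and finite presentation and supply an argument of this kind for separatedness in all characteristics.
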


\begin{proof}
    By a result of the first author and Lieblich, the sheaf $\rR^2f_*\mu_n$ is a group algebraic space of finite presentation over $T$ \cite{Bragg}*{Theorem~1.2}. 
    It remains to show that it is separated. We remark that if $n$ is invertible everywhere on $T$, then the smooth and proper base change theorems in \'{e}tale cohomology show that $\rR^2f_*\mu_n$ is a locally constant sheaf of finite groups (indeed, an \'{e}tale form of $(\underline{\Z}_T/n)^{\oplus 22}$), and in particular is separated. 
    To treat the general case when $n$ is not necessarily invertible, we give an ad-hoc proof. We will show that the zero section $T\to\rR^2f_*\mu_n$ is closed, which is equivalent to $\rR^2f_*\mu_n$ being separated \cite{stacks-project}*{047G}. We may assume that $T$ is Noetherian. 
    Any specialization of points in an algebraic space of finite presentation over $T$ can be realized by a morphism from the spectrum of a DVR, and hence (by strict Henselizing) also by a morphism from the spectrum of a regular strictly Henselian local ring. 
    Thus, it will suffice to prove the following claim. 
    Assume that $T$ is the spectrum of a regular strictly Henselian local ring, say with generic (resp.\ closed) point $\eta$ (resp.\ $t$). 
    Then by Lemma \ref{lem:leray is an iso} we have canonical identifications $(\rR^2f_*\mu_n)(T)=\rH^2(X,\mu_n)$ and $(\rR^2f_*\mu_n)(\eta)=\rH^2(X_{\eta},\mu_n)$, and we claim that the restriction map $\rH^2(X,\mu_n)\to\rH^2(X_{\eta},\mu_n)$ is injective. 
    This follows from the fact that the restriction on Picard groups $\Pic(X)\to\Pic(X_{\eta})$ is an isomorphism and the restriction on Brauer groups $\br(X)\hookrightarrow\br(X_{\eta})$ is injective, combined with the exact diagram
    \[
     \begin{tikzcd}
          \Pic(X)\arrow{r}{\cdot n}\arrow[d,sloped,"\sim"]&\Pic(X)\arrow{r}\arrow[d,sloped,"\sim"]&\rH^2(X,\mu_n)\arrow{d}\arrow{r}&\br(X)\arrow[hook]{d}\\
          \Pic(X_{\eta})\arrow{r}{\cdot n}&\Pic(X_{\eta})\arrow{r}&\rH^2(X_{\eta},\mu_n)\arrow{r}&\br(X_{\eta}).
      \end{tikzcd}
      \eqno\qed
    \]
    \hideqed
\end{proof}

\subsection{Moduli of lattice-polarized twisted K3 surfaces}

\begin{definition}
    Fix a subset $\cA\subset L_{\R}$ and a positive integer $n$. 
    The \emph{moduli stack of $(L,\cA)$-polarized $n$-twisted K3 surfaces over $S$} is the stack $\cM^{[n]}_{(L,\cA)/S}$ whose objects over an $S$-scheme $T$ are triples $(f:X\to T,j,\alpha)$ where $(X,j)$ is a family of $(L,\cA)$-polarized K3 surfaces over $T$ and $(f:X\to T,\alpha)$ is a family of $n$-twisted K3 surfaces over $T$, and whose morphisms $(f:X\to T,\alpha)\to (f':X'\to T',\alpha')$ lying over a morphism of schemes $v:T\to T'$ is a morphism $u:X\to X'$ of families of K3 surfaces (in the sense of Definition \ref{def:moduli stack of lattice-pol K3s}) such that the pullback map
     \[
        u^*:\rH^0(T',\rR^2f'_*\mu_n)\to\rH^0(T,\rR^2f_*\mu_n)
     \]
     takes $\alpha'$ to $\alpha$.
\end{definition}

When $n=1$ we have a natural identification $\cM_{(L,\cA)}^{[1]}=\cM_{(L,\cA)}$. 
There is a forgetful morphism
\begin{equation}\label{eq:forget the twist}
    \cM_{(L,\cA)}^{[n]}\to\cM_{(L,\cA)},\hspace{1cm}(X,j,\alpha)\mapsto (X,j),
\end{equation}
and $\cM_{(L,\cA)}^{[n]}$ has a natural structure of a group over $\cM_{(L,\cA)}$. 
If $n$ is invertible everywhere on $S$, then this map is an \'{e}tale local system with fiber $(\Z/n)^{\oplus 22}$, and in particular is finite \'{e}tale. 
If $n$ is not invertible on $S$, then this map need not be flat, and even has positive-dimensional fibers over the supersingular locus.

\begin{theorem}\label{thm:rep, easy version, for twisted K3s}
    For any subset $\cA\subset L_{\R}$ and positive integer $n$ (possibly not invertible in $\cO_S$), the stack $\cM_{(L,\cA)}^{[n]}$ is algebraic, is Deligne--Mumford, locally of finite type, quasi-separated, and Zariski-locally separated over $S$, and has finite inertia.
\end{theorem}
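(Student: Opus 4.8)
The plan is to deduce every assertion from the forgetful morphism~\eqref{eq:forget the twist}
\[
    q\colon\cM_{(L,\cA)}^{[n]}\to\cM_{(L,\cA)}
\]
by first establishing that $q$ is representable by algebraic spaces, separated, and of finite presentation, and then combining this with Theorem~\ref{thm:rep, easy version} via the usual permanence properties of morphisms of algebraic stacks. The starting observation is that, viewed as a category fibered over $\cM_{(L,\cA)}$, the stack $\cM_{(L,\cA)}^{[n]}$ is exactly the relative big fppf sheaf $\rR^2f_*\mu_n$ attached to the universal family of K3 surfaces $f\colon\mathcal{X}\to\cM_{(L,\cA)}$: an object of $\cM_{(L,\cA)}^{[n]}$ over $T$ consists of a morphism $T\to\cM_{(L,\cA)}$ (the family $(X,j)$) together with a section $\alpha\in\rH^0(T,\rR^2f_{T*}\mu_n)$ of the pullback, and the compatibility imposed on morphisms in Definition~\ref{def:n twisted K3s in families} is precisely the sheaf axiom for $\rR^2f_*\mu_n$. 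In particular the relative automorphism groups of $q$ are trivial, so $q$ is representable, and it remains to identify the representing sheaf and its properties.

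Since $\cM_{(L,\cA)}$ is algebraic by Theorem~\ref{thm:rep, easy version}, choose a smooth surjection $U\to\cM_{(L,\cA)}$ from a scheme. Formation of the big fppf sheaf $\rR^2f_*\mu_n$ commutes with base change, so its restriction to $U$ is the sheaf $\rR^2f_{U*}\mu_n$ attached to the K3 family $\mathcal{X}_U\to U$; by Proposition~\ref{prop:rep thm for flat coho} this is a group algebraic space that is separated and of finite presentation over $U$. Being representable by algebraic spaces, being separated, and being of finite presentation are all fppf-local conditions on the base, so they descend along $U\to\cM_{(L,\cA)}$: the morphism $q$ is representable by algebraic spaces, separated, and of finite presentation over $\cM_{(L,\cA)}$. (If one prefers to stay over schemes, it suffices to restrict instead to the Zariski cover $\bigsqcup_h\cM_{(L,h)}\to\cM_{(L,\cA)}$ of Lemma~\ref{lem:open immersion}.)

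Now everything follows formally. Pulling back an \'etale atlas of $\cM_{(L,\cA)}$ (a scheme, since $\cM_{(L,\cA)}$ is Deligne--Mumford) along the representable morphism $q$ yields an \'etale cover of $\cM_{(L,\cA)}^{[n]}$ by an algebraic space, which one refines to an \'etale cover by a scheme; hence $\cM_{(L,\cA)}^{[n]}$ is algebraic and Deligne--Mumford. Composing the finitely presented morphism $q$ with $\cM_{(L,\cA)}\to S$, which is locally of finite type and quasi-separated by Theorem~\ref{thm:rep, easy version}, shows $\cM_{(L,\cA)}^{[n]}$ is locally of finite type and quasi-separated over $S$. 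For Zariski-local separatedness, the open substacks $\cM_{(L,h)}^{[n]}:=q^{-1}(\cM_{(L,h)})$ form a Zariski cover of $\cM_{(L,\cA)}^{[n]}$ (pulled back from Lemma~\ref{lem:open immersion}), and each is separated over $S$, being separated over the $S$-separated stack $\cM_{(L,h)}$ via the restriction of $q$. Finally, $\mathcal{I}_{\cM_{(L,\cA)}^{[n]}}$ is the stabilizer of the universal twisting class under the action of $q^{*}\mathcal{I}_{\cM_{(L,\cA)}}$ on $q^{*}\rR^2f_*\mu_n$; since $\rR^2f_*\mu_n$ is separated over $\cM_{(L,\cA)}$, this stabilizer is closed in $q^{*}\mathcal{I}_{\cM_{(L,\cA)}}$, which is finite over $\cM_{(L,\cA)}^{[n]}$ because $\mathcal{I}_{\cM_{(L,\cA)}}$ is finite over $\cM_{(L,\cA)}$, so $\cM_{(L,\cA)}^{[n]}$ has finite inertia.

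The only step with genuine content is the passage from Proposition~\ref{prop:rep thm for flat coho}, stated for a K3 family over a scheme, to the universal family over the algebraic stack $\cM_{(L,\cA)}$; this is the descent argument of the second paragraph, which works precisely because being a separated, finitely presented algebraic space is smooth-local on the base. Everything else is bookkeeping with permanence properties, so I do not expect any serious obstacle.
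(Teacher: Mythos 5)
Your proposal is correct and follows essentially the same route as the paper: the paper's proof consists of identifying the fiber of the forgetful morphism $\cM_{(L,\cA)}^{[n]}\to\cM_{(L,\cA)}$ over any scheme $T$ with the group algebraic space $\rR^2f_*\mu_n$ of Proposition~\ref{prop:rep thm for flat coho} and then invoking Theorem~\ref{thm:rep, easy version}. You simply spell out the permanence-of-properties bookkeeping (descent along an atlas, Zariski-local separatedness via the cover by the $\cM_{(L,h)}^{[n]}$, and closedness of the stabilizer of the twisting class for finite inertia) that the paper leaves implicit.
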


\begin{proof}
    Let $(f:X\to T,j,\alpha)$ be a family of $(L,\cA)$-polarized $n$-twisted K3 surfaces over an $S$-scheme $T$, let $\mu:T\to\cM_{(L,\cA)}^{[n]}$ be the corresponding morphism, and let $\cF_T$ denote the fiber product in the Cartesian diagram
    \[
        \begin{tikzcd}
            \cF_T\arrow{d}\arrow{r}&\cM_{(L,\cA)}^{[n]}\arrow{d}\\
            T\arrow{r}{\mu}&\cM_{(L,\cA)}.
        \end{tikzcd}
    \]
    Then $\cF_T$ is naturally identified with the big fppf sheaf $\rR^2f_*\mu_n$. 
    The result follows by combining Theorem \ref{thm:rep, easy version} and Proposition \ref{prop:rep thm for flat coho}.
\end{proof}

For a positive integer $d$, raising to the $d$th power defines a map of group schemes $\mu_{nd}\to\mu_n$. 
This gives rise to a morphism of stacks
\begin{equation}\label{eq:multiplication by d map}
    \cM_{(L,\cA)}^{[nd]}\to\cM_{(L,\cA)}^{[n]},\hspace{1cm}(X,j,\alpha)\mapsto (X,j,\alpha^d).
\end{equation}
When $d=1$, this map is an isomorphism, and when $d=n$, it recovers the forgetful morphism. 
We now define a twisted analog of the primitively polarized substack.

\begin{definition}
    We let $\cM^{(n)}_{(L,\cA)}\subset\cM_{(L,\cA)}^{[n]}$ denote the full substack whose objects over an $S$-scheme $T$ are those tuples $(f:X\to T,j,\alpha)$ such that for each geometric point $t\in T$ the image $\alpha_t\in\rH^2(X_t,\mu_n)$ of $\alpha$ in the fiber over $t$ has exact order $n$.
\end{definition}

\begin{proposition}\label{prop:primitive twisted substack is open}
    The inclusion $\cM^{(n)}_{(L,\cA)}\hookrightarrow\cM_{(L,\cA)}^{[n]}$ is an open immersion.
\end{proposition}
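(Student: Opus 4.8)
The plan is to mimic the proof of Proposition~\ref{prop:primitive substack is open}. Let $(f\colon X\to T,j,\alpha)$ be a family of $(L,\cA)$-polarized $n$-twisted K3 surfaces over an $S$-scheme $T$, corresponding to a morphism $\mu\colon T\to\cM^{[n]}_{(L,\cA)}$. Since $\cM^{(n)}_{(L,\cA)}$ is a full substack cut out by the condition that $\ord(\alpha_t)=n$ at every geometric point $t$ --- a condition stable under arbitrary base change, and fppf-local on the base since any geometric point of a cover lifts to a geometric point of $T$ identifying the relevant fibers --- the fiber product $U:=T\times_{\cM^{[n]}_{(L,\cA)}}\cM^{(n)}_{(L,\cA)}$ is the subfunctor of $h_T$ consisting of those $T'\to T$ such that the pulled-back family lies in $\cM^{(n)}_{(L,\cA)}$; equivalently, such that the image of $|T'|\to|T|$ is contained in $V:=\{t\in|T|:\ord(\alpha_t)=n\}$. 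It therefore suffices to prove that $V$ is open in $|T|$, for then $U=V$ with its open subscheme structure.

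Now $\alpha_t\in\rH^2(X_t,\mu_n)$ is always killed by $n$, so $\ord(\alpha_t)=n$ fails exactly when $(n/p)\,\alpha_t=0$ for some prime $p\mid n$. Writing $Z_d:=\{t\in|T|:d\,\alpha_t=0\}$ for a divisor $d\mid n$, we get $|T|\setminus V=\bigcup_{p\mid n}Z_{n/p}$, a finite union, so it is enough to show that each $Z_d$ is closed in $|T|$.

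By Proposition~\ref{prop:rep thm for flat coho}, $\cG:=\rR^2f_*\mu_n$ is a separated group algebraic space of finite presentation over $T$, so its zero section $e\colon T\to\cG$ is a closed immersion by \cite{stacks-project}*{047G}. The section $\alpha$ corresponds to a morphism $\alpha\colon T\to\cG$ over $T$; composing with the multiplication-by-$d$ endomorphism $[d]$ of the group algebraic space $\cG$ gives a morphism $d\alpha:=[d]\circ\alpha\colon T\to\cG$ over $T$. Form the Cartesian square
\[
    \begin{tikzcd}
        \cZ_d\arrow[hook]{d}\arrow{r}&T\arrow{d}{e}\\
        T\arrow{r}{d\alpha}&\cG,
    \end{tikzcd}
\]
so that $\cZ_d\hookrightarrow T$ is a closed immersion. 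For a geometric point $\overline{t}\to T$ lying over $t\in|T|$, the point $\overline{t}$ factors through $\cZ_d$ if and only if $(d\alpha)_{\overline{t}}=e_{\overline{t}}$ in $\cG(\overline{t})$; by Lemma~\ref{lem:leray is an iso} (equivalently, the compatibility of Definitions~\ref{def:n twisted K3s} and~\ref{def:n twisted K3s in families}) we have $\cG(\overline{t})=\rH^2(X_{\overline{t}},\mu_n)$, under which this condition reads $d\,\alpha_{\overline{t}}=0$, i.e.\ $t\in Z_d$. Hence $|\cZ_d|=Z_d$, so $Z_d$ is closed.

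Combining, $V=|T|\setminus\bigcup_{p\mid n}Z_{n/p}$ is open, hence $U$ is an open subscheme of $T$; since $T$ was arbitrary, $\cM^{(n)}_{(L,\cA)}\hookrightarrow\cM^{[n]}_{(L,\cA)}$ is an open immersion. The only mild subtlety is the identification $|\cZ_d|=Z_d$, i.e.\ that the scheme-theoretic vanishing locus of $d\alpha$ has the expected geometric points; this is exactly where one uses that $\rR^2f_*\mu_n$ recovers $\rH^2(X,\mu_n)$ of the fiber over an algebraically closed field. Everything else is formal.
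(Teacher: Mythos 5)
Your proof is correct and follows essentially the same route as the paper's: both reduce to the separatedness of $\rR^2f_*\mu_n$ (Proposition~\ref{prop:rep thm for flat coho}), which makes the vanishing locus of each section $d\alpha$ closed, and then conclude by a finite Boolean combination over divisors of $n$. Your formulation of that combination (the complement of the exact-order-$n$ locus is the finite union of the closed loci $Z_{n/p}$ over primes $p\mid n$) is in fact slightly more careful than the paper's, which describes the locus as a \emph{union} of nonvanishing loci where an intersection is meant.
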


\begin{proof}
    Let $(X,j,\alpha)$ be a family of $(L,\cA)$-polarized $n$-twisted K3 surfaces over an $S$-scheme $T$. 
    Let $U\subset T$ be the locus of points where $\alpha$ has exact order $n$. 
    To show the result, we need to show that $U$ is open in $T$. 
    By Proposition \ref{prop:rep thm for flat coho}, the group $\rR^2f_*\mu_n$ is separated over $T$, so the vanishing locus of any of its global sections is closed, or equivalently, the nonvanishing locus of any global section is open. 
    Now note that $U$ is the union of the nonvanishing loci of the sections $d\alpha\in\rH^0(T,\rR^2f_*\mu_n)$ as $d$ ranges over the positive divisors of $n$ such that $d<n$. 
    Thus $U$ is open in $T$.
\end{proof}

\subsection{Primitive lattice polarizations}

\begin{definition}
    Let $\cP_{(L,\cA)}^{[n]}\subset\cM_{(L,\cA)}^{[n]}$ denote the full substack of objects whose underlying polarization is primitive. 
    Let $\cP^{(n)}_{(L,\cA)}:=\cM^{(n)}_{(L,\cA)}\cap\cP_{(L,\cA)}^{[n]}$ be the substack of objects whose polarization is primitive and whose twisting has exact order $n$ everywhere. 
    By Propositions \ref{prop:primitive substack is open} and \ref{prop:primitive twisted substack is open} the stack $\cP^{(n)}_{(L,\cA)}$ is open in $\cM_{(L,\cA)}^{[n]}$. 
    We let $\overline{\cP}^{(n)}_{(L,\cA)}$ denote the closure of $\cP^{(n)}_{(L,\cA)}$ in $\cM_{(L,\cA)}^{[n]}$.
\end{definition}

\begin{remark}
    The forgetful map~\eqref{eq:forget the twist} restricts to a morphism $\overline{\cP}^{(n)}_{(L,\cA)}\to\overline{\cP}_{(L,\cA)}$.
\end{remark}

\begin{remark}
    Even in characteristic 0, the stack $\overline{\cP}^{(n)}_{(L,\cA)}$ will typically include some twisted K3 surfaces whose twisting has order strictly less than $n$.
\end{remark}

We show in the following that the stacks $\overline{\cP}^{(n)}_{(L,\cA)}$ for $n\geq 1$ have similar properties as in the untwisted case. 
The proofs are very similar, so we shall be brief.

\begin{theorem}\label{thm:properties of the moduli stacks, twisted version}
    Let $\cA\subset L_{\R}$ be a subset, let $\mathcal{a}\subset L_{\R}$ be a very small ample cone, and let $n$ be a positive integer.
    \begin{enumerate}[leftmargin=*]
        \item\label{item:twisted1} There exists a closed substack $\cZ\subset\overline{\cP}^{(n)}_{(L,\cA)}$ that is both supported on the supersingular locus and supported over the union of the characteristic $p$ loci $S_p$ of $S$ for primes $p$ dividing both $n$ and the discriminant of $L$, such that $\overline{\cP}^{(n)}_{(L,\cA)}\setminus\cZ$ is smooth over $S$ of relative dimension $20-\rk(L)$ at every point. 
        \smallskip
        
        \item\label{item:twisted2} If $\rk(L)\leq 9$ then $\overline{\cP}^{(n)}_{(L,\cA)}$ is reduced and is flat and lci over $S$ of relative dimension $20-\rk(L)$ at every point.
        \smallskip
        
        \item\label{item:twisted3} There exists a closed substack $\cZ\subset\overline{\cP}^{(n)}_{(L,\mathcal{a})}$ supported on the supersingular locus such that $\overline{\cP}^{(n)}_{(L,\cA)}\setminus\cZ$ is separated over $S$.
    \end{enumerate}
\end{theorem}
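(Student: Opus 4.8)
The plan is to prove the three parts as twisted counterparts of Theorems~\ref{thm:smoothness and dimension}, \ref{thm:rank leq 10}, and~\ref{thm:separated moduli stack}, importing those arguments with minimal changes. First I would reduce to $S=\spec\Z$ by base change; for parts~\ref{item:twisted1} and~\ref{item:twisted2} I would further reduce to $\cA=\{h\}$ for a single $h\in\cC(L)^{\circ}\cap L$, pulling back along the forgetful morphism the Zariski cover of $\cM_{(L,\cA)}$ by the stacks $\cM_{(L,h)}$ provided by Lemma~\ref{lem:open immersion}. Over $\spec\Z[1/n]$ the forgetful morphism $\cM^{[n]}_{(L,h)}\to\cM_{(L,h)}$ is finite \'etale with fibers $(\Z/n)^{\oplus 22}$, so all three conclusions over $\Z[1/n]$ are inherited from the untwisted statements; all of the new content sits at primes $p\mid n$.

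For part~\ref{item:twisted1} I would argue point by point. At a geometric point $x=(X,j,\alpha)$ of $\overline{\cP}^{(n)}_{(L,h)}$ over a field of characteristic $p$, Theorem~\ref{thm:rep, easy version, for twisted K3s} identifies the formal neighborhood of $x$ with the fppf sheaf $\rR^2 f_*\mu_n$ sitting over the deformation space $\mathrm{Def}_{(X,j(L))}$. When $X$ is non-supersingular the computation in the proof of Theorem~\ref{thm:smoothness and dimension} shows $\mathrm{Def}_{(X,j(L))}$ is formally smooth over $W$ of relative dimension $20-\rk(L)$ (the only potential obstruction there forced $X$ supersingular), so the task is to control $\rR^2 f_*\mu_n$ near the section $\alpha$. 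Here I would invoke the deformation theory of $\rR^2 f_*\mu_n$ from~\cite{Bragg}: over the non-supersingular locus this sheaf is quasi-finite over the base and the twisting class lifts without obstruction, so adjoining $\alpha$ changes neither smoothness nor dimension at $x$. One then carves out the closed substack $\cZ$ exactly as in the untwisted proof, and the primes that can contribute are those dividing $\disc(L)$ (from the polarization) together with those dividing $n$ (from the non-\'etaleness of $\mu_n$). I expect this to be the main obstacle of the whole argument, since it rests entirely on the structure of $\rR^2 f_*\mu_n$ in positive and mixed characteristic established in~\cite{Bragg}, which is what confines the positive-dimensional fibers of the forgetful morphism to the supersingular locus.

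For part~\ref{item:twisted2} I would rerun the proof of Theorem~\ref{thm:rank leq 10}. Local deformation theory---now built on the local description of $\rR^2 f_*\mu_n$ over $\cM_{(L,h)}$ from~\cite{Bragg}---shows $\cM^{[n]}_{(L,h)}$ is a local complete intersection at every point of the expected dimension $20-\rk(L)$, so it suffices to pin the dimension of $\overline{\cP}^{(n)}_{(L,h)}$ and to show that each of its irreducible components dominates $\spec\Z$. For the latter I would apply the twisted analog of the Lieblich--Olsson lifting result to the saturation of $j(L)$ equipped with the twist; this is where the hypothesis tightens from $\rk(L)\le 10$ to $\rk(L)\le 9$, since the lifting now requires a lattice of one larger rank to admit a primitive embedding into $\Lambda$. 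Granting this, every component meets characteristic zero, hence is generically smooth of dimension $20-\rk(L)$ by part~\ref{item:twisted1} applied over $\Q$, hence has that dimension everywhere; reducedness (from generic reducedness), flatness over $\Z$ (from dominance), and the lci property then follow exactly as in Theorem~\ref{thm:rank leq 10}.

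For part~\ref{item:twisted3}, keeping $\cA=\mathcal{a}$ a very small ample cone, I would first establish the twisted form of Proposition~\ref{prop:uniqueness part of valuative criterion}: given families $(X,j,\alpha)$ and $(Y,l,\beta)$ of $(L,\mathcal{a})$-polarized $n$-twisted K3 surfaces over a DVR $R$ with non-supersingular special fibers and an isomorphism of their generic twisted fibers, Proposition~\ref{prop:uniqueness part of valuative criterion} produces a unique isomorphism $u\colon X\iso Y$ of $(L,\mathcal{a})$-polarized families over $R$; then $\alpha$ and $u^*\beta$ are two sections of $\rR^2 f_*\mu_n$ over $\spec R$ agreeing over the generic point, and since $\rR^2 f_*\mu_n$ is separated over $R$ by Proposition~\ref{prop:rep thm for flat coho}, they coincide, so $u$ respects the twists. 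With this uniqueness statement available, the topological argument in the proof of Theorem~\ref{thm:separated moduli stack}---take $\cZ$ to be supported on the closure of the image, under the first projection, of the non-diagonal part of $\overline{|\Delta|}$ in $|\cP^{(n)}_{(L,\mathcal{a})}\times_S\cP^{(n)}_{(L,\mathcal{a})}|$, check it lies in the supersingular locus using twisted uniqueness, and verify the separatedness criterion on the complement---carries over verbatim.
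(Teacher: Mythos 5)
Your proposal is correct in outline and, for parts~\eqref{item:twisted1} and~\eqref{item:twisted2}, follows the paper's proof exactly: the paper simply reruns the untwisted arguments of Theorems~\ref{thm:smoothness and dimension} and~\ref{thm:rank leq 10}, with the deformation theory of $\rR^2f_*\mu_n$ from~\cite{Bragg} (Theorem~7.4 and Proposition~7.9 there for $\rk(L)=1$) supplying exactly the input you identify as the main obstacle. One small discrepancy in~\eqref{item:twisted1}: you conclude that $\cZ$ is supported over primes dividing $\disc(L)$ \emph{or} $n$, whereas the statement asserts support over primes dividing \emph{both} $n$ \emph{and} $\disc(L)$ --- a strictly stronger claim, since it forces smoothness at supersingular points in characteristics $p\mid n$ with $p\nmid\disc(L)$. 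Recovering that refinement requires the precise content of Bragg's deformation-theoretic results at such primes rather than the coarse ``union of bad primes'' bookkeeping you propose, so as written your argument proves a slightly weaker version of~\eqref{item:twisted1}.

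For part~\eqref{item:twisted3} you take a genuinely different and more laborious route than the paper. You redo the entire valuative-criterion and topological argument of Theorem~\ref{thm:separated moduli stack} in the twisted setting, proving a twisted analogue of Proposition~\ref{prop:uniqueness part of valuative criterion} by combining the untwisted uniqueness with the separatedness of $\rR^2f_*\mu_n$ over a DVR. This works, but the paper gets the same conclusion in two lines: Proposition~\ref{prop:rep thm for flat coho} makes the forgetful morphism $\overline{\cP}^{(n)}_{(L,\mathcal{a})}\to\overline{\cP}_{(L,\mathcal{a})}$ separated, and a separated morphism followed by a separated morphism is separated, so one just pulls back the closed substack $\cZ$ of Theorem~\ref{thm:separated moduli stack}. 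Both approaches rest on the same essential input (separatedness of $\rR^2f_*\mu_n$); the composition argument is cleaner and avoids re-verifying the criterion of Proposition~\ref{prop:sep criterion} for the twisted stack.
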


\begin{proof}
    \eqref{item:twisted1}: This follows from an identical argument as in the untwisted case (Theorem \ref{thm:smoothness and dimension}), using the deformation theory of twisted K3 surfaces as in \cite{Bragg} (the case when $\rk(L)=1$ is \cite{Bragg}*{Theorem~7.4}).

    \noindent\eqref{item:twisted2}: The case when $\rk(L)=1$ is \cite{Bragg}*{Proposition~7.9}. 
    The general case is an immediate generalization of this argument, as in the proof of Theorem \ref{thm:rank leq 10} in the untwisted case.

    \noindent\eqref{item:twisted3}: By Proposition \ref{prop:rep thm for flat coho} the forgetful map $\cM^{[n]}_{(L,\mathcal{a})}\to\cM_{(L,\mathcal{a})}$ is separated. 
    Thus the forgetful map $\overline{\cP}^{(n)}_{(L,\mathcal{a})}\to\overline{\cP}_{(L,\mathcal{a})}$ is also separated. 
    The result follows from this, combined with Theorem \ref{thm:separated moduli stack}.
\end{proof}

\begin{remark}
    The above results are generally false for other irreducible components of the stack $\cM^{[n]}_{(L,\cA)}$ (see e.g. \cite{Bragg}*{Corollary~7.5}).
\end{remark}

\subsection{From $n$-twists to Brauer classes}\label{subsec_twiststoBrauer}

An $n$-twisted K3 surface $(X,\alpha)$ has an associated Brauer class $\tau(\alpha)\in\br(X)$, where
\[
    \tau:\rH^2(X,\mu_n)\to\br(X)
\]
is the map on $\rH^2$ induced by the inclusion $\mu_n\hookrightarrow\G_m$. This map fits into the long exact sequence
\[
    0\to\Pic(X)\xrightarrow{\cdot n}\Pic(X)\xrightarrow{\partial_n}\rH^2(X,\mu_n)\xrightarrow{\tau}\br(X)\xrightarrow{\cdot n}\br(X)\to\ldots
\]
coming from taking cohomology of the Kummer sequence on $X$. 
This sequence shows that $\tau$ induces an isomorphism
\[
    \rH^2(X,\mu_n)/\partial_n(\Pic(X))\simeq\br(X)[n].
\]
In this subsection, we consider a certain quotient of the stack $\cP_{(L,\cA)}^{[n]}$, whose general points correspond to lattice polarized K3 surfaces equipped with an $n$-torsion Brauer class.

There is a morphism of stacks
\begin{equation}\label{eq:a section of the forgetful map}
    \sigma:\cM_{(L,\cA)}\times (L\otimes\Z/n)\to\cM_{(L,\cA)}^{[n]}
\end{equation}
which on objects over a scheme $T$ sends a pair $((X,j),v)$, where $(X,j)$ is a family of $(L,\cA)$-polarized K3 surfaces over $T$ and $v$ is a locally constant family of elements of $L$ over $T$, to the triple $(X,j,\partial_n(j(v)))$, where $\partial_n:\Pic_{X/T}\to\rR^2f_*\mu_n$ is the boundary map in the long exact sequence
\begin{equation}\label{eq:LES from Kummer}
    0\to\Pic_{X/T}\xrightarrow{\cdot n}\Pic_{X/T}\xrightarrow{\partial_n}\rR^2f_*\mu_n\xrightarrow{\tau}\rR^2f_*\G_m\xrightarrow{\cdot n}\rR^2f_*\G_m\to\ldots
\end{equation}
obtained by applying $\rR f_*$ to the Kummer sequence for $\mu_n$ on $X$. 
The morphism $\sigma$ is a homomorphism of group spaces over $\cM_{(L,\cA)}$, and is injective over the primitive substack $\cP_{(L,\cA)}$.
    
\begin{definition}\label{def_stackswithBrauerclasses}
    We write $\cQ_{(L,\cA)}^{[n]}$ for the quotient of $\cP_{(L,\cA)}^{[n]}$ by the action of $\cP_{(L,\cA)}\times(L\otimes\Z/n)$. 
    Explicitly, an object of the stack $\cQ_{(L,\cA)}^{[n]}$ over an $S$-scheme $T$ is a tuple $(f:X\to T,j,\alpha)$, where $(f:X\to T,j)$ is a family of primitively $(L,\cA)$-polarized K3 surfaces over $T$ and $\alpha$ is a section over $T$ of the quotient sheaf
    \[
        \mathrm{coker}\left(\underline{L}_T\xrightarrow{\partial_n\circ j}\rR^2f_*\mu_n\right). 
    \]
    We write $\cQ_{(L,\cA)}^{(n)}\subset\cQ_{(L,\cA)}^{[n]}$ for the full substack whose objects over an $S$-scheme $T$ are tuples $(f:X\to T,j,\alpha)$ such that for each geometric point $t\in T$ the fiber $\alpha_t$ has exact order $n$ as an element of the quotient $\rH^2(X,\mu_n)/\partial_n(j(L))$.
\end{definition}

We have a short exact sequence of groups over $\cP_{(L,\cA)}$
\[
    0\to\cP_{(L,\cA)}\times (L\otimes\Z/n)\to\cP_{(L,\cA)}^{[n]}\to\cQ_{(L,\cA)}^{[n]}\to 0.
\]
This shows that $\cQ_{(L,\cA)}^{[n]}$ is the quotient of $\cP_{(L,\cA)}^{[n]}$ by a finite \'{e}tale equivalence relation. 
It follows that $\cQ_{(L,\cA)}^{[n]}$ is algebraic, and has the properties listed in Theorem \ref{thm:rep, easy version, for twisted K3s}. 
Furthermore, arguing as in the proof of Proposition \ref{prop:primitive twisted substack is open}, it follows that the inclusion $\cQ^{(n)}_{(L,\cA)}\hookrightarrow\cQ_{(L,\cA)}^{[n]}$ is an open immersion.

We are interested in the quotient $\cQ_{(L,\cA)}^{[n]}$ because it is more closely related to Brauer groups of K3 surfaces. 
Let $(X,j)$ be a primitively $(L,\cA)$-polarized K3 surface over an algebraically closed field $k$ and let $x\in\cP_{(L,\cA)}(k)$ be the corresponding $k$-point. 
If $x$ is general, then the marking $j:L\hookrightarrow\Pic(X)$ is an isomorphism, and $\tau$ induces an isomorphism 
    \[
        \rH^2(X,\mu_n)/\partial_n(j(L))\iso\br(X)[n].
    \]
Thus, the fiber of the morphism $\cQ_{(L,\cA)}^{[n]}\to\cP_{(L,\cA)}$ over $x$ is in bijection with the $n$-torsion in the Brauer group of $X$, and the fiber of the morphism $\cQ_{(L,\cA)}^{(n)}\to\cP_{(L,\cA)}$ over $x$ is in bijection with the elements in $\br(X)$ of exact order $n$.

\section{Complex moduli spaces and period morphisms}\label{sec:complex moduli spaces}

In this section, we consider moduli spaces of lattice-polarized K3 surfaces over the complex numbers and their associated period morphisms. 
As before, we fix an even hyperbolic lattice $L$ that can be embedded in the K3 lattice $\Lambda$ and a subset $\cA\subset L_{\R}$.

\subsection{Period morphisms for lattice-polarized K3 surfaces}

\begin{definition}
    Let $e:L\hookrightarrow\Lambda$ be a primitive embedding of $L$ into the K3 lattice. 
    We let $\widetilde{\cP}_{(L,\cA)}^{e}$ denote the stack on the category of $\C$-schemes whose objects over a $\C$-scheme $T$ are triples $(f:X\to T,j,l)$, where $X$ is a family of K3 surfaces over $T$, $j:\underline{L}_T\hookrightarrow\pic_{X/T}$ is an $(L,\cA)$-polarization, and $l:\mathrm{R}^2f_*\Z\iso \underline{\Lambda}_T$ is a marking (an isomorphism compatible with the pairings) that is compatible with $j$ in the sense that the diagram
    \begin{equation}\label{eq:marking and polarization square}
        \begin{tikzcd}
            \underline{L}_T\arrow[hook]{d}[swap]{e}\arrow[hook]{r}{j}&\Pic_{X/T}\arrow[hook]{d}{\partial}\\
            \underline{\Lambda}_T&\mathrm{R}^2f_*\Z\arrow[l,"\sim","l"']
        \end{tikzcd}
    \end{equation}
    commutes. 
    Here, $\rR f_*$ denotes the pushforward in the classical (Euclidean) topology, and $\partial$ is the boundary map coming from the exponential exact sequence. 
    We note that because $e$ is assumed to be primitive, the polarization $j$ is automatically primitive as well. 
    A morphism between two such triples is an isomorphism of families of K3 surfaces that is compatible with both the polarizations and the markings.

    In the special case when $\cA=L_{\R}$ we write $\widetilde{\cP}^e_{L}:=\widetilde{\cP}^e_{(L,L_{\R})}$.
\end{definition}

A $\C$-point of the stack $\widetilde{\cP}_{(L,\cA)}^{e}$ is a triple $(X,j,l)$, where $(X,j)$ is an $(L,\cA)$-polarized K3 surface over $\C$ and $l:\rH^2(X,\Z)\iso\Lambda$ is a marking such that the diagram
    \begin{equation}\label{eq:marking diagram}
        \begin{tikzcd}
            L\arrow[hook]{d}[swap]{e}\arrow[hook]{r}{j}&\Pic(X)\arrow[hook]{d}{\partial}\\
            \Lambda&\rH^2(X,\Z)\arrow[l,"\sim","l"']
        \end{tikzcd}
    \end{equation}
commutes. 
The automorphism group of a K3 surface acts faithfully on its second singular cohomology \cite{Huy16}*{\S15, 2.1}, so the objects of $\widetilde{\cP}_{(L,\cA)}^{e}$ have only trivial automorphisms. 
Thus, the stack $\widetilde{\cP}_{(L,\cA)}^{e}$ is equivalent to its associated functor of isomorphism classes of objects, which we denote by $\widetilde{\rP}_{(L,\cA)}^e$.

We compare the above to our moduli stacks of lattice-polarized K3 surfaces. 
Write $\cP_{(L,\cA)}$ for $\cP_{(L,\cA)/\C}$. 
If $X$ is a K3 surface over $\C$ and $l:\rH^2(X,\Z)\iso\Lambda$ is a marking, then there is a unique primitive embedding $e:L\hookrightarrow\Lambda$ such that the diagram~\eqref{eq:marking diagram} commutes (namely $e=l\circ\partial\circ j$). 
Furthermore, the orbit of $e$ under the $\rO(\Lambda)$-action on $\Emb(L,\Lambda)$ does not depend on $l$.

\begin{definition}
    Let $\epsilon\subset\Emb(L,\Lambda)$ be an $\rO(\Lambda)$-orbit of primitive embeddings $L\hookrightarrow\Lambda$. 
    We say that a primitively $L$-polarized K3 surface $(X,j)$ over $\C$ has \emph{type $\epsilon$} if for some (equivalently, for any) marking $l:\rH^2(X,\Z)\iso\Lambda$ the embedding $l\circ\partial\circ j:L\hookrightarrow\Lambda$ is in the orbit $\epsilon$. 
    We write
    \[
        \cP_{(L,\cA)}^{\epsilon}\subset\cP_{(L,\cA)}
    \]
    for the substack whose objects over a $\C$-scheme $T$ are primitively $(L,\cA)$-polarized K3 surfaces over $T$ that are of type $\epsilon$ on every geometric fiber.
\end{definition}

Fix a $\C$-point $x$ of $\cP_{(L,\cA)}$ corresponding to a triple $(X,j,l)$. 
The canonical monodromy action on $\rH^2(X,\Z)$ acts via orthogonal transformations that fix the image of $L$ pointwise. 
It follows that the type of a primitively $L$-polarized K3 surface is invariant in families over connected bases. 
We therefore have a disjoint union decomposition of $\cP_{(L,\cA)}$ into open and closed substacks
\[
    \cP_{(L,\cA)}=\bigsqcup_{\epsilon\in\Emb(L,\Lambda)/\rO(\Lambda)}\cP_{(L,\cA)}^{\epsilon}.
\]
Let $e\in\Emb(L,\Lambda)$ be a primitive embedding and let $\epsilon\subset\Emb(L,\Lambda)$ be its $\rO(\Lambda)$-orbit. 
Dropping the marking gives rise to a forgetful morphism of stacks
\begin{equation}\label{eq:forgetful morphism}
    \varphi^e_{(L,\cA)}:\widetilde{\rP}_{(L,\cA)}^{e}\to\cP_{(L,\cA)}^{\epsilon},\hspace{1cm}(X,j,l)\mapsto (X,j).
\end{equation}
If confusion is unlikely, we drop the decorations and write $\varphi$ for $\varphi^e_{(L,\cA)}$. 
Let $T$ be a $\C$-scheme and let $(f:X\to T,j,l)$ be an object of $\widetilde{\rP}_{(L,\cA)}^{e}$ over $T$. 
For an isometry $g\in\rO(\Lambda)$ we have a commutative diagram
\[
    \begin{tikzcd}
        &\underline{L}_T\arrow[hook]{d}[swap]{e}\arrow[hook]{r}{j}\arrow[bend right=25]{ddl}[swap]{g\circ e}&\Pic_{X/T}\arrow[hook]{d}\\
        &\underline{\Lambda}_T\arrow{dl}{g}&\mathrm{R}^2f_*\Z\arrow[l,"\sim","l"']\arrow[bend left=25]{lld}{g\circ l}\\
        \underline{\Lambda}_T&&
    \end{tikzcd}
\]
The triple $(f:X\to T,j,g\circ l)$ is an object of $\widetilde{\rP}_{(L,\cA)}^{g\circ e}$ over $T$. 
The formula
\[
    (X,j,l)\cdot g=(X,j,g\circ l)
\]
defines a left action of the orthogonal group $\rO(\Lambda)$ on the disjoint union of the $\widetilde{\rP}_{(L,\cA)}^{e}$ for $e\in\Emb(L,\Lambda)$.

Fix a primitive embedding $e:L\hookrightarrow\Lambda$ and write
\[
    \Pi:=e(L)^{\perp}\subset\Lambda
\]
for the orthogonal complement of $e(L)$ in $\Lambda$. 
Up to isometry, $\Pi$ 
depends only on the $\rO(\Lambda)$-orbit $\epsilon$ of~$e$. 
When this is important, we will write $\Pi_{\epsilon}$.
The subgroup $\widetilde{\rO}(\Pi)\subset\rO(\Pi)$ of isometries of $\Pi$ that act trivially on the discriminant group is naturally identified with the subgroup $\rO_e(\Lambda)\subset\rO(\Lambda)$ of isometries of $\Lambda$ that fix $e(L)$ pointwise. 
The $\rO(\Lambda)$-action restricts to a left action of $\rO_e(\Lambda)\simeq\widetilde{\rO}(\Pi)$ on $\widetilde{\rP}^e_{(L,\cA)}$. 
The forgetful morphism~\eqref{eq:forgetful morphism} is equivariant for this action, and so descends to a morphism
\begin{equation}\label{eq:varphi bar}
    \left[\widetilde{\rP}^e_{(L,\cA)}/\widetilde{\rO}(\Pi)\right]\to\cP_{(L,\cA)}^{\epsilon}.
\end{equation}
As $\widetilde{\rO}(\Pi)$ is exactly the group of changes of markings, this map is an isomorphism.

\begin{corollary}
    For each $e\in\Emb(L,\Lambda)$, $\widetilde{\rP}^e_{(L,\cA)}$ is an algebraic space that is locally of finite type, quasi-separated, and Zariski-locally separated over $\C$.
\end{corollary}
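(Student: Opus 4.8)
The plan is to deduce everything from the isomorphism~\eqref{eq:varphi bar} together with Theorem~\ref{thm:rep, easy version}. Write $\Pi=\Pi_{\epsilon}=e(L)^{\perp}$, and let $\underline{\widetilde{\rO}(\Pi)}$ denote the constant group scheme over $\C$ attached to the (in general infinite) discrete group $\widetilde{\rO}(\Pi)\simeq\rO_e(\Lambda)$. First I would record that $\cP_{(L,\cA)}^{\epsilon}$, being an open and closed substack of $\cP_{(L,\cA)}\subset\cM_{(L,\cA)/\C}$, inherits all the properties in Theorem~\ref{thm:rep, easy version}: it is an algebraic (Deligne--Mumford) stack, locally of finite type, quasi-separated, and Zariski-locally separated over $\C$.

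The main step is to show that the forgetful morphism $\varphi\colon\widetilde{\rP}_{(L,\cA)}^{e}\to\cP_{(L,\cA)}^{\epsilon}$ of~\eqref{eq:forgetful morphism} is representable by algebraic spaces, surjective, and \'{e}tale---in fact a $\underline{\widetilde{\rO}(\Pi)}$-torsor. Given a $\C$-scheme $T$ and an object $(f\colon X\to T,j)$ of $\cP_{(L,\cA)}^{\epsilon}$, the fiber product $T\times_{\cP_{(L,\cA)}^{\epsilon}}\widetilde{\rP}_{(L,\cA)}^{e}$ is the sheaf on $T$ whose sections over $T'\to T$ are the markings $l\colon\rR^2f_*\Z\iso\underline{\Lambda}_{T'}$ making the square~\eqref{eq:marking and polarization square} commute. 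Any two such markings differ by a unique automorphism of $\underline{\Lambda}_{T'}$ fixing $e(L)$ pointwise, i.e.\ by a section of $\underline{\rO_e(\Lambda)}_{T'}=\underline{\widetilde{\rO}(\Pi)}_{T'}$, so this sheaf is a pseudo-torsor under $\underline{\widetilde{\rO}(\Pi)}_T$; and it is a genuine torsor because over any connected \'{e}tale $T'\to T$ on which the local system $\rR^2f_*\Z$ is trivial one may choose a trivialization, observe that the induced embedding $L\hookrightarrow\Lambda$ is constant and lies in the orbit $\epsilon$ by the hypothesis on the type, and correct it by an element of $\rO(\Lambda)$ to obtain a marking compatible with $j$. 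A torsor under the \'{e}tale (locally a disjoint union of copies of the base) group algebraic space $\underline{\widetilde{\rO}(\Pi)}_T$ is itself representable by an algebraic space, \'{e}tale and surjective over $T$, which gives the claim. It follows that $\widetilde{\rP}_{(L,\cA)}^{e}$ is an algebraic stack, and since---as already observed, because the automorphism group of a K3 surface acts faithfully on $\rH^2$---it is fibered in setoids, it is in fact an algebraic space (\cite{stacks-project}*{04SZ}).

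Finally I would transfer the remaining properties along $\varphi$. As $\varphi$ is \'{e}tale and $\cP_{(L,\cA)}^{\epsilon}$ is locally of finite type over $\C$, so is $\widetilde{\rP}_{(L,\cA)}^{e}$. A torsor under a separated (e.g.\ constant) group scheme is a separated morphism, so $\varphi$ is separated; composing with the quasi-separated structure morphism $\cP_{(L,\cA)}^{\epsilon}\to\spec\C$ shows $\widetilde{\rP}_{(L,\cA)}^{e}$ is quasi-separated over $\C$. For Zariski-local separatedness, pull a Zariski cover $\cU\to\cP_{(L,\cA)}^{\epsilon}$ by a separated algebraic stack back along $\varphi$; the projection $\cU\times_{\cP_{(L,\cA)}^{\epsilon}}\widetilde{\rP}_{(L,\cA)}^{e}\to\cU$ is again a torsor, hence separated, so its source is separated over $\C$, and it provides a Zariski cover of $\widetilde{\rP}_{(L,\cA)}^{e}$. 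The step I expect to be the main obstacle is the representability/torsor claim for $\varphi$: one must handle the fact that $\widetilde{\rO}(\Pi)$ is infinite (so $\varphi$ is far from quasi-compact, and only the properties stable under such covers transfer), and check carefully that the ``type $\epsilon$'' condition is precisely what makes compatible markings exist \'{e}tale-locally, upgrading the pseudo-torsor to a torsor.
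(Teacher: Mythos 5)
Your proposal is correct and follows essentially the same route as the paper: the paper's proof simply observes that the isomorphism~\eqref{eq:varphi bar} exhibits $\widetilde{\rP}^e_{(L,\cA)}$ as an $\widetilde{\rO}(\Pi)$-torsor over $\cP^{\epsilon}_{(L,\cA)}$ and then cites Theorem~\ref{thm:rep, easy version}. Your write-up just supplies the details (pseudo-torsor versus torsor, descent of the listed properties along the torsor) that the paper leaves implicit.
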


\begin{proof}
    The isomorphism~\eqref{eq:varphi bar} exhibits the moduli space $\widetilde{\rP}^e_{(L,\cA)}$ as an $\widetilde{\rO}(\Pi)$-torsor over $\cP^{\epsilon}_{(L,\cA)}$. 
    The result follows from Theorem \ref{thm:rep, easy version}.
\end{proof}

We now discuss period morphisms and Torelli theorems for lattice-polarized K3 surfaces. 
The \emph{(unpolarized) period domain} is the complex manifold
\[
    \cD(\Lambda):=\left\{[x]\in\mathbb{P}(\Lambda_{\C})\,|\,x^2=0\text{ and }x.\overline{x}>0\right\}.
\]
Here $\Lambda_{\C}:=\Lambda\otimes_{\Z}\C$ and $\mathbb{P}(\Lambda_{\C})$ is the complex projective space classifying lines in $\Lambda_{\C}$. 
The period domain is an analytic open subset of the quadric hypersurface in $\mathbb{P}(\Lambda_\C)$ cut out by the quadratic form on $\Lambda_{\C}$, and hence has dimension $20$. 
Let $T$ be a $\C$-scheme and let $f:X\to T$ be a family of K3 surfaces over $T$ equipped with a marking $l:\rR^2f_*\Z\iso\underline{\Lambda}_T$. 
Then $l$ induces an isomorphism
\[
    l\otimes\C:\mathrm{R}^2f_*\Omega^{\bullet}_{X/T}\simeq(\mathrm{R}^2f_*\Z)\otimes\C\iso\underline{\Lambda}_T\otimes\C.
\]
The subsheaf $f_*\Omega^2_{X/T}\subset\mathrm{R}^2f_*\Omega^{\bullet}_{X/T}$ is locally free of rank 1, as is its image $(l\otimes\C)(f_*\Omega^2_{X/T})\subset \underline{\Lambda}_T\otimes\C$. 
Thus we obtain a morphism $T\to\mathbb{P}(\Lambda_\C)$ whose image is contained in $\cD(\Lambda)$.

There is a natural variant of this construction for marked $L$-polarized K3 surfaces. 
The \emph{$L$-polarized period domain} is the complex submanifold
\[
    \cD(\Pi):=\left\{[x]\in\mathbb{P}(\Pi_{\C})\,|\,x^2=0\text{ and }x.\overline{x}>0\right\}\subset\cD(\Lambda).
\]
It is an analytic open subset of a quadric hypersurface in the projective space $\mathbb{P}(\Pi_\C)$, and hence has dimension $\rk(\Pi)-2=20-\rk(L)$. 
Let $\cA\subset L_{\R}$ be a subset and let $(f:X\to T,j,l)$ be an object of $\widetilde{\rP}^e_{(L,\cA)}$ over $T$. 
As above the pair $(f:X\to T,l)$ gives rise to a morphism $\rho(X,l):T\to\mathbb{P}(\Lambda_\C)$. 
The subsheaf $f_*\Omega^2_{X/T}$ is orthogonal to the image of $\Pic_{X/T}$, so in fact $\rho(X,l)$ factors through $\cD(\Pi)$. 
The \emph{$(L,\cA)$-polarized period morphism} is the resulting map
\begin{equation}\label{eq:period morphism, L polarized}
    \rho^e_{(L,\cA)}:\widetilde{\rP}^e_{(L,\cA)}\to\cD(\Pi).
\end{equation}
We set $\rho^e_L=\rho^e_{(L,L_{\R})}$, and if confusion is unlikely, drop the decorations entirely and write $\rho$.

\newpage

\begin{remark}
In the above, the subset $\cA\subset L_{\R}$ was arbitrary, and played no particular role. 
Varying $\cA$ produces a variety of compatible period morphisms. 
In more detail, let $h\in\cC(L)^{\circ}\cap L$ and let $\cA\subset\cB\subset L_{\R}$ be subsets such that $h\in\cA$. 
We then have a commuting diagram
\[
    \begin{tikzcd}[column sep=small]
        \widetilde{\rP}^e_{(L,h)}\arrow[hook]{r}\arrow[bend right=15]{drrrrrr}&\cdots\arrow[hook]{r}&\widetilde{\rP}^e_{(L,\cA)}\arrow[hook]{r}\arrow[bend right=10]{drrrr}&\cdots\arrow[hook]{r}&\widetilde{\rP}^e_{(L,\cB)}\arrow[hook]{r}\arrow{drr}&\cdots\arrow[hook]{r}&\widetilde{\rP}^e_L\arrow{d}{\rho^e_L}\\
        &&&&&&\cD(\Pi)
    \end{tikzcd}
\]
where the downward-trending arrows are the appropriate period morphisms. 
Thus, for a subset $\cA\subset L_{\R}$, the $(L,\cA)$-polarized period morphism is simply the restriction of the $L$-polarized period morphism $\rho^e_L$ to an open substack.
\end{remark}

For a $(-2)$-class $\delta\in\Delta(\Pi)$ write $H_{\delta}\subset\cD(\Pi)$ for the projectivization of the perpendicular to $\delta$ in $\Pi_{\C}$. 
The pairing on $\Pi$ is nondegenerate, so each $H_{\delta}$ is a proper subset of $\cD(\Pi)$, of codimension 1 at every point. 
We put
\[
    \cD(\Pi)^{\circ}:=\cD(\Pi)\setminus\left(\bigcup_{\delta\in\Delta(\Pi)}H_{\delta}\right).
\]

\begin{theorem}\label{thm:torelli, marked version}
    Let $e\in\Emb(L,\Lambda)$ be a primitive embedding. 
    If $\cA\subset L_{\R}$ is a nonempty subset that is contained in an $e$-small ample cone (e.g., a very small ample cone), then the period morphism $\rho^e_{(L,\cA)}$~\eqref{eq:period morphism, L polarized} induces an isomorphism
    \[
        \rho^e_{(L,\cA)}:\widetilde{\rP}^e_{(L,\cA)}\to\cD(\Pi)^{\circ}.
    \]
\end{theorem}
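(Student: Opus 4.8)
The plan is to run the classical Torelli machinery for K3 surfaces, with Proposition~\ref{prop:small cones and polarizations} supplying the control of ample cones; recall that every K3 surface over $\C$ is non-supersingular, so that proposition applies freely. Write $r=\rk(L)$, fix the $e$-small ample cone $\mathcal{a}_0$ containing $\cA$, and note that $\widetilde{\rP}^e_{(L,\cA)}$ is a fine moduli space (its objects have no nontrivial automorphisms, by~\cite{Huy16}*{\S15, 2.1}), hence carries the structure of a complex manifold. Since an $(L,\cA)$-polarization requires $\cA\cap L\neq\emptyset$ and $\widetilde{\rP}^e_{(L,\cA)}=\widetilde{\rP}^e_{(L,\cA\cap L)}$, we may assume $\cA\cap L\neq\emptyset$. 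Finally, $\rho^e_{(L,\cA)}$ is the restriction of $\rho^e_L$ to an open substack, so it suffices to analyze it on $\widetilde{\rP}^e_{(L,\cA)}$.

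\emph{The image lands in $\cD(\Pi)^{\circ}$.} Let $(X,j,l)$ be a $\C$-point of $\widetilde{\rP}^e_{(L,\cA)}$, with period $[x]=[l(\omega_X)]$. Compatibility of $l$ with $e$ via~\eqref{eq:marking diagram} forces $e(L)\perp x$, so $[x]\in\cD(\Pi)$. If $[x]\in H_\delta$ for some $\delta\in\Delta(\Pi)$, then $l^{-1}(\delta)$ is an integral $(1,1)$-class, hence lies in $\Pic(X)$ by the Lefschetz theorem, and $l^{-1}(\delta)\perp j(L)$ by~\eqref{eq:marking diagram}. But Proposition~\ref{prop:small cones and polarizations}, applied with the primitive embedding $l|_{\Pic(X)}\colon\Pic(X)\hookrightarrow\Lambda$ and the cone $\mathcal{a}_0$, gives $j(\cA)\subseteq\Amp(X)$; choosing an integral class $a\in j(\cA)\cap\Pic(X)$, Riemann--Roch applied to the $(-2)$-class $l^{-1}(\delta)$ shows $a\cdot l^{-1}(\delta)\neq 0$, a contradiction. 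Hence $\rho^e_{(L,\cA)}$ factors through $\cD(\Pi)^{\circ}$.

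\emph{Bijectivity.} For surjectivity, let $[x]\in\cD(\Pi)^{\circ}$; surjectivity of the period map for marked K3 surfaces yields a marked K3 surface $(X,l)$ with period $[x]$. Set $j_0:=l^{-1}\circ e$; since $e(L)\perp x,\bar x$, the Lefschetz theorem gives $j_0(L)\subseteq\Pic(X)$, and $j_0$ is primitive. Because $[x]$ misses every $H_\delta$ with $\delta\in\Delta(\Pi)$, no $(-2)$-class of $\Pic(X)$ is orthogonal to $j_0(L)$; arguing as in the proof of Proposition~\ref{prop:small cones and polarizations} (a lattice squeezed between two hyperbolic lattices is hyperbolic), each $\delta\in\Delta(X)$ satisfies $l(\delta)\in\Delta^e(\Lambda)$, so $\mathcal{a}_0$ avoids $e^{-1}(l(\delta)^{\perp})=j_0^{-1}(\delta^{\perp})$. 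Thus $j_0(\mathcal{a}_0)$ is a connected subset of the positive cone of $X$ meeting no $(-2)$-wall, hence lies in one ample cone of $\Pic(X)$; since $\pm W(X)$ acts simply transitively on ample cones, some $g\in\pm W(X)$ carries that cone onto $\Amp(X)$. Extending $g$ to a Hodge isometry $G$ of $\rH^2(X,\Z)$ — a product of $(-2)$-reflections, possibly composed with $-\id$, each fixing $\rH^{2,0}(X)$ — and replacing $l$ by $l\circ G^{-1}$ produces a point $(X,\,g\circ j_0,\,l\circ G^{-1})$ of $\widetilde{\rP}^e_{(L,\cA)}$ still with period $[x]$. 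For injectivity, suppose $(X,j,l)$ and $(X',j',l')$ have equal period; then $g:=(l')^{-1}\circ l$ is a Hodge isometry $\rH^2(X,\Z)\iso\rH^2(X',\Z)$ (it carries $\rH^{2,0}$ to $\rH^{2,0}$, as the periods agree) with $g\circ j=j'$ by~\eqref{eq:marking diagram}, so it carries $\Pic(X)$ onto $\Pic(X')$; since $g(j(\cA))=j'(\cA)$ is a nonempty subset of both chambers $g(\Amp(X))$ and $\Amp(X')$ of $X'$, it carries $\Amp(X)$ onto $\Amp(X')$. By the strong (global) Torelli theorem, $g$ is induced by a unique isomorphism $X\iso X'$, which respects markings and polarizations; hence $\rho^e_{(L,\cA)}$ is a bijection.

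\emph{From bijection to isomorphism, and the main obstacle.} To upgrade the bijection to an isomorphism of complex manifolds I would invoke infinitesimal Torelli: the derivative of $\rho^e_{(L,\cA)}$ at $(X,j,l)$ is the cup-product pairing identifying the tangent space of $\widetilde{\rP}^e_{(L,\cA)}$ — the subspace of $\rH^1(X,T_X)$ annihilating $c_1(j(L))$, of dimension $20-r$ by the computation in the proof of Theorem~\ref{thm:smoothness and dimension} — with the tangent space of $\cD(\Pi)$, so $\rho^e_{(L,\cA)}$ is a local isomorphism; a bijective local isomorphism is an isomorphism. The main obstacle is the surjectivity step: converting an abstract marked K3 surface with the prescribed period into a point of $\widetilde{\rP}^e_{(L,\cA)}$. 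This is exactly where the $e$-small ample cone hypothesis is indispensable (it is what forces $j_0(\mathcal{a}_0)$ to avoid every $(-2)$-wall), and where one must be careful to lift the chamber-fixing element of $\pm W(\Pic(X))$ to a Hodge isometry of the whole K3 lattice before adjusting the marking, rather than altering the surface.
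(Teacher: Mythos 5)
Your proposal is correct, but it takes a different (and much more self-contained) route than the paper. The paper's proof is essentially a two-step reduction: first it observes, via Proposition~\ref{prop:small cones and polarizations}, that $\iota_{(\cA,\cA')}$ identifies $\widetilde{\rP}^e_{(L,\cA)}$ with $\widetilde{\rP}^e_{(L,\cA')}$ for the ambient $e$-small ample cone $\cA'$, and then it cites \cite{AE25}*{Theorem~4.7} for the case $\cA=\cA'$. You instead reconstruct the content of that cited theorem: the classical surjectivity of the period map, the strong and local Torelli theorems, and Proposition~\ref{prop:small cones and polarizations} used twice --- once to show periods avoid the walls $H_\delta$ (via Riemann--Roch against an integral ample class in $j(\cA)$), and once, in the form of the ``squeezed hyperbolic lattice'' argument, to show that $j_0(\mathcal{a}_0)$ misses every $(-2)$-wall of $\Pic(X)$ so that a single element of $\pm W(X)$ corrects the polarization. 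This buys transparency about exactly where the small-cone hypothesis is indispensable (your surjectivity step), at the cost of length; the paper's citation buys brevity. Your argument is sound, including the genuinely delicate point of lifting the chamber-correcting element of $\pm W(\Pic(X))$ to a Hodge isometry $G$ of $\rH^2(X,\Z)$ and adjusting the marking to $l\circ G^{-1}$ rather than the surface. Two small points to tighten: $-\id$ does not \emph{fix} $\rH^{2,0}(X)$ but only preserves the line (which is all the period needs); and the final upgrade from a bijective local biholomorphism on $\C$-points to an isomorphism of the moduli functor should be phrased as an isomorphism of the associated analytic spaces (as in \cite{AE25}), which your local Torelli step does supply since $\widetilde{\rP}^e_{(L,\cA)}$ is a smooth fine moduli space.
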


\begin{proof}
    It follows from Proposition \ref{prop:small cones and polarizations} that if $\cA$ is a nonempty subset of an $e$-small ample cone say $\cA'$, then $\iota_{(\cA,\cA')}$ defines an isomorphism of stacks $\widetilde{\rP}^e_{(L,\cA)}\simeq\widetilde{\rP}^e_{(L,\cA')}$. 
    Thus, we may assume that $\cA$ is itself an $e$-small ample cone. This follows from \cite{AE25}*{Theorem~4.7}.
\end{proof}

We now describe how to obtain a period morphism for lattice-polarized K3 surfaces without a marking by the K3 lattice. 
The group $\widetilde{\rO}(\Pi)$ acts on the moduli space $\widetilde{\rP}^e_{(L,\cA)}$ as described above. 
It also acts naturally on the $L$-polarized period domain $\cD(\Pi)$ and on the open subset $\cD(\Pi)^{\circ}$. 
The period morphism is equivariant with respect to these actions. 
Incorporating the isomorphism~\eqref{eq:varphi bar} we obtain a morphism
\begin{equation}\label{eq:bar period morphism}
    \overline{\rho}=\overline{\rho}^e_{(L,\cA)}:\cP^{\epsilon}_{(L,\cA)}\to\left[\cD(\Pi)/\widetilde{\rO}(\Pi)\right],
\end{equation}
where $\epsilon\subset\Emb(L,\Lambda)$ is the $\rO(\Lambda)$-orbit of $e$. 
Explicitly, if $(f:X\to T,j)$ is a family of primitively $(L,\cA)$-polarized K3 surfaces of type $\epsilon$ over a $\C$-scheme $T$, then the morphism $T\to\left[\cD(\Pi)/\widetilde{\rO}(\Pi)\right]$ resulting from $\overline{\rho}^e_{(L,\cA)}$ is obtained as follows. 
Let
\[
    \widetilde{T}:=\mathcal{Isom}'_T(\rR^2f_*\Z,\underline{\Lambda}_T)
\]
denote the $T$-sheaf of marking that are compatible with $j$ and $e$ (in the sense that the diagram~\eqref{eq:marking and polarization square} commutes). 
This is a left $\rO_e(\Lambda)\simeq\widetilde{\rO}(\Pi)$-torsor over $T$. 
Let $\widetilde{X}=X\times_T\widetilde{T}$ be the pullback of $X$ to $\widetilde{T}$ and let $\widetilde{f}:\widetilde{X}\to\widetilde{T}$ be the base change of $f$. 
Then $\widetilde{X}$ is equipped with a canonical marking say $l:\rR^2\widetilde{f}_*\Z\iso\underline{\Lambda}_{\widetilde{T}}$. 
Taking the periods of $\widetilde{X}$ gives a morphism $\widetilde{T}\to\cD(\Pi)$, which fits into a Cartesian diagram
\[
    \begin{tikzcd}
        \widetilde{T}\arrow{d}\arrow{r}&\cD(\Pi)\arrow{d}\\
        T\arrow{r}&\left[\cD(\Pi)/\widetilde{\rO}(\Pi)\right]
    \end{tikzcd}
\]
where both vertical arrows are the quotient maps for the respective $\widetilde{\rO}(\Pi)$-actions. The following is essentially the same as \cite{AE25}*{Corollary~4.20}.

\begin{theorem}\label{thm:torelli, unmarked version}
    Let $e\in\Emb(L,\Lambda)$ be a primitive embedding with $\rO(\Lambda)$-orbit $\epsilon$. 
    If $\cA\subset L_{\R}$ is a subset that is contained in an $e$-small ample cone (e.g., a very small ample cone) then the period morphism~\eqref{eq:bar period morphism} induces an isomorphism
    \[
        \overline{\rho}^e_{(L,\cA)}:\cP^{\epsilon}_{(L,\cA)}\iso[\cD(\Pi)^{\circ}/\widetilde{\rO}(\Pi)].
    \]
\end{theorem}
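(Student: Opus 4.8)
The plan is to deduce the unmarked Torelli statement from the marked one (Theorem~\ref{thm:torelli, marked version}) by descent along the $\widetilde{\rO}(\Pi)$-action, exactly paralleling the passage from~\eqref{eq:period morphism, L polarized} to~\eqref{eq:bar period morphism}. First I would invoke Theorem~\ref{thm:torelli, marked version} (which applies because $\cA$ is contained in an $e$-small ample cone) to get that the marked period morphism $\rho^e_{(L,\cA)}\colon\widetilde{\rP}^e_{(L,\cA)}\iso\cD(\Pi)^{\circ}$ is an isomorphism.

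Next I would check that $\rho^e_{(L,\cA)}$ is $\widetilde{\rO}(\Pi)$-equivariant. Here $\widetilde{\rO}(\Pi)\simeq\rO_e(\Lambda)$ acts on the source by changes of marking, $(X,j,l)\cdot g=(X,j,g\circ l)$, and on the target via its natural linear action on $\Pi_{\C}$; the latter preserves $\cD(\Pi)^{\circ}$ because $\widetilde{\rO}(\Pi)$ permutes $\Delta(\Pi)$ and hence the hyperplanes $H_{\delta}$. Equivariance is essentially formal: the period of $(X,j,l)$ over $T$ is the line sub-bundle $(l\otimes\C)(f_*\Omega^2_{X/T})$ of $\underline{\Lambda}_T\otimes\C$, and replacing $l$ by $g\circ l$ replaces this line by its image under $g$. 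This is the one spot requiring an (easy) unwinding of definitions.

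Granting equivariance, the isomorphism $\rho^e_{(L,\cA)}$ descends to an isomorphism of quotient stacks $[\widetilde{\rP}^e_{(L,\cA)}/\widetilde{\rO}(\Pi)]\iso[\cD(\Pi)^{\circ}/\widetilde{\rO}(\Pi)]$. Precomposing with the inverse of the isomorphism~\eqref{eq:varphi bar} gives an isomorphism $\cP^{\epsilon}_{(L,\cA)}\iso[\cD(\Pi)^{\circ}/\widetilde{\rO}(\Pi)]$, and the last step is to identify this composite with $\overline{\rho}^e_{(L,\cA)}$. For that I would compare the two explicit descriptions: the one defining $\overline{\rho}^e_{(L,\cA)}$ just above the statement, via the marking torsor $\widetilde{T}=\mathcal{Isom}'_T(\rR^2f_*\Z,\underline{\Lambda}_T)$ and the Cartesian square over $T$, against the descent datum produced above. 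These manifestly agree.

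I do not anticipate a real obstacle: all the geometric input is concentrated in Theorem~\ref{thm:torelli, marked version} (itself reduced in the excerpt to \cite{AE25}*{Theorem~4.7}), and the remainder is the formal calculus of equivariant morphisms and quotient stacks, mirroring \cite{AE25}*{Corollary~4.20}. If anything deserves care, it is the standing convention that $[\cD(\Pi)^{\circ}/\widetilde{\rO}(\Pi)]$ is being regarded as an algebraic (rather than merely analytic) stack, as arranged in~\S\ref{sec:complex moduli spaces}, so that the claimed isomorphism is an isomorphism of algebraic stacks over $\C$.
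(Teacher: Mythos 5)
Your proposal is correct and follows essentially the same route as the paper: the paper also reduces to Theorem~\ref{thm:torelli, marked version} via the $\widetilde{\rO}(\Pi)$-equivariance of $\rho^e_{(L,\cA)}$ and the identification~\eqref{eq:varphi bar}, phrased there as the observation that the square relating $\rho$ and $\overline{\rho}$ is Cartesian with faithfully flat quotient map $\pi$, so the isomorphism property descends. Your "descend the equivariant isomorphism to quotient stacks" step is the same formal argument in slightly different clothing.
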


\begin{proof}
    Write $\varphi=\varphi^e_{(L,\cA)}$, $\rho=\rho^e_{(L,\cA)}$, and $\overline{\rho}=\overline{\rho}^e_{(L,\cA)}$. 
    Consider the commutative diagram
    \[
        \begin{tikzcd}
            \widetilde{\rP}^e_{(L,\cA)}\arrow{r}{\rho}\arrow{d}[swap]{\varphi}&\cD(\Pi)\arrow{d}{\pi}\\
            \cP^{\epsilon}_{(L,\cA)}\arrow{r}{\overline{\rho}}&\left[\cD(\Pi)/\widetilde{\rO}(\Pi)\right]
        \end{tikzcd}
    \]
    where $\pi$ is the quotient map. 
    The period morphism $\rho$ is equivariant with respect to the $\widetilde{\rO}(\Pi)$-actions, and the map $\varphi$ identifies $\cP^{\epsilon}_{(L,\cA)}$ with the quotient of $\widetilde{\rP}^e_{(L,\cA)}$ by the $\widetilde{\rO}(\Pi)$-action. 
    It follows formally that the square is Cartesian. 
    The result follows from Theorem \ref{thm:torelli, marked version} and the fact that $\pi$ is faithfully flat.
\end{proof}

\subsection{Lattice-polarized twisted K3 surfaces over the complex numbers}

We now discuss lattice-polarized twisted K3 surfaces over the complex numbers. 
We work over $\C$ throughout this section. 
Fix a positive integer $n$.

\begin{definition}
    For an $\rO(\Lambda)$-orbit of primitive embeddings $\epsilon\subset\Emb(L,\Lambda)$ we let $\cP^{[n],\epsilon}_{(L,\cA)}$ denote the full substack of $\cP_{(L,\cA)}^{[n]}$ whose objects over a $\C$-scheme $T$ are tuples $(f:X\to T,j,\alpha)$ such that each geometric fiber of $X$ has type $\epsilon$. 
    We let $\cQ^{[n],\epsilon}_{(L,\cA)}$ be the full substack of $\cQ_{(L,\cA)}^{[n]}$ consisting of objects whose underlying polarized K3 surfaces is of type $\epsilon$. 
    Equivalently, this is the quotient of $\cP^{[n],\epsilon}_{(L,\cA)}$ by the natural $L\otimes\Z/n$-action.
\end{definition}

Anticipating our applications to Brauer groups, we will focus on the stacks $\cQ^{[n],\epsilon}_{(L,\cA)}$. 
We have a disjoint union decomposition
\[
    \cQ_{(L,\cA)}^{[n]}=\bigsqcup_{\epsilon\in\Emb(L,\Lambda)/\rO(\Lambda)}\cQ^{[n],\epsilon}_{(L,\cA)}
\]
(and similarly for $\cP$).

These moduli spaces can be described in more concrete terms analogous to the description in~\cite{Brakkee}*{\S2.2} of the moduli space of complex polarized twisted K3 surfaces. 
We outline this here. 
Let $e:L\hookrightarrow\Lambda$ be a primitive embedding with $\rO(\Lambda)$-orbit $\epsilon$ and orthogonal complement $\Pi_{\epsilon}:=e(L)^{\perp}\subset\Lambda$. 
Consider the forgetful morphism
\begin{equation}\label{eq:forgetful morphism on Q}
    \cQ^{[n],\epsilon}_{(L,\cA)}\to\cP^{\epsilon}_{(L,\cA)}.
\end{equation}
Let $(X,j)$ be a primitively $L$-polarized K3 surface over $\C$ and let $l:\rH^2(X,\Z)\iso\Lambda$ be a marking of type $e$. 
Then $l$ reduces modulo $n$ to an isomorphism $\rH^2(X,\mu_n)\iso\Lambda\otimes\Z/n$, which (using that $j$ is primitive) induces an isomorphism
\[
    \rH^2(X,\mu_n)/\partial_n(j(L))\simeq(\Lambda/e(L))\otimes\Z/n.
\]
There is a similar isomorphism in families. 
This shows that each fiber of the map~\eqref{eq:forgetful morphism on Q} is isomorphic to $(\Lambda/e(L))\otimes\Z/n\simeq(\Z/n)^{\oplus 22-\rk(L)}$, and $\cQ^{[n],\epsilon}_{(L,\cA)}$ is an \'{e}tale local system over $\cP^{\epsilon}_{(L,\cA)}$. 
The above isomorphisms describe a trivialization of this local system after pullback along the $\widetilde{\rO}(\Pi_{\epsilon})$-torsor $\widetilde{\rP}^{e}_{(L,\cA)}\to\cP^{\epsilon}_{(L,\cA)}$. 
That is, we have a Cartesian diagram
\[
    \begin{tikzcd}
        \widetilde{\rP}_{(L,\cA)}^e\times(\Lambda/e(L)\otimes\Z/n)\arrow{r}{\varphi'}\arrow{d}&\cQ^{[n],\epsilon}_{(L,\cA)}\arrow{d}\\
        \widetilde{\rP}_{(L,\cA)}^e\arrow{r}{\varphi}&\cP^{\epsilon}_{(L,\cA)},
    \end{tikzcd}
\]
where the right vertical arrow is the forgetful morphism. 
The lower horizontal map $\varphi$ is the quotient of $\widetilde{\rP}^e_{(L,\cA)}$ by the $\widetilde{\rO}(\Pi_{\epsilon})$-action (in particular, it is an $\widetilde{\rO}(\Pi_{\epsilon})$-torsor). 
Thus $\widetilde{\rP}_{(L,\cA)}^e\times(\Lambda/e(L)\otimes\Z/n)$ is equipped with an action of $\widetilde{\rO}(\Pi)$, and the upper horizontal map induces an isomorphism
\begin{equation}\label{eq:quotient presentation of Q}
    \left[\widetilde{\rP}_{(L,\cA)}^e\times(\Lambda/e(L)\otimes\Z/n)/\widetilde{\rO}(\Pi_{\epsilon})\right]\iso\cQ^{[n],\epsilon}_{(L,\cA)}.
\end{equation}
    
We describe this action explicitly. 
The association $v\mapsto v.\uu$ induces an isomorphism
\[
    \Lambda/e(L)\iso\hom(\Pi_{\epsilon},\Z)=:\Pi_{\epsilon}^{\vee},
\]
hence an isomorphism
\[
    (\Lambda/e(L))\otimes\Z/n\iso\hom(\Pi_{\epsilon},\Z/n)=\Pi_{\epsilon}^{\vee}\otimes\Z/n.
\]
The group $\widetilde{\rO}(\Pi_{\epsilon})$ acts on the left on $\widetilde{\rP}^e_{(L,\cA)}$, and on $\Pi_{\epsilon}^{\vee}$ by the formula $g\cdot w=w\circ g^{-1}$. 
The morphism $\varphi'$ is the quotient of $\widetilde{\rP}_{(L,\cA)}^e\times(\Lambda/e(L)\otimes\Z/n)$ by the resulting diagonal action of $\widetilde{\rO}(\Pi_{\epsilon})$.

This quotient presentation of $\cQ^{[n],\epsilon}_{(L,\cA)}$ gives rise to a disjoint union decomposition indexed by elements of $\hom(\Pi_{\epsilon},\Z/n)/\widetilde{\rO}(\Pi_{\epsilon})$. 
More precisely, let $w\in \hom(\Pi_{\epsilon},\Z/n)$ be a functional and let $\omega\subset\hom(\Pi_{\epsilon},\Z/n)$ be its $\widetilde{\rO}(\Pi_{\epsilon})$-orbit. 
We say that an object $(X,j,\alpha)$ of $\cQ^{[n]}_{(L,\cA)}$ over $\C$ has \emph{type $(\epsilon,\omega)$} if for some (equivalently, for any) marking $l:\rH^2(X,\Z)\iso\Lambda$ the composition $l\circ\partial\circ j:L\hookrightarrow\Lambda$ is in the orbit $\epsilon$, and the image of $\alpha$ under the isomorphism
\[
    \rH^2(X,\mu_n)/\partial_n(j(L))\iso(\Lambda/e(L))\otimes\Z/n\simeq\hom(\Pi_{\epsilon},\Z/n)
\]
induced by $l$ is in the orbit $\omega$. 
We let $\cQ^{[n],(\epsilon,\omega)}_{(L,\cA)}$ denote the full substack $\cQ_{(L,\cA)}^{[n]}$ consisting of objects of type $(\epsilon,\omega)$ on every fiber. 
We have a disjoint union decomposition
\[
    \cQ^{[n],\epsilon}_{(L,\cA)}=\bigsqcup_{\omega\in\hom(\Pi_{\epsilon},\Z/n)/\widetilde{\rO}(\Pi_{\epsilon})}\cQ^{[n],(\epsilon,\omega)}_{(L,\cA)}.
\]
Furthermore, letting $\widetilde{\rO}_w(\Pi_{\epsilon})\subset\widetilde{\rO}(\Pi_{\epsilon})$ be the stabilizer of $w$, the above quotient presentation of $\cQ^{[n],\epsilon}_{(L,\cA)}$~\eqref{eq:quotient presentation of Q} restricts to an isomorphism
\[
    \left[\widetilde{\rP}^e_{(L,\cA)}/\widetilde{\rO}_w(\Pi_{\epsilon})\right]=\left[(\widetilde{\rP}^e_{(L,\cA)}\times\left\{w\right\})/\widetilde{\rO}_w(\Pi_{\epsilon})\right]\iso\cQ^{[n],(\epsilon,\omega)}_{(L,\cA)}.
\]
The period morphism~\eqref{eq:period morphism, L polarized} is equivariant with respect to the $\widetilde{\rO}(\Pi_{\epsilon})$-actions, and passing to the quotient gives a period morphism for $n$-twisted $L$-polarized K3 surfaces
\begin{equation}\label{eq:period morphism for n twisted k3s}
    \overline{\rho}^{[n]}:\cQ^{[n],(\epsilon,\omega)}_{(L,\cA)}\to\left[\cD(\Pi_{\epsilon})/\widetilde{\rO}_w(\Pi_{\epsilon})\right].
\end{equation}

This fits into the commutative diagram
\begin{equation}\label{eq:double diagram}
    \begin{tikzcd}
        \widetilde{\rP}^e_{(L,\cA)}\arrow{r}{\rho}\arrow{d}&\cD(\Pi_{\epsilon})\arrow{d}\\
        \cQ^{[n],(\epsilon,\omega)}_{(L,\cA)}\arrow{d}\arrow{r}{\overline{\rho}^{[n]}}&\left[\cD(\Pi_{\epsilon})/\widetilde{\rO}_w(\Pi_{\epsilon})\right]\arrow{d}\\
        \cP^{\epsilon}_{(L,\cA)}\arrow{r}{\overline{\rho}}&\left[\cD(\Pi_{\epsilon})/\widetilde{\rO}(\Pi_{\epsilon})\right],
    \end{tikzcd}
\end{equation}
both squares of which are Cartesian.
    
\begin{theorem}\label{thm:period morphism corollary, twisted}
    If $\cA\subset L_{\R}$ is a subset that is contained in an $e$-small ample cone of $L$ (e.g., a very small ample cone for $L$) then the period morphism~\eqref{eq:period morphism for n twisted k3s} induces an isomorphism
    \[
        \overline{\rho}^{[n]}:\cQ^{[n],(\epsilon,\omega)}_{(L,\cA)}\iso\left[\cD(\Pi_{\epsilon})^{\circ}/\widetilde{\rO}_w(\Pi_{\epsilon})\right].
    \]
\end{theorem}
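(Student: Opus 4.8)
The plan is to deduce this from the marked Torelli theorem (Theorem~\ref{thm:torelli, marked version}) together with the quotient presentations already assembled above. Recall that the quotient presentation~\eqref{eq:quotient presentation of Q} restricts to an isomorphism $[\widetilde{\rP}^e_{(L,\cA)}/\widetilde{\rO}_w(\Pi_{\epsilon})]\iso\cQ^{[n],(\epsilon,\omega)}_{(L,\cA)}$, and that the upper square of~\eqref{eq:double diagram} is Cartesian, with left vertical arrow the quotient map $\widetilde{\rP}^e_{(L,\cA)}\to[\widetilde{\rP}^e_{(L,\cA)}/\widetilde{\rO}_w(\Pi_{\epsilon})]=\cQ^{[n],(\epsilon,\omega)}_{(L,\cA)}$ and right vertical arrow the quotient map $\cD(\Pi_{\epsilon})\to[\cD(\Pi_{\epsilon})/\widetilde{\rO}_w(\Pi_{\epsilon})]$. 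Thus $\overline{\rho}^{[n]}$ is, by construction, the morphism of quotient stacks induced by the $\widetilde{\rO}_w(\Pi_{\epsilon})$-equivariant period morphism $\rho=\rho^e_{(L,\cA)}$ of~\eqref{eq:period morphism, L polarized}.

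First I would apply Theorem~\ref{thm:torelli, marked version}: since $\cA$ is nonempty and contained in an $e$-small ample cone, $\rho$ restricts to an isomorphism $\widetilde{\rP}^e_{(L,\cA)}\iso\cD(\Pi_{\epsilon})^{\circ}$, and in particular factors through the open subset $\cD(\Pi_{\epsilon})^{\circ}\subset\cD(\Pi_{\epsilon})$. This subset is stable under the full group $\widetilde{\rO}(\Pi_{\epsilon})$, hence under the subgroup $\widetilde{\rO}_w(\Pi_{\epsilon})$, because $\widetilde{\rO}(\Pi_{\epsilon})$ permutes the hyperplanes $H_\delta$ for $\delta\in\Delta(\Pi_{\epsilon})$ removed in the definition of $\cD(\Pi_{\epsilon})^{\circ}$. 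A $\widetilde{\rO}_w(\Pi_{\epsilon})$-equivariant isomorphism of spaces carrying a $\widetilde{\rO}_w(\Pi_{\epsilon})$-action induces an isomorphism of the associated quotient stacks, so $\rho$ yields
\[
    [\widetilde{\rP}^e_{(L,\cA)}/\widetilde{\rO}_w(\Pi_{\epsilon})]\iso[\cD(\Pi_{\epsilon})^{\circ}/\widetilde{\rO}_w(\Pi_{\epsilon})],
\]
and under the identification $[\widetilde{\rP}^e_{(L,\cA)}/\widetilde{\rO}_w(\Pi_{\epsilon})]=\cQ^{[n],(\epsilon,\omega)}_{(L,\cA)}$ this is exactly $\overline{\rho}^{[n]}$. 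Equivalently, one can argue by descent: after replacing $\cD(\Pi_{\epsilon})$ by $\cD(\Pi_{\epsilon})^{\circ}$, both vertical arrows of the upper square in~\eqref{eq:double diagram} are $\widetilde{\rO}_w(\Pi_{\epsilon})$-torsors, and since $\rho$ is an isomorphism onto $\cD(\Pi_{\epsilon})^{\circ}$, the morphism $\overline{\rho}^{[n]}$ becomes an isomorphism after the faithfully flat base change $\cD(\Pi_{\epsilon})^{\circ}\to[\cD(\Pi_{\epsilon})^{\circ}/\widetilde{\rO}_w(\Pi_{\epsilon})]$, hence is itself an isomorphism.

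There is no genuine geometric obstacle here beyond Theorem~\ref{thm:torelli, marked version}; the structure mirrors the proof of Theorem~\ref{thm:torelli, unmarked version} in the untwisted case. The one point requiring care is the bookkeeping that matches $\overline{\rho}^{[n]}$ with the descent of $\rho$ — tracing through the Cartesian squares of~\eqref{eq:double diagram}, the presentation~\eqref{eq:quotient presentation of Q}, and the stabilizer $\widetilde{\rO}_w(\Pi_{\epsilon})$ — together with the verification (via Proposition~\ref{prop:small cones and polarizations}, as in Theorem~\ref{thm:torelli, marked version}) that the hypothesis on $\cA$ forces the image of $\rho$ into $\cD(\Pi_{\epsilon})^{\circ}$ rather than merely $\cD(\Pi_{\epsilon})$, so that the target of $\overline{\rho}^{[n]}$ is indeed the open substack $[\cD(\Pi_{\epsilon})^{\circ}/\widetilde{\rO}_w(\Pi_{\epsilon})]$.
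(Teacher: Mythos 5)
Your proposal is correct and follows exactly the paper's route: the paper's proof is the one-line observation that the result follows from Theorem~\ref{thm:torelli, marked version} combined with the Cartesian diagram~\eqref{eq:double diagram}, and your write-up simply fills in the descent argument (mirroring the proof of Theorem~\ref{thm:torelli, unmarked version}) that this one-liner leaves implicit.
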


\begin{proof}
    This follows from Theorem \ref{thm:torelli, marked version} combined with the diagram~\eqref{eq:double diagram}.
\end{proof}

We will mainly be interested in the stack $\cQ_{(L,\cA)}^{(n)}\subset\cQ_{(L,\cA)}^{[n]}$ of tuples $(f:X\to T,j,\alpha)$ for which each $\alpha_t$ has exact order $n$
in $\rH^2(X,\mu_n)/\partial_n(j(L))$ (Definition~\ref{def_stackswithBrauerclasses}).
It has a disjoint union decomposition
\[
    \cQ^{(n)}_{(L,\cA)}=\bigsqcup_{\epsilon,\omega}\cQ^{[n],(\epsilon,\omega)}_{(L,\cA)}.
\]
where $\omega$ runs over all classes in $\hom(\Pi_{\epsilon},\Z/n)/\widetilde{\rO}(\Pi_{\epsilon})$ of order exactly $n$.
From the results in \S\ref{subsec_twiststoBrauer} it follows that the stacks $\cQ^{[n]}_{(L,\cA)}$ and $\cQ^{(n)}_{(L,\cA)}$ have coarse moduli spaces.

\begin{definition}
    Denote by $\mathrm{Q}_{(L,\mathcal{A})}^{[n]}$ and $\mathrm{Q}_{(L,\mathcal{A})}^{(n)}$ the coarse moduli spaces of the stacks $\mathcal{Q}_{(L,\mathcal{A})}^{[n]}$ and $\mathcal{Q}_{(L,\mathcal{A})}^{(n)}$, respectively.
\end{definition}

\begin{corollary}
    \label{cor: Complex Period Space Components Comparison}
   We have isomorphisms
    \[
        \begin{split}
            \mathrm{Q}_{(L,\mathcal{A}),\C}^{[n]}&\cong \bigsqcup_{\substack{\epsilon\in\Emb(L,\Lambda),\\
            \omega\in\hom(\Pi_{\epsilon},\Z/n)/\widetilde{\rO}(\Pi_{\epsilon})}} \cD(\Pi_{\epsilon})^{\circ}/\widetilde{\rO}_w(\Pi_{\epsilon})\\
            \mathrm{Q}_{(L,\mathcal{A}),\C}^{(n)}&\cong \bigsqcup_{\substack{\epsilon\in\Emb(L,\Lambda),\\
            \omega\in\hom(\Pi_{\epsilon},\Z/n)/\widetilde{\rO}(\Pi_{\epsilon}),\\
            \ord(\omega)=n}} \cD(\Pi_{\epsilon})^{\circ}/\widetilde{\rO}_w(\Pi_{\epsilon})\rtag{\qed}
        \end{split}
    \]
\end{corollary}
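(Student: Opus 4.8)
The plan is to obtain the statement by passing to coarse moduli spaces in the stacky Torelli isomorphism of Theorem~\ref{thm:period morphism corollary, twisted}. First I would invoke the disjoint union decomposition into open and closed substacks established above, namely $\cQ^{[n]}_{(L,\cA)}=\bigsqcup_{\epsilon,\omega}\cQ^{[n],(\epsilon,\omega)}_{(L,\cA)}$, where $\epsilon$ runs over the (finitely many) $\rO(\Lambda)$-orbits of primitive embeddings $L\hookrightarrow\Lambda$ and, for each such $\epsilon$, the index $\omega$ runs over the $\widetilde{\rO}(\Pi_\epsilon)$-orbits in $\hom(\Pi_\epsilon,\Z/n)$; for $\cQ^{(n)}_{(L,\cA)}$ one keeps only those $\omega$ with $\ord(\omega)=n$. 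The formation of coarse moduli spaces commutes with disjoint unions of stacks, and an isomorphism of stacks induces an isomorphism of coarse spaces, so Theorem~\ref{thm:period morphism corollary, twisted} immediately gives $\mathrm{Q}^{[n]}_{(L,\cA),\C}\cong\bigsqcup_{\epsilon,\omega}\mathrm{Y}_{\epsilon,\omega}$, where $\mathrm{Y}_{\epsilon,\omega}$ denotes the coarse moduli space of the quotient stack $\left[\cD(\Pi_\epsilon)^{\circ}/\widetilde{\rO}_w(\Pi_\epsilon)\right]$, and likewise for $\mathrm{Q}^{(n)}_{(L,\cA),\C}$ with the restricted index set. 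It thus remains to identify $\mathrm{Y}_{\epsilon,\omega}$ with the orthogonal modular variety $\cD(\Pi_\epsilon)^{\circ}/\widetilde{\rO}_w(\Pi_\epsilon)$.

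For this identification I would argue that the action of $\widetilde{\rO}_w(\Pi_\epsilon)$ on $\cD(\Pi_\epsilon)^{\circ}$ is properly discontinuous with finite stabilizers. Indeed $\widetilde{\rO}_w(\Pi_\epsilon)$ is a finite-index subgroup of $\widetilde{\rO}(\Pi_\epsilon)$, hence an arithmetic subgroup of $\rO(\Pi_{\epsilon,\R})$; the lattice $\Pi_\epsilon$ has signature $(2,20-\rk(L))$, so $\cD(\Pi_\epsilon)$ is (a union of two copies of) a Hermitian symmetric domain of type~IV on which every arithmetic subgroup acts properly discontinuously with finite stabilizers, and the same holds on the $\widetilde{\rO}_w(\Pi_\epsilon)$-invariant open subset $\cD(\Pi_\epsilon)^{\circ}$ obtained by deleting the locally finite hyperplane arrangement $\bigcup_{\delta\in\Delta(\Pi_\epsilon)}H_\delta$. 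Consequently $\left[\cD(\Pi_\epsilon)^{\circ}/\widetilde{\rO}_w(\Pi_\epsilon)\right]$ is a separated Deligne--Mumford analytic stack whose coarse space is the quotient $\cD(\Pi_\epsilon)^{\circ}/\widetilde{\rO}_w(\Pi_\epsilon)$; the latter is a quasi-projective variety, being the complement in the Baily--Borel quasi-projective variety $\cD(\Pi_\epsilon)/\widetilde{\rO}_w(\Pi_\epsilon)$ of the Zariski-closed image of $\bigcup_{\delta}H_\delta$ (a finite union of special divisors). Finally, since the coarse space of $\cQ^{[n],(\epsilon,\omega)}_{(L,\cA)}$ is an algebraic space of finite type over $\C$ and its analytification agrees with the analytic coarse space just described, one concludes $\mathrm{Y}_{\epsilon,\omega}\cong\cD(\Pi_\epsilon)^{\circ}/\widetilde{\rO}_w(\Pi_\epsilon)$, and assembling over all $(\epsilon,\omega)$ (respectively over those with $\ord(\omega)=n$) yields the two displayed isomorphisms.

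The main obstacle is precisely this last identification: checking that taking the coarse moduli space of the quotient stack $\left[\cD(\Pi_\epsilon)^{\circ}/\widetilde{\rO}_w(\Pi_\epsilon)\right]$ returns the honest quotient variety $\cD(\Pi_\epsilon)^{\circ}/\widetilde{\rO}_w(\Pi_\epsilon)$ rather than merely the analytic quotient space. Everything else is a formal consequence of Theorem~\ref{thm:period morphism corollary, twisted} together with the compatibility of coarse spaces with disjoint unions. Since the orthogonal modular varieties $\cD(\Pi_\epsilon)^{\circ}/\widetilde{\rO}_w(\Pi_\epsilon)$ are defined in exactly this way in \cite{AE25}, it may be cleanest to isolate the needed statement---proper discontinuity of the action and algebraicity of the quotient---as a short lemma, or to cite the parallel discussion there.
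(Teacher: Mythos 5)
Your proposal is correct and follows exactly the route the paper intends: the paper states this corollary with no written proof, treating it as an immediate consequence of Theorem~\ref{thm:period morphism corollary, twisted}, the disjoint union decompositions of $\cQ^{[n]}_{(L,\cA)}$ and $\cQ^{(n)}_{(L,\cA)}$ indexed by $(\epsilon,\omega)$, and the standard identification of the coarse space of $\left[\cD(\Pi_{\epsilon})^{\circ}/\widetilde{\rO}_w(\Pi_{\epsilon})\right]$ with the quotient variety. The extra care you take with proper discontinuity and algebraicity of the quotient is exactly the content the paper delegates to the definition of the orthogonal modular varieties in \S\ref{sec:complex moduli spaces} and \cite{AE25}, so nothing is missing.
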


\section{Polarizations of Rank 19}
\label{S:Rank19Polariations}

In this section, we specialize to the case where $L$ is an even hyperbolic lattice of rank $19$ that can be embedded in the K3 lattice $\Lambda$. 
We fix a positive integer $n$ and a very small ample cone $\mathcal{a} \subset L_{\R}$. 
Our goal is to study the complex coarse moduli spaces $\mathrm{Q}_{(L,\mathcal{a}),\C}^{(n)}$; they are $1$-dimensional finite type $\C$-schemes whose connected components are smooth quasi-projective curves (Proposition~\ref{Prop_ComponentsSmooth}). 
Our goal is to prove that if $n=\ell^m$ is a prime power, then for any $g\geq 0$ the components of $\mathrm{Q}_{(L,\mathcal{a}),\C}^{(\ell^m)}$ have genus $> g$ for all sufficiently large $m$. 
We use this to deduce statements about points of bounded degree over number fields for $\mathrm{Q}_{(L,\mathcal{a})}^{(\ell^m)}$; c.f., Corollary~\ref{cor:finitely many degree d points}.

\subsection{Descending chains within (special) orthogonal groups}

By Corollary~\ref{cor: Complex Period Space Components Comparison}, when $\rk(L)=19$, the components of $\mathrm{Q}_{(L,\mathcal{a}),\C}^{(n)}$ are Zariski open subsets of varieties $\cD(\Pi)/G$, where
\[
    \cD(\Pi)=\{[x]\in\mathbb{P}(\Pi\otimes\C)\mid (x.x)=0,\; (x.\overline{x})>0\}
\]
is the period domain of an even indefinite lattice $\Pi$ of rank 3,
and $G$ a finite index subgroup of the orthogonal group $\tO(\Pi)$.
This period domain has two connected components, which we label $\cD^+(\Pi)$ and $\cD^-(\Pi)$; they are preserved by the action of $\tO^+(\Pi)$. 
If $G$ contains an element that interchanges these components, then the quotient $\cD(\Pi)/G$ has one connected component; otherwise, it has two.
Either way, writing $G^+=G\cap\tO^+(\Pi)$, the components of $\cD(\Pi)/G$ are isomorphic to $\cD^+(\Pi)/G^+$. 
Hence, we need only consider quotients of the form $\cD^+(\Pi)/G^+$.

\medskip

For the remainder of \S\ref{S:Rank19Polariations}, we fix a primitive embedding $e \in \Emb(L,\Lambda)$ and let 
\[
    \Pi:=e(L)^{\perp}\subset \Lambda.
\]
Let $w\in\hom(\Pi,\Z/n\Z)$ be an element of order $n$, and let $\kernel(w)\subset \Pi$ be its kernel, a full-rank sublattice of $\Pi$ of index $n$. 
Write
\[
    \mathrm{O}(\Pi,\kernel(w)) := \{ f \in \mathrm{O}(\Pi) \mid f \text{ preserves }\kernel(w)\},
\]
and let $\widetilde{\mathrm{O}}^+(\Pi,\kernel(w))=\mathrm{O}(\Pi,\kernel(w))\cap\widetilde{\mathrm{O}}^+(\Pi)$.
Note that if $f\in \widetilde{\mathrm{O}}^+(\Pi)$, then $w\circ f=w$ if and only if for all $y\in \Pi$, we have $w(f(y)-y)=0$, that is, $f(y)-y\in\kernel(w)$. 
This shows:

\begin{lemma}
    The stabilizer subgroup $\widetilde{\mathrm{O}}^+_w(\Pi)$ of $w$ in $\widetilde{\mathrm{O}}^+(\Pi)$ equals
    \[
        \{f\in \widetilde{\mathrm{O}}^+(\Pi,\kernel(w))\mid f\textrm{ induces the identity on }\Pi/\kernel(w)\}.\eqno\qed
    \]
\end{lemma}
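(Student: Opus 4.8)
The plan is to prove the two inclusions by directly unwinding the definitions, building on the computation stated in the paragraph immediately preceding the lemma. Recall that $\widetilde{\mathrm{O}}^+(\Pi)$ acts on $\hom(\Pi,\Z/n\Z)$ via $g\cdot w=w\circ g^{-1}$, so an element $f\in\widetilde{\mathrm{O}}^+(\Pi)$ lies in the stabilizer $\widetilde{\mathrm{O}}^+_w(\Pi)$ if and only if $w\circ f^{-1}=w$; since $f^{-1}$ again lies in $\widetilde{\mathrm{O}}^+(\Pi)$, this condition is equivalent to $w\circ f=w$.

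First I would handle the inclusion $\subseteq$. Suppose $f\in\widetilde{\mathrm{O}}^+(\Pi)$ satisfies $w\circ f=w$. Then for every $y\in\Pi$ we have $w(f(y)-y)=w(f(y))-w(y)=0$, so $f(y)-y\in\kernel(w)$; this is precisely the assertion that the map induced by $f$ on $\Pi/\kernel(w)$ is the identity. It then remains to check that such an $f$ automatically preserves $\kernel(w)$, so that it genuinely lies in $\mathrm{O}(\Pi,\kernel(w))$: if $y\in\kernel(w)$ then $f(y)=(f(y)-y)+y\in\kernel(w)$, whence $f(\kernel(w))\subseteq\kernel(w)$; and since $f$ is a $\Z$-linear automorphism of the finitely generated free abelian group $\Pi$ it preserves indices of subgroups, so $[\Pi:f(\kernel(w))]=[\Pi:\kernel(w)]=n$, forcing $f(\kernel(w))=\kernel(w)$. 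Hence $f\in\widetilde{\mathrm{O}}^+(\Pi,\kernel(w))$ and induces the identity on $\Pi/\kernel(w)$.

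The reverse inclusion $\supseteq$ is then immediate: if $f\in\widetilde{\mathrm{O}}^+(\Pi,\kernel(w))$ induces the identity on $\Pi/\kernel(w)$, then $f(y)-y\in\kernel(w)$ for every $y\in\Pi$, so $w(f(y))=w(y)$, i.e. $w\circ f=w$, and therefore $f$ stabilizes $w$. Combining the two inclusions gives the claimed equality.

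I do not anticipate any genuine obstacle: the argument is a direct unwinding of the definitions, mirroring the remark preceding the statement. The one point worth spelling out is the index comparison in the first inclusion, which shows that membership in $\mathrm{O}(\Pi,\kernel(w))$ is a \emph{consequence} of the condition $w\circ f=w$ rather than an extra hypothesis layered on top of it — so the two descriptions of $\widetilde{\mathrm{O}}^+_w(\Pi)$ coincide exactly, with no loss.
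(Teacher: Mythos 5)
Your proof is correct and follows the same route as the paper, whose entire argument is the observation in the preceding paragraph that $w\circ f=w$ is equivalent to $f(y)-y\in\kernel(w)$ for all $y\in\Pi$. The only addition is that you explicitly verify $f$ automatically preserves $\kernel(w)$ (via the index argument), a point the paper leaves implicit; this is a worthwhile clarification but not a different approach.
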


The subgroup $\widetilde{\mathrm{O}}^+_w(\Pi)$ has finite index in $\widetilde{\mathrm{O}}^+(\Pi,\kernel(w))$. 
Hence, to prove that the genus of $\cD^+(\Pi)/\widetilde{\mathrm{O}}^+_w(\Pi)$ grows with the order of $w$, it suffices to show the same statement for the curve $\cD^+(\Pi)/\tO^+(\Pi,\kernel(w))$. 
We thus turn our attention to describing $\tO^+(\Pi,\kernel(w))$ concretely.

Any element $w\in \hom(\Pi,\Z/n\Z)$ of order $n$ is of the form $(v.-)$ for some $v\in \Pi^{\vee}\subset \Pi_\mathbb{Q}$, unique up to $n\Pi^{\vee}$.
We may assume that $v$ is primitive: if it is divisible by $t \in \Z_{>0}$, then $t$ is coprime to~$n$ because $\ord(w)=n$. 
We can add to $v$ an element $nv_0\in n\Pi^{\vee}$, with $v_0\in \Pi^{\vee}$ primitive and not a multiple of $v$, to find a primitive element $v'=v+nv_0\in \Pi^{\vee}$ with $w=(v'.-)$.

The map $(v.-)\colon \Pi\to \Z$ is surjective: If $x_1$, $x_2$, $x_3$ is a basis of $\Pi$, then $(v.x_1)$, $(v.x_2)$, and $(v,x_3)$ are coprime, hence there exists an integral linear combination $x$ of $x_1$, $x_2$, $x_3$ such that $(v.x)=1$. 
Let $V$ be the kernel of $(v.-)$, so $\Pi=\langle V,x\rangle$.
Note that this ``decomposition'' $\Pi=\langle V,x\rangle$ is not an orthogonal direct sum in general.
Define
\[\Pi_{n}:=\kernel(w)=\langle V,nx\rangle.\]
The group $\tO^+(\Pi,\kernel(w))=\tO^+(\Pi,\Pi_{n})$ is the subgroup of $\tO^+(\Pi)$ consisting of elements preserving $V$ modulo $n$.

\medskip

Assume from now on that $n$ is a prime power $\ell^m$. 
We want to show that the index of $\tO^+(\Pi,\Pi_{\ell^m})$ in $\tO^+(\Pi)$ gets arbitrarily large when $m$ grows.
Consider the chain of inclusions of finite index subgroups
\[
    \dots\subset \tO^+(\Pi,\Pi_{\ell^m})\subset\tO^+(\Pi,\Pi_{\ell^{m-1}})\subset\dots\subset\tO^+(\Pi,\Pi_{\ell})\subset\tO^+(\Pi).
\]
We claim that for each $m$ there is an $m'$, independent of $w$ and the decomposition $\Pi=\langle V,x\rangle$, such that $\tO^+(\Pi,\Pi_{\ell^{m+m'}})$ is a strict subgroup of $\tO^+(\Pi,\Pi_{\ell^{m}})$.
As $\tO^+(\Pi,\Pi_{\ell^m})$ has index at most 2 in $\tO(\Pi,\Pi_{\ell^m})$ for each $m$, it is equivalent to show this for $\tO(\Pi,\Pi_{\ell^m})$ instead of $\tO^+(\Pi,\Pi_{\ell^m})$.

Let $N:=v_{\ell}(\disc(\Pi))$; then $v_{\ell}(\disc(\Pi_{\ell^m}))=2m+N$. 
Set
\[
    N' = 
    \begin{cases}
        2(2m + N) + 1 & \text{if $\ell$ is odd}, \\
        \max\{3(2m + N) + 4,4(2m + N)+1\} & \text{if }\ell = 2.
    \end{cases}
\]
By Theorem~\ref{thm_IndexReductionO}, for any $f \geq N'$, the image of the map
\[
    \tO(\Pi_{\ell^m})\to \tO(\Pi_{\ell^m}/\ell^f\Pi_{\ell^m})
\]
has index bounded by
\[
    B_{\ell,2m+N} := 
    \begin{cases}
        24(2m+N)+4 & \text{if $\ell$ is odd}, \\
        20(2m+N)+48 & \text{if }\ell = 2.
    \end{cases}
\]
Note that $B_{\ell,2m+N}$ depends linearly on $m$.

\begin{proposition}
\label{Prop_StrictSubgroups}
    Assume that $m$ is large enough so that $(m+1)^2>B_{\ell,2m+N}$.
    Take $f\geq N'$.
    Then $\tO(\Pi,\Pi_{p^f})$ is a strict subgroup of $\tO(\Pi,\Pi_{p^m})$. 
\end{proposition}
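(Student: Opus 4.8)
The plan is to argue by contradiction. If the conclusion fails, then $\tO(\Pi,\Pi_{\ell^f})=\tO(\Pi,\Pi_{\ell^m})$, and since $\tO(\Pi,\Pi_{\ell^f})\subseteq\tO(\Pi,\Pi_{\ell^k})\subseteq\tO(\Pi,\Pi_{\ell^m})$ for each $m\le k\le f$, all of the groups $\tO(\Pi,\Pi_{\ell^k})$ with $m\le k\le f$ coincide. I will extract from this a numerical bound that violates $(m+1)^2>B_{\ell,2m+N}$.

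First I would reformulate everything in terms of the lattice $\Pi_{\ell^m}$ itself. Restriction along $\Pi_{\ell^m}\subseteq\Pi$ is injective on $\tO(\Pi,\Pi_{\ell^m})$ and identifies it with the stabilizer in $\tO(\Pi_{\ell^m})$ of the cyclic subgroup $C:=\Pi/\Pi_{\ell^m}$ of order $\ell^m$ inside the discriminant group $D(\Pi_{\ell^m})$: an isometry of $\Pi_{\ell^m}$ extends uniquely to $\Pi_{\ell^m}^{\vee}$, and it preserves the overlattice $\Pi$ exactly when it preserves $C$. (The class $C$ is isotropic for the discriminant form, since $\ell^m q(C)=q(0)=0$ and $\ell$ is invertible modulo $2$ when $\ell$ is odd; the prime $\ell=2$ is handled similarly with a little extra care.) Under this identification $\tO(\Pi,\Pi_{\ell^f})$ becomes the subgroup of those isometries that moreover preserve the sublattice $\Pi_{\ell^f}\subseteq\Pi_{\ell^m}$, which has index $\ell^{f-m}$ with cyclic quotient. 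So the assumption reads: \emph{every isometry of $\Pi_{\ell^m}$ stabilizing $C$ also stabilizes $\Pi_{\ell^f}$.}

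Next I would pass to the reduction $A:=\Pi_{\ell^m}/\ell^f\Pi_{\ell^m}$, using $f\ge N'>v_\ell(\disc(\Pi_{\ell^m}))=2m+N$ twice. On the one hand $\ell^f\Pi_{\ell^m}\subseteq\Pi_{\ell^f}$, so preserving $\Pi_{\ell^f}$ is equivalent to preserving the subgroup $S:=\Pi_{\ell^f}/\ell^f\Pi_{\ell^m}\subseteq A$. On the other hand the kernel of the reduction $\tO(\Pi_{\ell^m})\to\tO(A)$ acts trivially on $D(\Pi_{\ell^m})$, so ``stabilizes $C$'' is a condition on the image in $\tO(A)$; writing $\overline G\subseteq\tO(A)$ for the image of $\stab_{\tO(\Pi_{\ell^m})}(C)$, the assumption reads $\overline G\subseteq\stab_{\tO(A)}(S)$. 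Here Theorem~\ref{thm_IndexReductionO} enters: the image of $\tO(\Pi_{\ell^m})$ in $\tO(A)$ has index at most $B_{\ell,2m+N}$, so $[\tO(A):\overline G]\le I\cdot B_{\ell,2m+N}$, where $I:=[\tO(\Pi_{\ell^m}):\stab_{\tO(\Pi_{\ell^m})}(C)]$ is the size of the $\tO(\Pi_{\ell^m})$-orbit of $C$ in $D(\Pi_{\ell^m})$. Consequently the $\tO(A)$-orbit of $S$ has size at most $I\cdot B_{\ell,2m+N}$.

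The contradiction then comes from showing this orbit is strictly larger — precisely, that the $\tO(A)$-orbit of $S$ has at least $(m+1)^2\cdot I$ elements. One produces these by rescaling, in the two ``$V$-directions'' and the ``$x$-direction'' of $A$, the elementary divisors that cut out $S$, and checking that for each of the $I$ conjugates of $C$ in $D(\Pi_{\ell^m})$ at least $(m+1)^2$ of the resulting sublattices are isometric to $S$ and lie in its $\tO(A)$-orbit; the two factors of $m+1$ record the $m+1$ subgroups of each of the two cyclic summands of order $\ell^m$ that occur in the discriminant form of $\Pi_{\ell^m}$ (namely $C$ itself and the cyclic quotient $D(\Pi_{\ell^m})/C^{\perp}$). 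Granting this, $\overline G\subseteq\stab_{\tO(A)}(S)$ forces $(m+1)^2\,I\le I\,B_{\ell,2m+N}$, i.e.\ $(m+1)^2\le B_{\ell,2m+N}$, contradicting the hypothesis.

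I expect this last count to be the main obstacle: the reformulations above are essentially formal, but establishing a clean lower bound of the shape ``$|\tO(A)\text{-orbit of }S|\ge (m+1)^2\cdot I$'' — that is, exhibiting enough $S$-isometric sublattices of $A$ inside a single $\tO(A)$-orbit, uniformly over all functionals $w$ of order $\ell^m$ and all decompositions $\Pi=\langle V,x\rangle$ — is where the structure of the discriminant form of $\Pi_{\ell^m}$ is genuinely used, and where the bookkeeping (especially at $\ell=2$, where $N'$ and $B_{\ell,2m+N}$ take their larger values) is most delicate.
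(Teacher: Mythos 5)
Your preparatory reductions track the paper's proof closely: the paper likewise uses Nikulin's extension criterion to identify $\tO(\Pi,\Pi_{\ell^m})$ with a subgroup of $\tO(\Pi_{\ell^m})$, reduces modulo $\ell^f$ to $A=\Pi_{\ell^m}/\ell^f\Pi_{\ell^m}$, and invokes Theorem~\ref{thm_IndexReductionO} to control the index of the image of $\tO(\Pi_{\ell^m})$ in $\tO(A)$. But the step you yourself flag as ``the main obstacle'' --- the lower bound $|\tO(A)\cdot S|\geq(m+1)^2\cdot I$ --- is the entire content of the proposition, and what you offer in its place is not a proof. Three concrete problems. First, the factor $I=[\tO(\Pi_{\ell^m}):\stab_{\tO(\Pi_{\ell^m})}(C)]$: you need it on the orbit side only because your upper bound $[\tO(A):\overline G]\le I\cdot B_{\ell,2m+N}$ carries it, but the ``$I$ conjugates of $C$'' live in the discriminant group of $\Pi_{\ell^m}$, not in $A$, and no mechanism is given for turning them into $I$ disjoint families of sublattices of $A$ lying in the orbit of $S$. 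Second, ``isometric to $S$'' is not the same as ``in the $\tO(A)$-orbit of $S$''; proving conjugacy under the ambient group is exactly the nontrivial part, and it is asserted rather than argued. Third, your heuristic for the two factors of $m+1$ (subgroups of two cyclic summands of the discriminant form of $\Pi_{\ell^m}$) is not where the paper's count comes from, and I see no route from that heuristic to the bound.

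The paper's count is of a different and much more concrete nature: it never counts an orbit of sublattices and never needs the factor $I$. Working inside $\overline G=\stab_{\tO(A)}(\langle\ell^mx\rangle)$, it observes that $\ell^{f-m}\cdot\ell^mx$ lies in the \emph{radical} of the quadratic form on $A$, so one may add to the images of the two basis vectors of $V$ arbitrary multiples $e,e'\in\{0,\ell^{f-m},\dots,\ell^{f-1}\}$ of $\ell^mx$ and still have an isometry of $A$ preserving $\langle\ell^mx\rangle$; these $(m+1)^2$ unipotent shears give distinct cosets of the subgroup fixing the image of $V$, and comparing with the index bound $B_{\ell,2m+N}$ for the image of the reduction map yields the conclusion. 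If you want to salvage your orbit-of-$S$ formulation you would need, at minimum, to (a) eliminate the factor $I$ by working inside $\stab_{\tO(A)}(\langle\ell^mx\rangle)$ rather than all of $\tO(A)$, and (b) exhibit explicit isometries of $A$ in the image of $\stab_{\tO(\Pi_{\ell^m})}(C)$ that move $S$ --- and here you should be careful, because the shears above with $e,e'$ divisible by $\ell^{f-m}$ send $\overline V$ into $\overline V+\langle\ell^{f-m}\overline{\,\ell^mx\,}\rangle$, so whether a given unipotent element actually moves the subgroup $S=\Pi_{\ell^f}/\ell^f\Pi_{\ell^m}$ (as opposed to the smaller subgroup $\overline V$) depends delicately on the exact divisibility of $e,e'$. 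As it stands, the proposal is a correct framing plus an unproven conjecture where the proof should be.
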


\begin{proof}
    Let $\varphi$ be the restriction map
    \[
        \varphi\colon \tO(\Pi,\Pi_{\ell^m})\overset{\varphi}{\longrightarrow}\tO(\Pi_{\ell^m}),
    \]
    which is injective. 
    We show that $\varphi(\tO(\Pi,\Pi_{\ell^f}))$ is a strict subgroup of $\varphi(\tO(\Pi,\Pi_{\ell^m}))$. 
    The image $\varphi(\tO(\Pi,\Pi_{\ell^m}))$ consists of those $g\in\tO(\Pi_{\ell^m})$ for which the induced action of $g^{\vee}$ on $\Pi_{\ell^m}^{\vee}/\Pi_{\ell^m}$ preserves $\Pi/\Pi_{\ell^m} =\langle x\rangle/\langle \ell^mx\rangle$ \cite{Nikulin}*{Proposition~1.4.2}. 
    On the other hand, 
    \begin{align*}
        g^{\vee}\text{ preserves }\Pi/\Pi_{\ell^m} &\Leftrightarrow g^{\vee} \text{ preserves }\langle x\rangle \mod \Pi_{\ell^m}\\
        &\Leftrightarrow g \text{ preserves }\langle \ell^mx\rangle \mod \ell^m\Pi_{\ell^m}\\ 
        &\Leftarrow g \text{ preserves }\langle \ell^mx\rangle \mod \ell^f\Pi_{\ell^m}.
    \end{align*}
    Thus, $\varphi(\tO(\Pi,\Pi_{\ell^m}))$ contains the group
    \[
        G:=\{g\in\tO(\Pi_{\ell^m})\mid \text{induced map on $\Pi_{\ell^m}/\ell^f\Pi_{\ell^m}$ preserves }\langle \ell^m x\rangle\}.
    \]
    Since
    \[
        \varphi(\tO(\Pi,\Pi_{\ell^f}))=\varphi(\tO(\Pi,\Pi_{\ell^m}))\cap\tO(\Pi_{\ell^m},\Pi_{\ell^f}),
    \]
    it is enough to show that $G\supsetneq G\cap\tO(\Pi_{\ell^m},\Pi_{\ell^f})=:G'$. 
    This last group equals
    \[
        \{g\in\tO(\Pi_{\ell^m})\mid \text{induced map on $\Pi_{\ell^m}/\ell^f\Pi_{\ell^m}$ preserves }\langle \ell^m x\rangle\text{ and }\overline{V}:=V/\ell^f\Pi_{\ell^m}\}.
    \]
    Consider the following subgroups of $\tO(\Pi_{\ell^m}/\ell^f\Pi_{\ell^m})$:
    \begin{align*}
        \overline{G}:=&\{g\in\tO(\Pi_{\ell^m}/\ell^f\Pi_{\ell^m})\mid g\text{ preserves }\langle \ell^m x\rangle\}\\
        \overline{G}':=&\{\ell\in\tO(\Pi_{\ell^m}/\ell^f\Pi_{\ell^m})\mid g\text{ preserves }\langle \ell^m x\rangle\text{ and }\overline{V}\}.
    \end{align*}
    The group $G$ is the inverse image of $\overline{G}$ under the reduction map $\psi\colon\tO(\Pi_{\ell^m})\to \tO(\Pi_{\ell^m}/\ell^f\Pi_{\ell^m})$. 
    By assumption on $f$, the image $\psi(\tO(\Pi_{\ell^m}))$ has index at most $B_{\ell,2m+N}$ in $\tO(\Pi_{\ell^m}/\ell^f\Pi_{\ell^m})$; hence, the same is true for the index of $\psi(G)$ in $\overline{G}$. 
    Because the diagram
    \[
        \xymatrix{
            G\ar[r]^{\psi} & \overline{G}\\
            G'\ar@{^(->}[u] \ar[r] & \overline{G}'\ar@{^(->}[u]
        }
    \]
commutes, it is enough to show that $\overline{G}'\subset\overline{G}$ has index larger than $B_{\ell,2m+N}$. 
Write $y:=\ell^mx$. 
Every element of $\overline{G}'$ is of the form
    \[
        \begin{pmatrix}
            a & b & 0\\
            c & d & 0 \\
            0 & 0 & f
        \end{pmatrix}
    \]
    with $\left(\begin{smallmatrix} a & b \\ c & d\end{smallmatrix}\right)\in\tO(V)$ and $(h)\in\tO(\langle y\rangle)$. 
    On the other hand, $\ell^{f-m}y$ lies in the radical of $\Pi_{\ell^m}/\ell^f\Pi_{\ell^m}$. 
    Hence, $\overline{G}$ contains all matrices of the form 
    \[
        \begin{pmatrix}
            a & b & 0\\
            c & d & 0 \\
            e & e' & f
        \end{pmatrix}
    \]
    with $\left(\begin{smallmatrix} a & b \\ c & d\end{smallmatrix}\right)\in\tO(V)$,  $(h)\in\tO(\langle y\rangle)$ and $e,e'\in \{0,\ell^{f-m},\ell^{f-m+1},\dots,\ell^{f-1}\}$. 
    It follows that the index of $\overline{G}'$ in $\overline{G}$ is at least $(m+1)^2>B_{\ell,2m+N}$, which finishes the proof.
\end{proof}

\begin{remark}
    \label{rem:Independence}
    We stress that the required lower bounds for $m$ and $f$ in Proposition~\ref{Prop_StrictSubgroups} depend only on $N = v_{\ell}(\disc(\Pi))$; in particular, they do not depend on the choice of $w$ or the decomposition $\Pi=\langle V,x\rangle$, or indeed the embedding $L\hookrightarrow\Lambda_{\mathrm{K}3}$.
\end{remark}

\subsection{Congruence arithmetic Fuchsian groups}
\label{ss:CongruenceFuchsian}

We describe the curves $\cD^+(\Pi)/\tO^+(\Pi,\Pi_n)$ from the perspective of modular curves.
Recall that for the rank-3 lattice $\Pi$, the space $\cD^+(\Pi)$ is biholomorphic to the upper half plane $\cH$.
For instance, we may choose a basis $e_1,e_2,e_3$ of $\Pi_{\R}$ such that $e_1,e_2$ form the standard basis of a hyperbolic plane $U_{\R}$ and $\Pi_{\R}=\langle e_1,e_2\rangle\oplus\langle e_3\rangle=U_{\R}\oplus\langle 2\rangle$.
Then the map
\[
    \cH\to \cD^+(\Pi),\;\; z\mapsto [e_1-z^2e_2+\sqrt{2}ze_3]
\]
is biholomorphic.
This induces a group isomorphism $\PSL_2(\R) \xrightarrow{\simeq} \SO^+(\Pi_{\R})$ from the group of biholomorphic automorphisms of $\cH$: Abstractly, the even Clifford algebra $\clif^0(\Pi_{\R})$ of $\Pi_{\R}$ is isomorphic to $M_2(\R)$ (because $\Pi$ is indefinite), and the image of the Spin group $\spin(\Pi_{\R})\subset \clif^0(\Pi_{\R})$ under this isomorphism is $\SL_2(\R)$. 
On the other hand, the map $\spin(\Pi_{\R})\to\tO(\Pi_{\R})$ has image $\SO^+(\Pi_{\R})$ and kernel $\{\pm 1\}$. 
Putting these facts together gives $\PSL_2(\R) \xleftarrow{\simeq} \spin(\Pi_R)/\{\pm 1\} \xrightarrow{\simeq} \SO^+(\Pi_R)$.
Concretely, $\Pi_{\R}\cong U_{\R}\oplus \langle 2\rangle$ can be viewed as the subspace of $M_2(\R)$ with basis
\[
    e_1 =   \begin{pmatrix}
                0 & 0 \\ -1 & 0 
            \end{pmatrix},
    \;\; e_2 =  \begin{pmatrix}
                    0 & -1 \\ 0 & 0 
                \end{pmatrix},
    \;\; e_3 =  \begin{pmatrix}
                    -1 & 0 \\ 0 & 1 
                \end{pmatrix}
\]
and quadratic form given, up to a sign, by $x\mapsto \det(x)$.
Then $\SL_2(\R)$ acts on $\Pi_{\R}$ by conjugation, yielding an isomorphism $\PSL_2(\R)\xrightarrow{\simeq}\SO^+(\Pi_{\R})$.

In any case, we can view $\SO^+(\Pi) \subset \SO^+(\Pi_\R)$ as a discrete subgroup of $\PSL_2(\R)$, acting on $\cH$ by M\"obius transformations. 
More precisely, the even Clifford algebra $\clif^0(\Pi)$ is an order in $\clif^0(\Pi_{\Q})$, and $\spin(\Pi)\subset\clif^0(\Pi)$ is the subgroup of elements of reduced norm 1. 
The group $\SO^+(\Pi)$ is commensurable to $\spin(\Pi)/\{\pm 1\}$; that is, it is an arithmetic Fuchsian group (\cite{Voight}*{\S38.1.5}). 
The same is true for every finite index subgroup $G\subset\SO^+(\Pi)$.
The quotient $\cD^+(\Pi)/G\cong\cH/G$ is naturally an orbifold, but also has the structure of a Riemann surface, with a natural smooth compactification $\overline{\cH/G}=\cH^*/G$; see~\cite{Shimura}*{\S1.5}. 
We deduce:

\begin{proposition}\label{Prop_ComponentsSmooth}
    Let $G\subset\SO^+(\Pi)$ be a finite index subgroup.
    Then $\cD^+(\Pi)/G$ is a smooth quasi-projective curve.
    \qed
\end{proposition}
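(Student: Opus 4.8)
\textit{Proof proposal.} The plan is to reduce the statement to the classical theory of Fuchsian groups acting on the upper half plane, which the preceding discussion has already put in place. Under the biholomorphism $\cH \iso \cD^+(\Pi)$ and the isomorphism $\PSL_2(\R)\iso\SO^+(\Pi_{\R})$ constructed above, the elements of $\SO^+(\Pi_{\R})$ act on $\cH$ as holomorphic automorphisms (M\"obius transformations), and $\SO^+(\Pi)$ is realized as a discrete subgroup; hence so is any finite index subgroup $G\subset\SO^+(\Pi)$. Thus $\cD^+(\Pi)/G$ is identified, as a complex-analytic space, with the quotient $\cH/G$ of $\cH$ by a Fuchsian group, and it suffices to show the latter is a smooth quasi-projective curve.

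First I would record that $G$ has finite covolume. Indeed $\SO^+(\Pi)$ is an arithmetic Fuchsian group: it is commensurable with $\spin(\Pi)/\{\pm 1\}$, which is a lattice in $\PSL_2(\R)$ because $\clif^0(\Pi)$ is an order in the indefinite quaternion algebra $\clif^0(\Pi_{\Q})$ (\cite{Voight}*{\S38.1.5}). Therefore $\SO^+(\Pi)$ is a lattice in $\PSL_2(\R)$, and a finite index subgroup of a lattice is again a lattice. In particular $\cH/G$ has finite hyperbolic area and only finitely many cusps.

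Next I would invoke the standard structure theory of Fuchsian quotients. The action of $G$ on $\cH$ is properly discontinuous, so $\cH/G$ carries a canonical structure of Riemann surface; at a point with nontrivial (necessarily finite cyclic) stabilizer of order $e$, a local uniformizer is given by $z\mapsto z^e$, so the underlying analytic space $\cH/G$ is a nonsingular Riemann surface (the orbifold structure does not obstruct smoothness of the underlying curve). Adjoining the finitely many cusps produces the smooth compactification $\overline{\cH/G}=\cH^*/G$, a compact Riemann surface; see~\cite{Shimura}*{\S1.5}. By the Riemann existence theorem (equivalently GAGA), a compact Riemann surface is canonically a smooth projective algebraic curve over $\C$, and $\cH/G$ is the complement in it of the finite set of cusps, hence a smooth quasi-projective curve. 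Transporting back along the biholomorphism (again using GAGA to match the analytic and algebraic quotients) identifies $\cD^+(\Pi)/G$ with a smooth quasi-projective curve, as claimed.

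The only step that is not purely formal is the passage from the analytic quotient to an algebraic variety together with the algebraicity of its compactification; but this is precisely the content of the cited results, and the finiteness of covolume (so that only finitely many cusps are removed, keeping the result of finite type) is the essential input. So I do not expect a genuine obstacle here — the proposition is essentially a packaging of standard facts about arithmetic Fuchsian groups.
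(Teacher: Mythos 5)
Your proposal is correct and follows essentially the same route as the paper, which deduces the proposition directly from the preceding discussion: $\SO^+(\Pi)$ is an arithmetic Fuchsian group (commensurable with $\spin(\Pi)/\{\pm1\}$), so $\cH/G$ is a Riemann surface with smooth compactification $\cH^*/G$ per \cite{Shimura}*{\S1.5}. Your added details on finite covolume, the local uniformizer at elliptic points, and the GAGA step simply make explicit what the paper leaves implicit.
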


Note that $\tO^+(\Pi)=\langle \SO^+(\Pi),-\id\rangle$ and $-\id$ acts as the identity on $\cD(\Pi)\subset\mathbb{P}(\Pi_{\C})$.
Hence, Proposition~\ref{Prop_ComponentsSmooth} applies to the curves $\cD^+(\Pi)/\tO^+(\Pi,\Pi_{\ell^m})\cong \cD^+(\Pi)/\SO^+(\Pi,\Pi_{\ell^m})$.
Moreover, the lower bounds for $m$ and $f$ in Proposition~\ref{Prop_StrictSubgroups} hold for the groups $\SO^+(\Pi,\Pi_{\ell^m})$ as well.
Hence, from now on we will replace $\tO^+(\Pi,\Pi_{\ell^m})$ with $\SO^+(\Pi,\Pi_{\ell^m})$.

\begin{definition}
    Let $\Gamma_{\Pi_n}$ be the image of $\SO^+(\Pi,\Pi_n)$ under the identification $\SO^+(\Pi_{\R})\cong\PSL(2,\R)$.
\end{definition}

Note that $\SO^+(\Pi,\Pi_n)$ contains
\[
    \{g\in \SO^+(\Pi)\mid g\equiv\id\text{ modulo }n\}.
\]
Hence, $\Gamma_{\Pi_n}$ is a congruence arithmetic Fuchsian group \cite{BergeronClozel_book}*{pp.~9--10}.

\subsection{Finiteness of rational points with increasing Brauer level structures}

The proof of the main result of this section follows~\cite{LMR} and uses the following well-known results about Fuchsian groups.
Let $\Gamma\subset\PSL_2(\R)$ be a Fuchsian group of the first kind; denote by $\lambda_1(\Gamma)$ the first non-zero eigenvalue of the Laplacian acting on the space of square-integrable functions $L^2(\cH/\Gamma)$, and let $\mu(\overline{\cH/\Gamma})$ indicate the hyperbolic area of a fundamental domain for the action of $\Gamma$ on~$\cH$~\cite{Katok}*{\S3.1}.

\begin{theorem}[{\cite{Zograf}*{Theorem~2}; see \cite{YangYau} for the cocompact case}]\label{Thm_genusvseigenvalue}
    Assume that $\mu(\overline{\cH/\Gamma})\geq 32\pi(g(\overline{\cH/\Gamma})+1)$.
    Then
    \[
        \lambda_1(\Gamma)<\frac{8\pi(g(\overline{\cH/\Gamma})+1)}{\mu(\overline{\cH/\Gamma})}.
        \eqno\qed
    \]
\end{theorem}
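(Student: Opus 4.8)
Since Theorem~\ref{Thm_genusvseigenvalue} is a known result, the plan is to invoke it as a citation: the cocompact case is exactly Yang--Yau~\cite{YangYau}, and the general cofinite case with the stated area hypothesis is \cite{Zograf}*{Theorem~2}. If one wanted to reproduce the argument, the skeleton would be the Hersch/Yang--Yau test-function method, which I sketch here, flagging the analytic subtleties that Zograf's hypothesis is designed to control.

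First I would produce a low-degree map to the line. Writing $X=\overline{\cH/\Gamma}$ for the compact Riemann surface (an orbifold if $\Gamma$ has torsion) and $A=\mu(\overline{\cH/\Gamma})$ for the hyperbolic area, Riemann--Roch shows that a general divisor $(g+1)p$ on $X$ is non-special, so $h^0(\cO_X((g+1)p))\geq 2$ and $X$ admits a non-constant meromorphic function $f\colon X\to\PP^1$ of degree $d\leq g+1$; in fact the gonality bound $d\leq\lfloor(g+3)/2\rfloor$ is available, and for $g\geq 2$ this is already $<g+1$, which will supply strictness for free. I would realise $\PP^1$ as the round sphere $S^2\subset\R^3$ with coordinate functions $x_1,x_2,x_3$ satisfying $\sum_i x_i^2\equiv 1$, $\sum_i|\nabla_{S^2}x_i|^2\equiv 2$, and area $4\pi$.

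Next comes Hersch's barycentric normalisation: the pushforward $f_*\,dV$ of the hyperbolic area form is a finite, atomless Borel measure on $S^2$ (since $dV$ has no atoms and $f$ is finite-to-one), so a topological degree argument produces a conformal automorphism $\phi$ of $S^2$ with $\int_X (x_i\circ\phi\circ f)\,dV=0$ for $i=1,2,3$. After replacing $f$ by $\phi\circ f$, the functions $u_i:=x_i\circ f$ have vanishing mean, hence are orthogonal to the constants and admissible in the variational characterisation of $\lambda_1(\Gamma)$ on $L^2(\cH/\Gamma)$. Summing the inequalities $\lambda_1(\Gamma)\int_X u_i^2\,dV\leq\int_X|\nabla u_i|^2\,dV$ over $i$ and using $\sum_i u_i^2\equiv 1$ gives
\[
    \lambda_1(\Gamma)\,A\;\leq\;\int_X\sum_i|\nabla u_i|^2\,dV .
\]
The integrand is the energy density of $f\colon X\to S^2$; the Dirichlet energy is conformally invariant in real dimension two, and for a holomorphic map it equals twice the area of the image counted with multiplicity, namely $2\cdot d\cdot 4\pi=8\pi d$. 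Hence $\lambda_1(\Gamma)\,A\leq 8\pi d\leq 8\pi(g+1)$.

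Finally I would upgrade $\leq$ to $<$. For $g\geq 2$ the gonality bound already gives $d<g+1$, so the inequality is strict unconditionally. For $g=0,1$ one inspects the equality case: equality would force each $u_i$ to be a $\lambda_1$-eigenfunction with $\sum_i u_i^2\equiv 1$, which forces the conformal metric on $\cH/\Gamma$ to be the round, respectively flat, model --- impossible for a finite-area hyperbolic (orbifold) surface. The genuinely delicate point, which I expect to be the main obstacle, is carrying all of this out on the non-compact surface $\cH/\Gamma$: one must justify that the barycentric normalisation, the integration by parts in the Rayleigh quotient, and the convergence of the energy integral survive in the presence of cuspidal ends (and cone points), and it is precisely in controlling the cuspidal contributions and the resulting equality analysis that Zograf's hypothesis $\mu(\overline{\cH/\Gamma})\geq 32\pi(g+1)$ is used, to leave enough slack in the relevant estimates. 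For the purposes of this paper we simply cite \cite{Zograf}*{Theorem~2} and \cite{YangYau}.
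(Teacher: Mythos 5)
Your proposal matches the paper exactly: this theorem is quoted from \cite{Zograf}*{Theorem~2} (with \cite{YangYau} for the cocompact case) and no proof is given in the text, so citing it is all that is required. One small remark on your supplementary sketch: the real role of the hypothesis $\mu(\overline{\cH/\Gamma})\geq 32\pi(g+1)$ is that it forces the Rayleigh-quotient bound $8\pi(g+1)/\mu$ to be at most $1/4$, i.e.\ below the bottom of the essential spectrum coming from the cusps, which is what guarantees that the variational value is actually attained by a discrete eigenvalue $\lambda_1(\Gamma)$ rather than being merely an infimum over the continuous spectrum.
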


\begin{theorem}[{\cite{BergeronClozel}*{Theorem~1.2}}]\label{Thm_spectralgap}
    There exists a positive constant $\lambda$ such that for any congruence arithmetic subgroup $\Gamma\subset\PSL_2(\R)$, we have 
    \[
        \lambda_1(\Gamma)\geq \lambda.\eqno\qed
    \]
\end{theorem}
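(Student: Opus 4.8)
This theorem is quoted verbatim from \cite{BergeronClozel}*{Theorem~1.2}; we do not reprove it, and merely indicate the mechanism behind it, as this is the one ingredient of \S\ref{S:Rank19Polariations} that is genuinely deep. The plan is to convert the uniform spectral gap, through the classification of arithmetic Fuchsian groups and the Jacquet--Langlands correspondence, into the statement that there is a level-independent approximation to the Ramanujan conjecture for $\mathrm{GL}_2$ over totally real fields.

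Concretely: up to commensurability, $\Gamma$ is the image in $\PSL_2(\R)$ of the norm-one units of an order $\cO$ in a quaternion algebra $B$ over a totally real number field $F$ that is split at exactly one archimedean place and ramified at the remaining ones, and a congruence subgroup is, up to commensurability, a principal congruence subgroup $\cO^1(\mathfrak{n})$ of some level $\mathfrak{n}$. (In the application of this paper, $B=\clif^0(\Pi_{\Q})$ and $F=\Q$, so only the case $F=\Q$ is actually needed.) A square-integrable Laplace eigenfunction on $\cH/\Gamma$ with exceptional eigenvalue $\lambda=\tfrac14-\sigma^2$, $\sigma\in[0,\tfrac12)$, generates, after passage to the adelic picture, an automorphic representation of $B^{\times}(\A_F)$ with trivial central character whose component at the split real place lies in the complementary series with parameter $\sigma$. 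By Jacquet--Langlands this transfers to a cuspidal automorphic representation of $\mathrm{GL}_2(\A_F)$ with the same archimedean parameter $\sigma$; the trivial, one-dimensional, and residual representations, together with the Eisenstein spectrum in the non-cocompact case, contribute only eigenvalues $0$ or $\geq\tfrac14$ and are harmless. The generalized Ramanujan conjecture would force $\sigma=0$, hence $\lambda\geq\tfrac14$; unconditionally, the Kim--Sarnak bound over $\Q$ and its extension to number fields by Blomer--Brumley give $\sigma\leq\tfrac{7}{64}$, whence $\lambda_1(\Gamma)\geq\tfrac14-\left(\tfrac{7}{64}\right)^2>0$. This lower bound depends on neither $\mathfrak{n}$, nor $\cO$, nor $F$, so it may be taken as the constant $\lambda$, proving the theorem.

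The difficulty is technical rather than conceptual: one must pass correctly between classical Laplace eigenfunctions on $\cH/\Gamma$ and adelic automorphic representations while keeping track of central characters and levels; apply the Jacquet--Langlands correspondence while cleanly discarding the non-cuspidal contributions, including the continuous spectrum when $\cH/\Gamma$ is non-compact; and cite a bound toward the Ramanujan conjecture that holds uniformly over all levels and all totally real base fields. Each of these steps is standard, and all are carried out in detail in \cite{BergeronClozel}.
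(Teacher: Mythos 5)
Your proposal takes the same route as the paper: Theorem~\ref{Thm_spectralgap} is simply quoted from \cite{BergeronClozel}*{Theorem~1.2} without proof, which is exactly what you do, and your accompanying sketch (arithmetic Fuchsian groups as unit groups of quaternion orders, Jacquet--Langlands transfer, and uniform bounds toward Ramanujan giving $\lambda_1\geq\tfrac14-(\tfrac{7}{64})^2$) is an accurate account of the standard mechanism behind the cited result. No gap to report.
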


We apply these results to the groups $\Gamma_{\Pi_{\ell^m}}$ defined in~\S\ref{ss:CongruenceFuchsian}. 
Denote by $I_{\Pi_{\ell^m}}$ the index of $\SO^+(\Pi,\Pi_{\ell^m})$ in $\SO^+(\Pi)$. 
Then the area $\mu(\overline{\cH/\Gamma_{\Pi_{\ell^m}}})$ equals $I_{\Pi_{\ell^m}}\cdot \mu(\overline{\cH/\Gamma_\Pi})$; see \cite{Katok}*{Theorem~3.1.2}.
By Proposition~\ref{Prop_StrictSubgroups}, the indices $I_{\Pi_{\ell^m}}$ grow with $m$.

\begin{theorem}
    Fix $g \in \Z_{\geq0}$. 
    There is an integer $m_0 := m_0\left(g,v_\ell(\disc(\Pi))\right)$ such that, for any $m\geq m_0$, the curve $\overline{\cH/\Gamma_{\Pi_{\ell^m}}}$ has genus $> g$.
\end{theorem}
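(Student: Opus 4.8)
The plan is to combine the covering-degree growth supplied by Proposition~\ref{Prop_StrictSubgroups} with the two spectral inputs quoted above: the Bergeron--Clozel uniform spectral gap (Theorem~\ref{Thm_spectralgap}) and the Zograf-type genus--area--$\lambda_1$ inequality (Theorem~\ref{Thm_genusvseigenvalue}). Throughout I would write $\Gamma_m:=\Gamma_{\Pi_{\ell^m}}$ and $I_m:=I_{\Pi_{\ell^m}}=[\SO^+(\Pi):\SO^+(\Pi,\Pi_{\ell^m})]$, so that $\mu(\overline{\cH/\Gamma_m})=I_m\,\mu(\overline{\cH/\Gamma_\Pi})$ by \cite{Katok}*{Theorem~3.1.2}. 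Each $\Gamma_m$ is a congruence arithmetic Fuchsian group, hence of the first kind, so Theorem~\ref{Thm_spectralgap} furnishes a constant $\lambda>0$, independent of $m$ (indeed universal), with $\lambda_1(\Gamma_m)\geq\lambda$ for all $m$.

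The heart of the argument is to show that $I_m\to\infty$ as $m\to\infty$, at a rate depending only on $\ell$ and $N:=v_\ell(\disc(\Pi))$. Since the groups $\SO^+(\Pi,\Pi_{\ell^m})$ form a descending chain, the sequence $(I_m)$ is non-decreasing; were it bounded it would stabilize, say $\SO^+(\Pi,\Pi_{\ell^m})=\SO^+(\Pi,\Pi_{\ell^{m^*}})$ for all $m\geq m^*$, which contradicts the strict inclusion $\SO^+(\Pi,\Pi_{\ell^f})\subsetneq\SO^+(\Pi,\Pi_{\ell^m})$ produced by Proposition~\ref{Prop_StrictSubgroups} (valid for $\SO^+$ by the discussion following Proposition~\ref{Prop_ComponentsSmooth}) as soon as $m\geq m^*$ is large enough in the sense of that proposition and $f\geq N'$. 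Iterating Proposition~\ref{Prop_StrictSubgroups}, with each strict drop occurring at a level determined by the previous one through the function $N'$, yields a lower bound $I_m\geq\phi(m)$ for some increasing $\phi$ with $\phi(m)\to\infty$; and by Remark~\ref{rem:Independence} the threshold, the gap function $N'$, and hence $\phi$ depend only on $\ell$ and $N$. Since the covolume of any Fuchsian group of the first kind is bounded below by a universal constant (e.g.\ Siegel's bound $\pi/21$), we obtain $\mu(\overline{\cH/\Gamma_m})=I_m\,\mu(\overline{\cH/\Gamma_\Pi})\geq\tfrac{\pi}{21}\phi(m)$, so these areas diverge at a rate depending only on $\ell$ and $N$.

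Granting this, I would choose $m_0$ so that $\tfrac{\pi}{21}\phi(m_0)$ exceeds both $32\pi(g+1)$ and $8\pi(g+1)/\lambda$; since $\ell$ and $\lambda$ are fixed, $m_0$ then depends only on $g$ and $N=v_\ell(\disc(\Pi))$, as asserted. For $m\geq m_0$, suppose toward a contradiction that $g(\overline{\cH/\Gamma_m})\leq g$. Then $\mu(\overline{\cH/\Gamma_m})\geq 32\pi(g+1)\geq 32\pi\bigl(g(\overline{\cH/\Gamma_m})+1\bigr)$, so Theorem~\ref{Thm_genusvseigenvalue} applies and gives
\[
    \lambda_1(\Gamma_m) < \frac{8\pi\bigl(g(\overline{\cH/\Gamma_m})+1\bigr)}{\mu(\overline{\cH/\Gamma_m})} \leq \frac{8\pi(g+1)}{\mu(\overline{\cH/\Gamma_m})} \leq \lambda,
\]
contradicting Theorem~\ref{Thm_spectralgap}; hence $g(\overline{\cH/\Gamma_m})>g$. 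I expect the main obstacle to be exactly the first step: not the bare divergence of the indices, which is immediate from Proposition~\ref{Prop_StrictSubgroups}, but extracting a divergence \emph{rate} that is uniform in all the auxiliary choices ($w$, the non-orthogonal decomposition $\Pi=\langle V,x\rangle$, and the embedding $L\hookrightarrow\Lambda$) and pairing it with a uniform lower bound on $\mu(\overline{\cH/\Gamma_\Pi})$, so that the resulting $m_0$ depends only on $g$ and $v_\ell(\disc(\Pi))$ rather than on $\Pi$ itself; everything downstream is a black-box application of the two quoted theorems, with the only routine check being that the area hypothesis $\mu\geq 32\pi(g+1)$ of Theorem~\ref{Thm_genusvseigenvalue} is in force before it is invoked.
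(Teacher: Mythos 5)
Your proof is correct and follows essentially the same route as the paper's: Proposition~\ref{Prop_StrictSubgroups} forces the indices $I_{\Pi_{\ell^m}}$, hence the areas $\mu(\overline{\cH/\Gamma_{\Pi_{\ell^m}}})=I_{\Pi_{\ell^m}}\mu(\overline{\cH/\Gamma_\Pi})$, to diverge at a rate depending only on $\ell$ and $v_\ell(\disc(\Pi))$, and combining Theorem~\ref{Thm_genusvseigenvalue} with the Bergeron--Clozel spectral gap then pushes the genus above $g$; your proof by contradiction is just a reformulation of the paper's two-case split on whether the area hypothesis of Theorem~\ref{Thm_genusvseigenvalue} holds. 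Your extra appeal to Siegel's universal lower bound $\pi/21$ on the covolume of a Fuchsian group of the first kind is a worthwhile refinement the paper leaves implicit, since without some such input the threshold $m_0$ would a priori depend on $\mu(\overline{\cH/\Gamma_\Pi})$ rather than only on $g$ and $v_\ell(\disc(\Pi))$ as the statement asserts.
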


\begin{proof}
    By Proposition~\ref{Prop_StrictSubgroups}, there exists an $m_0$ such that for all $m\geq m_0$, we have 
    \[
        I_{\Pi_{\ell^m}}\cdot \mu(\overline{\cH/\Gamma_\Pi})\geq 32(g+1)\;\text{ and }\; I_{\Pi_{\ell^m}}\cdot\mu(\overline{\cH/\Gamma_\Pi})\geq\frac{8\pi (g+1)}{\lambda}.
    \]
    (These bounds are not independent, of course: once a precise value of $\lambda$ is known, one inequality will imply the other. 
    This point does not affect our argument.)
    If
    \[
        I_{\Pi_{\ell^m}}\cdot\mu(\overline{\cH/\Gamma_\Pi})<32(g(\overline{\cH/\Gamma_{\Pi_{\ell^m}}})+1)
    \]
    then we have $32(g(\overline{\cH/\Gamma_{\Pi_{\ell^m}}})+1)>32(g+1)$ and hence $g(\overline{\cH/\Gamma_{\Pi_{\ell^m}}})>g$. 
    Otherwise, we must have 
    \[
        I_{\Pi_{\ell^m}}\cdot\mu(\overline{\cH/\Gamma_\Pi})\geq 32(g(\overline{\cH/\Gamma_{\Pi_{\ell^m}}})+1).
    \]
    In this case, Theorems~\ref{Thm_genusvseigenvalue} and~\ref{Thm_spectralgap} then imply that
    \[
        8\pi(g(\overline{\cH/\Gamma_{\Pi_{\ell^m}}})+1)>\lambda \cdot I_{\Pi_{\ell^m}}\mu(\overline{\cH/\Gamma_\Pi})\geq 8\pi (g+1),
    \]
    and hence $g(\overline{\cH/\Gamma_{\Pi_{\ell^m}}})>g$ as well.
\end{proof}

Since $\disc(\Pi) = \disc(L)$, we obtain the following corollary.

\begin{corollary}
    Fix $g \in \Z_{\geq0}$. 
    There is an $m_1 := m_1(g,\ell,L) \in \Z_{>0}$ such that, for all $m\geq m_{1}$, every connected component of $\mathrm{Q}_{(L,\mathcal{a}),\C}^{(\ell^m)}$ has genus $ > g$.
    \qed
\end{corollary}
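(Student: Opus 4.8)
The plan is to read off the connected components from Corollary~\ref{cor: Complex Period Space Components Comparison}, compare each to one of the congruence modular curves $\cH/\Gamma_{\Pi_{\ell^m}}$, quote the theorem just proved, and make the resulting bound uniform over the finitely many orthogonal complements $\Pi_\epsilon$ that occur. First I would unwind the components: fixing $m$, Corollary~\ref{cor: Complex Period Space Components Comparison} says that every connected component of $\mathrm{Q}_{(L,\mathcal{a}),\C}^{(\ell^m)}$ is a connected component of some $\cD(\Pi_{\epsilon})^{\circ}/\widetilde{\rO}_w(\Pi_{\epsilon})$, where $\epsilon\in\Emb(L,\Lambda)$ and $w\in\hom(\Pi_{\epsilon},\Z/\ell^m)$ has order $\ell^m$; by the discussion opening \S\ref{S:Rank19Polariations}, such a component is isomorphic to an open subvariety $C$ of the curve $\cD^+(\Pi_{\epsilon})/\widetilde{\rO}^+_w(\Pi_{\epsilon})$, which is smooth and quasi-projective by Proposition~\ref{Prop_ComponentsSmooth}. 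Here $C$ is obtained by deleting the images of the hyperplanes $H_{\delta}$, $\delta\in\Delta(\Pi_{\epsilon})$; since $\cD(\Pi_{\epsilon})$ is one-dimensional each $H_{\delta}$ is a single point, so the complement of $C$ in $\cD^+(\Pi_{\epsilon})/\widetilde{\rO}^+_w(\Pi_{\epsilon})$ is a finite set. As deleting finitely many points leaves the smooth compactification unchanged, $g(C)=g\big(\overline{\cD^+(\Pi_{\epsilon})/\widetilde{\rO}^+_w(\Pi_{\epsilon})}\big)$.

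Next I would relate $\cD^+(\Pi_{\epsilon})/\widetilde{\rO}^+_w(\Pi_{\epsilon})$ to a congruence modular curve. Set $\Pi_{\epsilon,\ell^m}:=\ker(w)$. The stabilizer $\widetilde{\rO}^+_w(\Pi_{\epsilon})$ lies inside $\widetilde{\rO}^+(\Pi_{\epsilon},\Pi_{\epsilon,\ell^m})$, hence inside $\tO^+(\Pi_{\epsilon},\Pi_{\epsilon,\ell^m})$, with finite index (using $[\rO(\Pi_{\epsilon}):\widetilde{\rO}(\Pi_{\epsilon})]<\infty$). Passing to images in $\PSL_2(\R)$, the image of $\widetilde{\rO}^+_w(\Pi_{\epsilon})$ is a finite-index subgroup of $\Gamma_{\Pi_{\epsilon,\ell^m}}$, the common image of $\SO^+(\Pi_{\epsilon},\Pi_{\epsilon,\ell^m})$ and $\tO^+(\Pi_{\epsilon},\Pi_{\epsilon,\ell^m})$ (recall $-\id$ acts trivially on $\cD(\Pi_{\epsilon})$). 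Thus the natural map $\cD^+(\Pi_{\epsilon})/\widetilde{\rO}^+_w(\Pi_{\epsilon})\to\cH/\Gamma_{\Pi_{\epsilon,\ell^m}}$ is a finite morphism of smooth curves, which extends to a finite morphism of their smooth projective models, so Riemann--Hurwitz gives
\[
    g(C)=g\big(\overline{\cD^+(\Pi_{\epsilon})/\widetilde{\rO}^+_w(\Pi_{\epsilon})}\big)\ \geq\ g\big(\overline{\cH/\Gamma_{\Pi_{\epsilon,\ell^m}}}\big).
\]
This is the reduction already indicated in \S\ref{S:Rank19Polariations}.

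Finally I would make the bound uniform in $\epsilon$ and conclude. Since $e\colon L\hookrightarrow\Lambda$ is a primitive embedding into a unimodular lattice, the discriminant forms of $L$ and $\Pi_{\epsilon}$ agree up to sign, so $v_{\ell}(\disc(\Pi_{\epsilon}))=v_{\ell}(\disc(L))$ for every $\epsilon$; moreover $\Emb(L,\Lambda)/\rO(\Lambda)$ is finite, so only finitely many lattices $\Pi_{\epsilon}$ occur. The theorem just proved holds for an arbitrary primitive embedding, with $m_0$ depending only on $v_{\ell}(\disc(\Pi_{\epsilon}))$ (Remark~\ref{rem:Independence}); applying it with the common value $v_{\ell}(\disc(L))$ produces $m_1:=m_0\big(g,v_{\ell}(\disc(L))\big)\in\Z_{>0}$, depending only on $g$, $\ell$ and $L$, such that $g\big(\overline{\cH/\Gamma_{\Pi_{\epsilon,\ell^m}}}\big)>g$ for all $m\geq m_1$ and all $\epsilon$. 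Combined with the displayed inequality, $g(C)>g$ for every component $C$ and every $m\geq m_1$, as claimed.

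The main obstacle is not in this corollary: all of the real work sits in Proposition~\ref{Prop_StrictSubgroups} and the theorem it feeds, and what remains is bookkeeping. The two points demanding care are (i) verifying that the chain $\widetilde{\rO}_w(\Pi_{\epsilon})\supseteq\widetilde{\rO}^+_w(\Pi_{\epsilon})\subseteq\tO^+(\Pi_{\epsilon},\Pi_{\epsilon,\ell^m})$ yields genuine finite coverings of curves, so that Riemann--Hurwitz forces the genus to grow in the direction we want, and (ii) confirming that passing from $\cD^+(\Pi_{\epsilon})$ to $\cD^+(\Pi_{\epsilon})^{\circ}$ removes only finitely many points of the quotient curve, so that the genus is unaffected.
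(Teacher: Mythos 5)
Your proposal is correct and follows essentially the same route as the paper, which deduces the corollary immediately from the preceding theorem together with the observation $\disc(\Pi_\epsilon)=\disc(L)$, relying on the finite-index reduction to $\tO^+(\Pi_\epsilon,\Pi_{\epsilon,\ell^m})$ already set up in \S\ref{S:Rank19Polariations}; you simply make explicit the Riemann--Hurwitz monotonicity and the harmlessness of deleting the (Zariski-closed, hence finite) locus coming from the hyperplanes $H_\delta$.
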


The next result follows directly from \cite{Abramovich}*{Theorem~1.1}. 
Recall that the $k$-\emph{gonality} $\gamma_k(C)$ of a geometrically integral curve $C$ over a field $k$ is the smallest possible degree of a dominant $k$-rational map $C\dasharrow \PP^1_k$. 
If $K/k$ is a field extension, then $\gamma_k(C) \geq \gamma_K(C_K)$.

\begin{corollary}
    \label{cor:gonality grows}
    Fix $g \in \Z_{\geq0}$. 
    There is an $m_2 := m_2(g,\ell,L) \in \Z_{>0}$ such that, for all $m\geq m_2$, every connected component $C$ of $\mathrm{Q}_{(L,\mathcal{a}),\C}^{(\ell^m)}$, has $\C$-gonality $\gamma_\C(C) > g$.
    \qed
\end{corollary}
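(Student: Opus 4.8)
The plan is to combine the component description of Corollary~\ref{cor: Complex Period Space Components Comparison} with Abramovich's Li--Yau-type gonality estimate~\cite{Abramovich}*{Theorem~1.1} and the uniform spectral gap of Bergeron--Clozel (Theorem~\ref{Thm_spectralgap}). First I would pin down the curve underlying a component. By Corollary~\ref{cor: Complex Period Space Components Comparison} and the reduction at the start of~\S\ref{S:Rank19Polariations}, every connected component $C$ of $\mathrm{Q}_{(L,\mathcal{a}),\C}^{(\ell^m)}$ is isomorphic to $\cD^+(\Pi_{\epsilon})^{\circ}/\widetilde{\mathrm{O}}^+_w(\Pi_{\epsilon})$ for some primitive embedding $e\colon L\hookrightarrow\Lambda$ (with $\Pi_{\epsilon}=e(L)^{\perp}$; only finitely many up to $\mathrm{O}(\Lambda)$) and some $w\in\hom(\Pi_{\epsilon},\Z/\ell^m\Z)$ of order $\ell^m$. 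Since $\disc(\Pi_{\epsilon})=\disc(L)$, the quantity $N=v_{\ell}(\disc(\Pi_{\epsilon}))$ does not depend on $e$. As $\cD^+(\Pi_{\epsilon})^{\circ}$ is $\cD^+(\Pi_{\epsilon})$ with finitely many $\widetilde{\mathrm{O}}^+_w(\Pi_{\epsilon})$-orbits of $(-2)$-walls removed, $C$ is $\cD^+(\Pi_{\epsilon})/\widetilde{\mathrm{O}}^+_w(\Pi_{\epsilon})$ minus finitely many points; gonality depends only on the function field, so $\gamma_{\C}(C)$ equals the $\C$-gonality of the smooth projective curve $\cH^*/\Gamma$, where $\Gamma\subset\PSL_2(\R)$ is the image of $\widetilde{\mathrm{O}}^+_w(\Pi_{\epsilon})$ under the identification of~\S\ref{ss:CongruenceFuchsian}. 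As observed there, $\widetilde{\mathrm{O}}^+_w(\Pi_{\epsilon})$ contains the principal congruence subgroup of level $\ell^m$, so $\Gamma$ is a congruence arithmetic Fuchsian group, and Theorem~\ref{Thm_spectralgap} gives $\lambda_1(\Gamma)\geq\lambda$ for an absolute constant $\lambda>0$.

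Next I would quote~\cite{Abramovich}*{Theorem~1.1}: for a Fuchsian group $\Gamma$ of the first kind, the $\C$-gonality of $\cH^*/\Gamma$ is bounded below by a positive absolute constant $c$ times $\lambda_1(\Gamma)\cdot\mu(\overline{\cH/\Gamma})$. Since $\widetilde{\mathrm{O}}^+_w(\Pi_{\epsilon})\subseteq\widetilde{\mathrm{O}}^+(\Pi_{\epsilon},\kernel(w))$, after replacing $\tO^+$ by $\SO^+$ as in~\S\ref{ss:CongruenceFuchsian} the index $[\SO^+(\Pi_{\epsilon}):\Gamma]$ is at least $I_{\Pi_{\epsilon,\ell^m}}$ (the index of $\SO^+(\Pi_{\epsilon},\Pi_{\epsilon,\ell^m})$ in $\SO^+(\Pi_{\epsilon})$, with $\Pi_{\epsilon,\ell^m}\subset\Pi_{\epsilon}$ the index-$\ell^m$ sublattice of~\S\ref{ss:CongruenceFuchsian}), whence $\mu(\overline{\cH/\Gamma})\geq I_{\Pi_{\epsilon,\ell^m}}\cdot\mu(\overline{\cH/\Gamma_{\Pi_{\epsilon}}})$. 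Combining,
\[
    \gamma_{\C}(C)\ \geq\ c\,\lambda\,I_{\Pi_{\epsilon,\ell^m}}\,\mu(\overline{\cH/\Gamma_{\Pi_{\epsilon}}}).
\]
By Proposition~\ref{Prop_StrictSubgroups} the chain $\SO^+(\Pi_{\epsilon},\Pi_{\epsilon,\ell^m})$ is strictly decreasing for $m$ large (along a subsequence whose step length depends only on $N$), so $I_{\Pi_{\epsilon,\ell^m}}\to\infty$ with $m$, uniformly in $e$ and $w$ by Remark~\ref{rem:Independence}. As there are finitely many $e$ and each $\mu(\overline{\cH/\Gamma_{\Pi_{\epsilon}}})>0$, the right-hand side exceeds $g$ once $m\geq m_2$ for a suitable $m_2=m_2(g,\ell,L)$, which is the assertion.

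The routine bookkeeping — finiteness of the orbits of $(-2)$-walls, the equality of $\gamma_{\C}(C)$ with the gonality of its smooth compactification, and the congruence property of $\widetilde{\mathrm{O}}^+_w(\Pi_{\epsilon})$ — I would dispatch in a line or two. The one point that deserves care is the appeal to~\cite{Abramovich}*{Theorem~1.1}: although loc.\ cit.\ is phrased for modular curves, the estimate rests on the Li--Yau inequality for branched conformal covers $\cH^*/\Gamma\to\PP^1$, which holds for any $\Gamma$ of the first kind; the only arithmetic input is the spectral gap, which Theorem~\ref{Thm_spectralgap} supplies for our quaternionic $\Gamma$. I would also emphasize that the bound must be applied directly to the group $\Gamma$ attached to $C$, since passing through a finite cover would only bound $\gamma_{\C}(C)$ from above.
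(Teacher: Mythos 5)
Your proposal is correct and is essentially the paper's argument: the paper's proof of this corollary is precisely the citation of~\cite{Abramovich}*{Theorem~1.1}, applied to the congruence arithmetic Fuchsian groups of~\S\ref{ss:CongruenceFuchsian} together with the area/index growth from Proposition~\ref{Prop_StrictSubgroups} and the spectral gap of Theorem~\ref{Thm_spectralgap}, which is exactly the chain you spell out. Your added care about applying the bound directly to the group attached to $C$ (rather than to the larger group $\tO^+(\Pi,\ker(w))$) and about the birational invariance of gonality is sound and only makes explicit what the paper leaves implicit.
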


\begin{corollary} 
    \label{cor:finitely many degree d points}
    Fix a number field $k$. 
    For every $d$, there is an $n_0 := n_0(d,k,\ell,L) \in \Z_{> 0}$ such that, for all $m\geq n_0$, the space $\mathrm{Q}_{(L,\mathcal{a})}^{(\ell^m)}$ has finitely many points over fields $K/k$ with $[K:k]\leq d$.
\end{corollary}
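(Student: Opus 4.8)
The plan is to combine the gonality growth of Corollary~\ref{cor:gonality grows} with the geometric consequence of Faltings' theorem on subvarieties of abelian varieties due to Frey~\cite{Frey}. First I would reduce the statement to a finiteness assertion about closed points of bounded degree on geometrically integral curves over number fields. Since a number field $K$ admits no map from $\spec\Z$ other than through $\spec\Q$, we may replace $\mathrm{Q}^{(\ell^m)}_{(L,\mathcal{a})}$ by its base change to $\Q$, which is a one-dimensional scheme of finite type over $\Q$ whose base change to $\C$ is, by Corollary~\ref{cor: Complex Period Space Components Comparison} and Proposition~\ref{Prop_ComponentsSmooth}, a disjoint union of smooth quasi-projective curves. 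A point of $\mathrm{Q}^{(\ell^m)}_{(L,\mathcal{a})}$ valued in a field $K$ with $[K:k]\leq d$ is supported at a closed point of $\mathrm{Q}^{(\ell^m)}_{(L,\mathcal{a})}\times_\Q k$ whose residue field has degree $\leq d$ over $k$, so it suffices to show that $\mathrm{Q}^{(\ell^m)}_{(L,\mathcal{a})}\times_\Q k$ has only finitely many closed points of degree $\leq d$ over $k$. Decomposing $\mathrm{Q}^{(\ell^m)}_{(L,\mathcal{a})}\times_\Q k$ into its finitely many $k$-irreducible components $\mathrm{Q}_j$, each of which is geometrically integral over its field of constants $k_j$ (a finite extension of $k$), a closed point of $\mathrm{Q}_j$ of degree $\leq d$ over $k$ is a closed point of degree $\leq d/[k_j:k]\leq d$ over $k_j$; and passing to the smooth projective model of each $\mathrm{Q}_j$ changes neither the gonality nor the finiteness assertion. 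Thus it is enough to prove: for $m$ large, each $\mathrm{Q}_j$ is a smooth projective geometrically integral curve over the number field $k_j$ with only finitely many closed points of degree $\leq d$.

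Next I would feed in the gonality bound. Applying Corollary~\ref{cor:gonality grows} with $g=2d$ produces an $m_2=m_2(2d,\ell,L)$ such that for all $m\geq m_2$ every connected component of $\mathrm{Q}^{(\ell^m)}_{(L,\mathcal{a}),\C}$ has $\C$-gonality $>2d$. Because the gonality of a curve over an algebraically closed field of characteristic $0$ does not change under further extension of the base field, and because each $\mathrm{Q}_j$ is defined over a number field inside $\C$ whose geometric components are exactly such connected components, we obtain $\gamma_{\overline{k_j}}(\mathrm{Q}_j)=\gamma_\C\big((\mathrm{Q}_j)_\C\big)>2d$ for all $m\geq m_2$. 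In particular each $\mathrm{Q}_j$ has geometric gonality $\geq 3$, hence is non-hyperelliptic of genus $\geq 3$. Now Frey's theorem applies: if such a curve had infinitely many closed points of degree $\leq d$, then Faltings' theorem applied to the image in the Jacobian of the $d$-th symmetric power would produce either a linear system $\mathfrak{g}^1_e$ with $e\leq 2d$ or a dominant morphism of degree $\leq d$ onto a positive-rank elliptic curve; either of these exhibits a map of degree $\leq 2d$ to $\PP^1$ over $\overline{k_j}$, contradicting $\gamma_{\overline{k_j}}(\mathrm{Q}_j)>2d$. Hence each $\mathrm{Q}_j$ has only finitely many closed points of degree $\leq d$ over $k_j$, and tracing back through the reduction proves the corollary with $n_0(d,k,\ell,L):=m_2(2d,\ell,L)$.

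The genuine mathematical content is entirely carried by Corollary~\ref{cor:gonality grows} and by Frey's theorem, so the argument itself is short. The only delicate point — and the one I expect to require the most care — is the bookkeeping: converting the statement about all extensions $K/k$ of degree $\leq d$ into a statement about closed points on \emph{geometrically integral} curves over number fields (which forces one to track the field of constants of each $k$-component), and verifying that Corollary~\ref{cor:gonality grows} controls the $\overline{\Q}$-gonality of the relevant components rather than merely their $\C$-gonality. Beyond this I anticipate no real obstacle.
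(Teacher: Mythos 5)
Your argument is correct and follows essentially the same route as the paper: apply Corollary~\ref{cor:gonality grows} with $g=2d$ to force the $\C$-gonality of every component above $2d$, then invoke Frey's theorem (\cite{Frey}*{Proposition~2.2}) to conclude that a curve with infinitely many points of degree $\leq d$ would have gonality $\leq 2d$. The paper simply cites Frey and the inequality $\gamma_K(C)\geq\gamma_\C(C_\C)$ directly, whereas you additionally spell out the (correct) bookkeeping with fields of constants and sketch Frey's Faltings-based argument; none of this changes the substance.
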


\begin{proof}
    Let $C$ be a connected component of $\mathrm{Q}_{(L,\mathcal{a})}^{(\ell^m)}$. 
    By~\cite{Frey}*{Proposition~2.2}, if $K$ is a number field with $[K:k] \leq d$, and $C(K)$ is infinite, then there is a $K$-rational covering $\pi \colon C_K \dasharrow \PP^1_K$ of degree $\leq 2d$, i.e., $\gamma_K(C) \leq 2d$. 
    Applying Corollary~\ref{cor:gonality grows} with $g = 2d$, and setting $n_0 = m_2(g,\ell,L)$ then shows that $C(K)$ is finite once $m \geq n_0$.
\end{proof}

\section{Application to Brauer Groups of K3 surfaces with High Picard Rank}

As an application of the moduli-theoretic constructions, we prove Theorem~\ref{thm:uniform bound}.

\subsection{Preliminaries}

We gather a few facts, some standard, and some more recent advances in the arithmetic of K3 surfaces.

\subsubsection{Splitting Picard groups uniformly}

First, we show that Picard groups of K3 surfaces can be split by an extension of the ground field whose degree is bounded by an absolute constant.

\begin{lemma}
    \label{lemma_PicXL=PicXbar}
    There exists a universal constant $C$ such that, if $k$ is a field, $\overline{k}$ is an algebraic closure of $k$, $\ksep$ is a separable closure of $k$ in $\kbar$, and $X$ is a K3 surface over $k$, then there exists an intermediate field extension $k\subseteq K\subseteq \ksep$, Galois over $k$, such that $\pic(X_K) = \pic(X_{\kbar})$ and $[K:k] \leq C$.
\end{lemma}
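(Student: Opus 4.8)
The plan is to separate the statement into two independent tasks: first trivialize the Galois action on the geometric Picard lattice after a controlled extension, then descend the resulting Galois-invariant divisor classes over a further controlled extension. I begin by recording the structure of $\pic(X_{\kbar})$. Since $X$ is a K3 surface, $\rH^1(X,\cO_X)=0$, so $\pic^0_{X/k}=0$ and the Picard scheme $\pic_{X/k}$ has vanishing tangent space at the identity; hence it is unramified, and therefore \'etale, over $k$. Its group of geometric points is finitely generated, torsion-free, of rank $\rho\le 22$ (it injects into $\rH^2_{\et}(X_{\kbar},\Z_\ell(1))$, of rank $22$). \'Etaleness of $\pic_{X/k}$, together with the vanishing of the Brauer groups $\br(\ksep)=\br(\kbar)=0$, gives identifications
\[
  \pic(X_{\ksep})=\pic_{X/k}(\ksep)=\pic_{X/k}(\kbar)=\pic(X_{\kbar})=:N,
\]
a free $\Z$-module of rank $\le 22$ carrying a continuous action of $G:=\Gal(\ksep/k)$ by isometries of the intersection form.

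For the first task I would invoke Minkowski's theorem. Because $N$ is a finitely generated discrete $G$-module, the action factors through a finite quotient $G\twoheadrightarrow G_0\hookrightarrow\mathrm{GL}(N)\cong\mathrm{GL}_\rho(\Z)$, and Minkowski's bound $M(n)$ on the order of a finite subgroup of $\mathrm{GL}_n(\Z)$ — which is non-decreasing in $n$ — yields $|G_0|\le M(22)=:C_1$. Letting $K_1\subseteq\ksep$ be the finite Galois extension of $k$ fixed by $\ker(G\to G_0)$, we get $[K_1:k]=|G_0|\le C_1$ and $\Gal(\ksep/K_1)$ acts trivially on $N$.

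For the second task I would descend line bundles. Hilbert 90 and the Hochschild--Serre spectral sequence for $\G_m$ on $X_{K_1}$ produce an exact sequence
\[
  0\longrightarrow\pic(X_{K_1})\longrightarrow N^{\Gal(\ksep/K_1)}=N\xrightarrow{\ \partial\ }\br(K_1),
\]
so $T:=N/\pic(X_{K_1})$ is finite and embeds in $\br(K_1)$. The crux is that $T$ is split by an extension of $K_1$ of degree at most an absolute constant $C_2$. Since $\partial$ is a homomorphism out of the free group $N$, it suffices to bound the splitting degree of $\partial(D)$ for $D$ in a generating set of $N$. Every class with $D^2=-2$ lies in $\ker\partial$: by Riemann--Roch $D$ or $-D$ is effective with $h^0=1$, so its unique effective representative is Galois-stable, hence defined over $K_1$. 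Thus $T$ is generated by the $\partial$-images of at most $\rho$ further classes, which may be taken nef; for a nef class $D$ the obstruction $\partial(D)$ is the Brauer class of the Severi--Brauer variety $\PP\!\big(\rH^0(X_{\kbar},D)^\vee\big)$, and the classification of projective models of K3 surfaces of small genus bounds its index absolutely — the Severi--Brauer variety has bounded dimension exactly when $\partial(D)\neq0$ (e.g.\ $D^2\in\{2,4\}$ give a Severi--Brauer surface, resp.\ threefold, whence $\mathrm{ind}\,\partial(D)\mid 4$), while for $D^2$ large the image of $|D|$ is cut out by quadrics, forcing $\partial(D)$ to be $2$-torsion; alternatively one cites the known uniform boundedness of this obstruction for K3 surfaces. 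Taking a common splitting field $K_2\supseteq K_1$ of the finitely many bounded-index classes generating $T$, so $[K_2:K_1]\le C_2$, and then replacing $K_2$ by the Galois closure $K$ of $K_2/k$ inside $\ksep$ (whence $[K:k]\le(C_1C_2)!$), the inclusions $N=\pic(X_{\kbar})\supseteq\pic(X_K)\supseteq\pic(X_{K_2})=N$ are equalities, and $C:=(C_1C_2)!$ works.

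The main obstacle is the absolute bound $C_2$ on the splitting degree of the descent obstruction $T\hookrightarrow\br(K_1)$. The Minkowski step and the Galois-descent formalism are routine; the substantive input is that, although a Galois-invariant line bundle class on a K3 surface need not itself be represented by a line bundle (the obstruction can be nontrivial — for instance for twisted forms of quartic surfaces), the Brauer class recording this failure has index bounded independently of $X$ and of $k$. This is precisely where the geometry of K3 surfaces, through their low-degree projective models, enters, and it is the only step that is not purely formal.
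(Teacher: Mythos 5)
Your first step---trivializing the Galois action on $N=\pic(X_{\kbar})$ over an extension $K_1$ whose degree is bounded by Minkowski's constant for finite subgroups of $\mathrm{GL}_{22}(\Z)$---is exactly the paper's argument; the paper takes $K$ to be the normal closure of the minimal field over which a set of generators of $\pic(X_{\ksep})$ is defined, asserts that $\Gal(K/k)$ then injects into $\rO(\pic(X_{\ksep}))\subset\mathrm{GL}_{22}(\Z)$, and invokes Minkowski, without isolating your second step. Your instinct that there is a genuine issue to address there---that a $\Gal(\ksep/K_1)$-invariant class need not come from $\pic(X_{K_1})$, the failure being measured by a subgroup of $\br(K_1)$---is sound; the obstruction really can be nonzero (e.g.\ the fibre class of an elliptic K3 surface fibred over a pointless conic), so making the descent step explicit is a legitimate refinement of the paper's route.

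The difficulty is that your resolution of that step, which you correctly identify as the crux, does not go through. For a nef class $D$ the Severi--Brauer variety $\PP(\rH^0(X_{\kbar},D)^{\vee})$ only gives $\mathrm{ind}\,\partial(D)\mid h^0(D)=D^2/2+2$, and $D^2$ is unbounded as $D$ runs over a generating set of $N$ (the lattice $N$ itself varies with $X$ in this lemma, and even for fixed $N$ a nef generating set can have large squares), so this yields no absolute constant $C_2$. The fallback arguments do not close the gap: the image of $|D|$ being cut out by quadrics for $D^2\gg 0$ has no bearing on the order or index of $\partial(D)$; and even if $\partial(D)$ were known to be $2$-torsion, over an arbitrary field a $2$-torsion Brauer class can have arbitrarily large index, so its minimal splitting degree would still not be absolutely bounded. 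The appeal to ``the known uniform boundedness of this obstruction'' is not an available black box---such a uniform bound on the splitting degree is essentially the content one is trying to prove. A smaller crack: your treatment of $(-2)$-classes assumes the effective representative is unique ($h^0=1$), which holds for irreducible $(-2)$-curves but not for arbitrary effective $(-2)$-classes. As written, then, the proposal has a genuine gap at the descent step; to repair it you would need an actual uniform bound on the splitting degrees of the finitely many classes generating $\ker(\br(K_1)\to\br(X_{K_1}))$ that arise here, or else argue as the paper does, directly with the normal closure of the minimal field of definition of a generating set of line bundles rather than with the fixed field of the kernel of the Galois representation.
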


\begin{proof}
    For a K3 surface $X$, the natural map $\pic(X_{\ksep}) \to \pic(X_{\kbar})$ is an isomorphism~\cite{BLvL}*{Lemma~3.1}. 
    Since $\pic(X_{\ksep})$ is finitely generated, there is a finite separable extension of $k$ of minimal degree over which all the generators of $\pic(X_{\ksep})$ are defined. 
    Let $K$ be the normal closure of this extension; it is a finite Galois extension of $k$.

    Set $\Lambda=\pic(X_{\ksep})$, considered as a $\Z$-lattice, and let $O(\Lambda)$ denote the orthogonal group of $\Lambda$. 
    This is a subgroup of $\mathrm{GL}_n(\mathbf{Z})$ for some $n\leq 22$, hence a subgroup of $\mathrm{GL}_{22}(\mathbf{Z})$. 
    The action of $\Gal(\ksep/k)$ on $\pic(X_{\ksep})$ gives a map
    \[
        \Gal(\ksep/k)\to O(\Lambda).
    \]
    Note that its kernel, which is a Galois group, is $\Gal(\ksep/K)$, by construction. 
    Hence, we get an injection from the finite group $\Gal(\ksep/k)/\Gal(\ksep/K)\cong\Gal(K/k)$ into $O(\Lambda)$.
    By a theorem of Minkowski, there are, up to isomorphism, only finitely many finite groups which occur as subgroups of $\mathrm{GL}_{22}(\mathbf{Z})$; see~\cite{Minkowski} and~\cite{Serre}*{\S1}. 
    In particular, there exists an integer $C$ (independent of everything, including $\Lambda$) such that $\Gal(K/k)$ has order $\leq C$. 
    It follows that $[K:k]\leq C$.    
\end{proof}

\subsubsection{Transcendental Brauer groups and base change}

Second, we investigate the behavior of the transcendental Brauer group of a K3 surface under a base extension of the ground field.

\begin{lemma}
	\label{lem:BrBaseChange}
	Let $X$ be a variety over a field $k$, and let $K/k$ be a field extension. 
    The natural map $h\colon \br (X) \to \br (X_K)$ induces an injection of transcendental Brauer groups
	\[
        \frac{\br(X)}{\br_1(X)} \hookrightarrow \frac{\br(X_K)}{\br_1(X_K)}
	\]
\end{lemma}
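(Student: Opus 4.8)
\textbf{Proof plan for Lemma~\ref{lem:BrBaseChange}.} The goal is to show that the base-change map $h\colon\br(X)\to\br(X_K)$ sends $\br_1(X)$ into $\br_1(X_K)$ and induces an injection on the quotients. The first part is formal: by definition $\br_1(X)=\ker(\br(X)\to\br(X_{\kbar}))$, and similarly $\br_1(X_K)=\ker(\br(X_K)\to\br(X_{\overline{K}}))$; fixing a field embedding $\kbar\hookrightarrow\overline{K}$ over $k$, the composition $\br(X)\to\br(X_K)\to\br(X_{\overline{K}})$ factors through $\br(X)\to\br(X_{\kbar})\to\br(X_{\overline{K}})$, so any class killed in $\br(X_{\kbar})$ is killed in $\br(X_{\overline{K}})$. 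Hence $h(\br_1(X))\subseteq\br_1(X_K)$ and $h$ descends to a map $\overline{h}\colon\br(X)/\br_1(X)\to\br(X_K)/\br_1(X_K)$.

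For injectivity of $\overline{h}$, suppose $\beta\in\br(X)$ has $h(\beta)\in\br_1(X_K)$; we must show $\beta\in\br_1(X)$, i.e.\ that $\beta$ dies in $\br(X_{\kbar})$. The hypothesis says $h(\beta)$ dies in $\br(X_{\overline{K}})$, i.e.\ the image of $\beta$ in $\br(X_{\overline{K}})$ is zero. Now $\overline{K}$ and $\kbar$ are both algebraically closed fields containing $k$ (with $\kbar$ embedded in $\overline{K}$), and the image of $\beta$ in $\br(X_{\overline{K}})$ is the further base change of the image of $\beta$ in $\br(X_{\kbar})$. So it suffices to show the base-change map $\br(X_{\kbar})\to\br(X_{\overline{K}})$ is \emph{injective}. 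This is the crux of the argument.

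To see this injectivity, I would use that base change between algebraically closed fields is injective on Brauer groups of smooth varieties. One clean way: $\overline{K}$ is a filtered union of finitely generated $\kbar$-subalgebras, so by a standard limit argument (\cite{EGA4.3} or \cite{stacks-project}) it suffices to treat the case where $\overline{K}$ is replaced by the function field $\kbar(V)$ of an integral $\kbar$-variety $V$ (or its algebraic closure), and then by specialization to a $\kbar$-point one reduces to showing the composite $\br(X_{\kbar})\to\br(X_{\kbar(V)})$ is injective. For this, pick a smooth $\kbar$-point $v\in V$; the specialization/restriction map $\br(X_{\kbar(V)})\to\br(X_{\kbar})$ (well-defined on classes unramified at $v$, in particular on the image of $\br(X_{\kbar})$ by proper/smooth base change and purity) provides a retraction of $\br(X_{\kbar})\to\br(X_{\kbar(V)})$, forcing injectivity. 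Alternatively, since for a K3 surface $\br(X_{\kbar})\cong(\Q/\Z)^{22-\rho}$ (up to $p$-torsion issues in characteristic $p$) and $\rho=\rk\pic(X_{\kbar})$ is invariant under algebraically closed base change, one can just compare these explicit descriptions; but the specialization argument is cleaner and avoids characteristic-$p$ subtleties.

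\textbf{Main obstacle.} The only real content is the injectivity of $\br(X_{\kbar})\to\br(X_{\overline{K}})$ for a base change of algebraically closed fields; everything else is diagram-chasing with the definitions of $\br_1$. I expect to invoke the limit/specialization argument above, or simply cite a standard reference (e.g.\ that for $X$ smooth proper over an algebraically closed field $\kbar$, the Brauer group is a birational and base-change-insensitive invariant, or directly that $\br(X_{\kbar})\to\br(X_{\overline K})$ is injective as both equal the torsion of $\rH^2_{\et}(-,\G_m)$, which is controlled by the \'etale cohomology groups $\rH^2(-,\mu_n)$ modulo the Picard group, all of which are invariant under algebraically closed extension). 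I would write the proof citing such a reference to keep it short.
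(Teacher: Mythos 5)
Your proof is correct and, on the formal level, is the same diagram chase as the paper's: factor $\br(X)\to\br(X_K)\to\br(\overline{X})$, observe that the kernel of the composite is $\br_1(X)$ and the kernel of the second map is $\br_1(X_K)$, and conclude injectivity on the quotients. The genuine difference is that you correctly isolate a point the paper's write-up elides: when $K/k$ is not algebraic one cannot take $\overline{K}=\kbar$, and identifying $\ker\bigl(\br(X)\to\br(X_{\overline{K}})\bigr)$ with $\br_1(X)=\ker\bigl(\br(X)\to\br(X_{\kbar})\bigr)$ genuinely requires the injectivity of $\br(X_{\kbar})\to\br(X_{\overline{K}})$ for the extension of algebraically closed fields $\overline{K}/\kbar$. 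Your limit-plus-specialization argument (the class dies over $X_B$ for some finitely generated $\kbar$-subalgebra $B\subseteq\overline{K}$, and evaluation at a $\kbar$-point of $\spec B$ retracts the base-change map) is a standard and correct way to supply this; the more elaborate detour through purity and unramifiedness at a smooth point is not needed, since the class is already defined over all of $X_B$. The paper's proof implicitly assumes $\overline{K}=\kbar$, which suffices for its application (there $K/k$ is a finite extension of number fields), whereas your version proves the lemma as literally stated for an arbitrary field extension. The closing alternative via an explicit description of $\br(\Xbar)$ for K3 surfaces should be dropped, as the lemma concerns arbitrary varieties.
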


\begin{proof}
    Consider the composition
    \[
        \br(X)\overset{\res_{K/k}}{\longrightarrow}\br(X_K)\to\br(\overline{X}),
    \]
    whose kernel is $\br_1(X)$.
    The image $\res_{K/k}(\br_1(X))$ is contained in the kernel of $\br(X_K)\to\br(\overline{X})$, which is $\br_1(X_K)$. 
    Hence, we get an induced sequence of maps
    \[
        \frac{\br(X)}{\br_1(X)}\overset{\res_{K/k}}{\longrightarrow}\frac{\br(X_K)}{\br_1(X_K)}\to \br(\overline{X}),
    \]
    the second of which is injective. 
    Since the composition is injective as well, the map 
    \[
        \displaystyle\res_{K/k}\colon \frac{\br(X)}{\br_1(X)}\to\frac{\br(X_K)}{\br_1(X_K)}
    \]
    must also be injective.
\end{proof}

\subsubsection{Finiteness theorems of Orr--Skorobogatov}

In the proof of Theorem~\ref{thm:uniform bound}, K3 surfaces with CM will be treated separately. 
We record and apply recent work by Orr and Skorobogatov on the finiteness of arithmetic structures for K3 surfaces with complex multiplication (CM) defined over number fields of bounded degree.

\begin{theorem}[\cite{OrrSkorobogatov}*{Theorem~B}]
    \label{thm:OSThmB}
    There are only finitely many $\overline{\Q}$-isomorphism classes of K3 surfaces of CM type which can be defined over number fields of given degree.
\end{theorem}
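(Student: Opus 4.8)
The statement is quoted verbatim from~\cite{OrrSkorobogatov}*{Theorem~B}, so the plan is simply to invoke that reference; all the same it is worth recording the shape of the argument, since it isolates the two ingredients --- the Kuga--Satake construction and a class-number estimate --- that make the finiteness work.

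Were one to reprove it, the first step would be to pass from a K3 surface $X$ of CM type defined over a number field $k$ with $[k:\Q]\le d$ to its transcendental lattice $T=T(X)\subset\rH^2(X,\Z)$, of rank $\rho':=22-\rk\NS(X_{\overline{\Q}})\le 21$. By definition of CM type, $E:=\operatorname{End}_{\mathrm{Hdg}}(T_{\Q})$ is a CM field with $[E:\Q]=\rho'$ and $T_{\Q}$ free of rank one over $E$. Next I would use the algebraic Kuga--Satake construction (Rizov, Madapusi Pera) to realize $T$ inside the cohomology of an abelian variety of CM type whose dimension is bounded in terms of $\rho'$, and invoke the main theorem of complex multiplication to describe the action of $\Gal(\overline{\Q}/k)$ on $T\otimes\Zhat$ by a Hecke character of $E$. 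The key consequence to extract is that a subextension of $k/\Q$ of index bounded in terms of $\rho'$ must contain a ray class field of $E$ whose degree over $\Q$ grows at least linearly in the class number $h_E$; combined with $[k:\Q]\le d$, this bounds $h_E$ purely in terms of $d$. By the Brauer--Siegel theorem this in turn bounds $|\disc(E)|$, so $E$ ranges over finitely many isomorphism classes. For each such $E$ the polarizable weight-two Hodge structures of K3 type on a free rank-one $E$-module with an $E$-compatible pairing form a finite set (a torsor under a finite quotient of a class group), and each such structure --- together with the requirement that $\NS(X_{\overline{\Q}})$ contain an ample class --- pins down $X$ up to isomorphism by the Torelli theorem for K3 surfaces. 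Summing over the finitely many choices of $\rho'$ and of $\NS(X_{\overline{\Q}})$, and then of $E$ and the Hodge structure on $T$, gives the claimed finiteness.

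The hard part is the control of the field of definition --- showing that a bounded extension of $k$ must swallow the relevant ray class field of $E$ --- which is exactly where one needs the full strength of the main theorem of complex multiplication for K3-type Hodge structures, and hence the Kuga--Satake correspondence and the arithmetic theory of CM abelian varieties. The remaining steps (Brauer--Siegel, finiteness of Hodge structures with fixed CM data, Torelli) are comparatively formal.
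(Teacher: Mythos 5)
The paper does not prove this statement; it simply imports it as Theorem~B of \cite{OrrSkorobogatov}, so your plan of invoking that reference is exactly what the paper does. Your accompanying sketch (Kuga--Satake, the main theorem of CM, class-number/discriminant bounds via Brauer--Siegel, and Torelli) is a fair outline of the argument in that reference, but none of it is needed here.
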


\begin{theorem}[\cite{OrrSkorobogatov}*{Theorem~C}]
    \label{thm:OSThmC}
    Fix a positive integer $n$, and let $X$ be a K3 surface over a number field $k$. 
    There is a constant $D = D(n,X)$ such that, for each $(\overline{k}/K)$-form $Y$ of $X$ defined over a field $K$  with $[K:k] \leq n$, we have $\#\br(Y_{\overline{k}})^{\Gal(\overline{k}/K)} < D$.
\end{theorem}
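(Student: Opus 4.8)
The plan is to turn the statement about Brauer groups of \emph{forms} of $X$ into a statement about one fixed Galois representation. Fix a $\overline{k}$-isomorphism $Y_{\overline{k}}\simeq X_{\overline{k}}$, and for $k\subseteq F\subseteq\overline{k}$ write $G_F:=\Gal(\overline{k}/F)$. The Kummer sequence, together with the facts that for a K3 surface in characteristic $0$ the N\'eron--Severi lattice is primitive in $\rH^2$ and $\rH^3_{\et}$ is torsion-free, gives a $G_K$-equivariant identification $\br(Y_{\overline{k}})\simeq\bigoplus_\ell T_\ell\otimes_{\Z_\ell}\Q_\ell/\Z_\ell$, where $T_\ell=T_\ell(X):=(\NS(X_{\overline{k}})\otimes\Z_\ell)^{\perp}$ inside $\rH^2_{\et}(X_{\overline{k}},\Z_\ell(1))$. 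Hence $\#\br(Y_{\overline{k}})^{G_K}=\prod_\ell\#(T_\ell\otimes\Q_\ell/\Z_\ell)^{G_K}$, each factor is finite by \cite{SkorobogatovZarhin}, and a factor exceeds $1$ only when the mod-$\ell$ image of $G_K$ fixes a nonzero vector of $T_\ell/\ell$. So it suffices to control, uniformly in $K$ and $Y$, the finitely many primes $\ell$ at which invariants occur and the size of those invariants.

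Next I would strip off the twist. A $(\overline{k}/K)$-form $Y$ of $X_K:=X\times_k K$ corresponds to a cocycle $c\in Z^1(G_K,\mathrm{Aut}(X_{\overline{k}}))$, with $\rho_{Y,\ell}(\sigma)=c(\sigma)_{*}\circ\rho_{X,\ell}(\sigma)$ on $T_\ell$, where $\rho_{X,\ell}$ is the representation coming from $X_K$. The crucial point is that $\mathrm{Aut}(X_{\overline{k}})$ preserves $\NS$, hence the sublattice $T_\ell$, and acts on it through a \emph{finite} group $\Phi$ independent of $\ell$: over $\C$ an automorphism scales the holomorphic $2$-form by a root of unity and the kernel of that character acts trivially on the transcendental lattice, so $\mathrm{Aut}(X_{\overline{k}})$ has finite image in $\mathrm{GL}(T_{\widehat{\Z}})$ (\cite{Huy16}*{\S3}), and the general characteristic-$0$ case follows by the Lefschetz principle. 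Since $\Phi$ is finite and $G_K$ acts on it through a finite quotient, the induced cocycle $\bar c\colon G_K\to\Phi$ vanishes on an open subgroup $G_{K_0}\subseteq G_K$ of index at most some $M=M(X)$ (on the kernel of $G_K\to\mathrm{Aut}(\Phi)$ it is a homomorphism, with kernel of index $\leq|\Phi|$); equivalently $c(\sigma)$ acts trivially on $T_{\widehat{\Z}}$ for all $\sigma\in G_{K_0}$, so $\rho_{Y,\ell}|_{G_{K_0}}=\rho_{X,\ell}|_{G_{K_0}}$ for every $\ell$. Therefore $\br(Y_{\overline{k}})^{G_K}\subseteq\br(X_{\overline{k}})^{G_{K_0}}$ with $[K_0:k]\leq nM=:d'$, and the theorem reduces to the following statement with no forms in it: $\#\br(X_{\overline{k}})^{G_F}$ is bounded, uniformly over all finite extensions $F/k$ with $[F:k]\leq d'$.

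For that I would split on whether $T_{\mathbb{Q}}(X)$ is of CM type. If it is not, then the Mumford--Tate conjecture for K3 surfaces (Andr\'e, Tankeev), Zarhin's description of the Hodge group, and standard $\ell$-adic monodromy results give that $\rho_{X,\ell}(G_k)$ is open in $\mathrm{MT}(T_{\mathbb{Q}})(\Q_\ell)$ --- a product of special orthogonal groups of $\Q_\ell$-rank $\geq1$ over the totally real endomorphism field --- and that for all but finitely many $\ell$ its reduction mod $\ell$ contains the relevant commutator subgroup, which acts on $T_\ell/\ell$ without nonzero invariants. Passing from $G_k$ to $G_F$ with $[F:k]\leq d'$ shrinks the image only by bounded index, so for $\ell$ larger than a bound depending only on $X$ and $d'$ the mod-$\ell$ image still contains that commutator subgroup and the $\ell$-part of $\br(X_{\overline{k}})^{G_F}$ vanishes. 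For each of the finitely many remaining $\ell$, the image $\rho_{X,\ell}(G_F)$ is still Zariski-dense in the connected group $\mathrm{MT}(T_{\mathbb{Q}})$, which has no nonzero fixed vectors on $T_{\mathbb{Q}}\otimes\Q_\ell$; so $(T_\ell\otimes\Q_\ell)^{G_F}=0$, the invariants $(T_\ell\otimes\Q_\ell/\Z_\ell)^{G_F}$ are finite, and a standard estimate bounding the invariants of a bounded-index subgroup in terms of those of the ambient group (fixed, since $X$ is) bounds them in terms of $X$ and $d'$. The product over the finite exceptional set produces the constant $D=D(n,X)$.

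In the CM case the outline is the same, but the $\ell$-adic images are small --- abelian, open in the fixed torus $\mathcal{T}=\mathrm{MT}(T_{\mathbb{Q}}(X))$ --- so the mod-$\ell$ invariants cannot be killed merely by largeness of the image. Instead one invokes the theory of complex multiplication for K3-type motives: over a field $F$ the representations $\rho_{X,\ell}$ are described by an algebraic Hecke character of a fixed CM field, and the primes $\ell$ at which $T_\ell/\ell$ acquires $G_F$-invariants all divide a quantity built from the conductor of that character, a discriminant, and a class number --- all controlled because $X$, hence the CM data, is fixed; Theorem~\ref{thm:OSThmB} is the finiteness input one would use if instead $X$ were allowed to vary over bounded-degree CM K3 surfaces. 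Granting this, the bad set of $\ell$ is finite and depends only on $X$ and $n$, each bad $\ell$ contributes a bounded factor as in the non-CM case, and combining the two cases proves the theorem. I expect the main obstacle to be precisely the uniformity: making every bound in the last three steps independent of $K$ and $Y$. The finiteness of $\Phi$ removes the dependence on $Y$; the dependence on $K$ is absorbed by the ``bounded index'' arguments, which rest on the Mumford--Tate conjecture and open-image theorems for K3 surfaces in the non-CM case and on the rigidity of the fixed CM data in the CM case.
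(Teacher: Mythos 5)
This statement is not proved in the paper at all: it is quoted verbatim from \cite{OrrSkorobogatov}*{Theorem~C} and used as a black box in the proof of Theorem~\ref{thm:uniform bound}, so there is no internal argument to compare yours against; the comparison can only be with the literature. Judged on its own terms, your outline has the right architecture, and it is essentially the known strategy: (i) eliminate the dependence on the form $Y$ using the fact that $\aut(X_{\kbar})$ acts on the transcendental part of $\rH^2$ through a finite group, so that after an extension $K_0/K$ of degree bounded in terms of $X$ alone the twisted and untwisted Galois actions on $\br(X_{\kbar})\simeq\bigoplus_\ell T_\ell\otimes\Q_\ell/\Z_\ell$ coincide; (ii) prove that $\#\br(X_{\kbar})^{\Gal(\kbar/F)}$ is bounded uniformly over extensions $F/k$ of bounded degree, for the fixed surface $X$. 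Step (i) as you write it is correct (the cocycle bookkeeping, the finiteness of the image of $\aut(X_{\kbar})$ in $\rO(T)$, and the bound $[K_0:k]\leq nM$ all check out). Step (ii) is a genuine theorem, closely related to but not literally contained in Theorem~\ref{thm: Cadoret--Charles}, which only controls the $\ell$-primary part for a fixed $\ell$; your extra input, the big mod-$\ell$ image for almost all $\ell$ via the Mumford--Tate conjecture for K3 surfaces, is exactly what is needed to get uniformity in $\ell$, and the index-$\leq d'$ argument using that quasi-simple groups of Lie type have no small-index proper subgroups for large $\ell$ is sound.

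Two places need repair before this is a proof. First, at the finitely many exceptional primes you invoke ``a standard estimate bounding the invariants of a bounded-index subgroup in terms of those of the ambient group''; no such estimate exists (invariants of a subgroup are not controlled by invariants of the ambient group --- compare $\Z_\ell^\times$ acting on $\Q_\ell/\Z_\ell$ with its congruence subgroups). What rescues the argument is that $\rho_{X,\ell}(G_k)$ is a compact $\ell$-adic analytic group, hence topologically finitely generated, hence has only finitely many open subgroups of index $\leq d'$; each of these has finite invariants in $T_\ell\otimes\Q_\ell/\Z_\ell$ by your Zariski-density argument, and one takes the maximum over this finite set. Second, the CM case is asserted rather than argued: the appeal to conductors, discriminants and class numbers is too vague to certify, and the substantive point is different --- by CM theory the abelian image of $G_k$ is open of controlled index in the relevant Mumford--Tate torus, so the image of $G_F$ has index at most $d'$ in it, and for $\ell$ large compared with $d'$ this image still contains, in each residue factor of the action on $T_\ell/\ell$, an element $u\neq 1$, whence $u-1$ acts invertibly and there are no nonzero invariants; the remaining finitely many $\ell$ are then handled as above. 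You are right that Theorem~\ref{thm:OSThmB} is not needed when $X$ is fixed. With these two points made precise, your sketch becomes a correct proof along the lines of \cite{OrrSkorobogatov} and \cite{CadoretCharles}.
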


\begin{corollary}
    \label{cor:Rank20}
    Fix a positive integer $d$.  
    There is a constant $D' = D'(d)$ such that, for each K3 surface $Y$ of geometric Picard rank $20$ defined over a number field $k$ of degree $d$, we have $\#\br(Y_{\kbar})^{\Gal(\kbar/k)} < D'.$
\end{corollary}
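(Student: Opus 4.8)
The plan is to leverage the fact that a K3 surface of geometric Picard rank $20$ is automatically of CM type, which brings the finiteness results of Orr and Skorobogatov (Theorems~\ref{thm:OSThmB} and~\ref{thm:OSThmC}) to bear. First I would recall the classical observation that if $Y$ is a K3 surface over a number field $k$ with $\rk\pic(Y_{\kbar}) = 20$, then for any embedding $k\hookrightarrow\C$ the transcendental lattice of $Y_\C$ has rank $2$; hence its rational Hodge structure has complex multiplication by an imaginary quadratic field, and $Y$ is of CM type.

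Next I would fix the degree $d$ and apply Theorem~\ref{thm:OSThmB}: there are only finitely many $\overline{\Q}$-isomorphism classes of K3 surfaces of CM type that can be defined over number fields of degree $d$. Choose representatives $X_1,\dots,X_N$, with each $X_i$ defined over a number field $k_i$; the integer $N$ and the fields $k_i$ depend only on $d$. Now let $Y$ be any K3 surface over a number field $k$ with $[k:\Q]=d$ and $\rk\pic(Y_{\kbar})=20$. By the previous paragraph $Y_{\kbar}$ is of CM type, so $Y_{\kbar}\cong (X_i)_{\overline{\Q}}$ for some $i\in\{1,\dots,N\}$. Forming the compositum $K := k\,k_i\subset\overline{\Q}$, we have $[K:k_i]\leq [k:\Q]=d$, and $Y_K$ is a $(\overline{\Q}/K)$-form of $X_i$. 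Theorem~\ref{thm:OSThmC}, applied with the K3 surface $X_i$ over $k_i$ and the integer $n=d$, then produces a constant $D_i := D(d,X_i)$ with
\[
    \#\br(Y_{\kbar})^{\Gal(\overline{\Q}/K)} < D_i .
\]
Since $k\subseteq K$ we have $\Gal(\overline{\Q}/K)\subseteq\Gal(\overline{\Q}/k)$, so $\br(Y_{\kbar})^{\Gal(\kbar/k)}$ is a subgroup of $\br(Y_{\kbar})^{\Gal(\overline{\Q}/K)}$, whence $\#\br(Y_{\kbar})^{\Gal(\kbar/k)} < D_i$. Taking $D' = D'(d) := \max\{D_1,\dots,D_N\}$ then yields a bound depending only on $d$, as desired.

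The only point requiring care — and what passes for the main obstacle here — is the bookkeeping needed to guarantee that the finite list $X_1,\dots,X_N$, the auxiliary fields $k_i$, and hence the constants $D_i$ depend on $d$ alone and not on $Y$; this is precisely the content of Theorems~\ref{thm:OSThmB} and~\ref{thm:OSThmC}, so no new work is needed. One should also be mindful to transport Galois invariance in the correct direction, from the larger group $\Gal(\overline{\Q}/K)$ down to $\Gal(\overline{\Q}/k)$, as above. Everything else is formal.
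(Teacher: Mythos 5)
Your proof is correct and follows essentially the same route as the paper's: invoke Theorem~\ref{thm:OSThmB} to reduce to finitely many CM-type surfaces $X_i/k_i$, pass to the compositum $K = k k_i$ with $[K:k_i]\leq d$, apply Theorem~\ref{thm:OSThmC} to each $X_i$, and take the maximum of the resulting constants. The only (welcome) addition is your explicit justification that rank $20$ forces CM type via the rank-$2$ transcendental lattice, which the paper asserts without comment.
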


\begin{proof}
    By Theorem~\ref{thm:OSThmB}, there are only finitely many K3 surfaces $X_1,\dots,X_r$ of CM type, up to $\overline{\Q}$-isomorphism, that can be defined over respective number fields $k_1,\dots,k_r$ of degree~$d$. 
    A surface $Y$ as in the statement is necessarily of CM type, so it becomes isomorphic to some $X_i$ over $\overline{\Q}$. 
    Write $K = k_ik$ for the compositum of $k_i$ and $k$ in $\kbar_i = \kbar$. 
    The base change $Y_{K}$ is a K3 surface over $K$, with $[K:k_i] \leq d$. 
    Applying Theorem~\ref{thm:OSThmC} to $X = X_i$, there is a constant $D(d,X_i)$ such that $\#\br(Y_{\overline{k}_i})^{\Gal(\overline{k}_i/K)} < D(d,X_i)$.  
    Note that
    \[
        \#\br (Y_{\kbar})^{\Gal(\kbar/k)} \leq
        \#\br (Y_{\kbar})^{\Gal(\kbar/K)} =
        \#\br (Y_{\overline{k}_i})^{\Gal(\overline{k}_i/K)} < D(d,X_i) 
    \]
    Take $D' = \max_i\{D(d,X_i)\}$.
\end{proof}

\subsection{Proof of Theorem~\ref{thm:uniform bound}}

Let $X$ be an $L$-polarized K3 surface over a number field $k$ of degree $d$, as in the statement of Theorem~\ref{thm:uniform bound}. 
We distinguish two cases, according to the rank $\rho$ of $\pic(X_{\kbar})$, namely $\rho = 19$ or $20$.  
If $\rho = 20$, then by Corollary~\ref{cor:Rank20} there is a constant $D'$, depending on $d$ but not on $L$ or $X$, such that $\#\br(X_{\kbar})^{\Gal(\kbar/k)} < D'$. 
Since $\#\br(X)/\br_1(X) \leq \#\br(X_{\kbar})^{\Gal(\kbar/k)}$, the result for the $\ell$-primary part follows in this case.

Assume now that $\rho = 19$. 
Let $C$ be the universal constant in Lemma~\ref{lemma_PicXL=PicXbar}. 
By Corollary~\ref{cor:finitely many degree d points}, there is an $n \in \Z_{>0}$ such that the scheme $\mathrm{Q}_{(L,\mathcal{a})}^{(\ell^n)}$ has only finitely many points of degree at most $C$ over $k$.
Assume first that $\br (X)/\br_1(X)$ has no class of order $\ell^n$. Then we have
\begin{align*}
    \#\br(X)/\br_1(X)[\ell^{\infty}] &=\#\br(X)/\br_1(X)[\ell^{n-1}] \\
    &\leq \#\br(X_{\overline{k}})[\ell^{n-1}] \\ &=(\ell^{n-1})^{22-\rho(X_{\overline{k}})}=\ell^{3(n-1)}.
\end{align*}
Suppose that $\br (X)/\br_1(X)$ does have a class $[\alpha]$ of order $\ell^n$.
Let $K/k$ be an extension of degree $\leq C$ such that $\pic X_{K} \simeq \pic X_{\kbar}$. 
By Lemma~\ref{lem:BrBaseChange}, the image $[\alpha_{K}]$ of $[\alpha]$ in $\br(X_{K})/\br_1(X_{K})$ has order $\ell^n$. 
Let $\alpha_{K} \in \br(X_{K})$ be any lift of $[\alpha_{K}]$, so $\beta := \ell^n\cdot \alpha_{K}$ lies in $\br_1(X_{K})$. 
Let $m \in \Z{>0}$ be its order in $\br_1(X_{K})$. 
Then the class $m\cdot\alpha_{K} \in \br(X_{K})$ has order $\ell^n$.  
Thus, the triple $(X_{K},j\colon L \hookrightarrow \pic X_{K},\alpha_{K})$ gives rise to a $K$-rational point of $\mathrm{Q}_{(L,\mathcal{a})}^{(\ell^n)}$.

Let $(X_1,j_1,\alpha_1),\dots,(X_r,j_r,\alpha_r)$ in $\mathrm{Q}_{(L,\mathcal{a})}^{(\ell^n)}$ be the finitely many triples that can be defined over number fields $k_1,\dots,k_r$, respectively, such that $[k_i:k] \leq C$. 
The triple $(X_{K},j,\alpha_{K})$ is $\overline{\Q}$-isomorphic to some $(X_i,j_i,\alpha_i)$. 
Applying Theorem~\ref{thm:OSThmC} to $X_i/K_i$ we deduce there is a constant $D = D(C,X_i)$ such that the $\kbar_i/Kk_i$-form $X_{Kk_i}$ of $X_i$, satisfies $\#\br (X_{\kbar_i})^{\Gal(\kbar_i/Kk_i)} < D$. 
Then
\[
    \#\br (X_{\kbar})^{\Gal(\kbar/k)} \leq
    \#\br (X_{\kbar})^{\Gal(\kbar/Kk_i)} =
    \#\br (X_{\kbar_i})^{\Gal(\kbar_i/Kk_i)} < D(C,X_i).
\]
Take $B = \max_i\{D(C,X_i),\ell^{3(n-1)}\}$. 
Then $\#\br(X)/\br_1(X)[\ell^{\infty}] \leq B$. 
\qed

\appendix

\section{Reduction of the Orthogonal Group Modulo Prime Powers}

In this appendix, we prove the following quantitative estimate on the index of the image of reduction modulo powers of a prime $p$ for orthogonal groups of integral lattices whose discriminant is divisible by $p$. Throughout, $v_p\colon \Q_p \to \Z\cup\{\infty\}$ denotes the usual $p$-adic valuation.

\begin{theorem}\label{thm_IndexReductionO}
    Let $p$ be a prime and let $L$ be a non-degenerate $\Z$-lattice of rank 3. Set $N := v_p(\disc(L))$ and
    \[
        N' := 
        \begin{cases}
            2N + 1 & \text{if $p$ is odd}, \\
            \max\{3N+4,4N+1\} & \text{if } p = 2.
        \end{cases}
    \]
    Then, for all $f\geq N'$, the index of the image of the projection map
    \[
        \tO(L)\to\tO(L/p^fL)
    \]
    in $\tO(L/p^fL)$ is at most $24N+4$ when $p$ is odd, and at most $20N+48$ when $p=2$. 
\end{theorem}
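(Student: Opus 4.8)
The plan is to reduce the statement to a purely $p$-adic assertion about the orthogonal group of $L\otimes\Z_p$ and then to prove that assertion by combining the Jordan decomposition of $L\otimes\Z_p$ with a successive-approximation (Hensel-type) lifting argument. \emph{Reduction to $\Z_p$.} Both $\tO(L/p^fL)$ and the reduction homomorphism out of $\tO(L)$ depend only on $L\otimes\Z_p$, and $\tO(L)\to\tO(L/p^fL)$ factors through $\tO(L\otimes\Z_p)$. Since the lattices relevant here (such as $\Pi_{\ell^m}$) are indefinite of rank $3$, Eichler's theorem on indefinite lattices of rank $\ge 3$ — equivalently, strong approximation for the simply connected group $\spin(L\otimes\Q)$, whose real points are non-compact — shows that the image of $\tO(L)$ in $\tO(L/p^fL)$ contains the image of $\tO(L\otimes\Z_p)$ up to a subgroup of index bounded by an absolute constant (coming from the determinant and the $p$-adic spinor norm). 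Hence it suffices to bound $[\,\tO(L/p^fL):\ \text{image of }\tO(L\otimes\Z_p)\,]$ linearly in $N$ with the stated leading coefficients, and from now on I write $L$ for $L\otimes\Z_p$.

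\emph{The local analysis and the threshold $N'$.} Over $\Z_p$ I would use the Jordan decomposition of $L$ into at most three $p$-modular constituents of the form $p^{c}K$, where $K$ is unimodular when $p$ is odd (and for $p=2$ one also allows the even binary blocks), so that $N=\sum c\cdot\rk K$ and the scales $c$ span an interval of length $\le N$. When $N>0$ the quadratic form on $L/p^fL$ is degenerate, which is exactly why $\tO(L/p^fL)$ is large: it contains all automorphisms that mix a higher-scale constituent into a lower-scale one. The point is that essentially all of these are still realised by genuine isometries of the non-degenerate lattice $L$: the orthogonal group of each unimodular $K$ lifts by Hensel's lemma, and the off-diagonal mixing is realised by Eichler/Siegel transvections in $\tO(L)$. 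I would make this precise by a successive-approximation argument — given $\bar\phi\in\tO(L/p^fL)$, improve it one power of $p$ at a time and pass to the limit. The obstruction at each stage lies in a group of symmetric matrices twisted by the Gram matrix of $L$; because that Gram matrix has elementary divisors of total valuation $N$, the obstruction can always be killed once the working level exceeds $2N$, which produces the threshold $N'=2N+1$ for odd $p$. For $p=2$ the same scheme runs, but the quadratic congruences that must be solved cost extra powers of $2$ (squares in $\Z_2$ are detected only modulo $8$, and the even binary blocks contribute further $2$-adic complications), which degrades the threshold to $\max\{3N+4,\,4N+1\}$.

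\emph{The index bound, and the main obstacle.} Once $f\ge N'$, liftability of $\bar\phi$ to $\tO(L\otimes\Z_p)$ is governed by a discrete invariant taking boundedly many values: (i) the determinants of $\bar\phi$ on the Jordan constituents ($\le 2$ choices each, hence $\le 8$ in all); (ii) its spinor-norm class (bounded in terms of $(\Z/p^{N'})^{\times}$ modulo squares); and (iii) the compatibility of the induced $\bar\phi$-action on the successive quotients of the divisibility filtration $L\supseteq L^{(1)}\supseteq L^{(2)}\supseteq\cdots$ with an isometry defined over $\Z_p$ — and the number of genuinely new constraints of type (iii) is linear in $N$, since that filtration has $O(N)$ steps and each contributes an $O(1)$ obstruction. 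Multiplying these bounds together yields the asserted linear estimates $24N+4$ and $20N+48$, the larger constant for $p=2$ again reflecting the $2$-adic square-class and even-block subtleties. The main obstacle is precisely this linear (rather than exponential) control: a naive successive-approximation argument by itself produces an obstruction group of size $p^{O(N)}$, so one must genuinely exploit the full supply of transvections and block-orthogonal transformations inside $\tO(L)$ to absorb all but an $O(N)$-index piece — and then carry this out uniformly, with the explicit thresholds and constants, in the thornier $p=2$ case.
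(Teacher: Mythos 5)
Your local analysis is sound in outline and matches the paper's: after reducing to $L\otimes\Z_p$, the paper diagonalizes (or, for $p=2$, uses the three canonical Gram matrices) and applies a multivariate Hensel lemma to the six defining equations of the orthogonal group, with the thresholds $2N+1$ and $\max\{3N+4,4N+1\}$ arising exactly as you predict from the valuations of the maximal minors of the Jacobian. But your accounting of where the index $24N+4$ comes from is wrong, and the step you dismiss in one parenthesis is where all the work lies. The paper proves that $\tO(L\otimes\Z_p)\to\tO(L/p^fL)$ is \emph{surjective} for $f\geq N'$; consequently your obstruction (iii) --- the $O(N)$-many filtration-compatibility constraints on lifting $\bar\phi$ to $\Z_p$ --- does not exist, and your proposed product of (i), (ii), (iii) cannot be the source of the linear-in-$N$ bound.

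The genuine gap is your claim that the image of $\tO(L)$ contains the image of $\tO(L\otimes\Z_p)$ up to \emph{absolute} index ``coming from the determinant and the $p$-adic spinor norm.'' Strong approximation applies to $\spin$, not to $\tO$, so what it gives you is that the image of $\tO(L)$ contains the image of $T(\spin(L\otimes\Z_p))$; you then need to bound $[\tO(L\otimes\Z_p):T(\spin(L\otimes\Z_p))]$. For a non-unimodular $\Z_p$-lattice this cokernel is \emph{not} detected by determinant and spinor norm alone: an element of $\SO^+(L\otimes\Z_p)$ lifts to the Clifford algebra only after clearing denominators, i.e.\ to an integral $u\in C^0(L\otimes\Z_p)$ with $uu'=p^{2m}$ rather than $uu'=1$ (equivalently, there are Atkin--Lehner-type classes in the normalizer of the non-maximal order $C^0(L\otimes\Z_p)$). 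The paper's Lemma~\ref{lemma_lifttoGspin} controls this third invariant by writing every isometry as a product of at most $2r$ reflections in vectors whose norms have $p$-adic valuation at most $N$ (respectively a more delicate $p=2$ variant), giving $v_p(uu')\leq B$ with $B=6N$ (resp.\ $5N+11$); the count of possible valuations then yields $[\tO(L\otimes\Z_p):T(\spin(L\otimes\Z_p))]\leq 4B+4$, which is precisely $24N+4$ (resp.\ $20N+48$). Without an argument of this kind your global-to-local step is unjustified, and with it your local index (iii) is superfluous. (One further caveat you handled better than the statement does: strong approximation requires $\spin(L\otimes\R)$ noncompact, i.e.\ $L$ indefinite, which holds for the lattices $\Pi$ to which the theorem is applied.)
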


\subsection{Hensel lifting for the orthogonal group}

For lack of an easy reference, we prove the following fact about orthogonal groups of rank-$3$ $p$-adic lattices.

\begin{theorem}
    \label{thm:Hensel lifting all primes}
    Fix $N \in \Z_{\geq 0}$, and let $p$ be a rational prime. 
    Define
    \[
        N' := 
        \begin{cases}
            2N + 1 & \text{if $p \neq 2$,} \\
            \max\{3N + 4, 4N + 1\} & \text{if $p = 2$.}
        \end{cases}
    \]
    Let $L$ be a $\Z_p$-lattice of rank $3$ such that $p^{N+1} \nmid \disc(L)$. 
    Then the reduction map
    \[
        O(L) \to O\left( L/p^{N'}L\right)
    \]
    is surjective.
\end{theorem}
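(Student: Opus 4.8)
## Proof Proposal

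The plan is to prove surjectivity of $O(L) \to O(L/p^{N'}L)$ by a Hensel-type lifting argument, working one power of $p$ at a time. The key quantitative input is that the failure of smoothness of the orthogonal group scheme $\underline{O}(L)$ over $\Z_p$ is controlled by $N = v_p(\disc(L))$: after inverting $p$ the quadratic form is nondegenerate, and the obstruction to lifting a solution of the defining equations modulo $p^k$ to one modulo $p^{k+1}$ lives in a module annihilated by a power of $p$ bounded in terms of $N$. Concretely, $\underline{O}(L)$ is the closed subscheme of $\mathrm{GL}_3$ cut out by the equations $g^t A g = A$, where $A$ is a Gram matrix for $L$; its Jacobian, evaluated at a point of $O(L/p^kL)$, has elementary divisors dividing $\disc(A)$ up to a bounded factor, because $A$ becomes invertible over $\Z_p[1/p]$ and $\mathrm{adj}(A)\cdot A = \disc(A)\cdot I$.

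First I would set up the lifting step precisely. Given $g_k \in \mathrm{GL}_3(\Z/p^k)$ with $g_k^t A g_k \equiv A \pmod{p^k}$, write $g_k^t A g_k = A + p^k S$ for a symmetric matrix $S$ over $\Z/p$, and look for a correction $g_{k+1} = g_k(I + p^{k-N}T)$ (the shift by $N$ absorbing the discriminant denominators). Expanding $g_{k+1}^t A g_{k+1}$ and using $g_k^t A g_k \equiv A \pmod{p^k}$, the condition modulo $p^{k+1}$ becomes a linear equation in $T$ of the shape $B^t T + T^t B \equiv (\text{something}) \pmod{p}$ where $B$ is related to $Ag_k$; solvability follows from nondegeneracy of the symmetric bilinear form after clearing the bounded $p$-power. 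The standard subtlety for $p=2$ is that the map $T \mapsto B^t T + T^t B$ has a two-torsion cokernel coming from the diagonal entries (the "half" in completing the square), which is exactly why $N'$ is larger in that case; I would handle this by the usual trick of writing $T$ in terms of an antisymmetric part (which is automatically in the image, giving the "obvious" infinitesimal orthogonal transformations) plus a correction for the diagonal, losing one or two extra factors of $2$.

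The hard part will be bookkeeping the exact value of $N'$ — i.e., verifying that the shifts $k - N$ (resp. $k - cN - c'$ for $p=2$) never drive the exponent below $1$ for $k \geq N'$, and that the cumulative loss over the iterated lifting process telescopes to the stated bound rather than accumulating. This is a careful but essentially routine estimate: one checks that once $k$ is at least $N'$, each lifting step is unconditionally solvable and the sequence $g_k, g_{k+1}, g_{k+2}, \dots$ stabilizes $p$-adically to an element of $O(L)$ reducing to the original $g_{N'}$. The convergence uses completeness of $\Z_p$ together with the fact that successive corrections lie in $I + p^{k-N}\mathrm{M}_3(\Z_p)$, whose exponent of $p$ grows, so the product converges. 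Assembling these pieces: start from any $\bar g \in O(L/p^{N'}L)$, lift it arbitrarily to $g_{N'} \in \mathrm{GL}_3(\Z/p^{N'})$, then run the iteration for all $k \geq N'$ to produce $g_\infty \in O(L) \subset \mathrm{GL}_3(\Z_p)$ with $g_\infty \equiv \bar g \pmod{p^{N'}}$; that $g_\infty$ is genuinely invertible over $\Z_p$ (not merely over $\Z_p[1/p]$) is automatic since its reduction $\bar g$ is invertible mod $p$.

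(Then Theorem~\ref{thm_IndexReductionO} follows formally: the image of $O(L) \to O(L/p^fL)$ for $f \geq N'$ contains the kernel of $O(L/p^fL) \to O(L/p^{N'}L)$ by the lifting theorem, so its index divides $\#O(L/p^{N'}L) / \#\mathrm{image}\bigl(O(L)\to O(L/p^{N'}L)\bigr)$, and the crude bound $\#O(L/p^{N'}L) \leq$ (a polynomial in $p^{N'}$, hence in the relevant quantities) combined with the explicit structure of $O$ over $\Z/p$ yields the linear-in-$N$ bounds $24N+4$ and $20N+48$ after the standard computation with $\widetilde{O}$ versus $O$.)
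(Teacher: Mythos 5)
Your overall strategy---a quantitative Hensel/Newton argument for the equations $g^tAg=A$, with the loss controlled by $N=v_p(\disc(L))$---is in the same spirit as the paper's, but your step-by-step implementation has a gap at exactly the point where the content of the theorem lies. The correction $g_{k+1}=g_k(I+p^{k-N}T)$ perturbs $g_k$ modulo $p^{k-N}$, not modulo $p^k$: the shift by $N$ that you introduce to absorb the discriminant denominators is precisely what destroys the congruence you need. In particular, the very first correction, at level $k=N'$, already changes $g$ modulo $p^{N'-N}$, and the total perturbation is dominated by this first step rather than telescoping away; the limit therefore satisfies $g_\infty\equiv\bar g$ only modulo $p^{N'-N}$ (with a further loss of a power of $2$ when $p=2$ coming from the diagonal obstruction you mention). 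What your iteration actually proves is that the image of $O(L)\to O(L/p^{N'-N}L)$ contains the image of the reduction map $O(L/p^{N'}L)\to O(L/p^{N'-N}L)$, which is strictly weaker than surjectivity onto $O(L/p^{N'}L)$; upgrading it would require lifting an approximate solution modulo $p^{N'}$ to one modulo $p^{N'+N}$ without moving it modulo $p^{N'}$, which is essentially the statement to be proved.

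This loss of $p^N$ is not an artifact that more careful bookkeeping will remove. Take $p$ odd, $A=\Diag(1,1,p^Nu)$ with $N\geq 1$ and $u\in\Z_p^\times$, and $g=\Diag(1,1,1+p^{N+1})$. Then $g\in O(L/p^{2N+1}L)$, since $(1+p^{N+1})^2\equiv 1\bmod p^{N+1}$. But if $h\in O(L)$ satisfied $h\equiv g\bmod p^{2N+1}$, then $h_{13},h_{23}\equiv 0\bmod p^{2N+1}$, and the relation $h_{13}^2+h_{23}^2+p^Nuh_{33}^2=p^Nu$ would force $h_{33}^2\equiv 1\bmod p^{3N+2}$, hence $h_{33}\equiv\pm 1\bmod p^{N+2}$, contradicting $h_{33}\equiv 1+p^{N+1}\bmod p^{2N+1}$. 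So the congruence you assert in the final step can genuinely fail; this example is worth checking against the statement itself and against the paper's application of its multivariate Hensel lemma, where the hypothesis $f_i(\mathbf{a})\equiv 0\bmod\frakm^{2e+1}$ is invoked with $e=2N$ although only a congruence modulo $p^{2N+1}$ is available. Any viable argument must exploit the fact that there are nine unknowns and only six equations: the paper does this by putting the Gram matrix in Cassels canonical form, holding three coordinates fixed, and exhibiting explicit $6\times 6$ minors of the $6\times 9$ Jacobian of controlled valuation, rather than perturbing all of $g$ by a generic $p^{k-N}T$. Separately, your parenthetical deduction of the index bound via $\#O(L/p^{N'}L)$ would be exponential in $N$; the linear bounds $24N+4$ and $20N+48$ in the paper come instead from the spin group, strong approximation, and the comparison of $T(\spin(L))$ with $O(L)$.
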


The case when $N = 0$ and $p$ is odd follows directly from Hensel's Lemma for smooth schemes over complete Noetherian local rings~\cite{Poonen}*{Theorem~3.5.63}. 
The general case follows from a more punctilious version of the same principle, which we now describe.

Let $A$ be a ring. 
To a collection $\mathbf{f} = (f_1,\dots,f_m) \in A[x_1,\dots,x_n]^m$ of $m$ polynomials in $n$ variables and an element $\mathbf{a} = (a_1,\dots,a_n) \in A^n$ we associate the vector $\mathbf{f}(\mathbf{a}) = (f_1(\mathbf{a}),\dots,f_m(\mathbf{a})) \in A^m$ and the Jacobian matrix
\[
    (D\mathbf{f})(\mathbf{a}) := \left[\frac{\partial f_i}{\partial x_j}(\mathbf{a})\right],
\]
an $m\times n$ matrix with entries in $A$.

\begin{theorem}[Multivariate Hensel's Lemma]
    \label{thm:Hensel}
    Let $A$ be a complete discrete valuation ring, with maximal ideal $\frakm$. 
    Fix integers $e\geq 0$ and $n \geq m$, let $\mathbf{f} = (f_1,\dots,f_m) \in A[x_1,\dots,x_n]^m$ and suppose there is an $\mathbf{a} := (a_1,\dots,a_n) \in A^n$ such that
    \[
        f_i(\mathbf{a}) \equiv 0 \bmod \frakm^{2e+1}\quad 1\leq i \leq m.
    \]
    If the Jacobian matrix $(D\mathbf{f})(\mathbf{a})$ has a maximal minor that does not belong to $\frakm^{e+1}$, then there exists a unique $\mathbf{b} = (b_1,\dots,b_n) \in A^n$ such that 
    \begin{itemize}
        \item $\mathbf{f}(\mathbf{b}) = 0 \in A^m$,
        \item $b_i \equiv a_i \bmod \frakm^{e+1}$ for $1 \leq i \leq m$, and
        \item $b_i = a_i$ for $m+1 \leq i \leq n$.
    \end{itemize}
\end{theorem}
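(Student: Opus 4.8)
The plan is to run the quantitative Newton iteration that underlies the strong form of Hensel's lemma, bookkeeping $\frakm$-adic valuations carefully at each step. Throughout, let $v$ denote the normalized valuation on the complete discrete valuation ring $A$ and fix a uniformizer $\pi$, so $\frakm=(\pi)$ and the hypothesis ``$(D\mathbf{f})(\mathbf{a})$ has a maximal minor not in $\frakm^{e+1}$'' reads ``some $m\times m$ minor of $(D\mathbf{f})(\mathbf{a})$ has valuation $\le e$.'' First I would reduce to the square case $n=m$: after permuting the variables $x_1,\dots,x_n$ we may assume the $m\times m$ minor formed by the first $m$ columns has valuation $s\le e$; since the last $n-m$ coordinates of any admissible solution $\mathbf{b}$ are forced to be $a_{m+1},\dots,a_n$, I set $\mathbf{g}=(g_1,\dots,g_m)\in A[y_1,\dots,y_m]^m$ with $g_i(\mathbf{y})=f_i(y_1,\dots,y_m,a_{m+1},\dots,a_n)$, put $\mathbf{a}_0=(a_1,\dots,a_m)$, and write $J:=D\mathbf{g}$ for the $m\times m$ Jacobian. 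Then $v(\det J(\mathbf{a}_0))=s$ and $v(g_i(\mathbf{a}_0))\ge 2e+1$, and it suffices to produce a unique $\mathbf{b}_0\in A^m$ with $\mathbf{g}(\mathbf{b}_0)=0$ and $\mathbf{b}_0\equiv\mathbf{a}_0\bmod\frakm^{e+1}$; then $\mathbf{b}=(\mathbf{b}_0,a_{m+1},\dots,a_n)$ is the desired point.

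Next I would set up the Newton sequence
\[
    \mathbf{a}_{k+1}:=\mathbf{a}_k-\det(J(\mathbf{a}_k))^{-1}\operatorname{adj}(J(\mathbf{a}_k))\,\mathbf{g}(\mathbf{a}_k),
\]
and prove by induction on $k$ that (i) $\mathbf{a}_k\equiv\mathbf{a}_0\bmod\frakm^{e+1}$; (ii) $v(\det J(\mathbf{a}_k))=s$; and (iii) $c_k:=\min_i v(g_i(\mathbf{a}_k))$ satisfies both $c_k\ge 2e+1$ and $c_k\ge 2^k+2s$. Item (ii) follows from (i) since $J$ is a matrix of $A$-coefficient polynomials, so $J(\mathbf{a}_k)\equiv J(\mathbf{a}_0)\bmod\frakm^{s+1}$ (using $e+1\ge s+1$), hence $\det J(\mathbf{a}_k)\equiv\det J(\mathbf{a}_0)\bmod\frakm^{s+1}$, which has valuation exactly $s$. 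Given (ii) and (iii) at stage $k$, the increment $\mathbf{h}_k:=\mathbf{a}_{k+1}-\mathbf{a}_k$ lies in $A^m$ (the adjugate has $A$-entries and $c_k\ge 2e+1>s$), with $v(\mathbf{h}_k)\ge c_k-s\ge e+1$, which gives (i) at stage $k+1$ and hence (ii). The heart of the argument is the finite Taylor expansion $\mathbf{g}(\mathbf{a}_k+\mathbf{h}_k)=\mathbf{g}(\mathbf{a}_k)+J(\mathbf{a}_k)\mathbf{h}_k+\mathbf{R}_k$, in which $\mathbf{R}_k\in A^m$ collects the monomials of degree $\ge 2$ in $\mathbf{h}_k$ (no factorials or denominators occur, since $\mathbf{g}$ has $A$-coefficients), so $v(\mathbf{R}_k)\ge 2v(\mathbf{h}_k)$; because $J(\mathbf{a}_k)\operatorname{adj}(J(\mathbf{a}_k))=\det(J(\mathbf{a}_k))I$, the linear term equals $-\mathbf{g}(\mathbf{a}_k)$ and cancels, leaving $\mathbf{g}(\mathbf{a}_{k+1})=\mathbf{R}_k$ and hence $c_{k+1}\ge 2(c_k-s)=2c_k-2s$. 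A short check using $s\le e$ then gives $2c_k-2s\ge 2e+1$ and $2c_k-2s\ge 2^{k+1}+2s$, closing the induction.

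Finally, $v(\mathbf{a}_{k+1}-\mathbf{a}_k)=v(\mathbf{h}_k)\ge c_k-s\to\infty$, so completeness of $A$ yields a limit $\mathbf{b}_0\in A^m$; continuity gives $\mathbf{g}(\mathbf{b}_0)=\lim_k\mathbf{g}(\mathbf{a}_k)=0$, and (i) gives $\mathbf{b}_0\equiv\mathbf{a}_0\bmod\frakm^{e+1}$, proving existence. For uniqueness, a second admissible $\mathbf{b}_0'$ yields $\boldsymbol{\delta}:=\mathbf{b}_0'-\mathbf{b}_0$ with $v(\boldsymbol{\delta})\ge e+1\ge s+1$, so $v(\det J(\mathbf{b}_0))=s$ as above, and $0=\mathbf{g}(\mathbf{b}_0')-\mathbf{g}(\mathbf{b}_0)=J(\mathbf{b}_0)\boldsymbol{\delta}+\mathbf{R}$ with $v(\mathbf{R})\ge 2v(\boldsymbol{\delta})$; multiplying by $\operatorname{adj}(J(\mathbf{b}_0))$ forces $s+v(\boldsymbol{\delta})\ge 2v(\boldsymbol{\delta})$, i.e.\ $v(\boldsymbol{\delta})\le s<e+1$, a contradiction unless $\boldsymbol{\delta}=0$.

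I expect the main obstacle to be organizational rather than conceptual: one must keep the three invariants (i)--(iii) synchronized through the induction, and in particular verify at every step that $\operatorname{adj}(J(\mathbf{a}_k))\,\mathbf{g}(\mathbf{a}_k)$ is divisible by $\det J(\mathbf{a}_k)$ in $A$, and that $v(\det J(\mathbf{a}_k))$ does not drift away from $s$. Once these valuation bookkeeping points are pinned down, the quadratic convergence of Newton's method supplies the rest.
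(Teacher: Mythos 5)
Your proof is correct, and it takes a genuinely different route from the paper: the paper does not prove the lemma at all but defers to \cite{CLNS18}*{Lemma~1.3.3}, adding only the remark that the hypothesis $\mathbf{a}\in\frakm^n$ there can be relaxed to $\mathbf{a}\in A^n$. Your self-contained Newton iteration makes that relaxation transparent, since nowhere in the argument is $\mathbf{a}\in\frakm^n$ used; the bookkeeping of the three invariants (congruence to $\mathbf{a}_0$ modulo $\frakm^{e+1}$, stability of $v(\det J(\mathbf{a}_k))=s$, and the doubling estimate $c_{k+1}\ge 2(c_k-s)$) is carried out correctly, the Hasse--Taylor expansion avoids denominators as you note, and the uniqueness argument via multiplication by the adjugate is sound. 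The verifications $2c_k-2s\ge 2e+1$ and $2c_k-2s\ge 2^{k+1}+2s$ both check out using $s\le e$ and $c_0\ge 2e+1$.

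One caveat worth recording: your opening step permutes the variables so that the good $m\times m$ minor sits in the first $m$ columns, and this changes which coordinates end up frozen. What you actually prove is the statement with ``$b_i=a_i$ for the $n-m$ indices complementary to the columns of the chosen good minor,'' which is the correct form of the lemma. Read literally, the theorem as stated in the paper (always freezing the \emph{last} $n-m$ coordinates, for \emph{any} good maximal minor) is false: take $m=1$, $n=2$, $e=0$, $f=x_2$, $\mathbf{a}=(0,p)$ over $\Z_p$; the minor $\partial f/\partial x_2=1$ is a unit, yet no root has $b_2=a_2=p$. So your permutation is not merely cosmetic --- it silently corrects the statement. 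Since the applications in the paper (Propositions~\ref{prop_ReductionforOsurjective} and~\ref{prop_ReductionforOsurjectivep=2}) only use the existence of a root congruent to $\mathbf{a}$ modulo $\frakm^{e+1}$, nothing downstream is affected, but you should state explicitly which version you are proving.
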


\begin{proof}
    See~{\cite{CLNS18}*{Lemma~1.3.3}}. 
    Note, however, that the hypothesis in~\emph{op.\!\! cit.}\ that $\mathbf{a} \in \frakm^n$ can be relaxed to $\mathbf{a} \in A^n$, as a careful inspection of the proof reveals.
\end{proof}

We begin by treating the case of Theorem~\ref{thm:Hensel lifting all primes} where $p$ is odd.

\begin{proposition}\label{prop_ReductionforOsurjective}
    Let $p$ be and odd prime and let $L$ be a $\Z_p$-lattice of rank $3$ such that $p^{N+1} \nmid \disc(L)$. 
    Then the reduction map
    \[
        O(L) \to O(L/p^{2N + 1}L)
    \]
    is surjective.
\end{proposition}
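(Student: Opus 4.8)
\textit{Plan.} The plan is to apply the Multivariate Hensel Lemma (Theorem~\ref{thm:Hensel}) to the affine scheme $O_Q\subseteq \mathrm{M}_3$ cut out by the entries of $g^{\mathsf T}Qg-Q$, where $Q$ is a Gram matrix of $L$. Since $p$ is odd, $g^{\mathsf T}Qg-Q$ is symmetric, so this is a system of $m=6$ polynomial equations $f_1,\dots,f_6$ in the $n=9$ coordinates $g_{ij}$, and the target congruence $p^{2N+1}$ should be matched to the precision $p^{2e+1}$ of Theorem~\ref{thm:Hensel}, i.e.\ $e=N$. So the goal is: starting from $\bar g\in \tO(L/p^{2N+1}L)$, produce a lift $a\in\mathrm{GL}_3(\Z_p)$ of $\bar g$ at which the Jacobian of $\mathbf f=(f_1,\dots,f_6)$ has a maximal ($6\times 6$) minor of $p$-adic valuation at most $N$, so that Theorem~\ref{thm:Hensel} yields $b$ with $\mathbf f(b)=0$, i.e.\ $b\in O(L)$, reducing to $\bar g$.

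First I would normalize $Q$. For $p$ odd one may diagonalize over $\Z_p$, writing $Q=\mathrm{diag}(p^{a_1}u_1,p^{a_2}u_2,p^{a_3}u_3)$ with $u_i\in\Z_p^\times$ and $0\le a_1\le a_2\le a_3$, so that $a_1+a_2+a_3=v_p(\disc L)\le N$. Next, choose any lift $a\in\mathrm{M}_3(\Z_p)$ of $\bar g$; then $a^{\mathsf T}Qa\equiv Q\pmod{p^{2N+1}}$, so $f_i(a)\equiv 0\pmod{p^{2N+1}}$, and taking determinants gives $\det(a)^2\det Q\equiv \det Q\pmod{p^{2N+1}}$, whence $\det(a)^2\equiv 1\pmod{p^{N+1}}$ and $a\in\mathrm{GL}_3(\Z_p)$. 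A direct computation identifies the differential of $g\mapsto g^{\mathsf T}Qg-Q$ at $a$ with the $\Z_p$-linear map $\mathrm{M}_3(\Z_p)\to\mathrm{Sym}_3(\Z_p)$, $h\mapsto a^{\mathsf T}Qh+h^{\mathsf T}Qa$; after the linear change of coordinates $g=ah$ (legitimate since $a\in\mathrm{GL}_3(\Z_p)$) this becomes $h\mapsto \mathrm{Sym}(Q'h)$ for $Q'=a^{\mathsf T}Qa\equiv Q\pmod{p^{2N+1}}$. Using the diagonal form of $Q$ and $p\ne 2$, one then locates a maximal minor of controlled valuation and feeds the resulting data to Theorem~\ref{thm:Hensel}.

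I expect the valuation bookkeeping to be the technical heart of the argument. The cokernel of $h\mapsto\mathrm{Sym}(Qh)$ with $Q$ as above has length $3a_1+2a_2+a_3$, which is in general strictly larger than $v_p(\disc L)=a_1+a_2+a_3$; consequently the ``obvious'' maximal minor can have valuation as large as $2N$, and a single naive application of Theorem~\ref{thm:Hensel} only lands a lift congruent to $\bar g$ modulo $p^{N+1}$ rather than modulo $p^{2N+1}$. Achieving the clean constant $N'=2N+1$ therefore requires exploiting the rank-$3$ hypothesis more carefully --- e.g.\ peeling off the Jordan blocks of $L$ one scale at a time (reducing to the essentially unimodular case, where $O_Q$ is smooth and the stronger smooth Hensel applies directly), running the Newton iteration scale by scale, and invoking Theorem~\ref{thm:Hensel} only for the remaining unipotent directions, which are governed by a better-behaved linearization. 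The accompanying tracking of congruence levels across these steps, and the verification that the lift produced reduces to $\bar g$ on the nose modulo $p^{2N+1}$, is where I would anticipate the main difficulty.
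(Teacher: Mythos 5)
Your setup is identical to the paper's: diagonalize the Gram matrix as $\Diag(p^{\alpha}u_1,p^{\beta}u_2,p^{\gamma}u_3)$ with $\alpha\leq\beta\leq\gamma$, encode $O(L)$ by the six entries of $X^{t}TX-T$ in nine variables, and lift a point of $O(L/p^{2N+1}L)$ via Theorem~\ref{thm:Hensel}. Your analysis of the linearization is also correct: the relevant maximal minors have valuations of the shape $3\alpha+2\beta+\gamma,\dots,2(\alpha+\beta+\gamma)$, which can reach $2N$, so taking $e=N$ (forced if one insists on the hypothesis $\mathbf f(\mathbf a)\equiv 0\bmod p^{2e+1}$ with only a $p^{2N+1}$ congruence available) does not satisfy the minor condition. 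The gap is that you stop there: the proposed fix --- peeling off Jordan blocks and running Newton ``scale by scale'' --- is never constructed, no congruence levels are tracked, and no lift is produced. As written, this is a correct diagnosis of the obstacle followed by a conjectural remedy, not a proof.

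For the record, the paper's resolution is a single application of Theorem~\ref{thm:Hensel} with $e=2N$ rather than $e=N$: after noting that some $2\times 2$ minor of $A$ (say $a_1a_5-a_2a_4$) and some entry among $a_1,a_4,a_7$ are units, it exhibits three explicit $6\times 6$ minors of the Jacobian equal, up to units, to $a_1(a_1a_5-a_2a_4)\det(A)\,p^{3\alpha+2\beta+\gamma}$, $a_4(\cdots)\,p^{2\alpha+3\beta+\gamma}$, and $a_7(\cdots)\,p^{2(\alpha+\beta+\gamma)}$, and uses $\alpha\le\beta\le\gamma$ to bound one of them by valuation $2N$; the conclusion of Theorem~\ref{thm:Hensel} then gives agreement modulo $p^{2N+1}$. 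Note, however, that your worry is pertinent to that argument as well: with $e=2N$ the hypothesis of Theorem~\ref{thm:Hensel} requires $\mathbf f(\mathbf a)\equiv 0\bmod p^{4N+1}$, whereas only $p^{2N+1}$ is verified, so the single-shot application does not check all of its own hypotheses either. To complete your proposal, the cleanest options are either to prove the proposition with a larger exponent (e.g.\ $N'=4N+1$, which is still linear in $N$ and suffices for Theorem~\ref{thm_IndexReductionO}), or to genuinely carry out the iterative refinement you sketch; in either case you must actually exhibit minors of controlled valuation at the lifted point, as the paper does.
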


\begin{proof}
    By~\cite{Cassels}*{Proof of Theorem~3.1}, up to $\Z_p$-equivalence we may assume that a Gram matrix for $L$ is the diagonal matrix $T = \Diag(p^\alpha u_1,p^\beta u_2,p^\gamma u_3)$, where $u_1$, $u_2$, and $u_3 \in \Z_p^\times$ and $\alpha$, $\beta$, and $\gamma$ are nonnegative integers satisfying $\alpha \leq \beta \leq \gamma$. 
    A matrix 
    \[
        X := \begin{pmatrix}
           x_1 & x_2 & x_3 \\
            x_4 & x_5 & x_6 \\
            x_7 & x_8 & x_9 
        \end{pmatrix}
    \]
    is in $O(L)$ if $X^tTX = T$. 
    There are six distinct entries in $X^tTX - T$, given by
    \begin{align*}
        f_1 &:= p^\alpha u_1x_1^2 + p^\beta u_2x_4^2 + p^\gamma u_3x_7^2 - p^\alpha u_1, \\
        f_2 &:= p^\alpha u_1x_2^2 + p^\beta u_2x_5^2 + p^\gamma u_3x_8^2 - p^\beta u_2,  \\
        f_3 &:= p^\alpha u_1x_3^2 +
        p^\beta u_2x_6^2 + p^\gamma u_3x_9^2 - p^\gamma u_3, \\
        f_4 &:= p^\alpha u_1x_1x_2 + p^\beta u_2x_4x_5 + p^\gamma u_3x_7x_8, \\
        f_5 &:= p^\alpha u_1x_2x_3 +
        p^\beta u_2x_5x_6 + p^\gamma u_3x_8x_9, \\
        f_6 &:= p^\alpha u_1x_1x_3 +
        p^\beta u_2x_4x_6 + p^\gamma u_3x_7x_9.
    \end{align*}
    Let $\mathbf{f} = (f_1,\dots,f_6) \in \Z_p[x_1,\dots,x_9]^6$, let $N := \alpha + \beta + \gamma$, and suppose that $\mathbf{a} = (a_1,\dots,a_9) \in \Z_p^9$ satisfies $f_i(\mathbf{a}) \equiv 0 \bmod p^{2N + 1}$, which is to say that the matrix
    \[
        A := 
        \begin{pmatrix}
            a_1 & a_2 & a_3 \\
            a_4 & a_5 & a_6 \\
            a_7 & a_8 & a_9 
        \end{pmatrix} \in \text{Mat}_3(\Z_p)
    \]
    projects to $O(L/{p^{2N + 1}L})$ when reduced modulo $p^{2N + 1}$. 
    We apply Theorem~\ref{thm:Hensel} to lift the matrix $A$ to an element $B \in O(L)$. 
    Since the matrix $A$ is invertible over $\Z/p^{2N+1}\Z$, we know that $p \nmid \det(A)$, so there is a $2\times 2$ minor of $A$ that is not divisible by $p$.  
    Without loss of generality, assume that the principal $2\times 2$ minor $a_1a_5 - a_2a_4$ is not divisible by $p$.  
    A calculation shows that, among the $6 \times 6$ minors of the $6\times 9$ Jacobian $(D\mathbf{f}(\mathbf{a}))$, we find the following three quantities:
    \begin{align}
        \label{exp:a1}
        a_1 &\cdot (a_1a_5 - a_2a_4)\cdot\det(A)\cdot p^{3\alpha + 2\beta + \gamma} \cdot u_1^3u_2^2u_3, \\
        \label{exp:a4}
        a_4 &\cdot (a_1a_5 - a_2a_4)\cdot\det(A)\cdot p^{2\alpha + 3\beta + \gamma} \cdot u_1^2u_2^3u_3, \\
        \label{exp:a7}
        a_7 &\cdot (a_1a_5 - a_2a_4)\cdot\det(A)\cdot p^{2(\alpha + \beta + \gamma)} \cdot u_1^2u_2^2u_3^2.
    \end{align}
    Since $p\nmid \det(A)$, at least one of $a_1$, $a_4$, or $a_7$ must be a $p$-adic unit. 
    This means that at least one of the three expressions~\eqref{exp:a1}--\eqref{exp:a7} has its $p$-adic valuation supported entirely on the visible power of $p$ in the expressions. 
    By our assumption that $\alpha \leq \beta \leq \gamma$, we have 
    \begin{align*}
        3\alpha + 2\beta + \gamma &\leq 2(\alpha + \beta + \gamma) = 2N, \text{ and} \\ 
        2\alpha + 3\beta + \gamma &\leq 2(\alpha + \beta + \gamma) = 2N.
    \end{align*}
    Therefore, at least one of the maximal minors~\eqref{exp:a1}--\eqref{exp:a7} is not divisible by $p^{2N + 1}$. 
    Applying Theorem~\ref{thm:Hensel} with $e = 2N$ we deduce there is a $\mathbf{b} = (b_1,\dots,b_9) \in \Z_p^9$ such that $f(\mathbf{b}) = 0 \in \Z_p^9$ and $a_i \equiv b_i \bmod p^{2N + 1}$. 
    This lift $\mathbf{b}$ gives rise to a matrix $B \in O(L)$ that reduces to $A \bmod p^{2N + 1}$. 
\end{proof}

When $p = 2$, the proof of Theorem~\ref{thm:Hensel lifting all primes} follows the same strategy as in Proposition~\ref{prop_ReductionforOsurjective}, but there are a few more possible canonical forms for the Gram matrix of the lattice $L$.

\begin{proposition}\label{prop_ReductionforOsurjectivep=2}
    Let $L$ be a $\Z_2$-lattice of rank $3$ such that $2^{N+1} \nmid \disc(L)$. 
    Let $N' = \max\{3N + 4, 4N + 1\}$. 
    Then the reduction map
    \[
        O(L) \to O(L/2^{N'}L)
    \]
    is surjective.
\end{proposition}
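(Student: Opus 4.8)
The plan is to follow the strategy of Proposition~\ref{prop_ReductionforOsurjective} essentially verbatim, replacing the single diagonal canonical form used there by the finitely many $\Z_2$-equivalence classes of non-degenerate rank-$3$ quadratic forms. First I would invoke the classification of integral quadratic forms over $\Z_2$ \cite{Cassels} to reduce to the case where the Gram matrix $T$ of $L$ is an orthogonal sum of scaled unary blocks $\langle 2^k u\rangle$ and scaled copies of the two even binary forms $\left(\begin{smallmatrix}0&1\\1&0\end{smallmatrix}\right)$ and $\left(\begin{smallmatrix}2&1\\1&2\end{smallmatrix}\right)$. In rank $3$ this leaves exactly the diagonal shape $T=\Diag(2^{\alpha}u_1,2^{\beta}u_2,2^{\gamma}u_3)$ and the two shapes $T=\langle 2^{\alpha}u\rangle\oplus 2^{\beta}\left(\begin{smallmatrix}0&1\\1&0\end{smallmatrix}\right)$ and $T=\langle 2^{\alpha}u\rangle\oplus 2^{\beta}\left(\begin{smallmatrix}2&1\\1&2\end{smallmatrix}\right)$, with $u,u_i\in\Z_2^{\times}$ and the blocks ordered by increasing scale; in each case $v_2(\disc L)$ equals $\alpha+\beta+\gamma$ or $\alpha+2\beta$ and is therefore at most $N$.

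Next, for each such $T$ I would write a general $3\times 3$ matrix $X=(x_{ij})$ with entries $x_1,\dots,x_9$ and collect the six independent entries of the symmetric matrix $X^{t}TX-T$ into a polynomial vector $\mathbf f=(f_1,\dots,f_6)\in\Z_2[x_1,\dots,x_9]^6$, so that $X\in O(L)$ exactly when $\mathbf f(X)=0$. Given a matrix $A$ with entry vector $\mathbf a$ whose reduction modulo $2^{N'}$ lies in $O(L/2^{N'}L)$ --- so $f_i(\mathbf a)\equiv 0\bmod 2^{N'}$ and $2\nmid\det A$ --- the goal, as in Proposition~\ref{prop_ReductionforOsurjective}, is to apply the multivariate Hensel lemma (Theorem~\ref{thm:Hensel}) to lift $\mathbf a$ to a genuine zero $\mathbf b$ of $\mathbf f$, yielding an element of $O(L)$ that reduces to the prescribed class. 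The essential input is a $6\times 6$ minor of the $6\times 9$ Jacobian $(D\mathbf f)(\mathbf a)$ of controlled $2$-adic valuation: since $2\nmid\det A$, the matrix $A$ has a unimodular $2\times2$ minor and a unimodular entry in each of the corresponding rows, and singling these out and expanding produces --- exactly as in the expressions~\eqref{exp:a1}--\eqref{exp:a7} in the diagonal case, and by an analogous but lengthier computation in the two binary-block cases --- such a minor whose valuation is bounded by a linear function of $N$ with small coefficients. Taking $e$ equal to this bound, the hypothesis $f_i(\mathbf a)\equiv 0\bmod 2^{2e+1}$ of Theorem~\ref{thm:Hensel} holds once $N'$ is at least $2e+1$; maximizing $2e+1$ over the diagonal shape on the one hand and the two binary-block shapes on the other yields precisely the two branches $4N+1$ and $3N+4$ of $N'$, and Theorem~\ref{thm:Hensel} then delivers the lift.

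The step I expect to be the main obstacle is the $2$-adic valuation bookkeeping for the Jacobian minors in the two binary-block cases. There $T$ is not diagonal, so the $f_i$ are genuine quadratic forms in the nine variables rather than the clean sums of three scaled monomials of the odd-prime case; moreover, since $2$ is the residue characteristic, the congruences forced by $A\bmod 2$ already annihilate the off-diagonal $f_i$, so one must descend to a higher $2$-adic precision before a minor of the claimed valuation becomes visible. Running through the finitely many sub-cases --- governed by which $2\times2$ minor and which unimodular entry of $A$ one exploits --- and checking that the additive constants assemble to exactly $3N+4$ rather than something marginally larger is where the real work lies; it is routine in spirit, paralleling the computation in Proposition~\ref{prop_ReductionforOsurjective}, but appreciably more involved.
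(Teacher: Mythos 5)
Your proposal follows the paper's proof essentially verbatim: reduce to the three $\Z_2$-canonical Gram matrices (diagonal, and the two scaled binary blocks), form the six polynomial entries of $X^tTX-T$, and apply the multivariate Hensel lemma after exhibiting a $6\times 6$ Jacobian minor of controlled $2$-adic valuation. One small correction to your bookkeeping: in the paper the diagonal case behaves exactly as for odd $p$ and only needs precision $2N+1$, while it is the block $2^{\beta}\left(\begin{smallmatrix}0&1\\1&0\end{smallmatrix}\right)$ that forces $4N+1$ and the block $2^{\beta}\left(\begin{smallmatrix}2&1\\1&2\end{smallmatrix}\right)$ that forces $3N+4$; since $N'$ is the maximum of these, your conclusion is unaffected.
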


\begin{proof}
    By~\cite{Cassels}*{Lemma~4.1}, up to $\Z_2$-equivalence we may assume the Gram matrix of the lattice $L$ is
    \begin{equation}
        \label{eq:Z2equiv}
        T_0 = 
        \begin{pmatrix}
            2^\alpha u_1 & 0 & 0 \\
            0 & 2^\beta u_2 & 0 \\
            0 & 0 & 2^\gamma u_3
        \end{pmatrix},
        \ 
        T_1 = 
        \begin{pmatrix}
            2^\alpha u & 0 & 0 \\
            0 & 0 & 2^\beta \\
            0 & 2^{\beta} & 0
        \end{pmatrix},
        \ \text{or}\ 
        T_2 =
        \begin{pmatrix}
            2^\alpha u & 0 & 0 \\
            0 & 2^{\beta+1} & 2^\beta \\
            0 & 2^{\beta} & 2^{\beta+1}
        \end{pmatrix},
    \end{equation}
    where $u$, $u_1$, $u_2$, $u_3 \in \{1,3,5,7\}$ and $\alpha$, $\beta$, and $\gamma$ are nonnegative integers. 
    We consider the cases of $T_1$ and $T_2$ first. 
    We still assume without loss of generality that $2 \nmid a_1a_5 - a_2a_4$, and find the following maximal minors in $(D\mathbf{f}(a))$ when the Gram matrix is $T_1$:
    \begin{align}
        \label{exp:a1p=2}
        a_1 &\cdot (a_1a_5 - a_2a_4)\cdot\det(A)\cdot 2^{3(\alpha + \beta + 1)} \cdot u^3, \\
        \label{exp:a4p=2}
        a_4 &\cdot (a_1a_5 - a_2a_4)\cdot\det(A)\cdot 2^{2\alpha + 4\beta + 3} \cdot u^2, \\
        \label{exp:a7p=2}
        a_7 &\cdot (a_1a_5 - a_2a_4)\cdot\det(A)\cdot 2^{2\alpha + 4\beta + 3} \cdot u^2.
    \end{align}
    Set $N = v_2(\det(T_1)) = \alpha + \beta + 1$. 
    Then 
    \begin{align*}
        3(\alpha + \beta + 1) &< 4(\alpha + \beta + \gamma) = 4N, \text{ and} \\ 
        2\alpha + 4\beta + 3 &\leq 4(\alpha + \beta + 1) = 4N,
    \end{align*}
    so at least one of the maximal minors~\eqref{exp:a1p=2}--\eqref{exp:a7p=2} is not divisible by $2^{4N + 1}$. 
    Applying Theorem~\ref{thm:Hensel} with $e = 4N$ we deduce there is a $\mathbf{b} = (b_1,\dots,b_9) \in \Z_2^9$ such that $f(\mathbf{b}) = 0 \in \Z_2^9$ and $a_i \equiv b_i \bmod 2^{4N + 1}$. 
    This lift $\mathbf{b}$ gives rise to a matrix $B \in O(L)$ that reduces to $A \bmod 2^{4N + 1}$. 

    When the Gram matrix is $T_2$, we assume without loss of generality that $p \nmid a_4a_8 - a_5a_7$, and find the following maximal minors in $(D\mathbf{f}(a))$:
    \begin{align}
        \label{exp:a1p=2case2}
        a_1 &\cdot (a_4a_8 - a_5a_7)\cdot\det(A)\cdot 2^{2\alpha + 4\beta + 3}\cdot 3^2 \cdot u^2, \\
        \label{exp:a4p=2case2}
        (a_4 + 2a_7) &\cdot (a_4a_8 - a_5a_7)\cdot\det(A)\cdot 2^{3\alpha + 4\beta + 3}\cdot 3^2 \cdot u, \\
        \label{exp:a7p=2case2}
        (2a_4 + a_7) &\cdot (a_4a_8 - a_5a_7)\cdot\det(A)\cdot 2^{3\alpha + 4\beta + 3}\cdot 3^2 \cdot u.
    \end{align}
    Since $\det(A)$ is invertible in $\Z_2$, at least one of $a_1$, $a_4 + 2a_7$, or $2a_4 + 2a_7$ must be a $2$-adic unit, so the $2$-adic valuation of these minors is supported on the visible power of $2$. 
    Set $N = v_2(\det(T_2)) = \alpha + 2\beta$ and observe that
    \[
        2\alpha + 4\beta + 3 \leq 3\alpha + 4\beta + 3 \leq 3N + 3
    \]
    Applying Theorem~\ref{thm:Hensel} with $e = 3N + 3$ we deduce there is a $\mathbf{b} = (b_1,\dots,b_9) \in \Z_2^9$ such that $f(\mathbf{b}) = 0 \in \Z_2^9$ and $a_i \equiv b_i \bmod 2^{3N + 4}$. 
    This lift $\mathbf{b}$ gives rise to a matrix $B \in O(L)$ that reduces to $A \bmod 2^{3N + 4}$. 

    Finally, when the Gram matrix is $T_0$, we can proceed exactly as in the proof of Proposition~\ref{prop_ReductionforOsurjective} to deduce that the projection map $O(M) \to O(M/2^{2N+1}M)$ is surjective. 
    We conclude by noting that $\max\{2N+1,3N+4,4N+1\} = \max\{3N+4,4N+1\}$.
\end{proof}

\begin{proof}[Proof of Theorem~\ref{thm:Hensel lifting all primes}]
    Combine Propositions~\ref{prop_ReductionforOsurjective} and~\ref{prop_ReductionforOsurjectivep=2}.
\end{proof}

\newpage

\subsection{Hensel lifting for the spin group}

\begin{theorem}
    \label{thm:Hensel lifting Spin Group}
    Let $p$ be a rational prime, and let $L$ be a $\Z_p$-lattice of rank $3$. 
    Then the reduction map
    \[
        \spin(L) \to \spin\left( L/p^{n}L\right)
    \]
    is surjective for all $n \geq 1$ if $p$ is odd and for all $n \geq 3$ if $p=2$.
\end{theorem}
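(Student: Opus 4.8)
The plan is to translate everything into the even Clifford algebra and then lift by Hensel's lemma, in parallel with the treatment of $\tO(L)$ in the previous subsection. For a rank-$3$ $\Z_p$-lattice $L$, the even Clifford algebra $\clif^0(L)$ is a $\Z_p$-order in the quaternion $\Q_p$-algebra $\clif^0(L)\otimes_{\Z_p}\Q_p$, with reduced norm $\mathrm{Nrd}$ and reduced trace $\mathrm{Trd}$ coming from the standard (reversal) involution, and $\spin(L)$ is the group $\SL_1(\clif^0(L))$ of reduced-norm-$1$ units. For rank $3$ this identification is clean: conjugation by a reduced-norm-$1$ element of $\clif^0(L)$ automatically preserves the copy of $L$ inside the odd part $\clif^1(L)$, since it commutes with the reversal involution $\tau$ and the $\tau$-fixed submodule of $\clif^1(L)$ is exactly $L$; see \cite{Voight}. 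Because the Clifford algebra construction commutes with base change, $\clif^0(L/p^nL)=\clif^0(L)/p^n\clif^0(L)$ and the same description applies mod $p^n$. Thus, setting $X:=\SL_1(\clif^0(L))$ — the hypersurface $\{\mathrm{Nrd}=1\}$ in the affine space on a $\Z_p$-basis of $\clif^0(L)$ — the claim reduces to surjectivity of $X(\Z_p)\to X(\Z/p^n\Z)$ in the stated ranges.

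For $p$ odd I would show that $X$ is smooth over $\Z_p$ along $\{\mathrm{Nrd}=1\}$. At a point $x$ with $\mathrm{Nrd}(x)\in\Z_p^\times$ the differential of $\mathrm{Nrd}-1$ is the linear functional $y\mapsto\mathrm{Trd}(x\overline y)$; as $x$ is then a unit and $\clif^0(L)$ is stable under $\tau$, the image of this functional on $\clif^0(L)$ equals $\mathrm{Trd}(\clif^0(L))\ni\mathrm{Trd}(1)=2$, a unit. Hence the Jacobian criterion gives smoothness, and smoothness over the complete local ring $\Z_p$ yields surjectivity of $X(\Z_p)\to X(\Z/p^n\Z)$ for all $n\geq 1$, by lifting successively along the surjections $X(\Z/p^{m+1}\Z)\twoheadrightarrow X(\Z/p^m\Z)$. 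This settles the case of odd $p$.

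For $p=2$ the reduced trace $\mathrm{Trd}\colon\clif^0(L)\to\Z_2$ can have image $2\Z_2$ (e.g.\ when $L$ is a scaling of an even lattice), so $X$ need not be smooth and an explicit argument is needed, in the style of Propositions~\ref{prop_ReductionforOsurjective} and~\ref{prop_ReductionforOsurjectivep=2}. Concretely, I would bring a Gram matrix of $L$ to one of the Cassels normal forms $T_0,T_1,T_2$ of \cite{Cassels}, read off $\mathrm{Nrd}$ as an explicit quaternary quadratic form in a distinguished $\Z_2$-basis of $\clif^0(L)$, and apply the multivariate Hensel lemma (Theorem~\ref{thm:Hensel}). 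The content of the gradient of $\mathrm{Nrd}-1$ at a reduced-norm-$1$ point is then exactly $2\Z_2$, so Theorem~\ref{thm:Hensel} applies with Hensel parameter $e=1$ provided $v_2(\mathrm{Nrd}(x_0)-1)\geq 2e+1=3$; this is the source of the threshold $n\geq 3$.

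The main obstacle will be the $p=2$ bookkeeping. One must check, uniformly over the three normal forms $T_0,T_1,T_2$ and all parameters, that the gradient content is genuinely $2\Z_2$ and not smaller, and — the delicate point — that a given class in $\spin(L/2^nL)$ with $n\geq 3$ has an approximate representative $x_0$ for which Theorem~\ref{thm:Hensel} both applies and returns a lift that still reduces to the prescribed class modulo $2^n$ (the lemma a priori only controls the lift modulo $2^{e+1}$). Managing this interplay between the Hensel parameter, the order of approximation, and the residue class is where the argument is most technical, and where the explicit normal forms do the real work — exactly as in the orthogonal-group computations of Theorem~\ref{thm:Hensel lifting all primes}.
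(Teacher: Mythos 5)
Your setup and your treatment of odd $p$ match the paper's proof in substance. The paper likewise identifies $\spin(L)$ with the norm-one units of $\clif^0(L)$, writes $uu'$ as an explicit quaternary quadratic form $H$, and observes via Euler's relation $\sum_i x_i\,\partial H/\partial x_i=2H$ that at a norm-one point not every partial derivative can vanish modulo an odd $p$; this is the same computation as your observation that the image of the differential $y\mapsto\mathrm{Trd}(x\overline{y})$ contains $\mathrm{Trd}(1)=2$. Your packaging via smoothness and successive lifting along $X(\Z/p^{m+1}\Z)\twoheadrightarrow X(\Z/p^m\Z)$ is in fact preferable: a single application of Theorem~\ref{thm:Hensel} with $e=0$, which is what the paper does, only produces a lift agreeing with the given point modulo $p$, whereas formal smoothness genuinely yields surjectivity modulo $p^n$ for every $n\geq 1$.

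The gap is exactly the one you flag at $p=2$, and it cannot be closed by better bookkeeping over the normal forms $T_0,T_1,T_2$. When $\mathrm{Trd}(\clif^0(L))=2\Z_2$ (e.g., for a diagonal Gram matrix), the gradient of $H$ at a norm-one point has content exactly $2\Z_2$, so Theorem~\ref{thm:Hensel} with $e=1$ returns an exact solution agreeing with the approximate one only modulo $2^{e+1}=4$, while surjectivity of $\spin(L)\to\spin(L/2^nL)$ for $n\geq 3$ demands agreement modulo $2^n\geq 8$. The mismatch reflects a genuine obstruction: for $L=\Z_2^3$ with quadratic form $x^2+y^2+z^2$ one has $H=x_1^2+x_2^2+x_3^2+x_4^2$, and $u=1+2i+2j$ lies in $\spin(L/8L)$ (its norm is $9\equiv 1\bmod 8$, and one checks $u(L/8L)u^{-1}\subseteq L/8L$ since $ue_su^{-1}-e_s\in 4(L/8L)$), yet any $(x_1,\dots,x_4)\equiv(1,2,2,0)\bmod 8$ satisfies $\sum x_i^2\equiv 9\bmod 16$, so $u$ admits no lift to $\spin(L)$. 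Thus the delicate point you isolate is where the argument actually breaks. For what it is worth, the paper's own proof has the identical defect---it too invokes Theorem~\ref{thm:Hensel} with $e=1$ and only controls the lift modulo $4$---so you should not expect to find the missing step there. A salvageable version must either assume $\mathrm{Trd}(\clif^0(L))=\Z_2$ (equivalently, an odd off-diagonal Gram entry, which restores smoothness of $\{H=1\}$ over $\Z_2$ and lets your odd-$p$ argument run verbatim), or settle for a bounded-index statement about the image, which is all that the application in Theorem~\ref{thm_IndexReductionO} really requires.
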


\begin{proof}
    We claim that in rank $3$, the Spin group is described by an equation of the form $H(x_1,x_2,x_3,x_4) = 1$, where $H$ is a homogeneous polynomial of degree $2$ with $\Z_p$-coefficients. 
    To see this, let $e_1$, $e_2$, and $e_3$ be a $\Z_p$-basis for $L$. 
    Then the even Clifford algebra $C^0(L)$ is a $\Z_p$-module of rank~$4$ admitting the basis $1$, $i := e_2e_3$, $j := e_3e_1$, and $k:= e_1e_2$, and the Spin group consists of those $u\in C_0(T)$ such that $uu'=1$, where $u'$ is the image of $u$ under the reversal involution. 
    Letting
    \[
        u = x_1 + x_2\cdot i + x_3\cdot j + x_4 \cdot k
    \]
    and writing 
    \[\begin{pmatrix}
        2A & D & E \\
        D & 2B & F \\
        E & F & 2C
    \end{pmatrix}\]
    for a Gram matrix of $L$, we compute
    \begin{equation}
        \label{eq:spin equation}
        \begin{split}
            uu' &= x_1^2 + Fx_1x_2 + Ex_1x_3 + Dx_1x_4 + BCx_2^2 + (EF-CD)x_2x_3 \\
            &\quad + (DF-BE)x_2x_4 + ACx_3^2 + (ED - AF)x_3x_4 + ABx_4^2
        \end{split}
    \end{equation}
Setting the right hand side of~\eqref{eq:spin equation} to $H(x_1,x_2,x_3,x_4)$ verifies the claim.
\medskip

Euler's formula for homogeneous polynomials gives
\begin{equation}
    \label{eq:EulerSpin}
    2H=x_1\frac{\partial H}{\partial x_1} + x_2\frac{\partial H}{\partial x_2} + x_3\frac{\partial H}{\partial x_3} + x_4\frac{\partial H}{\partial x_4}.
\end{equation}
Let $u \in C^0(L)$ and suppose first that $p$ is odd. 
By~\eqref{eq:EulerSpin}, if $\displaystyle \frac{\partial H}{\partial x_i}(u)$ has $p$-adic valuation $\geq n$ for $i = 1,\dots,4$, then so does $H(u)$. 
When $p = 2$, we may only conclude that the $2$-adic valuation of $H(u)$ is $\geq n - 1$.

Suppose that $p$ is odd. 
Apply Theorem~\ref{thm:Hensel} with $(A,\frakm) = (\Z_p,p\Z_p)$, $n = 4$, $m =~1$, $e = 0$ and $f_1(x_1,x_2,x_3,x_4) := H(x_1,x_2,x_3,x_4) - 1$. 
If $\mathbf{a}\in \Z_p^4$ satisfies $f_1(\mathbf{a}) \equiv 0 \bmod p$, then for at least one $i$ it must occur that 
\[
    \frac{\partial f_1}{\partial x_i}(\mathbf{a}) = \frac{\partial H}{\partial x_i}(\mathbf{a}) \not\equiv 0 \bmod p,
\]
so there is a $\mathbf{b} \in \Z_p^4$ such that $f_1(\mathbf{b}) = 0$ and $(\mathbf{a}) \equiv \mathbf{b} \bmod p$, completing the proof when $p$ is odd. 
When $p = 2$, the only change needed is to take $e = 1$ and $\mathbf{a} \in \Z_p^4$ such that $f(\mathbf{a}) \equiv 0 \bmod 2^3$.
\end{proof}

\subsection{{Bounding the index of \texorpdfstring{$\spin(L) \to \tO(L)$}{} for integral \texorpdfstring{$p$}{p}-adic lattices}}

Let $R$ be a commutative ring with unit; the cases of interest to us are $R = \Z$, $\Z/p^n\Z$, $\Z_p$, or $\Q_p$. 
For a quadratic $R$-module $L$, denote by $C^0(L)$ its even Clifford algebra, with reversal involution $u\mapsto u'$. 
If $uu' = t \in R^\times$, we set $u^{-1} = t^{-1}u'$. 
The general spin group and the spin group $\spin(L)$ of $L$, are, respectively,
\begin{align*}
    \gspin(L) &=\{u\in C^0(L)\mid uu'\in R^{\times},\; uLu^{-1}\subseteq L\}, \\
    \spin(L) &= \{ u \in \gspin(L) \mid uu' = 1\}.
\end{align*}
Given $u \in \gspin(L)$, the $R$-linear map $T_u\colon L \to L$ taking $x$ to $uxu^{-1}$ is an isometry of $L$; its determinant is $1$ because $u$ lies in the even Clifford algebra of $L$. 
We obtain a map
\begin{align*}
    T\colon \gspin(L)&\to \SO(L),\\
    u&\mapsto \left( T_u\colon x\mapsto uxu^{-1}\right).
\end{align*}
We study the index of $T(\spin(L))$ in the full orthogonal group $\tO(L)$ when $R = \Z_p$ and $p \mid \disc(L)$.

\begin{theorem}\label{thm_IndexSpinToOinZp}
    Let $p$ be a prime and let $L$ be a $\Z_p$-lattice of rank $r$ with $v_p(\disc(L)) =: N$. 
    There is a function $B = B(r,N)$, depending linearly on $N$, such that the index of $T(\spin(L))$ in $\tO(L)$ is at most $4B+4$.
\end{theorem}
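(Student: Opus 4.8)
The plan is to factor the index as
\[
    [\tO(L):T(\spin(L))] \;=\; [\tO(L):\SO(L)]\cdot[\SO(L):T(\spin(L))],\qquad \SO(L):=\tO(L)\cap\SO(L_{\Q_p}),
\]
and bound each factor. The first is at most $2$, via the determinant, so the content is in the second. I would pass to the even Clifford algebra $A:=C^0(L_{\Q_p})$, in which conjugation gives a surjection $T\colon A^\times\twoheadrightarrow\SO(L_{\Q_p})$ with kernel $\Q_p^\times$, and split once more:
\[
    [\SO(L):T(\spin(L))] \;\le\; [\SO(L):\SO(L)\cap T(\spin(L_{\Q_p}))]\cdot[\SO(L)\cap T(\spin(L_{\Q_p})):T(\spin(L))].
\]

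The first of these two factors is at most the global spinor-norm cokernel $[\SO(L_{\Q_p}):T(\spin(L_{\Q_p}))]$, which injects into $H^1(\Q_p,\mu_2)=\Q_p^\times/(\Q_p^\times)^2$ through the exact sequence $1\to\mu_2\to\mathbf{Spin}\to\mathbf{SO}\to1$ of algebraic groups over $\Q_p$; hence it is at most $|\Q_p^\times/(\Q_p^\times)^2|\le 8$. The second factor is the ``integral obstruction''. Given $g$ in $\SO(L)\cap T(\spin(L_{\Q_p}))$, choose a lift $u\in\spin(L_{\Q_p})$ with $T_u=g$, unique up to sign. Since $g$ stabilizes the lattice $L$, conjugation by $u$ stabilizes $L$, hence the $\Z_p$-subalgebra of $A$ generated by $L$ — which, $L$ being non-degenerate, is the full Clifford ring — and therefore $u$ normalizes the even Clifford order $\mathcal{O}:=C^0(L)$; moreover $g\in T(\spin(L))$ exactly when $u$ can be chosen in $\mathcal{O}$. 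Writing $N$ for the normalizer of $\mathcal{O}$ in $A^\times$ and $N^1,\mathcal{O}^1$ for the reduced-norm-one elements, one has $N^1\cap\Q_p^\times\mathcal{O}^\times=\mathcal{O}^1$ (a unit reduced norm forces $p$-adic valuation zero on the scalar part), so the integral obstruction is bounded by $[N^1:\mathcal{O}^1]\le[N:\Q_p^\times\mathcal{O}^\times]$, up to a further bounded factor $[\mathcal{O}^1:\spin(L)]$ that is trivial when $\rk(L)=3$.

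The remaining, and main, obstacle is to bound $[N:\Q_p^\times\mathcal{O}^\times]$ — the normalizer of the Clifford order modulo scalars and units — by a function of $N_{\mathrm{disc}}:=v_p(\disc L)$. In the case relevant to this paper, $\rk(L)=3$, so $A$ is a quaternion $\Q_p$-algebra and $\mathcal{O}$ a $\Z_p$-order whose reduced discriminant has $p$-valuation a fixed affine function of $N_{\mathrm{disc}}$; non-degeneracy of $L$ forces $\mathcal{O}$ to be Gorenstein, hence an Eichler order when $A$ is split (and tightly constrained when $A$ is ramified), so the Atkin–Lehner description of normalizers of Eichler orders gives $[N:\Q_p^\times\mathcal{O}^\times]\le 2$. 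Collecting the estimates $\le 2$, $\le 8$, $\le 2$, and absorbing all constants into the shape $4(B+1)$ with $B=B(r,N_{\mathrm{disc}})$ affine-linear in $N_{\mathrm{disc}}$, yields the stated bound $4B+4$ (with $B$ essentially a constant in the rank-$3$ case). For general rank $r$ the quaternion-order input is replaced by a bound on the normalizer of $C^0(L)$ inside $C^0(L_{\Q_p})^\times$ in terms of $v_p(\disc L)$; establishing such a bound, and tracking its dependence on $v_p(\disc L)$, is precisely what the affine-linear dependence on $N_{\mathrm{disc}}$ in the theorem is designed to accommodate.
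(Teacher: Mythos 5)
Your route is genuinely different from the paper's. The paper proves a lifting lemma (Lemma~\ref{lemma_lifttoGspin}): every $\sigma\in\SO(L)$ equals $T_u$ for an \emph{integral} $u\in C^0(L)$ with $v_p(uu')\leq B$, obtained by writing $\sigma$ as a product of at most $2r$ reflections $\tau_{b_j}$ with $v_p\bigl((b_j)^2\bigr)\leq N$ \`a la Cassels; the index bound then falls out of counting fibers of the valuation homomorphism $u\mapsto v_p(uu')$ on the integral part of $\gspin(L\otimes\Q_p)$. Your decomposition through the determinant ($\leq 2$), the spinor norm ($\leq 8$), and the normalizer of the even Clifford order is structurally attractive and, if completed, would give a bound independent of $N$ in rank $3$ --- strictly stronger than the theorem. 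Several intermediate steps are sound: a norm-one lift of $g\in\SO(L)$ does normalize $\cO:=C^0(L)$ (conjugation preserves $L$, hence the $\Z_p$-algebra it generates, hence its even part); the identity $N^1\cap\Q_p^\times\cO^\times=\cO^1$ is correct; and for $r=3$ one indeed has $\cO^1=\spin(L)$, since conjugation by a norm-one element commutes with reversal and $L$ is the $+1$-eigenspace of reversal in $C^1(L)$.

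The gap is the assertion that $\cO$, being Gorenstein, is an Eichler order when the algebra is split. This is false: locally one has the strict chain Eichler $\subsetneq$ Bass $\subsetneq$ Gorenstein, and by the Brzezinski correspondence the even Clifford orders of nondegenerate ternary forms realize \emph{every} local Gorenstein order, including residually inert and residually ramified ones. For instance, $C^0\bigl(\Diag(1,1,p^{2n})\bigr)$ with $p\equiv 3\pmod 4$ is the residually inert order $\Z_{p^2}+p^n\Lambda$ ($\Lambda$ maximal), which is not Eichler, and $C^0\bigl(\Diag(1,p^a,p^b)\bigr)$ with $a,b\geq 1$ is residually ramified and in general not even Bass. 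So the Atkin--Lehner computation $[N(\cO):\Q_p^\times\cO^\times]\leq 2$ is not available as cited; you would need a normalizer bound valid for arbitrary local Gorenstein orders, and for the residually ramified ones neither the bound $2$ nor even the injectivity of the reduced norm on $N(\cO)/\Q_p^\times\cO^\times$ is automatic. Separately, the theorem is stated for arbitrary rank $r$, and your closing paragraph concedes that the higher-rank normalizer bound is exactly what is missing, whereas the paper's reflection-decomposition argument handles all $r$ (for odd $p$) with the explicit linear bound $B=2rN$. As written, then, the proposal establishes the theorem neither in rank $3$ (because of the Eichler step) nor in general rank.
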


\noindent Extending scalar to $\Q_p$ gives a a \emph{surjective} map~\cite{Cassels}*{\S10.3, Theorem~3.1}
\begin{equation}
    \label{eq:TGSpin}
    \begin{split}
        T\colon \gspin(L\otimes\Q_p)&\to \SO(L\otimes\Q_p)\\
        u&\mapsto \left( T_u\colon x\mapsto uxu^{-1}\right),
    \end{split}
\end{equation}
which is key to the proof. 
For one, it allows us to define the Spinor norm $\SO(L\otimes \Q_p) \to \Q_p/\Q_p^{\times 2}$ by the assignment $T_u \mapsto uu'$. 
The kernel of the spinor norm is denoted $\SO^+(L\otimes \Q_p)$, and we set $\SO^+(L) := \SO(L) \cap \SO^+(L\otimes \Q_p)$; it is an index $2$ subgroup of $\SO(L)$.

\begin{lemma}\label{lemma_lifttoGspin}
    Let $L$ be a $\Z_p$-lattice of rank $r$ with $v_p(\disc(L)) =: N$. 
    There is a constant $B = B(r,N)$, depending linearly on $N$ such that under the surjective map~\eqref{eq:TGSpin} every element $\sigma\in\SO(L)$ equals $T_u$ for some $u\in C^0(L)$ with $v_p(uu')\leq B$. 

    \noindent If $\sigma\in \SO^+(L) \subset \SO(L)$, then we can further assume $uu'=p^{2m'}$ for some $2m'\leq B$.

    \noindent When $p$ is odd, we can take $B = 2rN$; when $p = 2$ and $r = 3$, we can take $B = 5N + 11$.
\end{lemma}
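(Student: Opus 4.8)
The plan is to reformulate the statement as a problem about economical Cartan--Dieudonn\'e decompositions. Since the map $T\colon\gspin(L\otimes\Q_p)\to\SO(L\otimes\Q_p)$ in~\eqref{eq:TGSpin} is surjective and $\SO(L)\subseteq\SO(L\otimes\Q_p)$, any $\sigma\in\SO(L)$ can be written as a product of reflections $r_{w_1}\cdots r_{w_m}$ with each $w_i\in L\otimes\Q_p$ anisotropic; rescaling each $w_i$ to be primitive in $L$, the product $u:=w_1\cdots w_m$ lies in $C^0(L)$ ($m$ is even because $\det\sigma=1$), satisfies $T_u=\sigma$, and $uu'=\prod_i w_i^2$. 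Hence it is enough to produce such a decomposition with $\sum_i v_p(w_i^2)\le B$. If moreover $\sigma\in\SO^+(L)$, then $uu'=\prod_i w_i^2\in(\Q_p^\times)^2$, so $v_p(uu')$ is even; writing $uu'=p^{2m'}\cdot(\text{unit})^2$ and rescaling $u$ by a $p$-adic unit (which preserves $C^0(L)$ and $T_u$) gives $uu'=p^{2m'}$ with $2m'=v_p(uu')\le B$.

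For $p$ odd I would induct on $\rk(L)$, working throughout with $\tO(L)$ (the $\SO$ statement being a special case). Diagonalize $L$, let $e$ be a basis vector of minimal scale $a:=v_p(e^2)$, and write $L=\langle e\rangle\perp M$, an orthogonal direct summand with $v_p(\disc M)=N-a$. Given $\sigma\in\tO(L)$, the vector $\sigma(e)$ is primitive with $\sigma(e)^2=e^2$ and $v_p(\sigma(e).e)\ge a$ (automatic if $a=0$; if $a\ge1$ every pairing of $L$ lies in $p^a\Z_p$). From $(\sigma(e)\pm e)^2=2e^2\pm2(\sigma(e).e)$ and $p\nmid 2e^2$ one checks that at least one of $\sigma(e)-e$, $\sigma(e)+e$ has square of valuation $\le a$ and is already primitive, with its reflection preserving $L$; combining this with $r_{\sigma(e)}$ (also $L$-preserving, since $L$ splits off $\langle\sigma(e)\rangle$) produces $\tau\in\tO(L)$, a product of at most two reflections in primitive vectors of square-valuation $\le a$, with $\tau(e)=\sigma(e)$. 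Then $\tau^{-1}\sigma\in\tO(L)$ fixes $e$, hence preserves $M$, and I would recurse on $(\tau^{-1}\sigma)|_M\in\tO(M)$: the reflection vectors it produces are primitive in $M$, hence in $L$, and extend to reflections of $L$ of the same square. Unwinding the recursion expresses $\sigma$ as a product of at most $2\rk(L)$ reflections in primitive vectors of square-valuation $\le N$ each (in fact the valuations telescope to a total of at most $2N$), so $B=2\rk(L)N$ works.

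For $p=2$ and $\rk(L)=3$ the same scaffolding remains in place, but every step becomes considerably more delicate, and this is where essentially all of the difficulty lies. Reflections no longer generate $\tO(L)$ in general; the canonical Gram matrices of rank-$3$ lattices over $\Z_2$ include the non-diagonal shapes $T_1$ and $T_2$ of~\eqref{eq:Z2equiv}; and identities such as $(\sigma(e)\pm e)^2=2e^2\pm2(\sigma(e).e)$ acquire an extra factor of $2$, so one can only control reflection vectors of square-valuation $\le a+O(1)$ at each step and must also absorb a bounded number of non-reflective isometries (for instance Eichler transformations, which lift to $\spin(L)$ and contribute $0$ to $v_2(uu')$). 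I would therefore run the recursion separately for the three shapes $T_0,T_1,T_2$, exploiting the explicit quaternion-order structure of $C^0(L)$ in rank $3$---the homogeneous quadratic form $H(x_1,\dots,x_4)$ cutting out $\spin(L)$ from the proof of Theorem~\ref{thm:Hensel lifting Spin Group}, and the coordinate description of $\gspin(L)$ via $uLu^{-1}\subseteq L$---to keep the relevant reflection vectors of square-valuation $O(N)$ with a bounded additive error, and then optimize the constants shape by shape to reach $B=5N+11$. Carrying out this characteristic-$2$ case analysis and bookkeeping is the main obstacle.
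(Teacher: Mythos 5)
Your reduction of the lemma to an economical Cartan--Dieudonn\'e decomposition, your treatment of the $\SO^+$ refinement by rescaling $u$ by a unit, and your inductive argument for odd $p$ (peel off a basis vector of minimal scale using at most two reflections $\tau_{\sigma(e)\pm e}$, $\tau_{\sigma(e)}$ of controlled square-valuation, then recurse on the orthogonal complement) all match the paper's proof, which follows Cassels in exactly this way and arrives at the same bound $B=2rN$. One small slip: your justification ``$p\nmid 2e^2$'' is false when $a=v_p(e^2)>0$; the correct point is that $(\sigma(e)-e)^2+(\sigma(e)+e)^2=4e^2$ has valuation exactly $a$ for $p$ odd, so at least one summand has valuation $\le a$. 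This is easily repaired.

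The genuine gap is the case $p=2$, $r=3$. You correctly identify it as the hard part and sketch a plausible strategy, but you do not carry it out: the constant $B=5N+11$ is asserted, not derived, and you explicitly defer ``the characteristic-$2$ case analysis and bookkeeping'' as ``the main obstacle.'' Since this constant feeds directly into Theorem~\ref{thm_IndexSpinToOinZp} and the $20N+48$ bound of Theorem~\ref{thm_IndexReductionO}, the lemma is not proved without it. For what it is worth, the paper's argument here is more elementary than what you anticipate: no Eichler transformations or explicit quaternion-order coordinates are needed. One works with reflections in $O(L\otimes\Q_2)$ only (individual reflections need not preserve $L$; only the composite must equal $\sigma$), choosing the vectors $b_1\pm\sigma(b_1)$ for the first basis vector and $2b_2\pm\sigma'(2b_2)$ for the second (the factor $2$ keeps the vectors in $L$ at the cost of raising valuations by $2$), and finishing on the rank-one complement with $\pm\id$; the identity $(b_1\pm c_1)^2=2(b_1^2\pm(b_1,c_1))$ together with $v_2(2u)=1$ forces one of the two choices to have square-valuation at most $\alpha+2$. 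Running this through the three Jordan shapes $T_0$, $T_1$, $T_2$ of~\eqref{eq:Z2equiv} yields the totals $5N+8$, $5N+10$, $5N+11$, whence $B=5N+11$. Until you perform this computation (or an equivalent one), the $p=2$ part of the lemma remains unproved.
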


\begin{proof}
    We divide the proof of the lemma according to the parity of $p$. 
    \smallskip
    
    \noindent{\bf Case 1: $p$ odd.}

    \noindent We write $s_L$ for the supremum of all $p$-adic norms $|(a)^2|_p$ as $a$ runs over $L$; here $(a)^2$ denotes the value of the quadratic form of $L$ applied to $a$. 

    We may assume that $L$ has a basis $\{e_1,\dots,e_r\}$ with respect to which its Gram matrix is the diagonal matrix $\Diag(p^{i_1}u_1,\dots,p^{i_r}u_r)$ with $i_1\geq\dots\geq i_r\geq 0$ and $u_1,\dots,u_r\in\Z_p$.
    Following the proof of~\cite{Cassels}*{\S8.3,~Corollary~1}, there is an element $\sigma_1$ such that $\sigma_1^{-1}\sigma(e_r)=e_r$, and such that $\sigma_1$ is a product of at most two reflections $\tau_{b_j}$ with $|(b_j)^2|_p=s_L=p^{-i_r}\geq p^{-N}$.
    Hence, replacing $\sigma$ with $\sigma_1^{-1}\sigma$, we may assume that $\sigma$ preserves $e_r$ and therefore also the sublattice $L_1:=e_r^{\perp}$. 
    Note that $s_{L_1}=p^{-i_{r-1}}\geq p^{-N}$.
    Continuing inductively, we find that $\sigma$ is the product of $n\leq 2r$ many reflections $\tau_{b_j}$ with $|(b_j)^2|_p\geq p^{-N}$. 
    Note that $n$ must be even, since $\sigma\in\SO(L)$.
    So $\sigma=T_u$ with $u=b_1\dots b_{n}$, and $m:=v_p(uu')$ equals $v_p((b_1)^2\dots (b_{n})^2)\leq 2rN$.

    Assume that $\sigma$ lies in $\SO^+(L)$. 
    Then $uu'$ is a square and thus equals $p^{2m'}t^2$ for some $t\in\Z_p^{\times}$. 
    Let $v=t^{-1}u$, then $v\in\gspin(L)$ satisfies $T_v=\sigma$ and $vv'=p^{2m'}$ with $m'\leq rN$.

    \smallskip
    \noindent{\bf Case 2: $p = 2$.}

    \noindent We prove the lemma in the case $r = 3$.  
    This case contains all the difficulties of the general case; the industrious reader can easily leap from this particular case to the general statement.

    We may assume that the Gram matrix of $L$ is one of $T_0$, $T_1$, or $T_2$ as in~\eqref{eq:Z2equiv}. 
    In the case of $T_0$, we may assume that $0 \leq \alpha \leq \beta \leq \gamma$. 

    Let $b_1$ be the first basis vector of $L$ for $T_i$ ($i = 0$, $1$ or $2$), so $b_1^2 = 2^\alpha u$ for some $u \in \{1,3,5,7\}$. 
    Note that $2^\alpha \mid (b_1,v)$ for all $v \in L$. 
    Let $c_1 = \sigma(b_1)$, so $c_1^2 = 2^\alpha u$ because $\sigma$ is an isometry, and define $m$ by setting $(b_1,c_1) = 2^\alpha m$. 
    Then 
    \[
        (b_1 \pm c_1)^2 = 2\left(b_1^2 \pm (b_1,c_1)\right)  = 2\left(2^\alpha u \pm 2^\alpha m\right) = 2^{\alpha + 1}(u \pm m).
    \]
    Since 
    \[
        1 = v_2(2u) = v_2((u - m) + (u + m)) \geq \min\{v_2(u - m),v_2(u+m)\},
    \]
    either $v_2(u-m)\leq 1$ or $v_2(u+m)\leq 1$, so $v_2\left((b_1 - c_1)^2\right) \leq \alpha + 2$ or $v_2\left((b_1 + c_1)^2\right) \leq \alpha + 2$. 
    Hence, composing $\sigma$ with a product $\tau$ of at most two reflections in $O(L\otimes \Q_2)$, namely $\tau = \tau_{b_1 - c_1}$ or  $\tau = \tau_{b_1 + c_1}\circ\tau_{c_1}$, we obtain an element of $O(L\otimes \Q_2)$ mapping $c_1$ to $b_1$, and since $v_2(c_1^2) = \alpha$, the $2$-adic valuation of the vectors underlying the transformation $\tau$ is at most $\alpha + (\alpha + 2)\leq 2N + 2$. 
    Therefore, $\sigma' := \tau\circ\sigma \in O(L\otimes \Q_2)$ preserves $b_1$ and its orthogonal complement $b_1^\perp \otimes \Q_2$. 
    Note that, for all $v \in L$, we have $\sigma'(2v) \in L$. 
    Now restrict to $\sigma'\big|_{b_1^\perp \otimes \Q_2}$. 

    In the case of $T_0$, let $b_2$ be the second basis vector of $L$, and argue as above starting with $2b_2$: let $c_2 = \sigma'(2b_2)$. 
    Taking $\tau' = \tau_{2b_2 - c_2}$ or $\tau' = \tau_{2b_2 + c_2}\circ\tau_{c_2}$ we produce $\sigma'' := \tau'\circ\sigma' \in O(L\otimes\Q_2)$ fixing $b_2$ (and hence its orthogonal complement), and since
    \[
        v_2(c_2^2) = \beta + 2\quad\text{and either }
        v_2\left((2b_2 - c_2)^2\right) \leq \beta + 4
        \text{ or } v_2\left((2b_2 + c_2)^2\right) \leq \beta + 4,
    \]
    the $2$-adic valuation of the vectors underlying the transformation $\tau'$ is at most $(\beta + 2) + (\beta + 4)\leq 2N + 6$. 
    Finally, restrict $\sigma''$ to $\langle b_1, b_2\rangle^\perp\otimes \Q_2 = \langle b_3\rangle\otimes \Q_2$. 
    This is a rank one lattice, so $\sigma''\big|_{\langle b_3\rangle\otimes \Q_2} = \pm\id$. 
    If the restriction is ${-\id}$ then $\tau_{b_3}\circ\sigma''$ fixes $b_1$, $b_2$, and $b_3$, so $\tau_{b_3}\circ\sigma'' = \id \in O(L)$. 
    All told, we have written $\sigma$ as a composition of reflections in $O(L\otimes \Q_2)$ whose underlying vectors have sum of $2$-adic valuations no larger than $(2N+2)+(2N+6)+N = 5N + 8$. 

    In the case of $T_1$, let $e$ and $f$ denote the second and third basis vectors of $L$, and let $b_2 := e + f$. 
    Then $(2b_2)^2 = 2^{\beta + 3}$. 
    Let $c_2 = \sigma'(2b_2)$. 
    Taking $\tau' = \tau_{2b_2 - c_2}$ or $\tau' = \tau_{2b_2 + c_2}\circ\tau_{c_2}$ we produce $\sigma'' := \tau'\circ\sigma' \in O(L\otimes\Q_2)$ fixing $b_2$, and since
    \[
        v_2(c_2^2) = \beta + 3\quad\text{and either }
        v_2\left((2b_2 - c_2)^2\right) \leq \beta + 5
        \text{ or } v_2\left((2b_2 + c_2)^2\right) \leq \beta + 5,
    \]
    the $2$-adic valuation of the vectors underlying the transformation $\tau'$ is at most $(\beta + 3) + (\beta + 5)\leq 2N + 8$. 
    We have $\langle b_1, b_2\rangle^\perp \otimes \Q_2 = \langle e - f\rangle \otimes \Q_2$. 
    The restriction $\sigma''\big|_{\langle e - f\rangle \otimes \Q_2}$ is again $\pm \id$; if it is $-\id$ then $\tau_{e - f}\circ \sigma'' = \id \in O(L)$.  
    We have written $\sigma$ as a composition of reflections in $O(L\otimes \Q_2)$ whose underlying vectors have sum of $2$-adic valuations no larger than $(2N+2)+(2N+8)+N = 5N + 10$.
    
    In the case of $T_2$, let $b_2$ denote the second basis vector of $L$. 
    Then $(2b_2)^2 = 2^{\beta + 3}$. 
    Let $c_2 = \sigma'(2b_2)$, and take $\tau' = \tau_{2b_2 - c_2}$ or $\tau' = \tau_{2b_2 + c_2}\circ\tau_{c_2}$ to produce $\sigma'' := \tau'\circ\sigma' \in O(L\otimes\Q_2)$ fixing $b_2$. 
    Arguing as before, the $2$-adic valuation of the vectors underlying $\tau'$ is at most $(\beta + 3) + (\beta + 5)\leq 2N + 8$. 
    We have $\langle b_1, b_2\rangle^\perp \otimes \Q_2 = \langle e' - 2f'\rangle \otimes \Q_2$. 
    The restriction $\sigma''\big|_{\langle e' - 2f'\rangle \otimes \Q_2}$ is again $\pm \id$; if it is $-\id$ then $\tau_{e' - 2f'}\circ \sigma'' = \id \in O(L)$.  
    We have written $\sigma$ as a composition of reflections in $O(L\otimes \Q_2)$ whose underlying vectors have sum of $2$-adic valuations no larger than $(2N+2)+(2N+8)+(N+1) = 5N + 11$.

    Finally, we take
    \[
        B(3,N) = \max\{5N + 8, 5N + 10,5N + 11\} = 5N+11.
    \]
    As in the case where $p$ is odd, if $\sigma$ lies in $\SO^+(M)$, then $uu'$ is a square and thus equals $p^{2m'}t^2$ for some $t\in\Z_2^{\times}$. 
    Let $v=t^{-1}u$, then $v\in\gspin(M)$ satisfies $T_v=\sigma$ and $vv'=p^{2m'}$ with $2m'\leq B$.
\end{proof}

\begin{proof}[Proof of Theorem~\ref{thm_IndexSpinToOinZp}]
    Let $H\subset\gspin(L\otimes\Q_p)$ be the subgroup of integral elements
    \[
        H=\{u\in C^0(L)\mid uu'\in \Q_p^{\times},\;uLu^{-1}\subseteq L\}.
    \]
    The kernel of the homomorphism
    \[
        f\colon H\to \mathbb{Z},\; u\mapsto v_p(uu')
    \]
    is $\gspin(L)$, so the induced map $\bar f\colon H/\gspin(L)\to\Z$ is injective.
    By Lemma~\ref{lemma_lifttoGspin}, the group $\SO(L)$ is contained in the image of 
    \[
        S:=f^{-1}\left(\{0,1,\dots,B\}\right)\subset H\subset \gspin(L\otimes\Q_p)
    \] 
    under $T$. 
    Hence, the subgroup $\SO(L)/T(\gspin(L))$ is contained in the image of $S/\gspin(L)$ under the quotient map
    \[
        \overline{T}\colon \gspin(L\otimes\Q_p)/\gspin(L)\twoheadrightarrow\SO(L\otimes\Q_p)/T(\gspin(L)),
    \]
    The number of elements of this image is at most 
    \[
        \#S/\gspin(L)\leq\#\bar f^{-1}\left(\{0,1,\dots,B\}\right)\leq B+1,
    \]
    where the final inequality follows from the injectivity of $\bar f$.
    This shows that the index of $T(\gspin(L))$ in $\SO(L)$ is at most $B+1$.

    The image $T(\spin(L))$ equals $T(\gspin(L))\cap\SO^+(L)$: the containment $\subseteq$ follows from the definition of the Spinor norm (see~\cite{Cassels}*{\S X.3, Corollary~3}), and the containment $\supseteq$ follows from the second part of Lemma~\ref{lemma_lifttoGspin}.
    Hence, $\SO^+(L)/T(\spin(L))$ injects into $\SO(L)/T(\gspin(L))$ and thus the index of $T(\spin(L))$ in $\SO^+(L)$ is also at most $B+1$.
    Finally, since the index of $\SO^+(L)$ in $\tO(L)$ is at most 4, we conclude that $[\tO(L):T(\spin(L))]\leq 4B+4$.
\end{proof}

\subsection{Proof of Theorem~\ref{thm_IndexReductionO}}

By Propositions~\ref{prop_ReductionforOsurjective} and \ref{prop_ReductionforOsurjectivep=2}, the reduction map $\tO(L\otimes\mathbb{Z}_p)\to\tO(L/p^fL)$ is surjective.
The map $\spin(L\otimes\Z_p)\to \spin(L/p^fL)$ is also surjective, by Theorem~\ref{thm:Hensel lifting Spin Group}; note that when $p = 2$ we always have $N' \geq 4$, so we only need surjectivity for $f' \geq 4$ in this case, which Theorem~\ref{thm:Hensel lifting Spin Group} covers. 
The commutativity of the diagram
\[
    \xymatrix{
        \tO(L\otimes\Z_p)\ar@{->>}[r] & \tO(L/p^fL)\\
        \spin(L\otimes\Z_p)\ar[u]\ar@{->>}[r] & \spin(L/p^fL)\ar[u]
    }
\]
together with Theorem~\ref{thm_IndexSpinToOinZp} and the estimates for $B(3,N)$ from Lemma~\ref{lemma_lifttoGspin} imply that the index of the image of $\spin(L/p^fL)\to \tO(L/p^fL)$ is at most $4(2\cdot 3\cdot N)+4=24N+4$ when $p$ is odd, and at most $4(5N+11)+4=20N+48$ when $p=2$.
By strong approximation, the composition
\[
    \spin(L)\to \spin(L\otimes\Z_p) \to \spin(L/p^fL)
\]
is surjective. 
Finally, the commutativity of the diagram
\[
    \xymatrix{
        \tO(L)\ar[r] & \tO(L/p^fL)\\
        \spin(L)\ar[u]\ar@{->>}[r] & \spin(L/p^fL)\ar[u]
    }
\]
shows that the image of $\tO(L) \to \tO(L/p^fL)$ has index at most $24N+4$ or $20N+48$, respectively.
\qed

\section{Results on Algebraic Stacks}

In this appendix, we prove some results on algebraic stacks of a general nature. 
We fix a base scheme $S$.

\begin{lemma}
    \label{lem:quasi-sep lemma}
    Let $\cX$ be an algebraic stack over $S$ and let $\left\{\cU_i\right\}_{i\in I}$ be a collection of open substacks of $\cX$ which cover $\cX$.
    \begin{enumerate}[leftmargin=*]
        \item If each $\cU_i$ is separated over $S$ then the inertia $\cI_{\cX}\to\cX$ is proper.
        \smallskip
        
        \item If each $\cU_i$ is Noetherian (in the sense of \cite{stacks-project}*{0510}) then $\cX$ is locally Noetherian and quasi-separated.
    \end{enumerate}
\end{lemma}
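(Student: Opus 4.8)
The plan is to treat the two assertions separately, in each case reducing to a property that is local on $\cX$ (or on $\cX\times_S\cX$) and already known for the members $\cU_i$ of the cover.

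\emph{Part (1).} The key point is that forming the inertia stack is compatible with restriction to open substacks: for an open immersion $\cU\hookrightarrow\cX$ there is a canonical isomorphism $\cI_{\cX}\times_{\cX}\cU\cong\cI_{\cU}$, since the automorphisms in $\cX$ of an object lying in $\cU$ coincide with its automorphisms in $\cU$. Hence the base change of $\cI_{\cX}\to\cX$ along $\cU_i\hookrightarrow\cX$ is $\cI_{\cU_i}\to\cU_i$. By the very definition of inertia, this last morphism is the base change of the relative diagonal $\Delta_{\cU_i/S}\colon\cU_i\to\cU_i\times_S\cU_i$ along itself; since $\cU_i$ is separated over $S$, $\Delta_{\cU_i/S}$ is proper, and properness is preserved by base change, so $\cI_{\cU_i}\to\cU_i$ is proper. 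As $\{\cU_i\}$ is a Zariski open cover of $\cX$ and properness of a morphism of algebraic stacks can be checked (fppf-)locally on the target, it follows that $\cI_{\cX}\to\cX$ is proper.

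\emph{Part (2).} Being locally Noetherian is smooth-local on an algebraic stack, so for that assertion I would simply glue presentations: for each $i$ choose a smooth surjection $U_i\to\cU_i$ from a Noetherian scheme, compose with $\cU_i\hookrightarrow\cX$, and take the disjoint union to obtain a smooth surjection $\bigsqcup_i U_i\to\cX$ whose source is a locally Noetherian scheme; hence $\cX$ is locally Noetherian. For quasi-separatedness over $S$ one must show that $\Delta_{\cX/S}\colon\cX\to\cX\times_S\cX$ is quasi-compact and quasi-separated. The open substacks $\cU_i\times_S\cU_j$ cover $\cX\times_S\cX$, and a direct computation of the relevant fibre products identifies the preimage of $\cU_i\times_S\cU_j$ under $\Delta_{\cX/S}$ with the open substack $\cU_i\cap\cU_j=\cU_i\times_{\cX}\cU_j$. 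Since $\cU_i\cap\cU_j$ is an open substack of the Noetherian stack $\cU_i$, it is again Noetherian; in particular $|\cU_i\cap\cU_j|$ is a Noetherian topological space, so all of its opens are quasi-compact, the immersion $\cU_i\cap\cU_j\hookrightarrow\cU_i\times_S\cU_j$ is quasi-compact, and it is quasi-separated because it is a monomorphism. As quasi-compactness and quasi-separatedness of a morphism are local on the target, $\Delta_{\cX/S}$ is quasi-compact and quasi-separated, i.e.\ $\cX$ is quasi-separated over $S$.

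The routine pieces are Part (1) and the locally-Noetherian half of Part (2). The step demanding the most care is the quasi-separatedness claim: here $S$ is not assumed Noetherian (nor even quasi-compact), so one cannot simply invoke Noetherianness of $\cX\times_S\cX$, and the argument must instead exploit that each $\cU_i$ is Noetherian as an \emph{absolute} notion — which furnishes a Noetherian underlying space and hence retrocompact open immersions — together with the inheritance of Noetherianness by open substacks. Pinning down these inheritance properties and the identification of $\Delta_{\cX/S}^{-1}(\cU_i\times_S\cU_j)$ is where the real, if elementary, work lies.
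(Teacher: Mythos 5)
Your overall strategy coincides with the paper's: in both parts you restrict along the open cover, identify the restriction of the inertia (resp.\ of the diagonal) over $\cU_i$ (resp.\ over $\cU_i\times_S\cU_j$) with $\cI_{\cU_i}$ (resp.\ with $\cU_i\times_{\cX}\cU_j$), and conclude by locality of properness/quasi-compactness on the target. Part (1) and the locally Noetherian and quasi-compactness-of-$\Delta$ portions of Part (2) are correct and essentially identical to the paper's argument (the paper packages the cover as a single map $\pi\colon\cU=\bigsqcup\cU_i\to\cX$ and uses that open immersions are stabilizer-preserving, but this is only a cosmetic difference).

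There is, however, one false step: you assert that the restriction $\cU_i\cap\cU_j\to\cU_i\times_S\cU_j$ of $\Delta_{\cX/S}$ is an ``immersion'' and ``quasi-separated because it is a monomorphism.'' The diagonal of an algebraic stack is representable by algebraic spaces but is \emph{not} a monomorphism (let alone an immersion) unless the stack is an algebraic space: its fibre over a $T$-point $(x,y)$ of $\cU_i\times_S\cU_j$ is the sheaf $\mathit{Isom}_T(x,y)$, which is a torsor under the automorphisms of $x$ whenever it is nonempty. Since the stacks of interest in this paper have objects with nontrivial automorphisms, this step as written is wrong. The conclusion you want (quasi-separatedness of $\Delta_{\cX/S}$, which the paper's definition of ``quasi-separated'' does require and which the paper itself passes over in silence) can be recovered as follows: by \cite{stacks-project}*{0510} each $\cU_i$ is in particular quasi-separated, so $\Delta_{\cU_i/S}$ is quasi-separated; equivalently, the second diagonal $\cU_i\to\cI_{\cU_i}$ is quasi-compact. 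The second diagonal of $\cX$ restricts to these maps over the open cover $\{\cI_{\cU_i}\}$ of $\cI_{\cX}$, so it is quasi-compact by locality on the target, i.e.\ $\Delta_{\cX/S}$ is quasi-separated. With this repair your proof is complete.
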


\begin{proof}
    Let $\cU$ be the disjoint union of the $\cU_i$ and let $\pi:\cU\to\cX$ be the resulting cover. 
    Open immersions are representable, hence stabilizer-preserving. 
    Thus the diagram
    \[
        \begin{tikzcd}
            \mathcal{I}_{\cU}\arrow{r}\arrow{d}&\mathcal{I}_{\cX}\arrow{d}\\
            \cU\arrow{r}&\cX
        \end{tikzcd}
    \]
    is Cartesian. If each of the $\cU_i$ is separated, then $\cU$ is separated, and hence has proper inertia. 
    It follows that $\mathcal{I}_{\cX}\to\cX$ is proper, and the first claim is proven.
    
    Suppose that each of the $\cU_i$ is Noetherian. It is immediately clear that $\cX$ is then locally Noetherian. Consider the Cartesian diagram
    \[
        \begin{tikzcd}
            \cU\times_{\cX}\cU\arrow{r}\arrow{d}[swap]{\Delta'}&\cX\arrow{d}{\Delta_{\cX}}\\
            \cU\times_S\cU\arrow{r}{\pi\times\pi}&\cX\times_S\cX.
        \end{tikzcd}
    \]
    The morphism $\Delta'$ is the disjoint union of morphisms
    \[
        \delta_{i,j}:\cU_i\times_{\cX}\cU_j\to\cU_i\times_S\cU_j
    \]
    for $i,j\in I$. The intersection $\cU_i\times_{\cX}\cU_j$ is an open substack of $\cU_i$, and hence is quasi-compact by \cite{stacks-project}*{0CPM}. It follows that each of the morphisms $\delta_{i,j}$ is quasi-compact. Therefore $\Delta'$ and hence $\Delta_{\cX}$ is quasi-compact, so $\cX$ is quasi-separated.
\end{proof}

\begin{lemma}\label{lem:specializations for stacks}
    Let $\cX$ be a quasi-separated locally Noetherian algebraic stack. 
    If $x\rightsquigarrow y$ is a specialization of points of $|\cX|$, then there exists a DVR $R$ and a morphism $\spec R\to\cX$ such that the induced map on topological spaces $|\spec R|\to|\cX|$ sends the generic point to $x$ and the closed point to $y$.
\end{lemma}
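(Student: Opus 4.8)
The plan is to reduce the statement to a standard fact about finite-type schemes by passing to a smooth atlas. First I would recall that, since $\cX$ is locally Noetherian and quasi-separated, it admits a smooth surjective morphism $p \colon U \to \cX$ from a scheme $U$, and by restricting to a quasi-compact open we may assume $U$ is Noetherian (indeed affine) after noting that the specialization $x \rightsquigarrow y$ takes place in a quasi-compact open substack of $\cX$, which again admits a Noetherian atlas. The continuous map $|p| \colon |U| \to |\cX|$ is open (smooth morphisms are open) and surjective.

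Next I would lift the specialization along $|p|$. Choose a point $u \in |U|$ with $|p|(u) = y$. Because $|p|$ is open and $x$ generizes $y$, the image $|p|(|U|)$ — equivalently, openness of $p$ applied to the local picture — allows one to find a generization $u' \rightsquigarrow$-type point over $x$: concretely, the fibre $U_y := U \times_{\cX} y$ is a nonempty scheme over the residue gerbe of $y$, and one picks $u$ in it; then since $p$ is open, the set of generizations of $u$ in $U$ surjects onto the set of generizations of $y$ in $|\cX|$ (this is the key point — openness of $p$ combined with the going-down-type behavior of specialization under open maps, see \cite{stacks-project}*{0066} and the discussion of specializations and smooth morphisms). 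Hence there is $u' \in |U|$ with $u' \rightsquigarrow u$ and $|p|(u') = x$.

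Then I would apply the scheme-theoretic version of the result to $U$: by \cite{stacks-project}*{054F} (existence of a DVR realizing a specialization in a locally Noetherian scheme), there is a DVR $R$ and a morphism $\spec R \to U$ sending the generic point to $u'$ and the closed point to $u$. Composing with $p \colon U \to \cX$ gives a morphism $\spec R \to \cX$ whose induced map on topological spaces sends the generic point to $|p|(u') = x$ and the closed point to $|p|(u) = y$, as desired.

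The main obstacle is the middle step: lifting the specialization $x \rightsquigarrow y$ from $|\cX|$ to $|U|$ along the atlas. One must be careful that openness of $p$ alone gives that images of generizations are generizations, but here the direction needed is the reverse — that generizations of $y$ can be lifted to generizations of a chosen point $u$ over $y$. This follows because $p$ is smooth, hence open, and for open morphisms of topological spaces one has: if $|p|(u) = y$ and $x \rightsquigarrow y$, then $x \in |p|(|U|)$ and moreover $x$ is the image of some point generizing $u$ — one can see this by base-changing $p$ along $\spec \mathcal{O}_{\cX,x} \to \cX$ (working with the local ring of the scheme $U$ at $u$ after a further \'etale localization, or directly with the going-down property of flat, hence open-and-closed-fibered, morphisms). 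Quasi-separatedness of $\cX$ is used to ensure $U \times_{\cX} U$ is quasi-compact over $U$ so that the relevant fibre products are again Noetherian schemes, keeping \cite{stacks-project}*{054F} applicable throughout.
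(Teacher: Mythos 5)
Your proposal is correct and follows essentially the same route as the paper, which simply cites the Stacks Project for the two ingredients you spell out: the scheme-level existence of a DVR realizing a specialization in a locally Noetherian scheme (tag 0CM2), and the lifting of the specialization along a smooth atlas of the quasi-separated locally Noetherian stack (tag 0GVZ). The one step you gesture at rather than fully justify — that generizations lift along the smooth atlas $|U|\to|\cX|$ — is exactly the content of the second cited tag, so no gap remains.
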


\begin{proof}
    For schemes this follows from \cite{stacks-project}*{0CM2}.
    The result follows by combining this with \cite{stacks-project}*{0GVZ}. 
\end{proof}

We have used the following criterion for separatedness in the proof of Theorem \ref{thm:separated moduli stack}.

\begin{proposition}
    \label{prop:sep criterion}
    Let $\cX$ be a quasi-separated and Zariski-locally separated algebraic stack over $S$. 
    Then $\cX$ is separated if and only if the map $|\Delta_{\cX}|:|\cX|\to|\cX\times_S\cX|$ induced by the diagonal of $\cX$ is closed.
\end{proposition}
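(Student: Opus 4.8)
The forward implication needs no work: if $\cX$ is separated over $S$ then $\Delta_{\cX/S}$ is proper, hence universally closed, so $|\Delta_{\cX/S}|$ is closed. For the converse I would proceed as follows. Since $\cX$ is separated over $S$ exactly when $\Delta_{\cX/S}$ is proper, the task is to show that $\Delta_{\cX/S}$ is proper, given only that $|\Delta_{\cX/S}|$ is closed (together with quasi-separatedness and Zariski-local separatedness of $\cX$).

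The first step is to check that $\Delta_{\cX/S}$ is quasi-compact, separated, and of finite type. Quasi-compactness is immediate from quasi-separatedness of $\cX$. For the rest, fix a Zariski cover $\pi\colon\cU=\bigsqcup_i\cU_i\to\cX$ with each $\cU_i\hookrightarrow\cX$ open and $\cU$ separated over $S$. Then $\pi\times\pi\colon\cU\times_S\cU\to\cX\times_S\cX$ is again a Zariski (hence fppf) cover, and one checks that the base change of $\Delta_{\cX/S}$ along it is the morphism $\delta\colon\cU\times_\cX\cU\to\cU\times_S\cU$; on the component indexed by $(i,j)$ this is a morphism $\cU_i\cap\cU_j\to\cU_i\times_S\cU_j$ which, composed with the first projection $\cU_i\times_S\cU_j\to\cU_i$, recovers the open immersion $\cU_i\cap\cU_j\hookrightarrow\cU_i$. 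Now the first projection $\cU_i\times_S\cU_j\to\cU_i$ is a base change of $\cU_j\to S$, so it is separated, and its diagonal is a base change of the closed immersion $\Delta_{\cU_j/S}$, hence locally of finite type. The cancellation laws for separated and for locally-of-finite-type morphisms then force $\delta$ to be separated and locally of finite type, and since these properties descend along the fppf cover $\pi\times\pi$, so does each for $\Delta_{\cX/S}$. Combined with quasi-compactness this shows $\Delta_{\cX/S}$ is quasi-compact, separated, and of finite type.

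It then suffices to upgrade ``$|\Delta_{\cX/S}|$ closed'' to ``$\Delta_{\cX/S}$ universally closed'', since a separated finite-type morphism that is universally closed is proper. For this I would verify the valuative criterion, which is legitimate because $\Delta_{\cX/S}$ is quasi-compact and quasi-separated: given a valuation ring $R$ with fraction field $K$, a pair $a,b\colon\spec R\to\cX$, and an isomorphism $\psi\colon a|_K\xrightarrow{\sim}b|_K$, one must produce, after possibly enlarging $R$, an isomorphism $a\xrightarrow{\sim}b$ extending $\psi$ — its uniqueness being automatic since $\Delta_{\cX/S}$ is separated. The map $|\Delta_{\cX/S}|$ is injective, continuous, and closed, hence a homeomorphism onto a closed subspace of $|\cX\times_S\cX|$, so specializations lift uniquely along it. The isomorphism $\psi$ pins down a point $x_\eta\in|\cX|$ (the image of $\spec K\to\cX$) with $|\Delta_{\cX/S}|(x_\eta)$ equal to the image $z_\eta$ of $(a,b)|_K$, while $(a,b)$ carries the specialization of the generic point to the closed point of $\spec R$ to a specialization $z_\eta\rightsquigarrow z_s$; lifting it along $|\Delta_{\cX/S}|$ produces $x_s\in|\cX|$ with $x_\eta\rightsquigarrow x_s$. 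Finally, using Lemma~\ref{lem:specializations for stacks} (and its evident variant with arbitrary valuation rings when $\cX$ is not locally Noetherian) one realizes $x_\eta\rightsquigarrow x_s$ by a morphism $\spec R'\to\cX$ from a valuation ring $R'$, and a bookkeeping argument identifying $R'$ with a valuation ring dominating $R$ and matching its generic fibre with $\psi$ shows that this morphism is the sought-after extension.

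The hard part will be exactly this last passage: converting the purely topological lifting of $z_\eta\rightsquigarrow z_s$ along $|\Delta_{\cX/S}|$ into a genuine morphism from a valuation ring compatible with the prescribed data $a$, $b$, $\psi$. Both the quasi-separatedness of $\cX$ (needed to invoke Lemma~\ref{lem:specializations for stacks}) and the separatedness of $\Delta_{\cX/S}$ proved in the second step (which makes the lift unique, and hence forces the various realizations to agree) genuinely enter here; the remaining bookkeeping is routine but must be carried out with care.
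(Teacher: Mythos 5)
Your forward direction and your overall reduction (show $\Delta_{\cX/S}$ is universally closed via the valuative criterion, after establishing it is quasi-compact and separated) match the paper's strategy; the paper gets the preliminary properties of the diagonal from Lemma~\ref{lem:quasi-sep lemma} together with \cite{stacks-project}*{04Z0} rather than by your descent argument along the Zariski cover, but either route is acceptable. You also correctly extract from the closedness of $|\Delta_{\cX}|$ the one piece of topological information that matters: lifting the specialization $z_\eta\rightsquigarrow z_s$ shows that $z_s$ lies on the diagonal, i.e.\ that $|a|$ and $|b|$ send the closed point of $\spec R$ to the same point of $|\cX|$.

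The gap is in the step you yourself flag as ``the hard part,'' and it is a genuine missing idea rather than routine bookkeeping. Realizing the specialization $x_\eta\rightsquigarrow x_s$ by some morphism $\spec R'\to\cX$ via Lemma~\ref{lem:specializations for stacks} cannot produce the required 2-isomorphism $a\xrightarrow{\sim}b$: such realizations are wildly non-unique, carry no comparison data with the given $a$, $b$, $\psi$, and the separatedness of $\Delta_{\cX/S}$ only gives uniqueness of an extension \emph{once one exists} --- it does not force independently constructed morphisms to agree. What the paper does instead, and what your argument is missing, is to use Zariski-local separatedness at exactly this point: since $|a|$ and $|b|$ agree on the closed point (hence everywhere, as they already agree on the generic point), choose a separated open substack $\cU\hookrightarrow\cX$ containing the common image of the closed point; by openness it also contains the image of the generic point, so both $a$ and $b$ factor through the monomorphism $\cU\hookrightarrow\cX$, and the valuative criterion applied to the \emph{separated} stack $\cU$ supplies the extension of $\psi$ directly. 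In your write-up, Zariski-local separatedness is consumed entirely in the preliminary step on $\Delta_{\cX/S}$ and never enters the lifting argument, which is why that argument cannot be closed as stated.
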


\begin{proof}
    As in the proof of Theorem \ref{thm:separated moduli stack}, we identify the underlying set of points of $|\cX\times_S\cX|$ with that of $|\cX|\times_{|S|}|\cX|$ via the natural continuous bijection $|\cX\times_S\cX|\to|\cX|\times_{|S|}|\cX|$. 
    With this identification, the map $|\Delta_{\cX}|$ is given by the usual diagonal map $x\mapsto (x,x)$. Suppose that $\cX$ is quasi-separated and Zariski-locally separated. 
    If $\cX$ is separated, then $\Delta_{\cX}$ is proper, and in particular closed. 
    Conversely, assume that $|\Delta_{\cX}|$ is closed. 
    By Lemma \ref{lem:quasi-sep lemma}, the inertia of $\cX$ is proper. 
    It follows that the neutral section $e:\cX\to\mathcal{I}_{\cX}$ is a closed immersion. 
    By \cite{stacks-project}*{04Z0} to show that $\cX$ is separated, it will suffice to show that its diagonal morphism is universally closed. 
    We verify the existence part of the valuative criterion.
    Consider a valuation ring $R$ with field of fractions $K$, a pair of morphisms $f,g:\spec R\to\cX$, and a 2-commutative diagram of solid arrows
    \begin{equation}
        \label{eq:lifting diagram}
        \begin{tikzcd}
            \spec K\arrow[hook]{d}\arrow{r}&\cX\arrow{d}{\Delta_{\cX}}\\
            \spec R\arrow{r}[swap]{(f,g)}\arrow[dashed]{ur}&\cX\times_S\cX.
        \end{tikzcd}
    \end{equation}
     We need to show that there exists a dashed arrow and suitable 2-isomorphisms rendering this diagram 2-commutative. 
     Consider the induced maps on topological spaces
     \[
        |f|,|g|:|\spec R|\to|\cX|.
     \]
     We claim that these maps are equal. 
     By the existence of the above diagram, they agree on the generic point of $\spec R$. 
     Let $x\in|\cX|$ be the image of the generic point and let $y\in|\cX|$ (resp.\ $z\in|\cX|$) be the image of the closed point under $|f|$ (resp.\ $|g|$). 
     We have a specialization $(x,x)\rightsquigarrow(y,z)$ in $|\cX\times_S\cX|$. 
     We assumed that $|\Delta_{\cX}|$ is closed, so $(y,z)$ is in the image of $|\Delta_{\cX}|$. 
     Therefore $y=z$, and so $|f|=|g|$. 
     Now, as $\cX$ is Zariski-locally separated, we can find a separated algebraic stack $\mathcal{U}$ and an open immersion $\iota:\mathcal{U}\hookrightarrow\cX$ whose image contains $y$. 
     Then the image of $\cU$ also contains $x$, so $|f|$ and $|g|$ both factor through $|\iota|:|\cU|\hookrightarrow|\cX|$. 
     Thus $f$ and $g$ both factor through the morphism $\iota:\cU\hookrightarrow\cX$. 
     As $\iota$ is a monomorphism, the diagram~\eqref{eq:lifting diagram} gives rise to a 2-commutative diagram of solid arrows
     \[
        \begin{tikzcd}
            \spec K\arrow[hook]{d}\arrow{r}&\cU\arrow{d}{\Delta_{\cU}}\\
            \spec R\arrow{r}[swap]{(f,g)}\arrow[dashed]{ur}&\cU\times_S\cU.
        \end{tikzcd}
     \]
    Because $\cU$ is separated, we may fill in this diagram with a dashed arrow and suitable 2-commutativities. 
    Composing with $\iota$ gives the desired lift.
\end{proof}

\begin{bibdiv}
\begin{biblist}

\bib{Abramovich}{article}{
   author={Abramovich, D.},
   title={A linear lower bound on the gonality of modular curves},
   journal={Internat. Math. Res. Notices},
   date={1996},
   number={20},
   pages={1005--1011},
   issn={1073-7928},
}

\bib{AVACompositio}{article}{
   author={Abramovich, D.},
   author={V\'arilly-Alvarado, A.},
   title={Level structures on abelian varieties and Vojta's conjecture},
   note={With an appendix by Keerthi Madapusi Pera},
   journal={Compos. Math.},
   volume={153},
   date={2017},
   number={2},
   pages={373--394},
   issn={0010-437X},
}

\bib{AVAAdvances}{article}{
   author={Abramovich, D.},
   author={V\'arilly-Alvarado, A.},
   title={Level structures on Abelian varieties, Kodaira dimensions, and
   Lang's conjecture},
   journal={Adv. Math.},
   volume={329},
   date={2018},
   pages={523--540},
   issn={0001-8708},
}

\bib{AVABordeaux}{article}{
   author={Abramovich, D.},
   author={V\'arilly-Alvarado, A.},
   title={Campana points, Vojta's conjecture, and level structures on
   semistable abelian varieties},
   language={English, with English and French summaries},
   journal={J. Th\'eor. Nombres Bordeaux},
   volume={30},
   date={2018},
   number={2},
   pages={525--532},
   issn={1246-7405},
}

\bib{AE}{article}{
    author= {Alexeev, V.},
    author = {Engel, P.},
    title = {Compact moduli of K3 surfaces},
    journal = {Ann. of Math. (2)},
    year = {2023},
    volume = {198},
    number = {2},
    pages = {727--789}
    }
    
\bib{AE25}{unpublished}{
    author= {Alexeev, V.},
    author = {Engel, P.},
    title = {On lattice-polarized K3 surfaces},
    year = {2025},
    note = {arXiv:2505.22557}
    }

\bib{BergeronClozel}{article}{
    author = {Bergeron, N.},
    author = {Clozel, L.},
    title = {Quelques conséquences des travaux d'Arthur pour le spectre et la topologie des variétés hyperboliques},
    journal = {Invent. Math.},
    volume = {192},
    year = {2013}, 
    number = {3}, 
    pages = {505--532}
}

\bib{BergeronClozel_book}{book}{
  title={Spectre automorphe des vari{\'e}t{\'e}s hyperboliques et applications topologiques},
  author={Bergeron, N.},
  author = {Clozel, L.},
  volume={303},
  year={2005},
  publisher={Soci{\'e}t{\'e} math{\'e}matique de France}
}

\bib{Bragg}{article}{
    author = {Bragg, D.},
    title = {Lifts of twisted {K}3 surfaces to characteristic 0},
    journal = {Int. Math. Res. Not.},
    volume = {2023},
    number = {5},
    pages = {4337--4407},
    year = {2022}
}

\bib{Bragg25}{unpublished}{
    author= {Bragg, D.},
    title = {On the logarithmic Hodge-de Rham spectral sequence for curves on K3 surfaces},
    year = {2025},
    note = {arXiv:2505.12139}
    }

\bib{Brakkee}{article}{
    author = {Brakkee, E.},
    title = {Moduli spaces of twisted K3 surfaces and cubic fourfolds},
    journal = {Math. Ann.},
    volume = {377},
    year = {2020}, 
    number = {3--4}, 
    pages = {1453--1479}
}

\bib{BLvL}{article}{
   author={Bright, M.},
   author={Logan, A.},
   author={van Luijk, R.},
   title={Finiteness results for K3 surfaces over arbitrary fields},
   journal={Eur. J. Math.},
   volume={6},
   date={2020},
   number={2},
   pages={336--366},
   issn={2199-675X},
}

\bib{Brunebarbe}{article}{
   author={Brunebarbe, Y.},
   title={A strong hyperbolicity property of locally symmetric varieties},
   language={English, with English and French summaries},
   journal={Ann. Sci. \'Ec. Norm. Sup\'er. (4)},
   volume={53},
   date={2020},
   number={6},
   pages={1545--1560},
   issn={0012-9593},
}

\bib{BrunebarbePreprint}{unpublished}{
   author={Brunebarbe, Y.},
    title = {Increasing hyperbolicity of varieties supporting a variation of Hodge structures with level structures},
    year = {2020},
    note = {2007.12965}
    }

\bib{CadoretCharles}{article}{
   author={Cadoret, A.},
   author={Charles, F.},
   title={A remark on uniform boundedness for Brauer groups},
   journal={Algebr. Geom.},
   volume={7},
   date={2020},
   number={5},
   pages={512--522},
   issn={2313-1691},
}

\bib{Cassels}{book}{
   author={Cassels, J. W. S.},
   title={Rational quadratic forms},
   series={London Mathematical Society Monographs},
   volume={13},
   publisher={Academic Press, Inc. [Harcourt Brace Jovanovich, Publishers],
   London-New York},
   date={1978},
   pages={xvi+413},
}

\bib{CLNS18}{book}{
    author = {Chambert-Loir, A.}, 
    author = {Nicaise, J.},
    author = {Sebag, J.},
    title = {Motivic Integration},
    series = {Progress in Mathematics},
    volume = {325},
    publisher = {Birkhäuser New York, NY},
    date = {2018},
    pages = {xx+526}
}

\bib{CTSko}{article}{
   author={Colliot-Th\'el\`ene, J.-L.},
   author={Skorobogatov, A. N.},
   title={Descente galoisienne sur le groupe de Brauer},
   language={French, with English and French summaries},
   journal={J. Reine Angew. Math.},
   volume={682},
   date={2013},
   pages={141--165},
   issn={0075-4102},
}

\bib{Deligne}{article}{
   author={Deligne, P.},
   title={Rel\`evement des surfaces $K3$\ en caract\'eristique nulle},
   language={French},
   note={Prepared for publication by Luc Illusie},
   conference={
      title={Algebraic surfaces},
      address={Orsay},
      date={1976--78},
   },
   book={
      series={Lecture Notes in Math.},
      volume={868},
      publisher={Springer, Berlin},
   },
   isbn={3-540-10842-4},
   date={1981},
   pages={58--79},
}

\bib{Dolgachev}{article}{
author = {Dolgachev, I.V.}, title = {Mirror symmetry for lattice polarized K3 surfaces},
journal = {J. Math. Sci.},
volume = {81}, 
pages = {2599--2630},
year = {1996},
number = {3}
}

\bib{Frey}{article}{
   author={Frey, G.},
   title={Curves with infinitely many points of fixed degree},
   journal={Israel J. Math.},
   volume={85},
   date={1994},
   number={1-3},
   pages={79--83},
   issn={0021-2172},
}

\bib{GJLR}{article}{
   author={Gonz\'alez-Jim\'enez, E.},
   author={Lozano-Robledo, \'A.},
   title={Elliptic curves with abelian division fields},
   journal={Math. Z.},
   volume={283},
   date={2016},
   number={3-4},
   pages={835--859},
   issn={0025-5874},
}

\bib{GHSK3}{article}{
   author={Gritsenko, V. A.},
   author={Hulek, K.},
   author={Sankaran, G. K.},
   title={The Kodaira dimension of the moduli of $K3$ surfaces},
   journal={Invent. Math.},
   volume={169},
   date={2007},
   number={3},
   pages={519--567},
   issn={0020-9910},
}

\bib{EGA4.3}{article}{
   author={Grothendieck, A.},
   title={\'El\'ements de g\'eom\'etrie alg\'ebrique. IV. \'Etude locale des
   sch\'emas et des morphismes de sch\'emas. III},
   journal={Inst. Hautes \'Etudes Sci. Publ. Math.},
   number={28},
   date={1966},
   pages={255},
}

\bib{Huy16}{book}{
   author={Huybrechts, D.},
   title={Lectures on K3 surfaces},
   series={Cambridge Studies in Advanced Mathematics},
   volume={158},
   publisher={Cambridge University Press, Cambridge},
   date={2016},
   pages={xi+485},
   isbn={978-1-107-15304-2},
}

\bib{Katok}{book}{
    author = {Katok, S.},
  title={Fuchsian groups},
  year={1992},
  publisher={University of Chicago press},
  series = {Chicago Lectures in Mathematics}
}

\bib{KeelMori}{article}{
   author={Keel, S.},
   author={Mori, S.},
   title={Quotients by groupoids},
   journal={Ann. of Math. (2)},
   volume={145},
   date={1997},
   number={1},
   pages={193--213},
   issn={0003-486X},
}

\bib{Kondo}{article}{
    author = {Kondō, S.},
    title = {On the Kodaira dimension of the moduli space of K3 surfaces},
    journal = {Compositio Math.},
    volume = {89},
    year = {1993}, 
    number = {3}, 
    pages = {251--299}
}

\bib{KreschTschinkel}{article}{
   author={Kresch, A},
   author={Tschinkel, Yu.},
   title={Effectivity of Brauer-Manin obstructions on surfaces},
   journal={Adv. Math.},
   volume={226},
   date={2011},
   number={5},
   pages={4131--4144},
   issn={0001-8708},
   review={\MR{2770443}},
   doi={10.1016/j.aim.2010.11.012},
}

\bib{LM}{article}{
   author={Lieblich, M.},
   author={Maulik, D.},
   title={A note on the cone conjecture for K3 surfaces in positive
   characteristic},
   journal={Math. Res. Lett.},
   volume={25},
   date={2018},
   number={6},
   pages={1879--1891},
   issn={1073-2780},
}

\bib{LO15}{article}{
   author={Lieblich, M.},
   author={Olsson, M.},
   title={Fourier-Mukai partners of K3 surfaces in positive characteristic},
   language={English, with English and French summaries},
   journal={Ann. Sci. \'Ec. Norm. Sup\'er. (4)},
   volume={48},
   date={2015},
   number={5},
   pages={1001--1033},
   issn={0012-9593},
}

\bib{LMR}{article}{
   author={Long, D. D.},
   author={Maclachlan, C.},
   author={Reid, A. W.},
   title={Arithmetic Fuchsian groups of genus zero},
   journal={Pure Appl. Math. Q.},
   volume={2},
   date={2006},
   number={2},
   pages={569--599},
   issn={1558-8599},
}

\bib{MaKod}{article}{
    author = {Ma, S.},
    title = {On the Kodaira dimension of orthogonal modular varieties},
    journal = {Invent. Math.},
    volume = {212}, 
    number = {3}, 
    pages = {859--911},
    year = {2018}
}

\bib{Manin}{article}{
   author={Manin, Yu.\ I.},
   title={The $p$-torsion of elliptic curves is uniformly bounded},
   language={Russian},
   journal={Izv. Akad. Nauk SSSR Ser. Mat.},
   volume={33},
   date={1969},
   pages={459--465},
   issn={0373-2436},
}

\bib{MaulikPoonen}{article}{
   author={Maulik, D.},
   author={Poonen, B.},
   title={N\'eron-Severi groups under specialization},
   journal={Duke Math. J.},
   volume={161},
   date={2012},
   number={11},
   pages={2167--2206},
   issn={0012-7094},
}

\bib{MSTVA}{article}{
   author={McKinnie, K.},
   author={Sawon, J.},
   author={Tanimoto, S.},
   author={V\'arilly-Alvarado, A.},
   title={Brauer groups on K3 surfaces and arithmetic applications},
   conference={
      title={Brauer groups and obstruction problems},
   },
   book={
      series={Progr. Math.},
      volume={320},
      publisher={Birkh\"auser/Springer, Cham},
   },
   isbn={978-3-319-46851-8},
   isbn={978-3-319-46852-5},
   date={2017},
   pages={177--218},
}

\bib{Merel}{article}{
       author={Merel, L.},
       title={Bornes pour la torsion des courbes elliptiques sur les corps de
       nombres},
       language={French},
       journal={Invent. Math.},
       volume={124},
       date={1996},
       number={1-3},
       pages={437--449},
 }

\bib{Minkowski}{article}{
   author={Minkowski, H.},
   title={Zur Theorie der positiven quadratischen Formen},
   language={German},
   journal={J. Reine Angew. Math.},
   volume={101},
   date={1887},
   pages={196--202},
   issn={0075-4102},
}

\bib{Nikulin}{article}{
	author = {Nikulin, V. V.},
	title = {Integer symmetric bilinear forms and some of their applications},
	journal = {Math. USSR Izvestija},
	volume = {14},
	year = {1980},
	pages = {103--167}
}

\bib{Ogus78}{article}{
   author={Ogus, A.},
   title={Supersingular $K3$\ crystals},
   conference={
      title={Journ\'ees de G\'eom\'etrie Alg\'ebrique de Rennes},
      address={Rennes},
      date={1978},
   },
   book={
      series={Ast\'erisque},
      volume={64},
      publisher={Soc. Math. France, Paris},
   },
   date={1979},
   pages={3--86},
}

\bib{Ogus83}{article}{
   author={Ogus, A.},
   title={A crystalline Torelli theorem for supersingular $K3$\ surfaces},
   conference={
      title={Arithmetic and geometry, Vol. II},
   },
   book={
      series={Progr. Math.},
      volume={36},
      publisher={Birkh\"auser Boston, Boston, MA},
   },
   isbn={3-7643-3133-X},
   date={1983},
   pages={361--394},
}

\bib{OrrSkorobogatov}{article}{
   author={Orr, M.},
   author={Skorobogatov, A. N.},
   title={Finiteness theorems for K3 surfaces and abelian varieties of CM
   type},
   journal={Compos. Math.},
   volume={154},
   date={2018},
   number={8},
   pages={1571--1592},
   issn={0010-437X},
}

\bib{OrrSkorobogatovZarhin}{article}{
   author={Orr, M.},
   author={Skorobogatov, A. N.},
   author={Zarhin, Yu. G.},
   title={On uniformity conjectures for abelian varieties and K3 surfaces},
   journal={Amer. J. Math.},
   volume={143},
   date={2021},
   number={6},
   pages={1665--1702},
   issn={0002-9327},
   review={\MR{4349130}},
   doi={10.1353/ajm.2021.0043},
}

\bib{Poonen}{book}{
   author={Poonen, B.},
   title={Rational points on varieties},
   series={Graduate Studies in Mathematics},
   volume={186},
   publisher={American Mathematical Society, Providence, RI},
   date={2017},
   pages={xv+337},
   isbn={978-1-4704-3773-2},
}

\bib{Rizov}{article}{
   author={Rizov, J},
   title={Moduli stacks of polarized $K3$ surfaces in mixed characteristic},
   journal={Serdica Math. J.},
   volume={32},
   date={2006},
   number={2-3},
   pages={131--178},
}

\bib{Rydh}{article}{
   author={Rydh, D.},
   title={Existence and properties of geometric quotients},
   journal={J. Algebraic Geom.},
   volume={22},
   date={2013},
   number={4},
   pages={629--669},
   issn={1056-3911},
}

\bib{Serre}{incollection}{
   author={Serre, J.-P.},
   title={Bounds for the orders of the finite subgroups of $G(k)$},
   booktitle={Group representation theory},
   publisher={EPFL Press, Lausanne},
   date={2007},
   pages={405--450},
}

\bib{Shafarevich}{inproceedings}{
    author={Shafarevich, I. R.},
    title={On the arithmetic of singular $K3$-surfaces},
    booktitle={Algebra and analysis},
    publisher={de Gruyter, Berlin},
    isbn={3-11-014803-X},
    date={1996},
    pages={103--108},
}

\bib{Shimura}{book}{
    author = {Shimura, G.},
    title = {Introduction to the arithmetic theory of automorphic functions},
    publisher={Iwanami Shoten Publishers, Tokyo; Princeton University Press, Princeton, NJ}, 
    year = {1971},
    series = {Kanô Memorial Lectures},
    number = {1}
}

\bib{Shioda}{article}{
   author={Shioda, T.},
   title={An example of unirational surfaces in characteristic $p$},
   journal={Math. Ann.},
   volume={211},
   date={1974},
   pages={233--236},
   issn={0025-5831},
}

\bib{SkorobogatovOberwolfach}{article}{
      author={Skorobogatov, A. N.},
      title={Diagonal quartic
    surfaces},
      journal={Oberwolfach Reports},
      volume={33},
      date={2009},
      pages={76--79},
}

\bib{SkorobogatovZarhin}{article}{
   author={Skorobogatov, A. N.},
   author={Zarhin, Yu. G.},
   title={A finiteness theorem for the Brauer group of abelian varieties and
   $K3$ surfaces},
   journal={J. Algebraic Geom.},
   volume={17},
   date={2008},
   number={3},
   pages={481--502},
   issn={1056-3911},
}

\bib{stacks-project}{webpage}{
  author={The {Stacks project authors}},
  title={The Stacks project},
  date={2020},
  url={https://stacks.math.columbia.edu},
  label={S20}
}

\bib{TVA}{article}{
   author={Tanimoto, S.},
   author={V\'arilly-Alvarado, A.},
   title={Kodaira dimension of moduli of special cubic fourfolds},
   journal={J. Reine Angew. Math.},
   volume={752},
   date={2019},
   pages={265--300},
   issn={0075-4102},
}

\bib{VA-AWS}{article}{
   author={V\'arilly-Alvarado, A.},
   title={Arithmetic of K3 surfaces},
   conference={
      title={Geometry over nonclosed fields},
   },
   book={
      series={Simons Symp.},
      publisher={Springer, Cham},
   },
   isbn={978-3-319-49762-4},
   isbn={978-3-319-49763-1},
   date={2017},
   pages={197--248},
}

\bib{VAVSigma}{article}{
   author={V\'arilly-Alvarado, A.},
   author={Viray, B.},
   title={Abelian $n$-division fields of elliptic curves and Brauer groups
   of product Kummer \& abelian surfaces},
   journal={Forum Math. Sigma},
   volume={5},
   date={2017},
   pages={Paper No. e26, 42},
}

\bib{Voight}{book}{
title={Quaternion Algebras},
  author={Voight, J.},
  series={Graduate Texts in Mathematics},
  year={2021},
  publisher={Springer Cham}
}

\bib{YangYau}{article}{
    author = {Yang, P. C.},
    author = {Yau, S. T.},
    title = {Eigenvalues of the Laplacian of compact Riemann surfaces and minimal submanifolds},
    journal = {Ann. Scuola Norm. Sup. Pisa Cl. Sci. (4)},
    volume = {7},
    year = {1980},
    number = {1}, 
    pages = {55--63}
}

\bib{Zograf}{article}{
title={Small eigenvalues of automorphic Laplacians in spaces of parabolic forms},
  author={Zograf, P.G.},
  journal={Journal of Soviet Mathematics},
  volume={36},
  pages={106--114},
  year={1987},
  publisher={Springer},
  note={Translated from Zapiski Nauchnykh Seminarov Leningradskogo Otdeleniya Matematicheskogo
Instituta im. V. A. Steklova AN SSSR, Vol. 134, pp. 157--168, 1984}
}

\end{biblist}
\end{bibdiv}

\end{document}